\documentclass[11pt, a4paper, leqno]{amsart}

\usepackage{amsmath,amsthm,amssymb} 
\usepackage{amsfonts} 
\usepackage{mathrsfs} 
\usepackage[T1]{fontenc} 
\usepackage[utf8]{inputenc} 

\usepackage[dvipsnames, svgnames]{xcolor} 
\usepackage{enumerate} 
\day=06 \month=08 \year=2014

\usepackage{breakcites}
\usepackage[colorlinks,citecolor=citec,hypertexnames=false,urlcolor=urlc,breaklinks=true, pagebackref, breaklinks=true]{hyperref} 
\usepackage{thmtools} 
\usepackage{accents} 

\usepackage{amsbsy} 
\usepackage{amscd} 
\usepackage{latexsym} 
\usepackage{txfonts} 
\usepackage{exscale} 
\usepackage{bbm} 

\let\ii\i 






\usepackage{geometry} 

\newif\ifsoldark
\newif\ifsollight
\newif\ifclassic
\newif\ifplain




\usepackage{cite} 

\renewcommand*\backref[1]{\ifx#1\relax \else (p. #1) \fi} 

\numberwithin{equation}{section}

\declaretheoremstyle[
headfont=\color{lmcolor}\normalfont\bfseries,
bodyfont=\normalfont\itshape,
]{colorlemma}
\theoremstyle{colorlemma}
\newtheorem{lemma}[equation]{Lemma}

\newtheorem{claim}[equation]{Claim}

\declaretheoremstyle[
headfont=\color{propcolor}\normalfont\bfseries,
bodyfont=\normalfont\itshape,
]{colorprop}
\theoremstyle{colorprop}
\newtheorem{proposition}[equation]{Proposition}
\newtheorem{corollary}[equation]{Corollary}

\declaretheoremstyle[
headfont=\color{thmcolor}\normalfont\bfseries,
bodyfont=\normalfont\itshape,
]{colorthm}
\theoremstyle{colorthm}
\newtheorem{theorem}[equation]{Theorem}

\declaretheoremstyle[
headfont=\color{defcolor}\normalfont\bfseries,
bodyfont=\normalfont,
]{colordef}
\theoremstyle{colordef}
\newtheorem{definition}[equation]{Definition}

\declaretheoremstyle[
headfont=\color{excolor}\normalfont\bfseries,
bodyfont=\normalfont,
]{colorexample}
\theoremstyle{colorexample}

\declaretheoremstyle[
headfont=\color{rmcolor}\normalfont\itshape,
bodyfont=\normalfont,
]{colorremark}
\theoremstyle{colorremark}
\newtheorem{remark}[equation]{Remark}
\newtheorem*{remark*}{Remark}
\newtheorem{remarks}[equation]{Remarks}

\makeatletter
\renewcommand{\eqref}[1]{\textup{\eqreftagform@{\ref{#1}}}}
\let\eqreftagform@\tagform@
\def\tagform@#1{%
	\maketag@@@{\color{eqcolor}(\ignorespaces#1\unskip\@@italiccorr)}%
}
\makeatother



\newcommand{\RR}{{\mathbb{R}}}
\newcommand{\eps}{\varepsilon}

\newcommand{\dint}{\int\!\!\!\!\!\int}
\newcommand{\divg}{\text{\normalfont{div}}}
\newcommand{\dist}{\operatorname{dist}}
\DeclareMathOperator{\divp}{\div_{\|}}
\newcommand{\CC}{{\mathbb{C}}}

\newcommand{\cl}[1]{\overline{#1}} 
\newcommand{\dv}{\operatorname{div}}
\newcommand{\n}[1]{\mathscr{#1}}
\newcommand{\re}{\mathbb{R}}
\newcommand{\rn}{{\mathbb{R}^n}}

\newcommand{\reu}{\mathbb{R}^{n+1}_+}
\newcommand{\ree}{\mathbb{R}^{n+1}}
\newcommand{\N}{\mathbb{N}}
\newcommand{\dd}{\mathbb{D}}
\newcommand{\bbm}[1]{\mathbbm{#1}}
\newcommand{\bb}[1]{\mathbb{#1}}
\newcommand{\C}{\mathcal{C}}

\newcommand{\om}{\Omega}

\newcommand{\F}{\mathcal{F}}
\newcommand{\cL}{\mathcal{L}}
\newcommand{\A}{\mathcal{A}}

\newcommand{\B}{\mathcal{B}}

\newcommand{\s}{\mathcal{S}}

\newcommand{\yot}{Y^{1,2}(\ree)}

\newcommand{\yotp}{Y^{1,2}(\reu)}
\newcommand{\yotn}{{Y^{1,2}(\rn)}}
\newcommand{\yota}{{Y^{1,2}(\Sigma_a^b)}}

\newcommand{\sltp}{S^2_+}
\newcommand{\sltn}{S^2_-}
\newcommand{\dltp}{D^2_+}
\newcommand{\dltn}{D^2_-}
\newcommand{\cD}{\mathcal{D}}

\newcommand{\lnrn}{{L^n(\rn)}}
\newcommand{\ltrn}{{L^2(\rn)}}

\newcommand{\nablap}{\nabla_{\|}}

\newcommand{\dno}{D_{n+1}}

\newcommand{\sab}{{\Sigma_a^b}}

\newcommand{\hm}{\omega}

\newcommand{\cee}{\mathbb{C}^{n+1}}

\newcommand{\RNum}[1]{\uppercase\expandafter{\romannumeral #1\relax}}
\newcommand{\tr}{\operatorname{Tr}}
\newcommand{\Tr}{\text{\normalfont Tr}}
\newcommand{\qs}{\m Q_s}

\newcommand{\tT}{\widetilde{\Theta}}

\DeclareMathOperator{\supp}{supp}

\DeclareMathOperator{\tro}{Tr_0}
\DeclareMathOperator{\trt}{Tr_{\mathnormal t}}

\def\div{\mathop{\operatorname{div}}\nolimits}


\def\Xint#1{\mathchoice
	{\XXint\displaystyle\textstyle{#1}}%
	{\XXint\textstyle\scriptstyle{#1}}%
	{\XXint\scriptstyle\scriptscriptstyle{#1}}%
	{\XXint\scriptscriptstyle\scriptscriptstyle{#1}}%
	\!\int}
\def\XXint#1#2#3{{\setbox0=\hbox{$#1{#2#3}{\int}$}
		\vcenter{\hbox{$#2#3$}}\kern-.5\wd0}}

\def\dashint{\Xint-}

\def\Yint#1{\mathchoice
	{\YYint\displaystyle\textstyle{#1}}%
	{\YYint\textstyle\scriptstyle{#1}}%
	{\YYint\scriptstyle\scriptscriptstyle{#1}}%
	{\YYint\scriptscriptstyle\scriptscriptstyle{#1}}%
	\!\dint}
\def\YYint#1#2#3{{\setbox0=\hbox{$#1{#2#3}{\iint}$}
		\vcenter{\hbox{$#2#3$}}\kern-.51\wd0}}
\def\longdash{\mkern-3.5mu{-}\mkern-3.5mu{-}}  

\def\fiint{\Yint\longdash}

\newcommand{\ep}{\varepsilon}
\newcommand{\ra}{\rightarrow}

\newcommand{\m}[1]{\mathcal{#1}}

\newcommand{\f}[1]{\mathfrak{#1}}
\newcommand{\vertiii}[1]{{\left\vert\kern-0.25ex\left\vert\kern-0.25ex\left\vert #1 
		\right\vert\kern-0.25ex\right\vert\kern-0.25ex\right\vert}}

\newcommand{\loc}{\operatorname{loc}}

\newcommand{\real}{\operatorname{\Re e\,}}

\newcommand{\Ya}{Y^{1,2}(\mathbb R^{n+1})}
\newcommand{\Y}{Y^{1,2}(\mathbb R^{n+1}_+)}
\newcommand{\Ym}{Y^{1,2}(\mathbb R^{n+1}_-)}
\newcommand{\Hf}{H^{\frac12}_0(\mathbb R^n)}
\newcommand{\Hfm}{H^{-\frac12}(\mathbb R^n)}

\newcommand*{\dt}[1]{%
	\accentset{\mbox{\Large\bfseries .}}{#1}}


%
%
%

%

%


\allowdisplaybreaks

\parskip=3pt




\geometry{top = 4 cm, bottom=4cm, left=4cm, right=4cm}

\geometry{marginparwidth=85pt}


\plaintrue 

	
	\ifplain
	
	\colorlet{citec}{blue}
	\colorlet{urlc}{blue}
	\colorlet{toc}{red}
	\colorlet{hyperc}{blue}
	\colorlet{bpcolor}{OliveGreen}
	\colorlet{smcolor}{purple}
	\colorlet{jllgcolor}{Red} 
	\colorlet{shcolor}{Orange} 
	\colorlet{sbcolor}{NavyBlue} 
	\colorlet{impcolor}{ProcessBlue}
	\colorlet{eqcolor}{blue}
	\colorlet{lmcolor}{black}
	\colorlet{propcolor}{black}
	\colorlet{thmcolor}{black}
	\colorlet{defcolor}{black}
	\colorlet{rmcolor}{black}
	\colorlet{excolor}{black}
	
	\fi

\begin{document}

\title[Critical Perturbation Theory, Part I]{Critical Perturbations for Second Order Elliptic Operators. Part I: Square function bounds for layer potentials}

	\author{S. Bortz}

\address{Simon Bortz \\
	Department of Mathematics, University of Alabama, Tuscaloosa, AL, 35487, USA}
\email{sbortz@ua.edu} 

\author{S. Hofmann}

\address{Steve Hofmann \\
	Department of Mathematics, University of Missouri, Columbia, MO 65211, USA}
\email{hofmanns@missouri.edu}

\author{J. L. Luna Garcia}

\address{Jos\'e Luis Luna Garcia \\
	Department of Mathematics and Statistics, McMaster University, Hamilton, Ontario, L8S 4K1, Canada}
\email{lunagaj@mcmaster.ca}

\author{S. Mayboroda}

\address{Svitlana Mayboroda \\
	School of Mathematics,	University of Minnesota, Minneapolis, MN 55455, USA} 
\email{svitlana@math.umn.edu}

\author{B. Poggi}

\address{Bruno Poggi \\
	Department of Mathematics,	Universitat Autònoma de Barcelona\\Bellaterra\\ 
	\phantom{h}
	Barcelona, 08193, Catalonia}
\email{bgpoggi.math@gmail.com}

\thanks{This material is based upon work supported by National Science Foundation under Grant No.\ DMS-1440140 while the  authors were in residence at the MSRI in Berkeley, California, during the Spring 2017 semester. S. Bortz and S. Mayboroda were partly supported by NSF INSPIRE Award DMS-1344235. S. Mayboroda was also supported in part by the NSF RAISE-TAQS grant DMS-1839077 and the Simons foundation grant 563916, SM. S. Hofmann was supported by NSF grant DMS-1664047. S. Bortz would like to thank Moritz Egert for some helpful conversations.}

\begin{abstract}  This is the first part of a series of two papers where we  study perturbations of divergence form second order elliptic operators $-\div A \nabla$ by   complex-valued  first and zeroth order terms, whose coefficients lie in critical spaces, via the method of layer potentials. In the present paper, we establish $L^2$ control of the square function via a vector-valued $Tb$ theorem and abstract layer potentials, and use these square function bounds to obtain uniform slice bounds for solutions.  For instance, an operator for which our results are new is the generalized magnetic Schr\"odinger operator $-(\nabla-i{\bf a})A(\nabla-i{\bf a})+V$ when the magnetic potential ${\bf a}$ and the electric potential $V$ are accordingly small in the norm of a scale-invariant Lebesgue space. 

\end{abstract}

\maketitle

\date{\today}

\keywords{}

{
	\hypersetup{linkcolor=toc}
	\tableofcontents
}
\hypersetup{linkcolor=hyperc}

\section{Introduction} 
In this paper, the first in a two-part series, we lay the groundwork for the study of the $L^2$ Dirichlet, Neumann and Regularity problems for critical perturbations of second order divergence form equations by lower order terms. In particular, we produce the natural ($L^2$) square function estimates for (abstract) layer potential operators.  We consider differential operators of the form
\begin{equation}\label{opdef.eq}
\cL :=-\text{div }(A\nabla+B_1)+B_2\cdot\nabla+V
\end{equation}
defined on $\bb R^n\times\bb R = \{(x,t)\}$, $n \ge 3$, where $A=A(x)$ is an $(n+1)\times(n+1)$ matrix of $L^{\infty}$ complex coefficients, defined on $\bb R^n$ (independent of $t$) and satisfying a uniform ellipticity condition:
\begin{equation}\label{elliptic} 
\lambda|\xi|^2\leq~\real\langle A(x)\xi,\xi\rangle:=\real\sum\limits_{i,j=1}^{n+1}A_{ij}(x)\xi_j\overline{\xi_i},\qquad\Vert A\Vert_{L^{\infty}(\bb R^n)}\leq\Lambda,
\end{equation}
for some $\Lambda,\lambda>0$, and for all $\xi\in\bb C^{n+1}, x\in\bb R^n$. The first order complex coefficients $B_1=B_1(x),B_2=B_2(x)\in\big(L^n(\bb R^n)\big)^{n+1}$ (independent of $t$) and the complex potential $V=V(x)\in L^{\frac n2}(\bb R^n)$ (again independent of $t$) are such that
\begin{equation}\nonumber
\max\big\{\Vert B_1\Vert_{L^n(\bb R^n)},~\Vert B_2\Vert_{L^n(\bb R^n)},~\Vert V\Vert_{L^{\frac n2}(\bb R^n)}\big\}\leq \eps_0
\end{equation}
for some $\ep_0$ depending on dimension and the ellipticity of $A$ in order to ensure the accretivity of the form associated to the operator $\cL$ on the space
$$Y^{1,2}(\ree) : = \big\{ u \in L^{2^*}(\ree): \nabla u \in L^2(\ree)\big\}$$
equipped with the norm
$$\lVert u \rVert_{Y^{1,2}(\ree)} := \lVert u \rVert_{L^{2^*}(\ree)} + \lVert \nabla u \rVert_{L^2(\ree)},$$
where $p^* := \tfrac{(n+1)p}{n + 1 -p}$. We interpret solutions of $\cL u=0$ in the weak sense; that is, $u\in W^{1,2}_{\loc}(\ree)$ is a solution of $\cL u=0$ in $\om\subset \ree$ if for every $\varphi \in C_c^\infty(\om)$ it holds that
\begin{equation}\nonumber
\dint_{\ree} \left( (A\nabla u + B_1 u)\cdot\overline{\nabla \varphi} + B_2\cdot \nabla u \overline{\varphi} \right) =  0.
\end{equation}

Examples of operators of the type defined above include the Schr\"odinger operator $-\Delta+V$ with $t-$independent electric potential $V\in L^{\frac n2}(\bb R^n)$ having a small $L^{\frac n2}$ norm, and the  generalized magnetic Schr\"odinger operator $-(\nabla-i{\bf a})A(\nabla-i{\bf a})$, where $A$ is a $t-$independent complex matrix satisfying (\ref{elliptic}), and  the magnetic potential ${\bf a}\in L^n(\bb R^n)^{n+1}$ is $t-$independent and has small $L^n$ norm. We treat the case $n\geq3$ because the Sobolev spaces we encounter are of the form $\dt W^{1,2}(\bb R^n)\cap L^s$ for some $s\geq1$, and in this case, these spaces embed continuously into Lebesgue spaces. This is not the situation when $n=2$, in which case the Sobolev spaces considered embed continuously into $BMO$. If one were to treat the case $n = 2$, it would be natural to assume that $V=0$ and that $B_i$, $i = 1,2$ are divergence-free. Under these additional hypotheses, one can use a compensated compactness argument \cite{CLMS} to obtain the boundedness and invertibility of the form associated to $\cL$ (see \cite{GHN}).

However, there are several considerations in the case $n\geq3$ that set it qualitatively apart from $n=2$. For instance, when $n=2$, all solutions are locally H\"older continuous and this is certainly not the case when $n\geq3$. Indeed, let $u(x) = -\ln|x|$, $x \in \rn$ and build $V(x)$ or $B_1(x)$ so that either $ - \Delta u + Vu = 0$ or $-\Delta u + \div B_1 u = 0$ in the $n$-dimensional ball $B(0,1/2)$. By extending $u$ to be a function on $B(0,1/2) \times \re$ by $u(x,t) = u(x)$, we may see that the analogous equations in $n+1$ dimensions are satisfied by $u(x,t)$, and yet $u(x,t)$ fails to be locally bounded despite the fact that $V^2, B_1 \in L^n(\rn)$. Moreover, by considering $u(x,t)$ on a smaller ball and replacing $V$ or $B_1$ by $V_\epsilon = V \mathbbm{1}_{B(0,\epsilon)}$ or $(B_1)_\epsilon = B_1 \mathbbm{1}_{B(0,\epsilon)}$ respectively, we may ensure that $V^2_\epsilon$ or $(B_1)_\epsilon$ have arbitrarily small $L^n(\rn)$ norm, provided that we choose $\epsilon > 0$ small enough. Therefore, solutions in our perturbative regime fail to be locally bounded and hence fail (miserably) to be locally H\"older continuous. 

The lack of local H\"older continuity (or local boundedness) is one reason our results are not at all a straightforward adaptation of related works. For instance, in \cite{AAAHK} the authors are able to treat the fundamental solution as a Calder\'on-Zygmund-Littlewood-Paley kernel using pointwise estimates on the fundamental solution (and its $t$-derivatives) presented in \cite{HK}. Additionally, although establishing a Caccioppoli inequality (Proposition \ref{classCaccioppoli.prop}) is easy, constants are not necessarily null solutions to our operator and thus this Caccioppoli inequality does not yield the usual ``reverse'' Poincar\'e inequality for solutions. We remind the reader that if there are no lower order terms, the Caccioppoli inequality (becomes a ``reverse'' Poincar\'e inequality and) improves to an $L^p-L^2$ version; more precisely, we have that for each ball $B_r$ and some $p>2$, the estimate
\[
\Big(\dashint_{B_r} |\nabla u|^p \, dx \Big)^{1/p} \lesssim \frac{1}{r}\Big(\dashint_{B_{2r}} |u|^2 \, dx \Big)^{1/2}
\]
holds \cite{Meyers,Geh, Giaquinta}. We do not manage to obtain the above $L^p-L^2$ inequality, but rather a suitable $L^p-L^p$ version (Proposition \ref{Lpcaccop.prop}). The unavailability of these desirable estimates makes it far less clear whether constructing the fundamental solution will be useful for us, and so we do not attempt it. We still endeavor to use the method of layer potentials, whence we appeal to  (and adapt) the abstract constructions of Barton \cite{Bar}, which avoid the use of fundamental solutions entirely. Fundamental solutions have been constructed in other situations with lower order terms in \cite{DHM} and \cite{KimSak}, but they rely on sign conditions.

Our results in this series of papers concern the unique solvability of some classical $L^2$ boundary value problems in the upper half space $\ree_+:= \rn \times \re_+$. To state them, we ought to recall the definition of the ($L^2-$averaged) \emph{non-tangential maximal operator} $N$. Given $x_0\in\bb R^n$, define the cone $\gamma(x_0)=\{(x,t)\in\bb R^{n+1}_+:\,|x-x_0|<t\}$. Then, for $u:\bb R^{n+1}_+\ra\bb C$ we write
\[
Nu(x_0):=\sup_{(x,t)\in\gamma(x_0)}\Big(~\fiint_{|x-y|<t,\, |s-t|<\frac t2}|u(y,s)|^2\,dy\,ds\Big)^{\frac12}.
\]
Given $f:\bb R^n\ra\bb C$, we say that $u\ra f$ \emph{non-tangentially}, or $u\ra f ~n.t.$, if for almost every $x\in\bb R^n$, $\lim_{(y,t)\ra(x,0)}u(y,t)=f(x)$, where the limit runs over $(y,t)\in\gamma(x)$.

We consider the following boundary value problems: The Dirichlet problem
\[
\operatorname{(D2)} \begin{cases}
\cL u = 0\qquad\text{in } \ree_+,\\[1mm]
\lim_{t \to 0}u(\cdot ,t) = f\qquad\text{strongly in } L^2(\rn), \\[1mm]
Nu\in L^2(\bb R^n)\quad\text{and } u\ra f \qquad\text{non-tangentially}, \\[1mm]
\iint_{\ree_+}t |\nabla u(x,t)|^2 \, dx \, dt < \infty,\\[1mm]
\lim_{t \to \infty} u(\cdot, t) = 0\quad\text{ in the sense of distributions},
\end{cases}
\] 
the Neumann problem
\[\operatorname{(N2)} \begin{cases}
\cL u = 0\qquad\text{in } \ree_+, \\[1mm]
\tfrac{\partial u}{\partial \nu^{\cL}}: = -e_{n+1}(A\nabla u + B_1u)(\cdot,0) = g \in L^2(\rn),\footnotemark \\[1mm]
N(\nabla u)\in L^2(\bb R^n),\\[1mm]
\iint_{\ree_+} t | \partial_t \nabla u(x,t)|^2 \, dx \,dt < \infty,\\[1mm]
\lim_{t \to \infty} \nabla u(\cdot, t) = 0 \text{ in the sense of distributions},
\end{cases}
\]
\footnotetext{The boundary data is achieved in the distributional sense; see Section \ref{AbsLPthry.sec}. We elaborate on this in the upcoming paper.} and the Regularity problem 
\[\operatorname{(R2)} \begin{cases}
\cL u = 0 \qquad\text{in } \ree_+,\\[1mm]
u(\cdot, t) \to f\qquad\text{weakly in } Y^{1,2}(\rn),\\[1mm]
N(\nabla u)\in L^2(\bb R^n),\qquad\text{and }u\ra f\quad\text{non-tangentially},\\[1mm]
\iint_{\ree_+} t | \partial_t \nabla u(x,t)|^2 \, dx \,dt < \infty,\\[1mm]
\lim_{t \to \infty} \nabla u(\cdot, t) = 0 \text{ in the sense of distributions}.
\end{cases}\]
 
The idea is to follow a (by now) familiar process for proving $L^2$ existence and uniqueness for these boundary value problems. This process has three steps, which can be (very) roughly summarized as:
\begin{enumerate}
\item Show square function (and/or non-tangential maximal function) bounds for a linear operator defined, perhaps by continuous extension, on $L^2$, where the operator necessarily produces weak solutions to the elliptic equation (for us, this operator is either the single or double layer potential).
\item Show the boundedness and invertibility of the appropriate boundary trace of the operator. 
\item Show that any solution with square function (and/or non-tangential maximal function) bounds is, in fact, the solution produced by the linear operator with appropriate data.
\end{enumerate}
This paper is concerned establishing the square function bounds for abstract layer potential operators, that is, step (1) in the process above. We prove the following. 

\begin{theorem}[Square function bound for the single layer potential]\label{perturbfromlargem.thrm}  Suppose that $\cL_0 = - \div A\nabla$ is a divergence form elliptic operator with t-independent coefficients, and that the matrix $A$ is elliptic. Then, there exists $\eps_0 > 0$, depending on $n$, $\lambda$, and $\Lambda$, such that if $M \in \m M_{n+1}(\rn, \mathbb{C})$, $V \in L^{n/2}(\rn)$ and $B_i \in L^n(\rn)$ are (all) $t$-independent with
\begin{equation}\label{eq.control2}\nonumber
\|M\|_{L^\infty(\rn)} + \Vert B_1\Vert_{L^n(\bb R^n)} + \Vert B_2\Vert_{L^n(\bb R^n)} + \Vert V\Vert_{L^{\frac n2}(\bb R^n)}  < \eps_0,
\end{equation}
then   for each $m \in \N$, we have the estimate
$$\dint_{\ree_+} \big|t^m\partial_t^{m+1}\s_t^{\cL} f(x)\big|^2\,  \frac{dx \, dt}{t} \leq C \| f \|_{L^2(\rn)}^2,$$
where $C$ depends on $m$, $n$, $\lambda$, and $\Lambda$,   and
\[
\cL:= -\div([A + M]\nabla + B_1) + B_2\cdot \nabla  + V.
\]
Under the same hypothesis, the analogous bounds hold for $\cL$ replaced by $\cL^*$, and for $\bb R^{n+1}_+$ replaced by $\bb R^{n+1}_-$.
\end{theorem}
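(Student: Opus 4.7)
The plan is to apply a vector-valued local $Tb$-theorem for square functions to the operator family $\Theta^{\cL}_t f := t\partial_t \s^{\cL}_t f$, using test functions constructed from the unperturbed operator $\cL_0$ and exploiting its known square function bounds. First, one reduces to a base case by iteration: since the coefficients of $\cL$ are $t$-independent, $\partial_t^k \s^{\cL}_t f$ solves $\cL u = 0$ on $\ree_+$ for every $k\geq 0$, so the $L^p$-Caccioppoli inequality of Proposition~\ref{Lpcaccop.prop} applied in the $t$-direction yields
\[
\dint_{\ree_+}\bigl| t^m \partial_t^{m+1}\s^{\cL}_t f\bigr|^2 \frac{dx\,dt}{t} \;\lesssim\; \dint_{\ree_+}\bigl| t^{m-1} \partial_t^{m}\s^{\cL}_t f\bigr|^2 \frac{dx\,dt}{t},
\]
reducing the case of large $m$ to a single base case by induction.

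For that base case, Barton's abstract layer potential framework, adapted in the paper to the accretive form for $\cL$, renders $\s^{\cL}$ well-defined on $L^2(\rn)$ and provides the basic off-diagonal $L^2$-decay and Caccioppoli-type energy bounds without the need for pointwise kernel estimates. The square function bound is then recast as a local $Tb$ hypothesis: it suffices to construct, for each dyadic cube $Q\subset\rn$ and each $\vec v$ in a fixed finite set of vectors, test functions $b_Q^{\vec v}$ satisfying non-degeneracy $\bigl|\dashint_Q b_Q^{\vec v}\,dx - \vec v\bigr|\leq\eta|\vec v|$ for some $\eta<1$, $L^2$-boundedness $\|b_Q^{\vec v}\|_{L^2(Q)}\lesssim|Q|^{1/2}|\vec v|$, and the Carleson testing condition $\dint_{Q\times(0,\ell(Q))} |\Theta^{\cL}_t b_Q^{\vec v}|^2 \tfrac{dx\,dt}{t}\lesssim |Q|\,|\vec v|^2$. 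The $Tb$ theorem must be in its vector-valued form, since the underlying kernel is matrix-valued.

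The test functions $b_Q^{\vec v}$ are to be built from resolvents of the unperturbed operator $\cL_0$ at scale $\ell(Q)$; the non-degeneracy and $L^2$-boundedness then follow from the ellipticity and the assumed solvability properties of $\cL_0$. Since the Carleson testing for $\Theta^{\cL_0}_t$ is an immediate consequence of the known square function bounds for $\s^{\cL_0}$ (available in the Hermitian, block-triangular, and constant-coefficient cases), it remains to transfer the testing from $\Theta^{\cL_0}_t$ to $\Theta^{\cL}_t$. The idea is to write
\[
\Theta^{\cL}_t b_Q^{\vec v} = \Theta^{\cL_0}_t b_Q^{\vec v} + \bigl(\Theta^{\cL}_t - \Theta^{\cL_0}_t\bigr) b_Q^{\vec v},
\]
and express the difference via a resolvent identity in terms of the perturbation $\mathcal{P} := \cL_0 - \cL = -\div(M\nabla\,\cdot)-\div(B_1\,\cdot)+B_2\cdot\nabla+V$ acting on $\s^{\cL} b_Q^{\vec v}$. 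Dualizing and using H\"older together with the Sobolev embedding $Y^{1,2}(\ree)\hookrightarrow L^{2^*}(\ree)$ and $L^p$-Caccioppoli control over $\nabla\s^{\cL} b_Q^{\vec v}$, the Carleson norm of the difference is bounded by $\eps_0$ times a combination of $|Q|^{1/2}|\vec v|$ and the Carleson norm itself.

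The principal obstacle is precisely this last bilinear estimate: because solutions of $\cL$ are not locally bounded (as emphasized in the introduction) and constants are not in the kernel, pointwise kernel methods are unavailable, so every step must pass through $L^p$-Caccioppoli, off-diagonal $L^2$-decay, and the critical integrability of the perturbation data. The smallness of $\eps_0$ is what allows the Carleson-norm terms arising on the right-hand side to be absorbed into the left, closing the argument in Neumann-series fashion. Once the base case is in hand, the analogous statements for $\cL^*$ follow by applying the same argument to the adjoint (which has identical critical-perturbation structure), and the bounds on $\ree_-$ follow by reflection across $\rn$.
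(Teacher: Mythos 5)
Your reduction in $m$ runs in the wrong direction for the $Tb$ machinery to close. You propose a base case at small $m$ (your $\Theta_t^{\cL} = t\partial_t\s_t^{\cL}$, i.e.\ $m=0$), then bootstrap upward via Caccioppoli; that upward bootstrap is indeed Lemma~\ref{lm.travelup}. But the $Tb$ argument in this paper must be run at $m\geq n+10$ (Theorem~\ref{Tbargument.thrm}), precisely because the quasi-orthogonality estimate (Lemma~\ref{QOforSnabBI.lem}, which requires $m\geq n$) and the good off-diagonal decay in the sense of Definition~\ref{goodODdecay.def} (Proposition~\ref{ODforsevop.prop}, which requires $m\geq\tfrac{n+1}{2}+2$) fail at small $m$. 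In the absence of pointwise kernel bounds --- which you correctly note are unavailable here --- the only off-diagonal decay for $t^m\partial_t^m(\s_t\nabla)$ on cut-off data is that of Proposition~\ref{nabSL2LpODdecay}, which scales like $2^{-k(m+1)}$ against a dimensional growth $2^{nk\alpha}$; at $m=0$ or $m=1$ this is nowhere near enough. Consequently, the paper's order of operations is the opposite of yours: establish the bound at large $m$, then \emph{descend} to small $m$ via Proposition~\ref{lm.passtograd}, a separate integration-by-parts argument which crucially uses the averaged slice estimates of Proposition~\ref{L2avgandsliceboundsperturb.prop}. Your plan omits this travel-down step entirely, and without it you cannot reach the $m$'s in the range $1,\ldots,n+9$.

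The second difference is methodological rather than a gap, but worth noting: the paper does not construct test functions from $\cL_0$-resolvents and does not perturb from $\cL_0$ inside the $Tb$ argument. Instead, the test functions are built \emph{intrinsically} from $\cL$ itself, as $F_Q = \cL^{-1}(\Psi_Q^+)-\cL^{-1}(\Psi_Q^-)$ with $\Psi_Q^\pm$ normalized bump functions, and $b_Q$ formed from traces of $F_Q$ and its conormal derivative at $t=0$; the Carleson testing follows from the Green's-formula identity $\widehat\Theta_t\widehat b_Q=|Q|\,t^m\partial_t^{m+1}F_Q^-$ together with Caccioppoli. This is why the paper's square function estimate actually holds for \emph{any} complex elliptic $t$-independent $A$ (see the remark following Theorem~\ref{perturbfromlargem.thrm}, attributing the homogeneous case to Ros\'en) --- the Hermitian/block/constant hypothesis is only invoked for solvability elsewhere in the series, not for the square function bound. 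Your route of transferring the Carleson testing from $\Theta^{\cL_0}_t$ to $\Theta^{\cL}_t$ via a resolvent expansion would yield only the restricted statement and would still need a careful truncation argument to justify the Neumann-series absorption of the Carleson norm (the paper faces an analogous circularity in Proposition~\ref{Carlesonerrorest.prop} and addresses it explicitly via truncation). That route is not obviously fatal, but it is a substantially different argument from the paper's; the missing travel-down step is the genuine gap.
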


We point out that in the previous result, there is no restriction on the matrix $A$, other than that it be $t$-independent
and satisfy the complex ellipticity condition \eqref{elliptic}. In the homogeneous, purely second order case (i.e., the case that
$B_1, B_2,$ and $V$ are all zero), this result is due to Ros\'en \cite{Ro}; an alternative proof, with an extra hypothesis
of De Giorgi/Nash/Moser regularity,
appears in \cite{GH}.

We also obtain a uniform estimate on horizontal slices in terms of the square function.

\begin{theorem}[Uniform control of $\yotn$ norm on each horizontal
slice]\label{thm.suponslices} Suppose that $u\in\Y$ and $\cL u=0$ in $\bb R^{n+1}_+$ in the weak sense. Then for every $\tau>0$,
	\begin{gather}\label{eq.suponslices}
	\| \tr_\tau u\|_{L^{\frac{2n}{n-2}}(\rn)}+ \Vert\Tr_{\tau}\nabla u\Vert_{L^2(\bb R^n)}\lesssim \int_\tau^\infty \int_\rn t|\dno^2 u|^2\, dx dt \leq \||tD_{n+1}^2u\||,
	\end{gather}
	where  the traces exist in the sense of  Lemma \ref{ContSlices.lem}, and $C$ depends on $m$, $n$, $\lambda$, and $\Lambda$, provided that $\max\{\| B_1\|_n, \|B_2\|_n, \|V\|_{\frac n2}\}$ is sufficiently small depending on $m$, $n$, $\lambda$, and $\Lambda$. Under the same hypothesis, the analogous bounds hold for $\cL$ replaced by $\cL^*$, and for $\bb R^{n+1}_+$ replaced by $\bb R^{n+1}_-$.
\end{theorem}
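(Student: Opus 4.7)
The plan is to exploit the $t$-independence of the coefficients so that $\partial_t u$ is itself a weak solution of $\cL v = 0$, and combine this with the global integrability $u\in\Y$, a slice-wise Rellich-type identity, and a Hardy-type integration by parts in the $t$ variable.

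First, since $A$, $B_1$, $B_2$, $V$ depend only on $x$, differentiating the weak formulation of $\cL u = 0$ in $t$ shows that $v := \partial_t u$ satisfies $\cL v = 0$ in $\ree_+$ in the weak sense; in particular, the Caccioppoli inequality and its $L^p$--$L^p$ refinement (Propositions \ref{classCaccioppoli.prop} and \ref{Lpcaccop.prop}) apply to both $u$ and $\partial_t u$. The assumption $u\in\Y$ gives
\[
\int_0^\infty\bigl(\|u(\cdot,s)\|_{L^{2^*}(\rn)}^{2^*}+\|\nabla u(\cdot,s)\|_{L^2(\rn)}^2\bigr)\,ds<\infty,
\]
which by a Fatou/sequence-extraction argument yields decay as $s\to\infty$ of $u$, $\nabla u$ and $\partial_t u$ on horizontal slices in a sense sufficient to justify the weak $L^2(\rn)$ representations
\[
\tr_\tau\partial_t u=-\int_\tau^\infty\tr_s\partial_t^2 u\,ds,\qquad \tr_\tau\nablap u=-\int_\tau^\infty\tr_s\partial_t\nablap u\,ds.
\]

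The main estimate is $\|\tr_\tau\nabla u\|_{L^2(\rn)}^2\lesssim\int_\tau^\infty s\|\tr_s\partial_t^2 u\|_{L^2(\rn)}^2\,ds$. Two ingredients enter. First, a slice-wise Rellich-type identity: testing the weak equation against $\eta(t)\overline{\partial_t u}$ for a smooth $t$-cutoff $\eta$ localized near $\tau$, and using that $\partial_t$ commutes with the ($t$-independent) divergence-form principal part, yields a comparison of the form
\[
\|\tr_s\nablap u\|_{L^2(\rn)}\lesssim\|\tr_s\partial_t u\|_{L^2(\rn)}+\text{error},
\]
where the error comes from the lower-order terms and is absorbed via the critical smallness of $B_1, B_2, V$ together with Sobolev embedding on $\rn$. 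Second, a Hardy-type $t$-weight manipulation: starting from $\|\tr_\tau\partial_t u\|_{L^2}^2=-\int_\tau^\infty\partial_s\|\tr_s\partial_t u\|_{L^2}^2\,ds$ and integrating by parts against the weight $s$ converts this identity into the weighted square-function tail $\int_\tau^\infty s\|\tr_s\partial_t^2 u\|_{L^2}^2\,ds$; the boundary term at infinity vanishes by the decay from Step~1, and the boundary term at $\tau$ is absorbed using the Rellich comparison above.

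Finally, the $L^{2n/(n-2)}$ bound on $\tr_\tau u$ follows from Sobolev embedding $\dot H^1(\rn)\hookrightarrow L^{2n/(n-2)}(\rn)$ (valid since $n\ge 3$) applied to the horizontal slice, and the second inequality in \eqref{eq.suponslices} is just the definition of $\vertiii{tD_{n+1}^2 u}$. \emph{The main obstacle} is the slice-wise Rellich comparison in the presence of the critical lower-order terms: without them it is a standard identity for the $t$-independent principal part, but with them we pick up error contributions that cannot be controlled coercively and must instead be absorbed via the smallness hypothesis on $B_1, B_2, V$ and the sharp Sobolev embeddings on $\rn$, in a way that is uniform in $\tau$ and preserves the required $t$-decay at infinity.
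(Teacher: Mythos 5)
Your proposal takes a genuinely different route, but it has two gaps that I believe are fatal.

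\textbf{The Rellich step is unavailable in this generality.} You propose to compare tangential and normal derivatives on a fixed slice, asserting that "without [the lower order terms] it is a standard identity for the $t$-independent principal part." That is only true when $A$ is (approximately) Hermitian. Testing the weak equation against $\eta(t)\overline{\partial_t u}$ produces $\int \eta\, A\nabla u\cdot\overline{\nabla\partial_t u}$; the real part of this only becomes an exact $t$-derivative $\tfrac12\partial_t(A\nabla u\cdot\overline{\nabla u})$ when $A = A^*$. For a general complex elliptic $A$ — which is the setting of Theorem~\ref{thm.suponslices}, whose constants depend only on $n,\lambda,\Lambda$ — there is no Rellich identity, and the slice inequality $\|\tr_s\nablap u\|_2\lesssim\|\tr_s\partial_t u\|_2 + \text{error}$ is simply not available. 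The paper never invokes such a comparison; indeed one of its selling points is precisely that no De Giorgi--Nash--Moser or Rellich structure is imposed on $A$.

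\textbf{The Hardy-type manipulation does not close.} Writing $\|\tr_\tau\partial_t u\|_2^2 = -\int_\tau^\infty\partial_s\|\tr_s\partial_t u\|_2^2\,ds = -2\Re\int_\tau^\infty\langle\partial_s^2 u,\partial_s u\rangle\,ds$ and applying Cauchy--Schwarz against the weight $s$ forces you to pair the desired quantity $\int_\tau^\infty s\|\partial_s^2 u\|_2^2\,ds$ with $\int_\tau^\infty s^{-1}\|\partial_s u\|_2^2\,ds$, which is exactly the borderline case of Hardy's inequality and carries a logarithmic divergence; the representation $\partial_s u = -\int_s^\infty\partial_\sigma^2 u\,d\sigma$ run through Minkowski and Cauchy--Schwarz picks up $\bigl(\int_s^\infty d\sigma/\sigma\bigr)^{1/2}=\infty$. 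You also propose to absorb the boundary term at $\tau$ via the Rellich comparison, which, even if it were available, would be circular: that boundary term is precisely the quantity on the left. The conclusion is in fact true (for solutions the derivatives decouple exponentially, as one can verify for $-\Delta$ on the Fourier side), but no purely elementary weighted integration by parts in $t$ establishes it; one needs to exploit the equation more deeply.

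What the paper actually does is quite different. After three integrations by parts in $t$ with a cutoff $\psi$, the pairing $\int_{\rn}{\bf h}\cdot\Tr_\tau\nabla u$ is recast (up to errors that vanish as $R\to\infty$) as an integral of $t^2\psi(t+\tau)$ against $\bigl[\int_{\rn}{\bf h}\cdot\Tr_t\nabla D_{n+1}\n T^{\tau}\bigl(D_{n+1}^2\n T^s u\bigr)\bigr]_{s=t}$. The solution $v=D_{n+1}^2\n T^s u$ is then represented by Green's formula (Theorem~\ref{thm.greenformula}) as $-\m D^{\cL,+}(\Tr_0 v) + \m S^{\cL}(\partial_\nu^{\cL,+}v)$, and the adjoint relations \eqref{eq.slgradadj}, \eqref{eq.dlgradadj} convert the pairing into bilinear expressions of the form $\langle\text{(derivative of }(\m S^{L^*}\nabla)\overline{\bf h}\text{)},\text{(derivative of }u\text{)}\rangle$. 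Cauchy--Schwarz then bounds each by a product of two square-function norms: a square function of the adjoint single layer potential of ${\bf h}$ (controlled by $\|{\bf h}\|_2$ via Theorem~\ref{fullsqfnlargem.thrm} and the travel-down Lemma~\ref{lm.l2slicesgrad}) times $\||t\partial_t^2 u\||$ (after the travel-up Lemma~\ref{lm.travelup}). This is the mechanism that replaces both Rellich and Hardy in your sketch; it is also why the argument survives for arbitrary complex elliptic $A$.
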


Now we make the following important remark. Consider the modified Dirichlet, Neumann, and Regularity problems $\operatorname{(D2')}$, $\operatorname{(N2')}$ and $\operatorname{(R2')}$ where the third condition in each problem is deleted; that is,
\begin{multline}\label{eq.modified}
 \operatorname{(D2')}, \operatorname{(N2')} \text{ and } \operatorname{(R2')} \text{ are the problems with \emph{only} square function bounds,}\\ \text{and not   non-tangential maximal function estimates nor n.t. limits.}
\end{multline} 
In this case, our Theorems \ref{perturbfromlargem.thrm} and \ref{thm.suponslices}, when combined with well-known techniques in the literature, give the unique $L^2$ solvability of the modified Dirichlet, Neumann, and Regularity problems within a perturbative regime (see Theorem \ref{main2.thrm}).   Indeed, the boundedness and invertibility of the boundary trace operators\footnote{This invertibility gives the existence of solutions to the boundary value problems.} require  little more than the ``slice bounds'' produced here along with analytic perturbation theory; while the uniqueness of solutions with square function bounds is an exercise\footnote{Especially if one reads and is inspired by \cite{Auscher-Mourgoglou}.} in ``pushing'' a representation formula (Green's identity) to the boundary and exploiting the invertibility of the trace operators.   We  mention that uniqueness, that is, pushing a representation formula to the boundary, can be established under weak hypotheses that are implied by {\it either} square function {\it or} non-tangential maximal function estimates. Therefore, the significant contributions from the forthcoming paper are the non-tangential estimates which will allow us to obtain a stronger result than Theorem \ref{main2.thrm} below. For that reason, in this article we omit further details of the exercise that yields the $L^2$ solvability of the modified problems from the square function estimates and uniform slice estimates (but the full details will be given in the forthcoming paper).



 We summarize our observations in the following result. 
	
\begin{theorem}\label{main2.thrm}
Suppose that $\cL_0 = - \div A\nabla$ is a divergence form operator with complex, bounded, elliptic, $t-$independent coefficients. Suppose further that 
$\pm\tfrac{1}{2} I + K_{\cL} : L^2(\rn) \to L^2(\rn)$, $\pm\tfrac{1}{2} I + \widetilde K_{\cL} : L^2(\rn) \to L^2(\rn)$, and $(S_0)_{\cL} : L^2(\rn) \to Y^{1,2}(\rn)$ are all bounded and invertible for $\cL = \cL_0, \cL_0^*$, where $K_{\cL}, \widetilde K_{\cL}$ and $(S_0)_{\cL}$ are the ``boundary operators'' associated to $\cL$. 
Then there exists $\tilde \eps_0 > 0$ depending on dimension, ellipticity of $A$ and the constants in the operator norms of $\pm\tfrac{1}{2} I + K_{\cL}, \pm\tfrac{1}{2} I + \widetilde K_{\cL}$ and  $(S_0)_{\cL}$, $\cL = \cL_0, \cL_0^*$ and their inverses, such that if 
$M \in \m M_{n+1}(\rn, \mathbb{C})$, $V \in L^{n/2}(\rn)$ and $B_i \in L^n(\rn)$ are (all) $t$-independent with
\[\|M\|_{L^\infty(\rn)} + \Vert B_1\Vert_{L^n(\bb R^n)} + \Vert B_2\Vert_{L^n(\bb R^n)} + \Vert V\Vert_{L^{\frac n2}(\bb R^n)}  < \tilde \eps_0\]
then  the modified problems  $\operatorname{(D2')}$, $\operatorname{(N2')}$ and $\operatorname{(R2')}$ (see (\ref{eq.modified}))  are uniquely solvable for $\cL_1$, where
\[
\cL_1 := -\div([A + M]\nabla + B_1) + B_2\cdot \nabla  + V.
\]
Moreover, the solutions to $\operatorname{(D2')}$, $\operatorname{(N2')}$ and $\operatorname{(R2')}$ for $\cL_1$ can be represented by layer potentials, and have the natural square function bounds in terms of the data. 
\end{theorem}

This paper is organized as follows.  In Section \ref{PDEestimates.sec} we prove some elementary but essential PDE estimates, and  in Section \ref{AbsLPthry.sec}  we develop the notion of abstract layer potentials. Next, we show that for $\eps_0 > 0$ small enough, the single and double layer potentials have square function estimates (Theorems \ref{perturbfromlargem.thrm} and \ref{fullsqfnlargem.thrm}, and Lemma \ref{lm.l2slicesgrad}), which, in turn, give  us  `slice space' estimates for the single and double layer potentials (Theorems \ref{thm.suponslices2a} and \ref{thm.suponslicesd}). In passing we remark that  this analysis already allows us to establish the jump relations (as weak limits in $L^2(\rn)$)
\[ \m D_{\pm} f \to (\mp \tfrac{1}{2}I + K) f
\]
and
\[(\nabla S_t)|_{t = \pm s} f \to \mp \frac{1}{2}\frac{f(x)}{A_{n+1,n+1}} e_{n+1} + \m T f
\]
for $f$ in $L^2$, where $\m D$ and $\s$ are the double and single layer potentials.

Our results in this series may be best thought of as   extensions of the results in \cite{AAAHK} to lower-order terms as well as complex matrices (and not only those arising from perturbations of real symmetric coefficients or constant coefficients), albeit with the important distiction that we do not require DeGiorgi-Nash-Moser \cite{DeG, Na, Mo} estimates; this allows us to consider  any complex elliptic matrix for $A$.  
Let us mention a few applications of our theorems. For the magnetic Schr\"odinger operator $-(\nabla-i{\bf a})^2$ when ${\bf a}\in L^n(\bb R^n)^{n+1}$ is $t-$independent and has small $L^n(\bb R^n)$ norm, we obtain in this paper the first estimate for the square function  and solvability of the modified problems $\operatorname{(D2')}$, $\operatorname{(N2')}$, $\operatorname{(R2')}$  in the unbounded setting of the half-space. In fact, since our methods do not rely on an algebraic structure other than $t-$independence, we have similar novel conclusions for the generalized magnetic Schr\"odinger operators $-(\nabla-i{\bf a})A(\nabla-i{\bf a})$ where $A$ is a  real, symmetric, $t-$independent, elliptic, bounded matrix, and ${\bf a}$ is as above. 
	


We will now review some of the extensive history of boundary value problems for second order divergence form elliptic operators in Lipschitz domains. Unless otherwise stated, the results below will always be results for operators \emph{without} lower order terms. For Laplace  equation ($\cL = - \Delta$) in a Lipschitz domain, solvability of $\operatorname{(D2)}$ was obtained by Dahlberg \cite{D1}, and solvability of $\operatorname{(N2)}$ and $\operatorname{(R2)}$ was shown by Jerison and Kenig \cite{JK2}; these were also shown later by Verchota \cite{V} via the method of layer potentials, using the celebrated result of Coifman, McIntosh and Meyer \cite{CMcM}. For real, symmetric and $t$-independent coefficients, the solvability of $\operatorname{(D2)}$ was shown by Jerison and Kenig \cite{JK1}, and the solvability of $\operatorname{(N2)}$ and $\operatorname{(R2)}$ was shown by Kenig and Pipher \cite{KP1}. The solvability via the method of layer potentials in the case of real, symmetric and $t$-independent coefficients was shown in \cite{AAAHK} (and previously by Mitrea, Mitrea and Taylor \cite{Mit-Mit-Tay} with some  additional smoothness assumptions on the coefficients).

Aside from the results in \cite{AAAHK, Aus-Axel-Hof, Aus-Axel-McI}, which we describe further below, the known results for non-symmetric, $t$-independent matrices can be split into three categories: complex perturbations of constant coefficient matrices, `block' form matrices and real t-independent coefficients. In \cite{FJK}, Fabes, Jerison and Kenig showed solvability of $\operatorname{(D2)}$ for small complex $L^\infty$-perturbations of constant coefficient operators, the solvability of $\operatorname{(D2)}$, $\operatorname{(N2)}$ and $\operatorname{(R2)}$ in this setting was shown via layer potentials in \cite{AAAHK} (see \cite[Theorem 1.15]{AAAHK}). 

Solvability of $L^2$ boundary value problems in the case of \emph{all} `block' form matrices 
\begin{equation}\nonumber
A(x)=\left(\begin{array}{ccc|c}&&&0\\&B(x)&&\vdots\\&&&0\\ \hline 0&\cdots&0&1\end{array}\right),
\end{equation}
where $B(x)$ is an $n \times n$ matrix is, in the case of $\operatorname{(D2)}$, a consequence of the semigroup theory and, in the case of $\operatorname{(N2)}$ and $\operatorname{(R2)}$, a consequence of the solution to the Kato problem \cite{Katoproblem} on $\rn$. In particular, if we let $J: = - \div_x B(x) \nabla_x$, then one obtains the solvability of $\operatorname{(R2)}$ by solving the Kato problem for $J$, and one obtains the solvability of $\operatorname{(N2)}$ from solving the Kato problem for $adj(J)$. In fact, for $\operatorname{(N2)}$, one may equivalently show that the Riesz transforms associated to $J$, $\nabla J^{-1/2}$, are $L^2$ bounded, which can, in turn, be interpreted as a statement about the boundedness of the single layer potential from $L^2$ into $\dt W^{1,2}$. These results were obtained   in \cite{CMcM} ($n + 1 = 2$) and in  \cite{Katoproblem} ($n + 1 \ge 3$); see also \cite{HMc, AHLT, HLMc}.

In the case of real, $t$-independent coefficients, the results available are of the 
form $(\text{D}p)$ (for some $p <\infty$ sufficiently large), $(\text{N}p)$ and $(\text{R}p)$ (for some $p >1$, typically dual to the Dirichlet exponent), where $(\text{D}p)$, $(\text{N}p)$, and $(\text{R}p)$ are $L^p$ analogues of $(\text{D}2)$, $(\text{N}2)$, and $(\text{R}2)$ respectively. This is the best one can hope for by a counter-example in \cite{KKoPT}  (but see also \cite{Ax}), where the authors show that for  any fixed $p<\infty$, there exists a real (non-symmetric) coefficient matrix $A$, such that $(\text{D}p)$ fails to be solvable for the associated divergence form elliptic operator. In \cite{KKoPT}, the authors show that for all real $t$-independent coefficients with $n  + 1 = 2$, the problem  $(\text{D}p)$ is solvable  for some $p <\infty$.
In the same setting, Kenig and Rule \cite{KR} showed the solvability of $(\text{N}q)$ and $(\text{R}q)$ for $q$ the H\"older conjugate of the exponent $p$ from the aforementioned result \cite{KKoPT}. More recently, Barton \cite{bartonmem} perturbed these solvability results to deduce that $(\text{D}p)$, $(\text{N}q)$, and $(\text{R}q)$ remain solvable in the half-plane when the matrix consists of  almost-real coefficients; and the methods of \cite{KKoPT} were extended by Hofmann, Kenig, Mayboroda and Pipher \cite{HKMP1,HKMP2} to show the solvability of $(\text{D}p)$ for some $p <\infty$
for all real $t$-independent coefficients when $n  + 1 \ge 3$ and solvability of $(\text{R}q)$, again with $q$ dual to $p$.

As mentioned above, perhaps the closest results to the current exposition are \cite{AAAHK, Aus-Axel-Hof, Aus-Axel-McI}, where $L^2$ solvability of boundary value problems was explored for full complex matrices, either by the method of layer potentials \cite{AAAHK} or the ``first-order approach'' \cite{Aus-Axel-Hof, Aus-Axel-McI}  (which relies on the functional calculus of Dirac operators associated to divergence form elliptic operators). In \cite{AAAHK}, the authors show solvability of $(\text{D}2), (\text{N}2)$ and $(\text{R}2)$  via the method of layer potentials for $L^\infty$ perturbations of real, symmetric $t$-independent coefficients, and $L^\infty$ perturbations of constant coefficients. In \cite{Aus-Axel-Hof}, the authors show solvability of $(\text{D}2), (\text{N}2)$ and $(\text{R}2)$ in the same cases as \cite{AAAHK}, as well as perturbations from block form matrices. In \cite{Aus-Axel-McI}, the authors treat the previous cases of \cite{Aus-Axel-Hof} as well as perturbations of Hermitian coefficient matrices. 

We mention also the work of Gesztesy, Nichols, and the second author of this paper \cite{GHN}, where the authors studied the $n$-dimensional Kato problem related to our perturbations. The works \cite{AAAHK, Aus-Axel-Hof, Aus-Axel-McI, GHN} as 
well as \cite{Ro, GH, Hof-May-Mour} served as an indication that the present results should hold. The techniques 
from the solution to the (original) Kato problem are integral to our analysis. In particular, we adapt the methods 
from \cite{AAAHK, GH, Hof-May-Mour} to prove our square function estimates for the single layer potential (Theorems \ref{perturbfromlargem.thrm} and \ref{fullsqfnlargem.thrm}) via the generalized $Tb$ theory developed in the resolution of the Kato problem \cite{Katoproblem} and since refined in \cite{GH}. 

Let us remark on the assumption of $t$-independence. Given a second order divergence form elliptic operator (no lower order terms), define the transverse modulus of continuity $\hm(\tau): (0, \infty) \to [0,\infty]$ as 
\[\hm(\tau) : = \sup_{x \in \rn}\sup_{t \in (0, \tau)} |A(x,t) - A(x,0)|.\]
In \cite{CFK}, Caffarelli, Fabes and Kenig showed that given {\it any} function $\hm(\tau): (0, \infty) \to [0,\infty]$ such that $\int_0^1 [\hm(\tau)]^2\, d\tau/\tau = \infty$, there exists a real, symmetric elliptic matrix with  transverse modulus of continuity $\hm(\tau)$ such that the corresponding elliptic measure and $n$-dimensional Lebesgue measure (on $\rn \times \{0\}$) are mutually singular, and hence $(\text{D}2)$ (or even $(\text{D}p)$ for any $p$) fails to be solvable.  On the other hand, in \cite{FJK}, the authors show that $(\text{D}2)$ is uniquely solvable provided that $\int_0^1 [\hm(\tau)]^2\, d\tau/\tau < \infty$ and that $A(x,0)$ is sufficiently close to a constant matrix. Later, refinements of this condition were introduced and investigated; in these refinements one measures some discrepancy on Whitney boxes quantified by a Carleson measure condition; see, for instance, \cite{AA, FKP, D2, DPP, DPR, DP, Hof-May-Mour, KP2, KP3, fp}. In light of these constructions, it is natural to consider $t-$independent coefficients as an entry point to our investigations.

We end this review of the history of the work on the homogeneous (i.e., no lower order terms) operators by noting that the \emph{a priori} connections between the different problems $(\text{D}_p)$, $(\text{N}_{p'})$ and $(\text{R}_{p'})$ have also been of great interest. In some instances (say, $A$ is real, $t$-independent), one has that the solvability of $(\text{R}_{p})$ for $L$ implies solvability of $(\text{D}_{p'})$ for the adjoint operator $L^*$, and vice versa (where $p'$ is the H\"older conjugate to $p$) (see \cite{kbook}), but it was found in \cite{m} that such implications need not hold in the general setting of complex coefficients, even for $t$-independent matrices. We refer to \cite{m} for a more systematic review of these connections.

The literature in the setting with lower order terms present (that is, not all of $b_1,b_2, V$ are identically $0$) is much more sparse. In \cite{hoflew}, \emph{parabolic} operators with singular drift terms $b_2$ were studied, and their results would later be applied toward $(\text{D}_p)$ for elliptic operators with singular drift terms $b_2$ in \cite{KP3} and \cite{DPP}. When $A\equiv I$, $b_1\equiv b_2\equiv0$ and $V>0$ satisfies certain conditions, Shen proved the solvability of $(\text{N}_p)$ on Lipschitz domains in \cite{shenn}. His results were later extended in \cite{taor,taowang} to $(\text{R}_p)$ and under weaker assumptions on the potential $V$. It is a critical element of the proof that the leading term of $L\equiv-\Delta+V$ is the Laplacian, and the question of $(\text{N}_p)-$solvability for Schr\"odinger operators on rough domains in the case that $A\neq I$ remain open, even under generous assumptions on $V$.

More recently, the problems $(\text{D}_2)$ and $(\text{R}_2)$ for equations with lower order terms have been considered in \cite{sak} in bounded Lipschitz domains, under some continuity and sign assumptions on the coefficients. Solvability results for the variational Dirichlet problem of equations with lower order terms on unbounded domains have been obtained in \cite{mourg}.    Finally, we bring attention to \cite{mt}, where, through the development of a holomorphic functional calculus, the authors proved the $L^2$ well-posedness of the Dirichlet, Neumann, and regularity problems in the $t-$independent half-space setting for the Schr\"odinger operator $-\dv A\nabla+V$ with Hermitian $A$ and potential $V$ in the reverse H\"older class $RH^{\frac n2}$.

\section{Preliminaries}

As stated above, our standing assumption will be that $n \ge 3$, and the ambient space will always be $\ree = \{x,t: x \in \rn,\, t \in \mathbb{R}\}$. 
We employ the following standard notation:

\begin{list}{$\bullet$}{\leftmargin=0.4cm  \itemsep=0.2cm}
\item We will use lower-case $x, y, z$ to denote points in $\rn$ and lower-case $t,s, \tau$ to denote real numbers. By convention, $x=(x_1,\ldots,x_n)$, and $x_{n+1}=t$. We will use capital $X,Y,Z$ to denote points in $\ree$. The symbols $e_1,\ldots, e_{n+1}$ are reserved for the standard basis vectors in $\bb R^{n+1}$.

\item We will often be breaking up vectors into their parallel and perpendicular parts.
For an $(n + 1)$-dimensional vector $\vec{V} = (V_1, \dots, V_n, V_{n+1})$, we define its `horizontal' or `parallel' component as
$$V_\parallel : = (V_1, \dots, V_n),$$
and its `vertical' or `transverse' component as $V_\perp = V_{n+1}$. Similarly, we label the horizontal component of the $(n+1)$-dimensional gradient operator as
$$\nabla_\parallel := \nabla_x := (\partial_{x_1}, \dots, \partial_{x_n}),$$
and the `vertical' component as $D_{n+1}$ or $\nabla_{\perp}$.

\item Given the $(n+1)\times(n+1)$ complex-valued matrix $A$, for each $i,j=1,\ldots,n+1$, we denote by $A_{ij}$ the $ij-$th entry of $A$. We denote by  $\tilde A$ the $(n+1)\times n$ submatrix of $A$ consisting of the first $n$ columns of $A$. We define $\vec{A}_{i,\cdot}$ as the $(n+1)-$dimensional row vector made up of the $i-$th row of $A$; similarly we let $\vec{A}_{\cdot,j}$ be the $(n+1)-$dimensional column vector made up of the $j-$th row of $A$.

\item We set $\ree_+ := \rn \times (0, +\infty)$ and $\partial \ree_+ := \rn \times \{0\}$. We define $\ree_-$ similarly and often we write $\rn$ in place of $\partial \ree_+$ when confusion may not arise. For $t\in\bb R$, we denote $\bb R^{n+1}_t=\bb R^{n+1}_{+,t}:=\bb R^n\times(t,\infty)$, and $\bb R^{n+1}_{-,t}:=\bb R^n\times(-\infty,t)$.

\item The letter $Q$ will always denote a cube in $\rn$. By $\ell(Q)$ and $x_Q$ we denote the side length and center of $Q$, respectively. We write $Q(x,r)$ to denote the cube with center $x$ and sides of length $r$, parallel to the coordinate axes.

\item Given a (closed) $n$-dimensional cube $Q = Q(x,r)$, its concentric dilate by a factor of $\kappa >0$ will be denoted $\kappa Q := Q(x,\kappa r)$.   Similar dilations are defined for cubes in $\ree$ as well as (open) balls in $\rn$ and $\ree$.

\item For $a,b\in [-\infty,\infty]$, we set $\Sigma_a^b:=\big\{ X=(x,t)\in \ree: t\in (a,b)\big\}$.

\item Given a Borel set $E$ and Borel measure $\mu$, for any $\mu|_E$-measurable function $f$ we define the $\mu$-average of $f$ over $E$ as
\[\dashint_E f \, d\mu := \frac{1}{\mu(E)}\int_E f \, d\mu.\]

\item For a Borel set $E\subset \ree$, we let $\bbm{1}_E$ denote the usual
indicator function of $E$; that is, $\bbm{1}_E(x) = 1$ if $x\in E$, and $\bbm{1}_E(x)= 0$ if $x\notin E$.

\item For a Banach space $X$, we let $\B(X)$ denote the space of bounded linear operators on $X$. Simiarly, if $X$ and $Y$ are Banach spaces, we denote by $\B(X,Y)$ the space of bounded linear operators $X\ra Y$.

\end{list}

We will work with several function spaces; let us briefly describe them. For the rest of the paper, we assume that the reader is familiar with the basics of the theory of distributions and Fourier Transform   and the basics of the theory of Sobolev spaces (see \cite{L}). We delegate some of the basic definitions and results to these and other introductory texts. 

Let $\Omega$ be an open set in $\bb R^k$ for some $k\in\bb N$. For any $m\in\bb N$ and any $p\in[1,\infty)$, the space $L^p(\Omega)^m=L^p(\Omega,\bb C^m)$ consists of the complex-valued  $p-$th integrable $m-$dimensional vector functions over $\Omega$. We equip $L^p(\Omega,\bb C^m)$ with the norm
\[
\Vert\vec{f}\Vert_{L^p(\Omega,\bb C^m)}=\Big(\sum_{i=1}^m\int_{\Omega}|f_i|^p\Big)^{\frac1p},\qquad \vec f=(f_1,\ldots,f_m).
\]
For simplicity of notation, we often write $\Vert\vec f\Vert_p=\Vert\vec f\Vert_{L^p(\Omega)}=\Vert\vec f\Vert_{L^p(\Omega,\bb C^m)}$ when the domain $\Omega$ and the dimension of the vector function $\vec f$ are clear from the context (most often, when $\Omega$ is the ambient space, which for us means either $\Omega=\bb R^n$ or $\Omega=\bb R^{n+1}$).

The space $C_c^{\infty}(\Omega)$ consists of all compactly supported smooth complex-valued functions in $\Omega$. As usual, we denote $\n D=C_c^{\infty}(\bb R^{n+1})$, and we let $\n D'=\n D^*$ be the space of distributions on $\bb R^{n+1}$. The space $\n S$ consists of the Schwartz functions on $\bb R^{n+1}$, and $\n S'=\n S^*$ is the space of tempered distributions on $\bb R^{n+1}$.

For $p\in[1,\infty)$, we denote by $W^{1,p}(\Omega)$ the usual Sobolev space of functions in $L^p(\Omega)$ whose weak gradients exist and lie in $(L^p(\Omega))^{n+1}$. We endow this space with the norm
\[
\Vert u\Vert_{W^{1,p}(\Omega)}:=\Vert u\Vert_{L^p(\Omega)} +\Vert\nabla u\Vert_{L^p(\Omega)}.
\]
We define $W_0^{1,p}(\Omega)$ as the completion of $C_c^{\infty}(\Omega)$ in the above norm. We shall have occasion to discuss the homogeneous Sobolev spaces as well: by $\dt W^{1,p}(\Omega)$ we denote the space of functions in $L^1_{\loc}(\Omega)$ whose weak gradients exist and lie in $L^p(\Omega)$. We equip this space with the seminorm
\[
|u|_{\dt W^{1,p}(\Omega)}:=\Vert\nabla u\Vert_{L^p(\Omega)},
\]
and point out that  $\dt W^{1,p}(\Omega)$ coincides with the completion of the quotient space \\ $C^{\infty}(\Omega)/\bb C$ in the $|\cdot|_{\dt W^{1,p}(\Omega)}$ (quotient) norm. For $p\in(1,n+1)$ and $\om \subset \ree$ an open set, we define the space $Y^{1,p}(\om)$ as
$$Y^{1,p}(\om) : = \Big\{ u \in   L^{\frac{(n+1)p}{n+1-p}}(\om): \nabla u \in L^p(\om)\Big\}.$$
Write $p^* := \tfrac{(n+1)p}{n + 1 -p}$. We equip this space with the norm
$$\lVert u \rVert_{Y^{1,p}(\om)} := \lVert u \rVert_{L^{p^*}(\om)} + \lVert \nabla u \rVert_{L^p(\om)}.$$
We define $Y_0^{1,p}(\om)$ as the completion of $C_c^\infty(\om)$ in this norm. By virtue of the Sobolev embedding, when $p\in(1,n+1)$ we have that $Y_0^{1,p}(\Omega)$ coincides with the completion of $C_c^{\infty}(\om)$ in the $\dt W^{1,p}(\Omega)$ seminorm. Moreover, we have that $Y^{1,p}_0(\bb R^{n+1})=Y^{1,p}(\bb R^{n+1})$.

The $Y^{1,2}$ spaces exhibit the following useful property.

\begin{lemma}[Integrability up to a constant of a function with square integrable gradient on a half-space]\label{malziem.lem}
	Suppose that $u\in L^1_{\loc}(\sab)$ for some $a<b$, $a,b\in [-\infty,\infty]$, either $a=-\infty$ or $b=+\infty$, and that the distributional gradient satisfies $\nabla u\in L^2(\sab)$. Then there exists $c\in \CC$ such that $u-c\in \yota$.
\end{lemma}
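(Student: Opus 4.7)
The plan is to reduce to the whole-space case by even reflection across the finite boundary of $\Sigma_a^b$, and then establish the claim on $\bb R^{n+1}$ using Poincar\'e–Sobolev on an exhausting family of balls. Since one of $a$, $b$ is finite, say $a\in \RR$ and $b=+\infty$ (the case $b$ finite is symmetric, and the case $a=-\infty$, $b=+\infty$ is itself the whole-space case), I would extend $u$ to $\tilde u$ on all of $\ree$ by even reflection across $\{t=a\}$, namely $\tilde u(x,t):=u(x,t)$ for $t>a$ and $\tilde u(x,t):=u(x,2a-t)$ for $t\le a$. A routine distributional computation shows $\tilde u\in L^1_{\loc}(\ree)$ with $\nabla \tilde u\in L^2(\ree)$ and $\|\nabla \tilde u\|_{L^2(\ree)}\le \sqrt{2}\,\|\nabla u\|_{L^2(\sab)}$, so nothing is lost in passing to $\tilde u$.

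Thus it suffices to prove the analogous claim on $\ree$: if $w\in L^1_{\loc}(\ree)$ with $\nabla w\in L^2(\ree)$, then there is $c\in\CC$ with $w-c\in L^{2^*}(\ree)$. Setting $w_R:=\dashint_{B_R}w$, the classical Poincar\'e–Sobolev inequality on each Euclidean ball $B_R\subset \ree$ yields
$$\|w-w_R\|_{L^{2^*}(B_R)}\;\lesssim\; \|\nabla w\|_{L^2(B_R)}\;\le\;\|\nabla w\|_{L^2(\ree)}.$$
Taking averages and applying Jensen's inequality (using $n+1\ge 3$, so $2^*<\infty$), one estimates $|w_R-w_{R'}|\lesssim |B_R|^{-1/2^*}\|\nabla w\|_{L^2}$ whenever $R'\ge R$. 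Hence $\{w_R\}$ is Cauchy with some limit $c\in\CC$, and the same inequality controls $|B_R|^{1/2^*}|w_R-c|$, so by the triangle inequality
$$\|w-c\|_{L^{2^*}(B_R)}\;\lesssim\;\|\nabla w\|_{L^2(\ree)},$$
uniformly in $R$. Monotone convergence then gives $w-c\in L^{2^*}(\ree)$. Restricting back to $\sab$ produces the desired constant, and since $\nabla(u-c)=\nabla u\in L^2(\sab)$, we conclude $u-c\in \yota$.

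The only genuinely non-trivial step is the Cauchy argument for the averages $w_R$; the key point is that the exponent $-1/2^*$ on $|B_R|$ is strictly negative precisely because $n+1\ge 3$, and it is this dimensional restriction that makes the construction of the constant $c$ succeed. The reflection step is purely technical bookkeeping, and the final identification of $u-c$ as an element of $\yota$ is immediate from the definition of the norm, since subtracting a constant does not alter the gradient.
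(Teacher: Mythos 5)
Your proof is correct. The paper omits the argument, citing Theorem 1.78 of Mal\'y--Ziemer, whose proof is precisely the Poincar\'e--Sobolev averaging argument on an exhausting family of balls that you reproduce for $\ree$; your preliminary even-reflection step is the natural way to reduce the half-space case to the whole-space case treated there, so your route matches the intended one.

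One small remark on the writeup: the Cauchy estimate $|w_R-w_{R'}|\lesssim |B_R|^{-1/2^*}\|\nabla w\|_{L^2}$ for $R'\ge R$ does not by itself say the sequence is Cauchy, since the bound is in terms of the \emph{smaller} radius. What one actually uses is that for any $R_0\le R\le R'$ this bound is $\le |B_{R_0}|^{-1/2^*}\|\nabla w\|_{L^2}$, which tends to $0$ as $R_0\to\infty$; stating that intermediate observation explicitly would make the argument airtight. Also, ``Jensen'' here is really H\"older (or Jensen applied to $s\mapsto |s|^{2^*}$), so it is worth naming the inequality you actually invoke. Finally, the passage from $W^{1,1}_{\loc}$ on the half-space to $W^{1,1}_{\loc}$ of the even reflection across $\{t=a\}$ relies on the matching of one-sided traces on the hyperplane; it is standard, but since the statement only assumes $u\in L^1_{\loc}$ with $\nabla u\in L^2$, a one-line reference to the fact that $u\in W^{1,1}_{\loc}(\sab)$ and even reflection preserves $W^{1,1}_{\loc}$ would close that gap cleanly.
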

The proof is very similar to that of Theorem 1.78 in \cite{MZ}, thus we omit it.

 In our paper, whenever we write $u(t)$ for $t\in\bb R$, we mean
	\begin{equation}\label{eq.ut}
		u(t)=u(\cdot,t);
	\end{equation}
	thus $u(t)$ is a measurable function on $\bb R^n$.  Let us present a fact regarding the regularity of functions in $Y^{1,2}(\ree)$ when seen as single-variable vector-valued maps. The proof is omitted as it is straightforward.

\begin{lemma}[Local H\"older continuity in the transversal direction]\label{ContSlices.lem}
	Suppose that $u\in Y^{1,2}(\Sigma_a^b)$ for some $a < b$. Then it holds that $u\in C_{\loc}^\alpha ((a,b); L^{2^*}(\rn))$
	for some exponent $\alpha>0$  (see \eqref{eq.ut}). Moreover, if $\partial_t \nabla u\in L^2(\Sigma_a^b)$, then we also have that $\nabla u\in C_{\loc}^{\beta} ((a,b); L^2(\rn))$ for some $\beta>0$.
\end{lemma}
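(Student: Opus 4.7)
The plan is to combine the Fundamental Theorem of Calculus (FTC) in the transversal direction with Cauchy--Schwarz, the Sobolev embedding on horizontal slices, and interpolation of $L^p$-norms.

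First I handle the second (easier) claim, under the additional assumption $\partial_t \nabla u \in L^2(\Sigma_a^b)$. For $t_1 < t_2$ in a compact subinterval $[c,d] \subset (a,b)$, the distributional identity
\[\nabla u(\cdot, t_2) - \nabla u(\cdot, t_1) = \int_{t_1}^{t_2} \partial_s \nabla u(\cdot, s) \, ds,\]
combined with Minkowski's integral inequality and Cauchy--Schwarz in $s$, yields
\[\|\nabla u(\cdot, t_2) - \nabla u(\cdot, t_1)\|_{L^2(\rn)} \le |t_2 - t_1|^{1/2}\, \|\partial_s \nabla u\|_{L^2(\Sigma_a^b)},\]
so $\beta = 1/2$ works.

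For the first claim, I proceed in three steps. First, since $\partial_s u \in L^2(\Sigma_a^b)$, Fubini and a one-dimensional FTC/Cauchy--Schwarz argument show that for a.e.\ $x \in \rn$ the map $s \mapsto u(x, s)$ admits a locally H\"older-$\tfrac12$ representative satisfying $|u(x, t_2) - u(x, t_1)|^2 \le |t_2 - t_1| \int_{t_1}^{t_2} |\partial_s u(x,s)|^2 \, ds$; integration in $x$ gives
\[\|u(\cdot, t_2) - u(\cdot, t_1)\|_{L^2(\rn)} \le |t_2 - t_1|^{1/2} \|\partial_s u\|_{L^2(\Sigma_a^b)}.\]
Observe that the difference lies in $L^2(\rn)$ even though the individual slices need not. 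Second, for a.e.\ $t$ one has $\nabla_x u(\cdot, t) \in L^2(\rn)$ by Fubini, and since the difference is in $L^2(\rn)$ with spatial gradient in $L^2(\rn)$, the Sobolev inequality on $\rn$ (the additive constant being forced to zero by the $L^2$ membership on an infinite-measure space) gives, for a.e.\ $t_1, t_2$,
\[\|u(\cdot, t_2) - u(\cdot, t_1)\|_{L^{2n/(n-2)}(\rn)} \le C\bigl(\|\nabla_x u(\cdot, t_1)\|_{L^2(\rn)} + \|\nabla_x u(\cdot, t_2)\|_{L^2(\rn)}\bigr).\]
Third, the paper's exponent $2^* = 2(n+1)/(n-1)$ satisfies $2 < 2^* < 2n/(n-2)$, and the log-convexity of $L^p$-norms with parameter $\theta = 1/(n+1)$ interpolates the two preceding bounds into
\[\|u(\cdot, t_2) - u(\cdot, t_1)\|_{L^{2^*}(\rn)} \lesssim |t_2 - t_1|^{1/(2(n+1))} \bigl(\|\nabla_x u(\cdot, t_1)\|_{L^2} + \|\nabla_x u(\cdot, t_2)\|_{L^2}\bigr)^{n/(n+1)}.\]

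The hard part is that the slice-norm factor on the right is a priori only $L^2$ in $t$, not uniformly bounded on $[c,d]$, so the display above by itself controls Cauchy differences only between ``good'' $t$-values. To upgrade to H\"older continuity on the whole compact subinterval, I would pass to the Lebesgue-averaged representative $\bar u(\cdot, t) := \lim_{\eta \to 0^+} \dashint_{t-\eta}^{t+\eta} u(\cdot, s) \, ds$, verify the estimate above for $\bar u$ between any two Lebesgue points of $t \mapsto \|\nabla_x u(\cdot, t)\|_{L^2(\rn)}$ (a set of full measure in $[c,d]$), and then extend by uniform continuity to all of $[c,d]$. The resulting H\"older exponent is $\alpha = 1/(2(n+1)) > 0$.
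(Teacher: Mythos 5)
Your argument for the second conclusion is correct: the Bochner FTC together with Cauchy--Schwarz gives $\|\nabla u(\cdot,t_2)-\nabla u(\cdot,t_1)\|_{L^2(\rn)}\le |t_2-t_1|^{1/2}\|\partial_t\nabla u\|_{L^2(\Sigma_a^b)}$, and $\beta=1/2$ works. For the first conclusion, however, the gap you flag yourself is genuine and the proposed fix does not close it. Passing to the Lebesgue-averaged representative and restricting to Lebesgue points of $t\mapsto\|\nabla_x u(\cdot,t)\|_{L^2(\rn)}$ guarantees only that this slice norm is \emph{finite} at those points, not uniformly bounded over a compact subinterval, so your interpolated estimate does not produce a uniform H\"older modulus and there is nothing to extend by continuity. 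In fact the first conclusion fails under the hypothesis $u\in Y^{1,2}(\Sigma_a^b)$ alone: take $u=\sum_k a_k\,\psi(\mu_k(\cdot-y_k))\,\phi((\cdot-t_k)/\delta_k)$ with fixed $\psi\in C_c^\infty(\rn)$, $\phi\in C_c^\infty(-1/2,1/2)$, $\phi(0)=1$, pairwise disjoint $y$- and $t$-supports, and $t_k\to t_*\in(a,b)$; the three sums controlling $\|u\|_{L^{2^*}(\Sigma_a^b)}$, $\|\nabla_x u\|_{L^2(\Sigma_a^b)}$, $\|\partial_t u\|_{L^2(\Sigma_a^b)}$ leave enough room to make $u\in Y^{1,2}(\Sigma_a^b)$ while the slice norm $\|u(\cdot,t_k)\|_{L^{2^*}(\rn)}=a_k\mu_k^{-n/2^*}\|\psi\|_{L^{2^*}(\rn)}$ stays bounded away from zero as $\delta_k\to 0$ (for $n=3$: $a_k=k^{9/4}$, $\mu_k=k^{3}$, $\delta_k=k^{-3}$ works), so $u$ is not even continuous into $L^{2^*}(\rn)$ near $t_*$.

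What saves the day is that everywhere the paper actually invokes this lemma (the Remark immediately following it, which uses the continuity of both $u$ and $\nabla u$, and Remark \ref{slicescontinuitysolutionsrmk.rmk}, where Lemma \ref{Le1.5.lem} supplies the extra regularity for solutions), the additional hypothesis $\partial_t\nabla u\in L^2(\Sigma_a^b)$ is also in force, and the first conclusion should be read as carrying it. Under that hypothesis your interpolation closes, and in fact gives $\alpha=1/2$ rather than $1/(2(n+1))$, provided you apply the Sobolev inequality to the \emph{difference} instead of splitting with the triangle inequality: by the same Bochner computation used for $\nabla u$,
\[
\|u(\cdot,t_2)-u(\cdot,t_1)\|_{L^{2n/(n-2)}(\rn)}\lesssim\|\nabla_x u(\cdot,t_2)-\nabla_x u(\cdot,t_1)\|_{L^2(\rn)}\le|t_2-t_1|^{1/2}\|\partial_t\nabla u\|_{L^2(\Sigma_a^b)},
\]
and combining this with $\|u(\cdot,t_2)-u(\cdot,t_1)\|_{L^2(\rn)}\le|t_2-t_1|^{1/2}\|\partial_t u\|_{L^2(\Sigma_a^b)}$ through the log-convexity step (with $\theta=1/(n+1)$) yields $\|u(\cdot,t_2)-u(\cdot,t_1)\|_{L^{2^*}(\rn)}\lesssim|t_2-t_1|^{1/2}$.
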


\begin{remark}
Note that the functions above are representatives of $u(t)$ and $\nabla u(t)$, but that these retain the same properties as their smooth counterparts when acting on functions defined on the slice $\{ x_{n+1}=t\}$. More precisely, for any $\vec\varphi\in C_c^\infty(\rn;\CC^n)$ and any $t\in (a,b)$, we have the identity
\begin{equation*}
\int_\rn u(x,t) \divp\vec\varphi(x)\, dx = - \int_\rn \nablap u(x,t) \cdot\vec\varphi(x)\, dx.
\end{equation*}
The above identity is already true for a.e. $t\in (a,b)$, and is seen to be true for arbitrary $t\in(a,b)$ by the continuity of $u$ and $\nabla u$.
\end{remark}

Analogously, we introduce $Y^{1,2}(\bb R^n)$ as
$$Y^{1,2}(\bb R^n) : = \Big\{ u \in L^{\frac{2n}{n-2}}(\bb R^n): \nabla u \in L^2(\bb R^n)\Big\},$$
and equip it with the norm
$$\lVert u \rVert_{Y^{1,2}(\bb R^n)} := \lVert u \rVert_{L^{\frac{2n}{n-2}}(\bb R^n)} + \lVert \nabla u \rVert_{L^2(\bb R^n)}.$$
Note carefully that in our convention, $2^*=\frac{2(n+1)}{n-1}\neq\frac{2n}{n-2}$.

Some fractional Sobolev spaces will be useful for us when discussing trace operators. Let $\m F:L^2(\bb R^n)\ra L^2(\bb R^n)$ be the Fourier transform. Throughout this paper, we shall also denote $\hat u:=\m Fu$.   We write
\[
H^{\frac12}(\bb R^n)=\Big\{u\in L^2(\bb R^n)\,:\,\int_{\bb R^n}\big(1+|\xi|\big)|\hat u(\xi)|^2\,d\xi<+\infty\Big\}.
\]
The   space $\dt H^{\frac12}(\bb R^n)$ consists of those   tempered distributions $u\in\n S'$ whose Fourier transform $\hat u\in\n S'$ is a measurable function verifying that   $\int_{\bb R^n}|\xi||\hat u(\xi)|^2\,d\xi <+\infty$. Naturally, this space comes equipped with the seminorm $|u|_{\dt H^{\frac12}(\bb R^n)}=\int_{\bb R^n}|\xi||\hat u(\xi)|^2\,d\xi$. We define the space $\Hf=\dt H^{\frac12}_0(\bb R^n)$ as the completion of $C_c^{\infty}(\bb R^n)$ under the $\dt{H}^{\frac12}(\bb R^n)$ seminorm. We write $\Hfm:=(\Hf)^*$, and emphasize that we are departing from notation used elsewhere in the literature. Since $\Hf\supsetneq H^{\frac12}(\bb R^n)$, it follows that   $\Hfm$ is contained in  the dual space of $H^{\frac 12}(\bb R^n)$, which is the usual (inhomogeneous) fractional Sobolev space of order $-1/2$ that coincides with the space
\[
\Big\{u\in\n S'(\bb R^n)\,:\,\int_{\bb R^n}(1+|\xi|^2)^{-\frac12}|\hat u(\xi)|^2\,d\xi<+\infty\Big\}.
\]
For a survey on the properties of fractional Sobolev spaces, see \cite{dinpv}. We state without proof two easy results which are nevertheless useful.

\begin{proposition}[Sobolev embeddings of the fractional Sobolev spaces]\label{prop.fracemb}
Let $p_+:=\frac{2n}{n-1}$ and $p_-:=\frac{2n}{n+1}$. Then we have the continuous embeddings $
\Hf \hookrightarrow L^{p_+}(\rn)$, $L^{p_-}(\rn)\hookrightarrow \Hfm$.
\end{proposition}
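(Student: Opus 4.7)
The plan is to obtain the first embedding as a direct consequence of the Hardy--Littlewood--Sobolev inequality (i.e., the $L^p$ boundedness of the Riesz potential), and then to derive the second embedding via a duality argument using the first.

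For the first embedding, I would argue as follows. Given $\varphi\in C_c^\infty(\rn)$, set $g := \m F^{-1}\bigl(|\xi|^{1/2}\hat\varphi\bigr)$, so that by Plancherel,
\begin{equation*}
\|g\|_{L^2(\rn)}^2 \;=\; \int_\rn |\xi|\,|\hat\varphi(\xi)|^2\,d\xi \;=\; |\varphi|_{\dot H^{1/2}(\rn)}^2.
\end{equation*}
Since $\hat\varphi(\xi) = |\xi|^{-1/2}\hat g(\xi)$, one recognizes $\varphi = c_n\, I_{1/2}g$, where $I_{1/2}$ is the Riesz potential of order $1/2$ on $\rn$ and $c_n$ is a dimensional constant. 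The Hardy--Littlewood--Sobolev theorem (e.g., \cite{grafakoscfa}) then yields
\begin{equation*}
\|\varphi\|_{L^{p_+}(\rn)} \;\lesssim\; \|I_{1/2}g\|_{L^{p_+}(\rn)} \;\lesssim\; \|g\|_{L^2(\rn)} \;=\; |\varphi|_{\dot H^{1/2}(\rn)},
\end{equation*}
with $p_+ = 2n/(n-1)$ being precisely the Sobolev conjugate of $2$ at order $1/2$ in dimension $n$. Since $C_c^\infty(\rn)$ is dense in $\Hf$ by definition, this inequality extends to every $u\in\Hf$, giving a continuous embedding $\Hf\hookrightarrow L^{p_+}(\rn)$.

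For the second embedding I would use duality. A computation shows that the Hölder conjugate of $p_+$ is exactly $p_-$: indeed,
\begin{equation*}
\frac{1}{p_+} + \frac{1}{p_-} \;=\; \frac{n-1}{2n} + \frac{n+1}{2n} \;=\; 1.
\end{equation*}
Given $f\in L^{p_-}(\rn)$, define the linear functional $\Lambda_f$ on $\Hf$ by
\begin{equation*}
\Lambda_f(u) \;:=\; \int_\rn f(x)\,\overline{u(x)}\,dx.
\end{equation*}
By Hölder's inequality and the first embedding,
\begin{equation*}
|\Lambda_f(u)| \;\le\; \|f\|_{L^{p_-}(\rn)}\|u\|_{L^{p_+}(\rn)} \;\lesssim\; \|f\|_{L^{p_-}(\rn)}\,|u|_{\dot H^{1/2}(\rn)},
\end{equation*}
so $\Lambda_f\in (\Hf)^* = \Hfm$ with $\|\Lambda_f\|_{\Hfm}\lesssim \|f\|_{L^{p_-}(\rn)}$. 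The map $f\mapsto \Lambda_f$ is injective (distinct elements of $L^{p_-}$ test differently against $C_c^\infty\subset\Hf$), so identifying $f$ with $\Lambda_f$ yields the desired continuous inclusion $L^{p_-}(\rn)\hookrightarrow \Hfm$.

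I do not foresee any serious obstacle: the only nontrivial analytic input is the Hardy--Littlewood--Sobolev inequality, which is classical, and the rest is bookkeeping with Plancherel, Hölder's inequality, and density of $C_c^\infty(\rn)$ in $\Hf$. The mild subtlety worth flagging is that $\Hf$ is defined as a completion under the \emph{homogeneous} seminorm, so one must verify that the estimate established on the dense subspace $C_c^\infty(\rn)$ persists under this completion (which it does, since the $L^{p_+}$-convergence of a Cauchy sequence in $|\cdot|_{\dot H^{1/2}}$ identifies a canonical representative in $L^{p_+}(\rn)$).
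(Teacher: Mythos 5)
Your proof is correct. The paper actually states Proposition \ref{prop.fracemb} \emph{without} proof (``We state without proof two easy results which are nevertheless useful''), so there is no in-text argument to compare against; your route through Plancherel, the identification $\varphi = c_n I_{1/2}g$, Hardy--Littlewood--Sobolev, and then duality for the $\Hfm$ inclusion is exactly the standard one a reader would be expected to supply, and your remark about extending the estimate from $C_c^\infty(\rn)$ through the completion defining $\Hf$ correctly handles the one genuine subtlety.
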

\begin{sloppypar}
\begin{proposition}\label{mappropgrad.prop}
	The map $\nabla: \Hf\to \Hfm$ is bounded.
\end{proposition}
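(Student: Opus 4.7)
The plan is to reduce the statement to a simple duality/Fourier computation. By definition, $H^{-1/2}(\rn) = (\dt H^{1/2}_0(\rn))^*$, so for a scalar $f \in \n S'$, the $\dyoh$-norm is the operator norm of $f$ acting on $C_c^\infty(\rn)$ functions measured in the $\yoh$ seminorm. Thus to show boundedness of $\nabla: \yoh \to \dyoh$ (understood componentwise) it suffices, by density of $C_c^\infty(\rn)$ in $\yoh$, to verify that for every $u \in C_c^\infty(\rn)$ and every $j \in \{1, \dots, n\}$,
\[
|\langle \partial_j u, \varphi\rangle| \;\lesssim\; |u|_{\dt H^{1/2}(\rn)}\,|\varphi|_{\dt H^{1/2}(\rn)}, \qquad \varphi \in C_c^\infty(\rn),
\]
with a constant independent of $u$ and $\varphi$.

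First, integrating by parts and applying Plancherel's identity gives
\[
\langle \partial_j u, \varphi\rangle \;=\; -\int_\rn u\,\overline{\partial_j \varphi}\;=\; -c\int_\rn \xi_j\, \hat u(\xi)\,\overline{\hat\varphi(\xi)}\, d\xi,
\]
where $c$ is the Fourier-transform normalization constant. The triangle inequality and $|\xi_j|\le|\xi|$ then yield
\[
|\langle \partial_j u,\varphi\rangle| \;\le\; c\int_\rn |\xi|^{1/2}|\hat u(\xi)|\cdot |\xi|^{1/2}|\hat\varphi(\xi)|\, d\xi.
\]
Next I apply the Cauchy--Schwarz inequality to the right-hand side, obtaining
\[
|\langle \partial_j u,\varphi\rangle| \;\le\; c\left(\int_\rn |\xi||\hat u(\xi)|^2\, d\xi\right)^{1/2}\left(\int_\rn |\xi||\hat\varphi(\xi)|^2\, d\xi\right)^{1/2} \;=\; c\,|u|_{\dt H^{1/2}(\rn)}\,|\varphi|_{\dt H^{1/2}(\rn)}.
\]
Taking the supremum over $\varphi \in C_c^\infty(\rn)$ with $|\varphi|_{\dt H^{1/2}}\le 1$ gives $\|\partial_j u\|_{\dyoh} \le c\,|u|_{\dt H^{1/2}(\rn)}$, and summing over $j$ finishes the proof.

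There is no real obstacle here: the argument is purely a Fourier-side Cauchy--Schwarz bound, which works precisely because a derivative in the frequency variable contributes a factor of $\xi$, and this factor splits evenly into two $|\xi|^{1/2}$ weights that match the $\dt H^{1/2}$ seminorm on each factor. The only point requiring mild care is the initial reduction: one must note that the gradient, a priori defined on $C_c^\infty(\rn)$, extends by density and the above estimate to a bounded operator from all of $\yoh$ into $\dyoh$, which is automatic once the estimate is established on the dense subspace.
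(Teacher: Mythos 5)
Your proof is correct, and it is the standard Fourier-side argument: the paper explicitly states this proposition without proof (``We state without proof two easy results...''), so there is no in-paper argument to compare against. Integration by parts, Plancherel, the trivial bound $|\xi_j|\le|\xi|$, a symmetric split of the weight into $|\xi|^{1/2}\cdot|\xi|^{1/2}$, and Cauchy--Schwarz give exactly the estimate needed, and the extension from $C_c^\infty(\rn)$ to all of $\Hf$ by density is immediate from the definition of $\Hf$ as a completion. One small item you could make explicit, though it is routine, is that the density-limit $v=\lim_k\partial_j u_k$ in $\Hfm$ actually coincides with the distributional derivative $\partial_j u$: this follows since $u_k\to u$ in $\Hf\hookrightarrow L^{p_+}(\rn)$ (Proposition~\ref{prop.fracemb}), so one may pass to the limit in $\langle \partial_j u_k,\psi\rangle=-\langle u_k,\partial_j\psi\rangle$ for $\psi\in C_c^\infty(\rn)$. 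Also, strictly the codomain of $\nabla$ should be $(\Hfm)^n$ since it is vector-valued; you note this implicitly by treating components, which is fine.
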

\end{sloppypar}

For fixed $t\in\bb R$ and any open set $\Omega\subset\bb R^{n+1}$ with nice enough (but possibly unbounded) boundary such that $\bb R^n\times\{\tau=t\}\subset\Omega$, we define the \emph{trace operator}
\begin{equation}\label{eq.traceop}
	 \Tr_t:C_c^{\infty}(\overline{\Omega})\ra C_c^{\infty}(\bb R^n),\qquad \Tr_tu=u(\cdot,t).
\end{equation}
The relevance of the fractional Sobolev spaces to our theory comes from the following   trace result; we cite a paper with the proof   for traces of functions in $\dt W^{1,2}(\bb R^{2})$, but the result is straightforwardly extended to our situation.

\begin{lemma}[Traces of $Y^{1,2}$ functions; \cite{strislab}]\label{lm.ytrace} Fix $t>0$. Let $\Omega$ be either $\bb R^{n+1}$, $\bb R^{n+1}_t$, or $\bb R^{n+1}_{-,t}$. Then, for each $s\in\bb R$ such that there exists $x\in\bb R^n$ with $(x,s)\in\Omega$, the  trace operator $\Tr_s$  (see (\ref{eq.traceop})) extends uniquely to a bounded linear operator $Y^{1,2}(\Omega)\ra H_0^{\frac12}(\bb R^n)$.
\end{lemma}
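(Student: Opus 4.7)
The plan is to establish the desired bound on a dense class of smooth functions via a Fourier-analytic computation in the transversal variable, and then extend by continuity. By translating in the transversal direction I may assume $s=0$, so that $\Omega$ contains an open neighborhood of $\rn\times\{0\}$ on at least one side; suppose the upper side, the lower case being entirely analogous.

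For the first step, fix $u\in C_c^\infty(\overline\Omega)$ and let $\hat u(\xi,t)$ denote its partial Fourier transform in the first $n$ variables. Since $\hat u(\xi,\cdot)$ has compact support in $t$ for each $\xi$, the fundamental theorem of calculus gives
\begin{equation*}
|\hat u(\xi,0)|^2 \;=\; -2\real\int_0^\infty \overline{\hat u(\xi,t)}\,\partial_t\hat u(\xi,t)\,dt.
\end{equation*}
Multiplying by $|\xi|$, applying the inequality $2|\xi|\,|\hat u|\,|\partial_t\hat u|\leq |\xi|^2|\hat u|^2+|\partial_t\hat u|^2$, integrating in $\xi\in\rn$, and invoking Plancherel on each horizontal slice, I obtain
\begin{equation*}
\int_{\rn} |\xi|\,|\hat u(\xi,0)|^2\,d\xi \;\leq\; \int_0^\infty\!\!\int_{\rn}\!\bigl(|\nabla_x u|^2+|\partial_t u|^2\bigr)\,dx\,dt \;\leq\; \|\nabla u\|_{L^2(\Omega)}^2.
\end{equation*}
Since $\Tr_0 u\in C_c^\infty(\rn)\subset\Hf$, this establishes the trace bound on smooth test functions.

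For the second step, I extend by density. When $\Omega=\ree$ this is immediate from the identification $Y^{1,2}_0(\ree)=Y^{1,2}(\ree)$ recorded earlier in the excerpt. For a half-space, given $u\in Y^{1,2}(\Omega)$, I truncate by a radial cutoff $\chi_R$ supported in a ball of radius $2R$, equal to $1$ on $B_R$, with $|\nabla\chi_R|\lesssim 1/R$; the cutoff error $\|u\,|\nabla\chi_R|\|_{L^2(\Omega)}$ is controlled by H\"older's inequality with exponents $2^*/2$ and $(n+1)/2$, using $u\in L^{2^*}(\Omega)$ together with $|\supp\nabla\chi_R|\lesssim R^{n+1}$, yielding a bound by $\|u\|_{L^{2^*}(\supp\nabla\chi_R)}$, which tends to $0$ as $R\to\infty$. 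A standard mollification then produces a $C_c^\infty(\overline\Omega)$ approximant, and the trace bound extends uniquely by continuity to the desired bounded map $Y^{1,2}(\Omega)\to\Hf$; uniqueness comes from density and linearity.

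The hard part will be the density argument in the half-space cases. The $L^{2^*}$ control built into the definition of $Y^{1,2}$ is precisely what makes the truncation step succeed without leaving behind residual constants at infinity, dovetailing with Lemma \ref{malziem.lem} which identifies $Y^{1,2}(\Sigma_a^b)$ as the natural space in which constants have already been factored out.
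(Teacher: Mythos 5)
The paper does not supply a proof of this lemma; it cites \cite{strislab} for the $\dot W^{1,2}(\mathbb{R}^2)$ prototype and declares the extension straightforward. Your argument carries out that extension correctly, and it is the standard one: the partial Fourier transform in the horizontal variables, the fundamental theorem of calculus in the transversal variable, the elementary inequality $2|\xi|\,ab\leq |\xi|^2 a^2 + b^2$, and Plancherel on slices together deliver
\[
\int_{\rn}|\xi|\,|\hat u(\xi,0)|^2\,d\xi \;\lesssim\; \|\nabla u\|_{L^2(\Omega)}^2
\]
for compactly supported smooth $u$, which is exactly the $\Hf$ bound for the trace. The density step is also handled correctly: for $\Omega=\ree$ you invoke the identification $Y^{1,2}_0(\ree)=Y^{1,2}(\ree)$ recorded in the paper, and for the half-spaces the truncation error is controlled through H\"older with exponents $2^*/2$ and $(n+1)/2$, using precisely the $L^{2^*}$ integrability that the space $Y^{1,2}$ is designed to carry. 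The one detail left implicit is that the subsequent mollification near the boundary of a half-space must be boundary-adapted (a one-sided mollifier, or a small inward shift before mollifying), since you need approximants in $C_c^\infty(\overline\Omega)$ rather than $C_c^\infty(\Omega)$; this is routine and does not constitute a gap.
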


\begin{definition}[Local weak solutions]\label{weakform.def}   Let $\Omega\subseteq\bb R^{n+1}$ be an open set with Lipschitz (but possibly unbounded) boundary, and fix $f\in L^1_{\loc}(\om)$, $F\in L^1_{\loc}(\om,\cee)$, and $u\in W^{1,2}_{\loc}(\om)$. We say that $u$ solves the equation $\cL u=f-\div F$ in $\om$ \emph{in the weak sense} if, for every $\varphi\in C_c^\infty(\om)$, the identity
	\begin{equation}\label{solndef.eq}
	\dint_{\ree} \Big( (A\nabla u + B_1 u)\cdot\overline{\nabla \varphi} + B_2\cdot \nabla u \overline{\varphi}\Big) = \dint_{\ree} \left( f\overline{\varphi} + F\cdot \overline{\nabla \varphi}\right)
	\end{equation}
	holds.
\end{definition}

\begin{remark}\label{rm.extend}  Suppose that $\Omega$ is as in Lemma \ref{lm.ytrace}.  By a standard density argument, if $u\in Y^{1,2}(\Omega)$ solves $\cL u=f + \div F$ in $\om$ in the weak sense and either
	\begin{itemize}
		\item $F \in L^2(\om)$ and $f \in L^{(2n+2)/(n+3)}(\om)$, or
		\item $\om = D \times I$, where $D$ is a domain with nice enough (but possibly unbounded) boundary and $I$ is an interval, and 
		\begin{equation}\label{eq.extendcond}
			F \in L^2(\om),\quad f \in L^2(I;L^{(2n)/(n+2)}(D)) + L^{(2n+2)/(n+3)}(\om),
		\end{equation}
	\end{itemize}
	then \eqref{solndef.eq} holds for all $\varphi \in Y^{1,2}_0(\om)$. A similar observation to the second item can be made if $\om$ is a ball in $\ree$.
\end{remark}

For an infinite interval $I\subset \RR$ and a Banach space $X$, let $C^k_0(I;X)$ be the space of functions $f:I\to X$ such that all their first $k$ derivatives $f^{(l)}:I \to X$, $0\leq l\leq k$, exist, are continuous on $I$, and satisfy that  $\lim_{t\to\infty} \| f^{(l)}(t)\|_X =0$ for all $ 0\leq l\leq k$. When $k=0$, we will omit the superscript and simply write $C^0=C$.

\begin{definition}[Slice Spaces]\label{def.slicespace}For $n\geq 3$, we define 
\begin{equation*}
\dltp:= \left\{ v\in C_0\big((0,\infty); L^2(\rn)\big): \| u\|_{\dltp} <\infty\right\},
\end{equation*}
with norm given by $\| v\|_{\dltp}:= \sup_{t>0} \| v(t)\|_\ltrn$ (see (\ref{eq.ut})). We also define
\begin{equation*}
\begin{split}
\sltp & :=\left\{ u\in C^2_0\big((0,\infty); \yotn\big) : u'(t) \in C_0\big((0,\infty); \ltrn\big), \, \| u\|_{\sltp}<\infty\right\},
\end{split}
\end{equation*}
with norm given by 
\begin{multline*}
\| u\|_{\sltp}    := \sup_{t>0} \| u(t)\|_{\yotn}  +\sup_{t>0} \| u'(t)\|_\ltrn\\
  \quad + \sup_{t>0} \| tu'(t)\|_{\yotn} + \sup_{t>0}\| t^2 u''(t)\|_{\yotn}.
\end{multline*}
In particular, both $\dltp$ and $\sltp$ are Banach spaces. Similarly, with obvious modifications, we can define the slice spaces $\sltn$ and $\dltn$ in the negative half line $(-\infty, 0)$.
\end{definition}
We also state, without proof, the following criterion for the existence of weak derivatives in $L^2(I;X)$. See \cite{CH} for further results and definitions.

\begin{theorem}[Vector-valued weak derivatives; \cite{CH} Theorem 1.4.40]\label{diffboch.thm}
Suppose that $X$ is a reflexive Banach space and let $I\subset \RR$ be a (not necessarily bounded) interval. Let $f\in L^2(I;X)$. Then $f\in W^{1,2}(I;X)$ if and only if there exists $\varphi\in L^2(I;\RR)$ such that for any $t,s\in I$, the estimate
\begin{equation*}
\| f(t)-f(s)\|_X \leq \Big| \int_{s}^{t} \varphi(r)\, dr\Big|
\end{equation*}  
holds. Moreover, for a.e. $t\in I$,  the difference quotients 
\begin{equation*}
\Delta^hf(t):= \dfrac{f(t + h)-f(t)}{h},\qquad h\in\bb R, |h|\ll1,
\end{equation*}
converge weakly in $X$ to $f'(t)$   as $h\ra0$.
\end{theorem}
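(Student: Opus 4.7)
The plan is to handle the two implications separately and then deduce the a.e.\ weak convergence of difference quotients from the vector-valued Lebesgue differentiation theorem.

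For the forward direction, assume $f\in W^{1,2}(I;X)$. The fundamental theorem of calculus for the Bochner integral provides an absolutely continuous representative (still denoted $f$) such that $f(t)-f(s)=\int_s^t f'(r)\,dr$ for all $t,s\in I$. Setting $\varphi(r):=\|f'(r)\|_X\in L^2(I;\RR)$ and bringing the $X$-norm inside the Bochner integral yields the desired estimate.

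The reverse direction is the main substantive step. The hypothesis controls the difference quotients uniformly: for every $h>0$ and $t\in I$ with $t+h\in I$,
\begin{equation*}
\|\Delta^h f(t)\|_X \,\leq\, \frac{1}{h}\int_t^{t+h}\varphi(r)\,dr \,\leq\, (M\varphi)(t),
\end{equation*}
where $M$ is the Hardy--Littlewood maximal operator. By the $L^2$-boundedness of $M$, the family $\{\Delta^h f\}_{h>0}$ is bounded in $L^2(I_h;X)$ on suitable subintervals $I_h\subset I$. Since $X$ is reflexive and $1<2<\infty$, the space $L^2(I;X)$ is reflexive, so by Banach--Alaoglu I would extract a sequence $h_k\to 0$ along which $\Delta^{h_k}f\rightharpoonup g$ weakly in $L^2(I;X)$ for some $g\in L^2(I;X)$. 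To identify $g$ with the weak derivative of $f$, I would fix $\psi\in C_c^\infty(I)$ and $x^*\in X^*$, and apply discrete integration by parts, namely
\begin{equation*}
\int_I\langle\Delta^{h_k}f(t),x^*\rangle\psi(t)\,dt \,=\, -\int_I\langle f(t),x^*\rangle(\Delta^{-h_k}\psi)(t)\,dt \,\longrightarrow\, -\int_I\langle f(t),x^*\rangle\psi'(t)\,dt,
\end{equation*}
which combined with the weak $L^2$-convergence identifies $g$ as the weak derivative of $f$, and hence $f\in W^{1,2}(I;X)$.

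For the a.e.\ convergence of difference quotients, once $f'\in L^2(I;X)$ is in hand, Pettis' theorem ensures that $f'$ has essentially separable range, so the vector-valued Lebesgue differentiation theorem for the Bochner integral yields that $\Delta^h f(t)=\frac{1}{h}\int_t^{t+h}f'(r)\,dr \to f'(t)$ strongly (hence weakly) in $X$ for a.e.\ $t\in I$. The main obstacle will be the reverse direction, specifically the step of upgrading the weak $L^2(I;X)$-limit of the difference quotients to the weak derivative of $f$ via the duality computation above; reflexivity of $X$ enters essentially both in reducing extraction of a weak $L^2$-limit to Banach--Alaoglu and implicitly in justifying the duality pairing with arbitrary $x^*\in X^*$.
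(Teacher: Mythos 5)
The paper does not supply a proof of this statement: it is quoted directly from \cite{CH}, Theorem 1.4.40, and stated ``without proof.'' So there is no in-paper argument to compare against. That said, your proposal is correct and follows the standard route for this kind of vector-valued difference-quotient criterion: (a) the forward implication via the fundamental theorem of calculus for the Bochner integral, taking $\varphi=\|f'\|_X$; (b) the reverse implication via the pointwise maximal-function bound $\|\Delta^h f(t)\|_X\le(M\varphi)(t)$, uniform $L^2$-control of the difference quotients, weak compactness in $L^2(I;X)$ (which is reflexive precisely because $X$ is), and identification of the weak limit as the distributional derivative through discrete integration by parts against $\psi\otimes x^*$; and (c) the a.e.\ convergence via $\Delta^hf(t)=\frac1h\int_t^{t+h}f'$ and the Bochner version of the Lebesgue differentiation theorem, which in fact delivers strong a.e.\ convergence and hence the weak convergence asserted.

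Two small points worth tightening. First, the phrase ``by Banach--Alaoglu I would extract a sequence'' should really invoke reflexivity together with Eberlein--\v{S}mulian (or separability plus weak-$*$ metrizability); Banach--Alaoglu alone gives weak-$*$ compactness of the closed unit ball in a dual space, not sequential extraction. Second, in the integration-by-parts step you do not need $(L^2(I;X))^*\cong L^2(I;X^*)$: the functional $F\mapsto\int_I\langle F(t),x^*\rangle\psi(t)\,dt$ is automatically continuous on $L^2(I;X)$ for any $x^*\in X^*$ and $\psi\in L^2(I)$, so reflexivity of $X$ is used only for the compactness step, not for the duality pairing. Once $g=f'$ is identified, uniqueness of the distributional derivative promotes the subsequential weak limit to a full weak limit as $h\to0$.
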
 
\begin{remark} We will see that if $u\in W^{1,2}_{\loc}(\reu)\cap \sltp$ and $\cL u=0$ in $\reu$, then  by Caccioppoli's inequality (on slices) we have that
	\begin{multline*}
	\| u\|_{S^2_+}   \approx \sup_{t>0} \| u(t)\|_{\yotn} + \sup_{t>0} \| u'(t)\|_{\ltrn}\\
	  \approx\sup_{t>0} \| \nablap\trt u\|_{\ltrn} + \sup_{t>0} \|\trt( \dno u)\|_\ltrn. 
	\end{multline*} 
\end{remark}

We now state a Trace Theorem in cubes. We set
\begin{equation*}
I_R^\pm:= (-R,R)^n\times (0,\pm R), \quad I_R:=(-R,R)^{n+1}, \quad \Delta_R:= (-R,R)^n\times\{0\}.
\end{equation*}

\begin{proposition}[Trace operator on a cube]\label{troncubes.prop}  Let $H^{\frac12}(\Delta_R)$ be the space consisting of pointwise restrictions of functions in $H^{\frac12}(\bb R^n)$ to $\Delta_R$.  There exists a bounded linear operator $\tro^\pm: W^{1,2}(I_R^\pm)\to H^{\frac12}(\Delta_R)$   (called \emph{the trace operator associated to $I_R^{\pm}$}) with the following properties.
\begin{enumerate}[(i)]
	\item For each $u\in C^\infty(\overline{I_R^\pm})$, $\tro^\pm u(\cdot) = u(\cdot, 0)$.
	\item For each $\Phi\in C_c^{\infty}(I_R)$, the identity
\begin{equation}\nonumber
\int_{\Delta_R} (\tro^\pm u) \overline{\phi}= \mp\dint_{I_R^\pm} (u \overline{\dno \Phi} + \dno u\overline{\Phi}) 
\end{equation}
holds, where $\phi(\cdot)= \Phi(\cdot, 0)$.
\end{enumerate} 
In particular, the traces are consistent in the sense that for every $R'<R$, the restriction  to $I_{R'}^\pm$ of the trace operator associated to $I_R^\pm$, agrees with the trace in $I_{R'}^\pm$. 
\end{proposition}
\begin{proof}
The result follows from the usual Trace Theorem on Lipchitz domains (see, for instance, \cite{L} Theorem 15.23 and the results which follow this theorem) and the fact that $I_R^+$ is an extension domain for $W^{1,2}$ (see \cite{L} Theorem 12.15).\end{proof}

We now remark that the zeroth-order term $V$ in our differential equation can be absorbed into the first order terms. 

\begin{lemma}[Zeroth order term absorbed by first order terms]
Let $\cL$ be as in \eqref{opdef.eq} with
\[\max\big\{\Vert B_1\Vert_n,~\Vert B_2\Vert_n,~\Vert V\Vert_{\frac n2}\big\}\leq \eps_0.\]
Then 
\[\cL = -\div(A\nabla+\tilde B_1)+\tilde B_2\cdot\nabla,\]
where 
\[\max\big\{\Vert \tilde B_1\Vert_n,~\Vert \tilde B_2\Vert_n,\big\}\leq C_n\eps_0.\]
\end{lemma}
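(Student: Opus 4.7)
The plan is to write $V$ as a horizontal divergence $V = \div_x \vec W$ for some $\vec W \in L^n(\rn)^n$ of comparable size, and then use integration by parts in the weak formulation of $\cL$ to absorb the zeroth order term $Vu$ into modified drift coefficients. Note in particular that the statement is algebraic/functional-analytic and does not use ellipticity of $A$ or any PDE structure.

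For the first step, given $V \in L^{n/2}(\rn)$, set $\vec W := -\nabla_x (-\Delta_x)^{-1} V$; concretely, the $j$-th component is $\vec W_j = -R_j (I_1 V)$, where $R_j$ is the $j$-th Riesz transform on $\rn$ and $I_1$ is the Riesz potential of order $1$. By the Hardy-Littlewood-Sobolev inequality, $I_1 : L^{n/2}(\rn) \to L^n(\rn)$ is bounded (since $\tfrac{2}{n} - \tfrac{1}{n} = \tfrac{1}{n}$), and the Riesz transforms are bounded on $L^n(\rn)$ by classical Calder\'on-Zygmund theory. Hence
\[
\|\vec W\|_{L^n(\rn)} \leq C_n \|V\|_{L^{n/2}(\rn)} \leq C_n \eps_0,
\]
and by construction $\div_x \vec W = V$ as tempered distributions on $\rn$.

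For the second step, extend $\vec W$ to a $t$-independent vector field on $\ree$ with vanishing transverse component. In the distributional sense one has the identity $Vu = \div(\vec W u) - \vec W \cdot \nabla u$, which upon substitution into the weak form \eqref{solndef.eq} yields
\[
\cL u = -\div\bigl(A\nabla u + (B_1 - \vec W)u\bigr) + (B_2 - \vec W)\cdot \nabla u.
\]
Setting $\tilde B_1 := B_1 - \vec W$ and $\tilde B_2 := B_2 - \vec W$ gives the claim, with $\max\{\|\tilde B_1\|_n, \|\tilde B_2\|_n\} \leq (1 + C_n)\eps_0$, which absorbs into the stated constant $C_n \eps_0$.

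The only bookkeeping point is to verify that the products $Vu\bar\varphi$, $\vec W u \cdot \overline{\nabla \varphi}$, and $(\vec W \cdot \nabla u)\,\bar\varphi$ are locally integrable for every test function $\varphi \in C_c^\infty(\ree)$; this is immediate from H\"older's inequality and the Sobolev embedding (any local solution $u$ lies in $L^{2n/(n-2)}_{\loc}$, since $\nabla u \in L^2_{\loc}$), so the integration by parts substituting $\vec W$ for $V$ is justified. In short, the lemma reduces to the solvability of $\div \vec W = V$ with the appropriate $L^n$ bound, which is the main and essentially only step, and is a standard consequence of Riesz potential/Riesz transform estimates.
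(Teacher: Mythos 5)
Your argument is correct and is essentially identical to the paper's proof: the paper likewise writes $V = c_n \div_x \vec R I_1 V$ (which is the same as your $\vec W = -\nabla_x(-\Delta_x)^{-1}V$, i.e.\ $\vec W_j = -R_j I_1 V$) and invokes $I_1 : L^{n/2}(\rn)\to L^n(\rn)$ together with $L^n$-boundedness of the Riesz transforms. You merely spell out the subsequent product-rule substitution $Vu = \div(\vec W u) - \vec W\cdot\nabla u$ and the local integrability check, which the paper leaves implicit.
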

\begin{proof}
We write 
$$V(x) = -\div_x \nabla_\parallel I_2 V(x) = c_n\div_x \vec{R} I_1 V(x),$$
where $I_\alpha$ is the $\alpha$-order Riesz potential
\[
(I_{\alpha}f)(x)=\frac1{c_{\alpha}}\int_{\bb R^n}\frac{f(y)}{|x-y|^{\alpha}}\,dy,
\]
and $\vec{R}$ is the Riesz transform on $\rn$. For definitions and properties, see \cite{steinsid}. To conclude the lemma, we note that $I_1: L^{n/2}(\rn) \to L^n(\rn)$ and $\vec{R}$ is a bounded operator $L^n(\rn) \to [L^n(\rn)]^n$.\end{proof}

Observe that it suffices that $V\in \dt L^n_{-1} = \{ V\in \n D': I_1 V \in L^n\}$, with small norm. Thus, our results hold under this slightly more general assumption on $V$.

Accordingly, from now on we drop the term $V$ from our operator. We obtain invertibility of the operator $\cL$ on the Hilbert space $Y^{1,2}(\ree)$ when the size of the lower order terms is small enough.

\begin{definition}[Sesquilinear form and associated operator] Define the sesquilinear form $B_{\cL}:C_c^{\infty}(\bb R^{n+1})\times C_c^{\infty}(\bb R^{n+1})\ra\bb C$ via
\begin{equation*} 
B_{\cL}[u,v]:=\dint_{\bb R^{n+1}}\Big[A\nabla u\cdot\overline{\nabla v}+uB_1\cdot\overline{\nabla v}+\overline{v}B_2\cdot\nabla u\Big],\qquad u,v\in C_c^{\infty}(\bb R^{n+1}).
\end{equation*}
Define the operator $\cL:\n D\ra\n D'$ via the identity
\[
\langle\cL u,v\rangle=B_{\cL}[u,v],\qquad   u, v\in C_c^{\infty}(\bb R^{n+1}).
\]
It is clear that $\cL$ is linear.		
\end{definition} 

In fact, the form $B_{\cL}$ extends to a bounded, coercive form on  $Y^{1,2}(\bb R^{n+1})\times Y^{1,2}(\bb R^{n+1})$, and the operator $\cL$ extends to  an isomorphism  $Y^{1,2}(\bb R^{n+1})\ra(Y^{1,2}(\bb R^{n+1}))^*$. This is precisely the content of the following result.

\begin{proposition}[Extension of operator to $Y^{1,2}$]\label{laxmilgram.prop} The form $B_{\cL}$ extends to a bounded form on  $Y^{1,2}(\ree)$; that is,
\begin{equation*}
|B_{\cL}[u,v]|\lesssim \| \nabla u\|_2 \| \nabla v\|_2,\quad\text{for all } u,v\in C_c^{\infty}(\bb R^{n+1}),
\end{equation*}
with the implicit constant  depending on $n, \lambda, \Lambda,$ and  $\max\big\{\| B_1\|_n,\| B_2\|_n\big\}$. Hence $\cL$ extends to a bounded operator $Y^{1,2}(\bb R^{n+1})\ra(Y^{1,2}(\bb R^{n+1}))^*$.

Moreover, there exists a constant $\varepsilon_0 =\varepsilon_0(n,\lambda, \Lambda) > 0$ such that if\\ $\max\big\{\| B_1\|_n,\| B_2\|_n\big\} < \eps_0$, then $B_{\cL}$ is also coercive in $Y^{1,2}(\ree)$ with lower bound $\lambda/2$; that is,
\begin{equation*}
\frac{\lambda}{2}\| \nabla u\|_2^2 \lesssim \f Re\, B_{\cL}[u,u], \quad \text{for all } u\in C_c^{\infty}(\bb R^{n+1}).
\end{equation*} 
In particular, if $\max\big\{\| B_1\|_n,\| B_2\|_n\big\} <\varepsilon_0$, then by the Lax-Milgram Theorem the operator $\cL^{-1}: (Y^{1,2}(\ree))^*\to Y^{1,2}(\ree)$ exists as a bounded linear operator. 
\end{proposition}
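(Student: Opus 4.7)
The plan is to prove the three assertions in order: boundedness of $B_{\cL}$ on $Y^{1,2}(\ree)$, coercivity under the smallness hypothesis, and then invoke Lax--Milgram. For all three steps, the crux is that $B_1, B_2$ are $t$-independent, so they are handled by slicewise Hölder in $x$ (with exponents $\tfrac{2n}{n-2}, n, 2$, which are admissible since $\tfrac{n-2}{2n} + \tfrac 1n + \tfrac 12 = 1$), followed by Cauchy--Schwarz in $t$ and the Sobolev embedding on each horizontal slice.

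\textbf{Step 1 (boundedness).} Fix $u,v \in C_c^\infty(\ree)$. The $A$-term is immediate from $\|A\|_\infty \le \Lambda$ and Cauchy--Schwarz. For the $B_1$ term, observe
\begin{equation*}
\int_\rn |u(x,t)||B_1(x)||\nabla v(x,t)|\,dx \le \|u(\cdot,t)\|_{L^{\frac{2n}{n-2}}(\rn)} \|B_1\|_{L^n(\rn)}\|\nabla v(\cdot,t)\|_{L^2(\rn)}.
\end{equation*}
Since $u(\cdot,t) \in C_c^\infty(\rn)$, the Sobolev inequality on $\rn$ gives $\|u(\cdot,t)\|_{L^{2n/(n-2)}(\rn)} \le C_n \|\nabla u(\cdot,t)\|_{L^2(\rn)}$. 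Integrating in $t$ and applying Cauchy--Schwarz yields
\begin{equation*}
\dint_\ree |u\, B_1 \cdot \overline{\nabla v}| \le C_n \|B_1\|_n \|\nabla u\|_{L^2(\ree)} \|\nabla v\|_{L^2(\ree)}.
\end{equation*}
The $B_2$ term is handled by the same computation (with the roles of $u$ and $\nabla v$ traded between the two factors, since it is $v$ that needs to be sent through the slicewise Sobolev embedding). Combining these bounds gives
\begin{equation*}
|B_{\cL}[u,v]| \le \bigl(\Lambda + C_n(\|B_1\|_n + \|B_2\|_n)\bigr) \|\nabla u\|_2 \|\nabla v\|_2,
\end{equation*}
and density of $C_c^\infty(\ree)$ in $Y^{1,2}(\ree)$ (which holds since $Y^{1,2}(\ree) = Y^{1,2}_0(\ree)$) extends $B_{\cL}$ to a bounded form on $Y^{1,2}(\ree)$.

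\textbf{Step 2 (coercivity).} Setting $u = v$ in the above, ellipticity \eqref{elliptic} gives $\real \int A\nabla u \cdot \overline{\nabla u} \ge \lambda \|\nabla u\|_2^2$, and the lower-order terms are controlled by Step 1:
\begin{equation*}
\real B_\cL[u,u] \ge \lambda \|\nabla u\|_2^2 - C_n (\|B_1\|_n + \|B_2\|_n) \|\nabla u\|_2^2.
\end{equation*}
Choosing $\eps_0 = \eps_0(n, \lambda, \Lambda)$ so small that $2 C_n \eps_0 \le \lambda/2$ produces the stated coercivity bound $\real B_\cL[u,u] \ge (\lambda/2) \|\nabla u\|_2^2$.

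\textbf{Step 3 (Lax--Milgram).} By the Sobolev embedding $Y^{1,2}(\ree) \hookrightarrow L^{2^*}(\ree)$, the seminorm $\|\nabla \cdot\|_{L^2(\ree)}$ is equivalent to the full $Y^{1,2}$ norm on $Y^{1,2}(\ree)$, and endows it with the structure of a Hilbert space. The form $B_\cL$ is bounded and coercive with respect to this norm by Steps 1--2, so the Lax--Milgram lemma produces a bounded linear inverse $\cL^{-1}: (Y^{1,2}(\ree))^* \to Y^{1,2}(\ree)$. The only mild subtlety (not really an obstacle) is exploiting the $t$-independence of $B_1,B_2$ in Step 1 to avoid treating them as $L^p(\ree)$ functions, since that would be impossible; but once one recognizes to apply Hölder slicewise in $x$ and integrate in $t$, the rest is routine.
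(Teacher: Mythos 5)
Your proof is correct, and it fills in exactly the details the paper declines to write out (it just says "the proof is straightforward, thus omitted"). The two key points you correctly identify are (a) $B_1,B_2$ must be handled by Hölder in $x$ on each slice followed by Cauchy–Schwarz in $t$, rather than by a global Hölder estimate on $\ree$ (which is unavailable since $t$-independent coefficients are never in $L^p(\ree)$ for finite $p$), and (b) the seminorm $\|\nabla\cdot\|_{L^2(\ree)}$ is equivalent to the full $Y^{1,2}(\ree)$ norm via the Gagliardo--Nirenberg--Sobolev inequality, so Lax--Milgram applies. One small clarification worth making explicit: when you invoke the slicewise Sobolev inequality on $\rn$, the estimate $\|u(\cdot,t)\|_{L^{2n/(n-2)}(\rn)} \le C_n\|\nabla_\parallel u(\cdot,t)\|_{L^2(\rn)}$ involves the horizontal gradient $\nabla_\parallel$, and you then bound $\|\nabla_\parallel u(\cdot,t)\|_{L^2(\rn)} \le \|\nabla u(\cdot,t)\|_{L^2(\rn)}$ before integrating in $t$ — you wrote $\|\nabla u(\cdot,t)\|_{L^2(\rn)}$ directly, which is fine but elides that step.
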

\noindent\emph{Proof.} The proof is straightforward, thus omitted.\hfill{$\square$}

\begin{remark} We will always assume that $\max\{\| B_1\|_n,\| B_2\|_n\} <\varepsilon_0$, as above. The value of $\eps_0$ may be made smaller, but it will always depend only on $n , \lambda$ and $\Lambda$, and we will explicitly state when we impose further smallness.
\end{remark}

\begin{definition}[Dual operator]
Associated to $\cL$ we also have the dual operator, denoted $\cL^*:Y^{1,2}(\bb R^{n+1})\ra(Y^{1,2}(\bb R^{n+1}))^*$, defined by the relation
\begin{equation*}
\langle \cL u, v\rangle = \langle u, \cL^* v\rangle.
\end{equation*}
It is a matter of algebra to check that 
\begin{equation*}
\cL^* v = -\div( A^* \nabla v + \overline{B}_2v) + \overline{B}_1\cdot \nabla v
\end{equation*}
holds in the weak sense. 
\end{definition}

In particular, $\cL^*$ is an operator of the same type as $\cL$ and if \\ $\max\{\| B_1\|_n,\| B_2\|_n\} <\varepsilon_0$ so that $\cL^{-1}$ is defined, then  $(\cL^*)^{-1}$ is well defined, bounded, and satisfies $(\cL^*)^{-1}=(\cL^{-1})^*$.

\subsection{Generalized Littlewood-Paley Theory}

In this subsection, we review some of the known results from the generalized Littlewood-Paley theory. Here, the generalization is that one replaces the classical smoothness assumption by a so-called \emph{quasi-orthogonality} condition, and one replaces the classical pointwise decay condition by  off-diagonal decay in an $L^2$ sense. 

First, we introduce the \emph{square function norm} $\|| \cdot \||$. We define
\[\|| F \||_{\pm} := \Big(\dint_{\ree_\pm} |F(x,t)|^2 \, \frac{dx \, dt}{t} \Big)^{1/2}\quad, \quad \|| F \||_{all} := \Big(\dint_{\ree} |F(x,t)|^2 \, \frac{dx \, dt}{t} \Big)^{1/2}. \]
For a family of   linear  operators  on $L^2(\bb R^n)$, $\{\theta_t\}_{t>0} $, we define
\[ \||\theta_t  \||_{+,op} := \sup_{\|f \|_2 = 1} \||\theta_t  f\||_{+}, \]
and similarly define $\||\theta_t  \||_{-,op}$ and $\||\theta_t  \||_{all,op}$. We will often drop the sign in the subscript when in context it is understood that we work in the upper half space. 

Recall that a Borel measure $\mu$ on $\bb R^{n+1}_+$ is called {\it Carleson} if there exists a constant $C$ such that $\mu(R_Q) \leq C |Q|$ for all cubes $Q \subset \rn$, where $R_Q = Q \times (0, \ell(Q))$ is the \emph{Carleson box above $Q$}. Given a measurable function $\Upsilon$ on $\ree_+$, we define
$$\| \Upsilon \|_{\mathcal{C}} := \sup_{Q} \frac{1}{|Q|} \int_0^{\ell(Q)}\int_{Q} |\Upsilon(x,t)|^2 \, \frac{dx \, dt}{t},$$
where the supremum is taken over all cubes $Q\subset\bb R^n$.  In other words, $\| \Upsilon \|_{\mathcal{C}} < \infty$ if and only if $|\Upsilon(x,t)|^2 \, \tfrac{dx \, dt}{t}$ is a Carleson measure; in this case, we say that $\Upsilon\in\mathcal C$.   There is a deep connection between Carleson measures and square function estimates, as seen in the $T1$ theorem for square functions of Christ and Journ\'e \cite{CJ}. In this article, we   use  a generalized version of their result \cite[Theorem  4.3]{GH}.

We record several results from \cite{AAAHK}, which will be crucial in establishing square function estimates for solutions. 

\begin{definition}[Good off-diagonal decay]\label{goodODdecay.def}
We say that a family of   linear  operators  on $L^2(\bb R^n)$, $\{\theta_t\}_{t>0}$, has \emph{good off-diagonal decay} if there exist $M \ge 0$ and $C>0$ such that  for all $f\in L^2(\bb R^n)$, the estimate 
$$\|\theta_t (f {\bbm 1}_{2^{k+1}Q\setminus 2^kQ})\|_{L^2(Q)}^2 \lesssim_M 2^{-nk}\Big( \frac{t}{2^k \ell(Q)}\Big)^{2M + 2} \|f \|_{L^2(2^{k+1}Q \setminus 2^kQ)}^2$$
holds for every cube $Q \subset \rn$, every $k \ge 2$ and all $0 < t \leq C\ell(Q)$. Here, the implicit constants may depend only on dimension, $M$, and on the family of operators.
\end{definition}

If $b\in L^{\infty}(\bb R^n)$, then for any cube $Q$ in $\bb R^n$ and any $t\in(0,C\ell(Q))$, it can be shown via the good off-diagonal decay that $\theta_t(b\bbm 1_{\bb R^n\backslash Q})\in L^2(Q)$. This  allows us to define $\theta_tb:=\theta(b\bbm 1_Q)+\theta_t(b\bbm 1_{\bb R^n\backslash Q})\in L^2(Q)$ for any $t>0$ and $Q$ with $\ell(Q)\geq t/C$ (the independence of $\theta_tb$ over $Q$ is given by the linearity). Thus, for $b\in L^{\infty}(\bb R^n)$,   $\theta_tb\in L^2_{\loc}(\bb R^n)$ for each $t>0$. We omit further details.

\begin{lemma}[Consequences of off-diagonal decay; {\cite{FS}, \cite[Lemma 3.2]{AAAHK}}]\label{FSAAAHKL3.2.lem} 
Suppose that $\{\theta_t\}_{t>0}$ is a family of   linear operators  on $L^2(\bb R^n)$ with good off-diagonal decay   which verifies that $\||\theta_t\||_{op}\leq C$. Then, for every  $b\in  L^{\infty}(\bb R^n)$ (see the above remarks), the family $\{\theta_t\}_{t>0}$ satisfies the estimate
$$\| \theta_t b \|_{\C} \lesssim ( 1 + \|| \theta_t\||_{op}^2)\|b\|_\infty^2.$$
Moreover, if $\| \theta_t \|_{L^2 \to L^2} \lesssim1$ and $\theta_t 1 = 0$ for all $t>0$, then for every $b\in BMO(\bb R^n)$,  
$$\| \theta_t b \|_{\C} \lesssim ( 1 + \|| \theta_t\||_{op}^2)\|b\|_{BMO}^2.$$
\end{lemma}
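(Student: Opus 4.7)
My plan is to split $b$ into a local piece and annular tails, handle the local piece with the assumed square function bound $\||\theta_t\||_{op}$, and control each annular tail via the off-diagonal estimate from Definition \ref{goodODdecay.def}, summing against the geometric decay in $k$. Fix a cube $Q$ and let $b_0 := b\mathbbm{1}_{4Q}$, $b_k := b\mathbbm{1}_{2^{k+1}Q\setminus 2^kQ}$ for $k\ge 2$. For a simple function $b$, I would estimate
\[
\int_0^{\ell(Q)}\!\!\int_Q|\theta_tb|^2\,\frac{dx\,dt}{t}\ \lesssim\ \int_0^{\ell(Q)}\!\!\|\theta_tb_0\|_{L^2(Q)}^2\tfrac{dt}{t}+\int_0^{\ell(Q)}\!\Bigl(\sum_{k\ge 2}\|\theta_tb_k\|_{L^2(Q)}\Bigr)^{\!2}\tfrac{dt}{t}.
\]
The first summand is dominated by $\||\theta_t\||_{op}^2\|b_0\|_2^2\lesssim \||\theta_t\||_{op}^2\|b\|_\infty^2|Q|$. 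For the tails I would apply Definition \ref{goodODdecay.def} and a weighted Cauchy--Schwarz (with weights $2^{-k/2},2^{k/2}$) to bound the inner sum by $\sum_k 2^k\|\theta_tb_k\|_{L^2(Q)}^2$, which by off-diagonal decay and $\|b_k\|_2^2\lesssim \|b\|_\infty^2|2^{k+1}Q|$ gives
\[
\sum_{k\ge 2}2^{-k(2M+1)}\|b\|_\infty^2\Bigl(\tfrac{t}{\ell(Q)}\Bigr)^{2M+2}|Q|.
\]
Integration $\int_0^{\ell(Q)}(t/\ell(Q))^{2M+2}\,dt/t<\infty$ and the geometric sum in $k$ yield the advertised bound, divided by $|Q|$.

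To pass from simple functions to $b\in L^\infty(\rn)$, I would use the same annular decomposition as a \emph{definition}: the series $\theta_tb_0+\sum_{k\ge 2}\theta_tb_k$ converges absolutely in $L^2(Q)$ for each $t\in(0,\ell(Q))$ by the estimates above (each $b_k\in L^2$), and the limit is independent of the chosen cube $Q$ by a routine consistency check on nested cubes. Simple-function approximation of $b$ in $L^2$-local sense (valid since we only need locally $L^2$ control on each annulus) then transfers the Carleson bound to $b\in L^\infty$ by lower semicontinuity of the Carleson norm along pointwise-a.e.\ limits.

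For the BMO improvement, the crucial observation is that $\theta_t 1=0$ lets us subtract off any constant: for the fixed cube $Q$, write $\theta_tb=\theta_t(b-b_{4Q})$ where $b_{4Q}$ is the average over $4Q$. Decomposing $b-b_{4Q}$ into local and annular pieces as before, I bound $\|(b-b_{4Q})\mathbbm{1}_{4Q}\|_2^2\lesssim|4Q|\,\|b\|_{BMO}^2$ (John--Nirenberg or a direct computation), and for the tails I use
\[
\|(b-b_{4Q})\mathbbm{1}_{2^{k+1}Q\setminus 2^kQ}\|_2^2\ \lesssim\ (1+k^2)|2^{k+1}Q|\,\|b\|_{BMO}^2,
\]
since $|b_{2^{k+1}Q}-b_{4Q}|\lesssim k\|b\|_{BMO}$. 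Feeding this into the same scheme, the $L^2\to L^2$ hypothesis controls the local piece, off-diagonal decay controls the tails, and the extra $k^2$ factor is absorbed by $2^{-k(2M+1)}$.

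The main obstacle I expect is being careful about the fact that $\theta_t b$ is not a priori defined for $b\in L^\infty$ or $b\in BMO$, since neither space sits inside $L^2(\rn)$; the annular series provides the definition, but showing it is independent of the choice of base cube $Q$ and compatible with the simple-function case requires a brief argument. A second technical point is that the weighted Cauchy--Schwarz step requires $M>0$ (or at least the off-diagonal exponent to exceed $1$ after weighting); for $M=0$ one can instead use Minkowski's inequality since $\int_0^{\ell(Q)}(t/\ell(Q))^{M+1}dt/t$ already converges without squaring, giving an alternative route that avoids the $2^k$ weight altogether.
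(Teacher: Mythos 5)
Your proof is correct and follows the same annular-decomposition argument as \cite[Lemma 3.2]{AAAHK} (and \cite{FS}), which is what the paper cites in lieu of giving a proof: split $b$ into a local piece handled by the square function hypothesis and tails handled by off-diagonal decay, define $\theta_t b$ for $b\in L^\infty$ or $b\in BMO$ by the resulting convergent series, and use $\theta_t 1=0$ to replace $b$ by $b-b_{4Q}$ in the BMO case. Two small corrections: in the BMO case it is the square function bound $\||\theta_t\||_{op}$ (not the uniform $L^2\to L^2$ bound) that controls the local piece --- the $L^2\to L^2$ hypothesis enters only to make $\theta_t 1$ well defined (cf.\ Lemma~\ref{AAAHKL3.11.lem}) so that the subtraction is legitimate --- and your weighted Cauchy--Schwarz already converges for $M=0$ since the resulting exponent is $2M+1\ge 1$, so the Minkowski alternative is an unnecessary precaution.
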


\begin{lemma}[{\cite[Lemma 3.11]{AAAHK}}]\label{AAAHKL3.11.lem} 
Suppose that $\{\theta_t\}_{t>0} $ is a family of linear  operators on $L^2(\bb R^n)$ with good off-diagonal decay and which satisfies $\|\theta_t\|_{L^2 \to L^2} \lesssim1$ for all $t>0$. For each $t>0$, let $\A_t$ denote a self-adjoint averaging operator on $L^2(\bb R^n)$, given as $\A_tf=\int_{\bb R^n}f(y)\varphi_t(\cdot,y)\,dy$, whose kernel satisfies 
\[
0 \le \varphi_t(x,y) \lesssim t^{-n}{\bbm1}_{|x - y| \le Ct},\quad\text{and}\quad \int_{\rn} \varphi_t(x,y) \, dy = 1.
\]
Then for each $t>0$ and any $b\in L^{\infty}(\bb R^n)$, the function $\theta_t b$ is well defined as an element of $L^2_{\loc}(\bb R^n)$, and we have that
$$\sup_{t>0} \|(\theta_t b)\A_t f \|_{L^2(\bb R^n)} \lesssim \| b \|_\infty \|f \|_2.$$
\end{lemma}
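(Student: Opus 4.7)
The plan is to first make sense of $\theta_t b$ as a locally $L^2$ function, and then, having obtained uniform-in-$t$ local $L^2$ bounds on $\theta_t b$, to exploit the local nature of the averaging operator $\mathcal{A}_t$ to pair it against $f$ on cubes of side length $\approx t$.

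\textbf{Step 1: Definition of $\theta_t b$ locally.} Fix $t>0$ and any cube $Q \subset \rn$ of side length $\ell(Q) \approx t$. Write
\begin{equation*}
b = b\bbm{1}_{2Q} + \sum_{k\ge 1} b\bbm{1}_{2^{k+1}Q\setminus 2^kQ}.
\end{equation*}
The first piece lies in $L^2(\rn)$, so $\theta_t(b\bbm{1}_{2Q})$ is unambiguously defined via the $L^2$ boundedness of $\theta_t$, and $\|\theta_t(b\bbm{1}_{2Q})\|_{L^2(Q)} \lesssim \|b\|_\infty |Q|^{1/2}$. For each tail piece, good off-diagonal decay (Definition \ref{goodODdecay.def}) gives, for some $M \ge 1$,
\begin{equation*}
\|\theta_t(b\bbm{1}_{2^{k+1}Q\setminus 2^kQ})\|_{L^2(Q)} \lesssim 2^{-nk/2}\bigl( 2^{-k}\bigr)^{M+1} \|b\|_\infty (2^k\ell(Q))^{n/2} \lesssim 2^{-k(M+1)}\|b\|_\infty |Q|^{1/2}.
\end{equation*}
Summing in $k$ and observing that the estimates are consistent under passage to smaller cubes shows that $\theta_t b$ is a well-defined element of $L^2_{\loc}(\rn)$ with
\begin{equation*}
\|\theta_t b\|_{L^2(Q)} \lesssim \|b\|_\infty |Q|^{1/2}, \qquad \ell(Q) \approx t.
\end{equation*}

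\textbf{Step 2: Localization of $\mathcal{A}_t f$ and summing over a grid.} Partition $\rn$ into a grid of disjoint cubes $\{Q_j\}$ of side length $t$. Because $\varphi_t(x,\cdot)$ is supported in $B(x,Ct)$, for each $x\in Q_j$ there is a fixed dilate $Q_j^* := C' Q_j$ (with $C'$ depending only on $C$ and dimension) such that $B(x,Ct) \subset Q_j^*$. Hence by Cauchy--Schwarz and the pointwise bound on $\varphi_t$,
\begin{equation*}
|\mathcal{A}_t f(x)|^2 \lesssim t^{-n}\|f\|_{L^2(Q_j^*)}^2, \qquad x\in Q_j.
\end{equation*}
Combining this with Step 1 applied to $Q_j$ (whose side length is exactly $t$) yields
\begin{equation*}
\int_{Q_j} |\theta_t b|^2 |\mathcal{A}_t f|^2 \, dx \lesssim t^{-n}\|f\|_{L^2(Q_j^*)}^2 \cdot \|b\|_\infty^2 |Q_j| \lesssim \|b\|_\infty^2 \|f\|_{L^2(Q_j^*)}^2.
\end{equation*}
Summing over $j$ and using that the inflated cubes $\{Q_j^*\}$ have bounded overlap (depending only on $C'$ and $n$) gives
\begin{equation*}
\|(\theta_t b)\mathcal{A}_t f\|_{L^2(\rn)}^2 \lesssim \|b\|_\infty^2 \sum_j \|f\|_{L^2(Q_j^*)}^2 \lesssim \|b\|_\infty^2 \|f\|_2^2,
\end{equation*}
uniformly in $t$, as desired.

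\textbf{Main obstacle.} The essential content is Step 1: absent off-diagonal decay, the symbol $\theta_t b$ need not even be meaningfully defined for unbounded $b$. Once that is in place and one records that $|\mathcal{A}_t f|$ is truly localized at scale $t$ (the crucial feature of the kernel hypothesis), the remainder is a pigeonhole over the natural grid at scale $t$; no cancellation or $\theta_t 1 = 0$ hypothesis is used here, and self-adjointness of $\mathcal{A}_t$ is not invoked in this particular estimate (it is relevant in companion applications).
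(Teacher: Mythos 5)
Your proof is correct and follows essentially the same route as the standard argument for this lemma in \cite{AAAHK}: obtain a uniform local $L^2$ bound $\|\theta_t b\|_{L^2(Q)}\lesssim\|b\|_\infty|Q|^{1/2}$ on cubes of scale $t$ by splitting $b$ into a near piece (handled by $L^2$ boundedness) and annular tail pieces (handled by off-diagonal decay), and then pair against $\mathcal{A}_t f$ on a scale-$t$ grid using that $\mathcal{A}_t f|_{Q_j}$ sees only $f|_{Q_j^*}$, with bounded overlap of the $Q_j^*$ closing the sum. One small technical point worth tidying: Definition \ref{goodODdecay.def} only asserts the off-diagonal estimate for $k\ge 2$, so the ``near piece'' should be $b\bbm{1}_{4Q}$ (absorbing both $k=0$ and $k=1$ into the $L^2$-boundedness step), with the tail sum starting at $k\ge 2$; this changes nothing of substance. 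You are also right that neither the self-adjointness of $\mathcal{A}_t$ nor any cancellation hypothesis on $\theta_t$ is needed here.
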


\begin{lemma}[{ \cite[Lemma 3.5]{AAAHK}}]\label{AAAHKL3.5.lem}
Suppose that $\{R_t \}_{t>0}$ is a family of operators  on $L^2(\bb R^n)$ with good off-diagonal decay, and suppose further that $\|R_t\|_{L^2 \to L^2} \lesssim1$ and $R_t 1 = 0$ for all $t>0$ (note that by Lemma \ref{AAAHKL3.11.lem}, $R_t 1$ is defined as an element of $L^2_{\loc}(\bb R^n)$). Then for each $h\in\dt W^{1,2}(\bb R^n)$, we have that
$$\int_{\rn} |R_t h|^2  \lesssim t^2 \int_{\rn} |\nabla_x h|^2.$$ 
If, in addition, $\|R_t\dv_x \|_{L^2 \to L^2} \lesssim\frac1{t}$, then we also have for each $f\in L^2(\bb R^n)$ that
$$\int_{\ree_+} |R_t f(x) |^2 \, \frac{dx \, dt}{t} \lesssim \|f \|_2^2.$$
\end{lemma}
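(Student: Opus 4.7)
The plan is to prove the two estimates in order, with the second built upon the first via a Calderón reproducing / quasi-orthogonality argument.

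For the first estimate, I would exploit the cancellation $R_t 1 = 0$. Fix $t>0$ and tile $\bb R^n$ by a grid $\mathbb D_t$ of cubes of side length $t$ (chosen compatibly with the constants in the off-diagonal decay). For each $Q \in \mathbb D_t$, set $c_Q := \dashint_Q h$; since $R_t 1 = 0$ as an element of $L^2_{\loc}(\bb R^n)$ by Lemma \ref{AAAHKL3.11.lem}, on $Q$ we may replace $R_t h$ by $R_t(h - c_Q)$. Decompose $\bb R^n = 4Q \cup \bigcup_{k \ge 2}(2^{k+1}Q \setminus 2^k Q)$. On the near piece $4Q$, the $L^2$-boundedness of $R_t$ together with Poincaré's inequality bounds the contribution to $\|R_t h\|_{L^2(Q)}$ by a multiple of $t\|\nabla h\|_{L^2(4Q)}$. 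On each far annular piece, invoke the good off-diagonal decay of Definition \ref{goodODdecay.def} (with $M$ fixed large enough to beat the $2^{nk}$ growth of annuli), and use Poincaré's inequality on $2^{k+1}Q$ together with a telescoping estimate to pass from $c_Q = \bar h_Q$ to $\bar h_{2^{k+1}Q}$, obtaining a geometrically decaying prefactor in $k$ times $t\|\nabla h\|_{L^2(2^{k+1}Q)}$. Summing first in $k$ and then over $Q \in \mathbb D_t$ (the dilates $2^{k+1}Q$ having bounded overlap uniform in $k$) yields the claimed $L^2$ bound on $R_t h$.

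For the second estimate, I would combine the first estimate with the new hypothesis $\|R_t \div_x\|_{L^2 \to L^2} \lesssim 1/t$ through a Calderón reproducing formula. Let $\psi \in \n S(\bb R^n)$ be a radial Schwartz function with $\int\psi = 0$, set $\psi_s(x) := s^{-n}\psi(x/s)$, and let $P_s f := \psi_s * f$; a suitable companion family $\widetilde P_s$ of the same type yields the reproducing identity $f = \int_0^\infty \widetilde P_s P_s f \, ds/s$ together with the Plancherel-type bound $\int_0^\infty \|P_s f\|_2^2 \, ds/s \lesssim \|f\|_2^2$. The problem reduces to the quasi-orthogonality estimate
\[
\|R_t \widetilde P_s\|_{L^2 \to L^2} \lesssim \min(t/s, \, s/t).
\]
When $s \ge t$, use that $\widetilde P_s f \in \dt W^{1,2}(\bb R^n)$ with $\|\nabla \widetilde P_s f\|_2 \lesssim s^{-1}\|f\|_2$, and apply the first part of the lemma to get $\|R_t \widetilde P_s f\|_2 \lesssim (t/s)\|f\|_2$. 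When $s \le t$, use $\int\widetilde\psi = 0$ to factor $\widetilde\psi_s = s\,\div_x \vec\phi_s$, where convolution with $\vec\phi_s$ is $L^2$-bounded uniformly in $s$; then $R_t \widetilde P_s f = s\, R_t \div_x(\vec\phi_s * f)$, and the assumed bound on $R_t \div_x$ produces $\|R_t \widetilde P_s f\|_2 \lesssim (s/t)\|f\|_2$. Writing $R_t f = \int_0^\infty R_t \widetilde P_s(P_s f)\, ds/s$, applying Minkowski's inequality, and then invoking Schur's test on $((0,\infty), dt/t)$ with the bounded kernel $K(t,s) = \min(t/s, s/t)$ gives $\||R_t f\||_+^2 \lesssim \int_0^\infty \|P_s f\|_2^2 \, ds/s \lesssim \|f\|_2^2$.

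The most delicate issue is in the first part: $R_t h$ must be given meaning even though $h \in \dt W^{1,2}(\bb R^n)$ need not lie in $L^2(\bb R^n)$. The annular decomposition itself provides the definition, since each piece $(h-c_Q)\bbm 1_{2^{k+1}Q \setminus 2^k Q}$ does belong to $L^2(\bb R^n)$ by Poincaré, and the resulting series converges in $L^2(Q)$ thanks to the geometric decay supplied by off-diagonal decay. The compatibility of this definition with the $L^2_{\loc}$ realization of $R_t 1 = 0$ from Lemma \ref{AAAHKL3.11.lem} is what legitimates the replacement $R_t h = R_t(h - c_Q)$; beyond this interpretive care and the bookkeeping for the telescoping, the remaining steps follow a standard dyadic-plus-reproducing-formula recipe.
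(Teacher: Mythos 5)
Your proposal is correct and reproduces the standard argument behind \cite[Lemma 3.5]{AAAHK}; the paper under review only cites this lemma and does not supply its own proof, so there is no internal argument to compare against. Both halves of your argument---the dyadic annular decomposition using $R_t 1=0$ together with Poincar\'e and a telescoping bound on the averages $\bar h_{2^jQ}$, and then the Calder\'on reproducing formula combined with Schur's test on the quasi-orthogonality kernel $\min(t/s,s/t)$---follow the canonical route, and you correctly flag the one point that requires care: the parameter $M$ in Definition~\ref{goodODdecay.def} must be taken larger than $n$ so that the $2^{-kM}$ decay from the off-diagonal estimate absorbs both the $\approx 2^{k(n/2+1)}$ growth coming from the telescoping and the $\approx 2^{nk}$ overlap of the dilated cubes $\{2^{k+1}Q\}_{Q\in\mathbb D_t}$ (and, since the off-diagonal estimate is only assumed for $t\lesssim_M\ell(Q)$, the grid should be chosen with $\ell(Q)\approx C_M t$ rather than exactly $t$, a harmless adjustment).
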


The following definition is important in establishing quasi-orthogonality estimates
(compare to the notion of an $\epsilon$-family in \cite{CJ}).

\begin{definition}[CLP Family]\label{CLP.def}
We say that a family of convolution operators  on $L^2(\bb R^n)$, $\{\m Q_s\}_{s > 0}$, is a \emph{CLP family} (``Calder\'on-Littlewood-Paley'' family), if there exist $\sigma > 0$ and $\psi \in L^1(\bb R^n)$ satisfying
\[
|\psi(x)| \lesssim (1 + |x|)^{-n-\sigma},\qquad\text{and}\qquad |\hat\psi(\xi)|\lesssim \min(|\xi|^\sigma, |\xi|^{-\sigma}),
\]
such that the following four statements hold.
\begin{enumerate}[i)]
\item The representation $\m Q_sf = s^{-n}\psi(\cdot/s) \ast f$ holds for each $f\in L^2(\bb R^n)$.
\item For each $f\in L^2(\bb R^n)$, we have control of the following $L^2$ norms uniformly in $s$:
\[
\sup_{s > 0}\left(\|\m Q_s f\|_2  +  \|s\nabla\m Q_s f\|_2\right) \lesssim \|f\|_2.
\]
\item For each $f\in L^2(\bb R^n)$, we have the square function estimate
\[
\int_0^\infty \int_{\rn} |\m Q_sf(x)|^2 \, \frac{dx \, ds}{s} \lesssim \|f\|^2_2.
\]
\item   Let $I:L^2(\bb R^n)\ra L^2(\bb R^n)$ be the identity operator. The equation
\[
\int_{0}^\infty \m Q_s^2 \, \frac{ds}{s} = I
\]
holds in the sense that   the  Bochner  integrals $\int_{\delta}^R\m Q_s^2\frac{ds}s$ converge to $I$ in the strong operator topology on $\B(L^2(\bb R^n))$ as $\delta\ra0$ and $R\ra\infty$. 
\end{enumerate}
\end{definition}

\begin{proposition}[Qualitative mappings]\label{CLPW12.prop}
Let $f\in \yotn$ and $\{\qs\}_{s>0}$ be either

\begin{enumerate}[a)]
\item A standard Littlewood-Paley family as in Definition \ref{CLP.def}, with kernel $\psi$, with the additional condition that there exists $\sigma>1$ such that $|\hat\psi(\xi)|\lesssim \min( |\xi|^\sigma, |\xi|^{-\sigma})$.
\item $\qs= I-P_s$, where $P_s$ is a nice approximate identity.
\end{enumerate} 
Then for all $s>0$, we have that $\qs f\in W^{1,2}(\rn)$.
\end{proposition}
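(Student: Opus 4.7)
The plan is to verify separately that $\qs f \in L^2(\rn)$ and $\nabla \qs f \in L^2(\rn)$. The gradient bound is essentially automatic in both cases: since $\qs$ is a convolution operator, $\nabla \qs f = \qs(\nabla f)$, and $\qs$ is bounded on $L^2(\rn)$---either by part (ii) of Definition \ref{CLP.def} in case (a), or by Young's inequality applied to $\nabla\qs f = \nabla f - \varphi_s \ast \nabla f$ in case (b). Hence $\|\nabla \qs f\|_2 \lesssim \|\nabla f\|_2 < \infty$. The substantive content of the proposition is the $L^2$ bound on $\qs f$ itself, which is not immediate because $f$ only lies in $L^{\frac{2n}{n-2}}(\rn)$ a priori.

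For this, I would first establish the quantitative estimate $\|\qs f\|_2 \lesssim s\|\nabla f\|_2$ for $f \in C_c^\infty(\rn)$ via Plancherel, and then extend to general $f \in Y^{1,2}(\rn)$ by density (valid for $n \ge 3$, in view of the Sobolev embedding and the identity $Y^{1,2}_0 = Y^{1,2}$ on the whole space). For Schwartz $f$ in case (a), Plancherel yields
\begin{equation*}
\|\qs f\|_2^2 \;\approx\; \int_\rn |\hat\psi(s\xi)|^2 |\hat f(\xi)|^2 \, d\xi \;=\; \int_\rn \frac{|\hat\psi(s\xi)|^2}{|s\xi|^2}\, s^2 |\xi|^2 |\hat f(\xi)|^2 \, d\xi,
\end{equation*}
and the key observation is that the hypothesis $\sigma > 1$, combined with $|\hat\psi(\eta)| \lesssim \min(|\eta|^\sigma, |\eta|^{-\sigma})$, forces $|\hat\psi(\eta)|/|\eta|$ to be uniformly bounded on $\rn$, producing $\|\qs f\|_2^2 \lesssim s^2 \|\nabla f\|_2^2$. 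In case (b), writing $\widehat{\qs f}(\xi) = (1 - \hat\varphi(s\xi))\hat f(\xi)$, the regularity of $\hat\varphi$ near the origin (with $\hat\varphi(0)=1$) yields $|1 - \hat\varphi(\eta)| \lesssim \min(|\eta|, 1)$, and splitting the frequency integral at $|s\xi|=1$---using $|1-\hat\varphi(s\xi)|^2|\hat f|^2 \lesssim s^2|\xi|^2|\hat f|^2$ on the low-frequency piece and $|\hat f|^2 \le s^2|\xi|^2|\hat f|^2$ on the high-frequency piece---produces the same bound $\|\qs f\|_2 \lesssim s\|\nabla f\|_2$.

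The density extension is routine: if $f_k \in C_c^\infty(\rn)$ converges to $f$ in $Y^{1,2}(\rn)$, then the quantitative bound makes $\{\qs f_k\}$ Cauchy in $L^2(\rn)$, while Young's inequality guarantees $\qs f_k \to \qs f$ in $L^{\frac{2n}{n-2}}(\rn)$; since the two limits must agree as measurable functions, $\qs f \in L^2(\rn)$ with the desired bound. The main subtlety I anticipate is that for general $f \in Y^{1,2}(\rn)$, the Fourier transform $\hat f$ is only a tempered distribution (possibly singular at the origin), so direct Plancherel manipulations on $\hat\psi(s\cdot)\hat f$ would require extra care; the density reduction to $C_c^\infty(\rn)$ sidesteps this issue entirely, since then $\hat f$ is a Schwartz function and every manipulation is unambiguously legitimate.
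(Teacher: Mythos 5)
Your proof is correct and takes essentially the same approach as the paper: Plancherel, extract a factor $s|\xi|$ from $\hat\psi(s\xi)$ (resp. $1-\hat\varphi(s\xi)$) to see the multiplier is bounded, and absorb the remaining $s^2|\xi|^2|\hat f(\xi)|^2$ into $\|\nabla f\|_2^2$. The paper's proof is terser and leaves both the gradient estimate $\nabla\m Q_s f = \m Q_s(\nabla f)\in L^2$ and the justification of the Plancherel manipulation for non-Schwartz $f\in\yotn$ implicit, both of which you address explicitly (the latter via density using $Y^{1,2}_0(\ree)=Y^{1,2}(\ree)$), so your write-up is somewhat more careful but not genuinely different.
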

\begin{proof}
In either case, via Plancherel's Theorem, it will suffice to estimate the $L^2$ norm of $\widehat{\m Q_sf}$. In case a), by basic properties of the Fourier Transform, we see that 
\begin{equation*}
\int_\rn |\widehat{\m Q_sf} (\xi)|^2 \, d\xi =\int_\rn |\hat{\psi}(s\xi)|^2 |\hat{f}(\xi)|^2\, d\xi \lesssim \int_\rn \min( |s\xi|^{\sigma-1}, |s\xi|^{-\sigma-1})^2 |\xi|^2|\hat{f}(\xi)|^2\, d\xi,
\end{equation*}
whence the desired conclusion follows in this case. For case b), we similarly compute, using Plancherel's Theorem and the Fundamental Theorem of Calculus, that if $\varphi$ is the radial kernel of the nice approximate identity $P_s$,
\begin{multline*}
\int_\rn |\widehat{\m Q_sf}(\xi)|^2\, d\xi   = \int_\rn |1-\hat{\varphi}(s|\xi|)|^2|\hat{f}(\xi)|^2\, d\xi
  = \int_\rn |\hat{f}(\xi)|^2 \Big| \int_0^{s|\xi|} \hat{\varphi}'(\tau)\, d\tau \Big|^2\, d\xi\\
  \leq \int_\rn s^2|\xi|^2|\hat{f}(\xi)|^2 \dashint_0^{s|\xi|} |\hat{\varphi}'(\tau)|^2\, d\tau \, d\xi
  \leq s^2  \| \hat{\varphi}'\|_{L^\infty(\rn)} \int_\rn |\xi|^2|\hat{f}(\xi)|^2\, d\xi. 
\end{multline*}\end{proof}

\section{Elliptic theory estimates}\label{PDEestimates.sec}

In this section, we establish several estimates for the operators under consideration, which are `standard' in the elliptic theory. We begin with Caccioppoli-type estimates.

\subsection{Caccioppoli-type inequalities}

Let us first show

\begin{proposition}[Caccioppoli inequality, \cite{DHM}]\label{classCaccioppoli.prop} Let $\Omega\subset\bb R^{n+1}$ be an open set. 
Suppose that $u\in W^{1,2}_{\loc}(\om)$, $f\in L^2_{\loc}(\Omega)$, $\vec F\in L^2_{\loc}(\Omega)^{n+1}$, and that $\cL u=f-\div \vec F$ in $\om$ in the weak sense. Then, for every ball $B\subset 2B\subset \om$, the estimate
\begin{equation*}
\dint_B |\nabla u|^2 \lesssim \dint_{2B} \Big( \dfrac{1}{r(B)^2}|u|^2+ |\vec F|^2 + r(B)^2|f|^2\Big), 
\end{equation*}
holds, with the implicit constant  depending only on $n, \lambda, \Lambda$.
\end{proposition}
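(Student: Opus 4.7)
The plan is to run the standard Caccioppoli test-function argument: test the weak formulation (Definition 2.8) against $\varphi = u\eta^2$, where $\eta\in C_c^\infty(2B)$ satisfies $\eta\equiv1$ on $B$, $0\le\eta\le1$, and $|\nabla\eta|\lesssim 1/r(B)$. Expanding $\nabla(u\eta^2)=\eta^2\nabla u + 2u\eta\nabla\eta$, taking real parts, and invoking the ellipticity condition \eqref{elliptic}, we obtain
\[
\lambda\dint |\eta\nabla u|^2 \le \real\dint \bigl(A\nabla u\cdot\overline{\nabla(u\eta^2)} + B_1 u\cdot\overline{\nabla(u\eta^2)} + B_2\cdot\nabla u\,\overline{u\eta^2}\bigr) + \bigl|\!\dint(f\overline{u\eta^2}+\vec F\cdot\overline{\nabla(u\eta^2)})\bigr|.
\]
The classical $A$-, $\vec F$-, and $f$-terms are bounded by Cauchy--Schwarz and Young with a small parameter $\delta>0$, producing the desired $\int_{2B}(r^{-2}|u|^2+|\vec F|^2+r^2|f|^2)$ up to a term $\delta\int|\eta\nabla u|^2$ which will be absorbed.

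The genuinely new difficulty lies in the $B_1$- and $B_2$-terms, since $B_i\in L^n(\rn)$ is only scaling-critical. The key observation is that $B_1,B_2$ are $t$-independent, so they belong to the mixed-norm space $L^\infty_t L^n_x(\ree)$ with norm $\|B_i\|_{L^n(\rn)}$. For a representative term such as $\int B_1 u\cdot\overline{\eta^2\nabla u}$, apply mixed-norm H\"older on $\ree=\rn\times\RR$ with exponents $(L^\infty_t L^n_x,\, L^2_t L^{2n/(n-2)}_x,\, L^2_t L^2_x)$ to bound it by
\[
\|B_1\|_n\,\|u\eta\|_{L^2_t L^{2n/(n-2)}_x}\,\|\eta\nabla u\|_{L^2(\ree)}.
\]
Since $n\ge 3$ and $u\eta$ has compact horizontal support for each fixed $t$, the slice-wise Sobolev embedding $\dt W^{1,2}(\rn)\hookrightarrow L^{2n/(n-2)}(\rn)$ yields $\|u\eta\|_{L^2_t L^{2n/(n-2)}_x}\lesssim \|\nabla_x(u\eta)\|_{L^2(\ree)}\le \|\eta\nabla u\|_2 + \|u\nabla\eta\|_2$. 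Young's inequality then dominates the whole expression by $\delta\int|\eta\nabla u|^2 + C_\delta\|B_1\|_n^2 r^{-2}\int_{2B}|u|^2$. The analogous argument, with the same Sobolev-on-slices trick, handles $\int B_1|u|^2\eta\nabla\eta$ and the two $B_2$-terms.

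Collecting estimates, choosing $\delta$ small, and using the smallness $\|B_i\|_n\le\varepsilon_0=\varepsilon_0(n,\lambda,\Lambda)$ (recall that $V$ has already been absorbed into $B_1$ via the reduction before Proposition \ref{laxmilgram.prop}), all $\delta\int|\eta\nabla u|^2$ and $\|B_i\|_n\int|\eta\nabla u|^2$ contributions are absorbed into the LHS, leaving precisely the desired right-hand side $\int_{2B}(r^{-2}|u|^2+|\vec F|^2+r^2|f|^2)$. The main technical obstacle is the critical scaling of $B_1,B_2$: there is no slack in H\"older's inequality, and the argument crucially uses the $t$-independence of the coefficients to reduce to the $n$-dimensional Sobolev inequality on each horizontal slice, rather than the $(n+1)$-dimensional one, which would give only the weaker exponent $2(n+1)/(n-1)$.
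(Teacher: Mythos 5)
Your argument is correct and follows essentially the same route as the paper's proof: test against $u\eta^2$, use ellipticity, bound the classical terms by Cauchy's inequality, and handle the critical lower-order terms by applying H\"older in $x$ on each horizontal slice followed by the $n$-dimensional Sobolev embedding, exploiting $t$-independence of $B_1,B_2$, and finally absorb via the smallness $\|B_i\|_n<\varepsilon_0(n,\lambda,\Lambda)$. Your framing of the slice-wise estimate as a mixed-norm H\"older inequality in $L^\infty_t L^n_x\cdot L^2_t L^{2n/(n-2)}_x\cdot L^2_t L^2_x$ is just a repackaging of the iterated integral the paper writes out explicitly, and your closing remark correctly identifies why the $n$-dimensional (rather than $(n+1)$-dimensional) Sobolev inequality is the crucial ingredient.
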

The above estimate is a particular case of a Caccioppoli inequality obtained in a very general setting of elliptic systems in \cite{DHM}. Since our techniques will be exploited in several calculations later, we present here a self-contained proof. 
\begin{proof} 
	
	 Consider $\eta\in C_c^\infty(2B)$ such that $0\leq \eta \leq 1$, $\eta\equiv 1$ in $B$ and $|\nabla \eta|\lesssim r(B)^{-1}$. Note that $u\eta^2$ is a valid testing function in (\ref{solndef.eq}), and therefore we obtain that
\begin{multline*}
\dint_{\ree} \lambda|\nabla u|^2 \eta^2 \leq \dint_{\ree}A\nabla u\cdot \overline{\nabla u}\eta^2\\
=\dint_{\ree} \Big( -2(A\nabla u\cdot\nabla\eta) \eta \bar{u}  + B_1 u\cdot \overline{\nabla (u\eta^2)} - B_2\cdot \nabla u \overline{u\eta^2} \Big) \\
\quad + \dint_{\ree}\Big(\vec F\cdot \overline{\nabla (u\eta^2)} +f\overline{u\eta^2}\Big)\\
=: I +II +III + IV+V.
\end{multline*}
To handle the term $I$, we use Cauchy's inequality with $\ep>0$ and the boundedness of $A$ to obtain that
\begin{equation*}
|I|\leq 2\Lambda \dint_{\ree} |\nabla u|\eta |\nabla \eta||u|\leq \Lambda \varepsilon \dint_{\ree} |\nabla u|^2\eta^2 + \dfrac{\Lambda}{\varepsilon} \dint_{\ree} |u|^2|\nabla \eta|^2.
\end{equation*}
with $\varepsilon$ small enough (depending only on $\lambda, \Lambda$) that we can hide the first term. The second term is seen to be of a desired form after using the bound on $|\nabla\eta|$.

To handle the term $III$, we use the H\"older and Sobolev inequalities in $\rn$ coupled with the $t-$independence of $B_2$, as follows:
\begin{multline*}
|III| \leq \int_{-\infty}^\infty\int_{\rn} |B_2|(|\nabla u|\eta)|u|\eta\, dxdt\\
 \leq \int_{-\infty}^\infty \| B_2\|_{\lnrn} \|\eta\nabla u\|_{L^2(\rn)} \| u\eta\|_{L^{\frac{2n}{n-2}}(\rn)}\, dt\\
 \lesssim \| B_2\|_{\lnrn} \int_{-\infty}^\infty\|\eta \nabla u \|_\ltrn \|\nabla_{\|} (u\eta)\|_\ltrn \, dt\\
 \leq \| B_2\|_\lnrn \int_{-\infty}^\infty \Big( \|\eta \nabla u  \|_\ltrn^2+ \|\eta \nabla u \|_\ltrn\|u\nabla \eta\|_\ltrn\Big)\, dt.
\end{multline*} 
Using the Cauchy inequality on the second term, we arrive at the estimate
\begin{equation*}
|III|\lesssim \| B_2\|_n \dint_{\ree} \left( |\nabla u|^2 \eta^2+ |u|^2 |\nabla \eta|^2\right). 
\end{equation*}
If we choose $\|B_2\|_n<\varepsilon_0$ (see Proposition \ref{laxmilgram.prop}) with $\varepsilon_0$ small enough (depending only on $n,\lambda, \Lambda$), we can hide the first term, while the second term is of a desired form.

To handle the term $II$, notice that the product rule allows us to write the estimate
\begin{equation*}
|II|\leq \dint_{\ree} \left( |B_1||u||\nabla u|\eta^2+ 2|B_1||u|^2\eta|\nabla \eta|\right)=:II_1+II_2.
\end{equation*}
The first term is handled similarly as $III$. As for $II_2$, we appeal again to the H\"older and Sobolev inequalities, together with the $t$-independence of $B_1$, to see that
\begin{multline*}
|II|  \lesssim \int_{-\infty}^\infty \| B_1\|_\lnrn \| u\nabla \eta\|_\ltrn \| u\eta\|_{L^{2n/n-2}(\rn)}\, dt\\
  \lesssim \| B_1\|_\lnrn \int_{-\infty}^\infty \| u\nabla \eta\|_\ltrn \| \nabla_{\|} (u\eta)\|_\ltrn\, dt,
\end{multline*}
and this last expression may be handled in the same way as in $II$.

For the term $IV$, we use the product rule to obtain that
\begin{equation*}
\begin{split}
|IV| & \leq \dint_{\ree} \big( |\vec F|\eta |\nabla u|\eta + 2|\vec F|\eta |u||\nabla \eta|\big)=: IV_1+IV_2.  
\end{split}
\end{equation*}
The first term may be estimated with the Cauchy's inequality with $\varepsilon$:
\begin{equation*}
IV_1 \leq \dint_{2B} \Big( \dfrac{1}{\varepsilon}|\vec F|^2 + \varepsilon|\nabla u|^2\eta^2\Big),
\end{equation*}
and we can hide the second term. The term $IV_2$, since after using the Cauchy inequality, both terms are of a desired form:
\begin{equation*}
IV_2 \leq \dint_{2B} \big( |\vec F|^2 + |u|^2|\nabla \eta|^2\big).
\end{equation*}

Combining these estimates gives
\begin{equation*}
\dint_B |\nabla u |^2  \leq \dint_{\ree} |\nabla u|^2 \eta^2  \lesssim \dfrac{1}{r(B)^2}\dint_{2B} \left( |u|^2+ |\vec F|^2\right)  +|V|. 
\end{equation*}

To handle the term $V$, we use the Cauchy inequality to obtain that
\begin{equation*}
|V| \leq \dint_{\ree} |f||u|\eta^2  \leq \dint_{2B} \Big(r(B)^2|f|^2 + \dfrac{1}{r(B)^2} |u|^2\Big) .
\end{equation*}
This completes the proof.\end{proof}

\begin{remark}[$Y^{1,p}$ form a complex interpolation scale]
In the case of purely second order operators (that is, $B_1=B_2=0$), we may exploit the fact that constants are always null-solutions. Applying the Poincar\'e inequality, we obtain a weak reverse H\"older inequality for $\nabla u$, which in particular implies $L^p$ integrability for the gradient, for some $p>2$. We do not obtain the analogous estimate here, but rather a suitable substitute. More precisely, we shall muster an $L^p$ version of the Caccioppoli inequality. In order to prove this result, we remark that the spaces $Y^{1,p}(\ree)$ and their dual spaces, $(Y^{1,p})^*$,  form a complex interpolation scale, with
\[
[Y^{1,p_1}, Y^{1,p_2}]_{\theta} = Y^{1,p_\theta}, \quad \frac1{p_\theta} = \frac{1 -\theta}{p_1} + \frac{\theta}{p_2},
\]
for $\theta \in (0,1)$ and $1 < p_1 < p_2 < n$. We may show this fact by gathering the following two ingredients. First, the homogeneous spaces $\dt{W}^{1,p}$ form a complex interpolation scale (see \cite{trieb}). Next, one uses that the map that sends an element in $\dt{W}^{1,p}$ to its unique representative in $Y^{1,p}$ is a `retract' (see \cite[Lemma 7.11]{KMM} and the discussion preceding it). Thus, we employ \cite[Lemma 7.11]{KMM} and conclude that the spaces $Y^{1,p}$ form a complex interpolation scale. The fact that $(Y^{1,p})^*$ form a complex interpolation scale is a general consequence of the interpolation scale for $Y^{1,p}$; see, for instance, \cite[Theorem 4.5.1]{Ber-Lof}.
\end{remark}

The $L^p$ Caccioppoli inequality will also make use of the well-known lemma of {\v{S}}ne{\u{\ii}}berg \cite{Snei}. The (explicitly) quantitative version stated here appears in \\ \cite{ABES-KMS}. 

\begin{theorem}[{\v{S}}ne{\u{\ii}}berg's Lemma {\cite[Theorem A.1]{ABES-KMS},\cite{Snei}}]
\label{thm:Sneiberg}
Let \\$\cl{X} = (X_0, X_1)$ and $\cl{Y} = (Y_0, Y_1)$ be interpolation couples of  Banach spaces, and $T \in \B(\cl{X}, \cl{Y})$. Suppose that for some $\theta^* \in (0,1)$ and some $\kappa>0$, the lower bound $ \|T x\|_{Y_{\theta^*}} \geq \kappa \|x\|_{X_{\theta^*}}$ holds for all $x\in X_{\theta^*}$. Then the following statements are true.

\begin{enumerate}[i)]
 \item Given $0 < \eps < 1/4$, the lower bound $ \|T x\|_{Y_\theta} \geq \eps \kappa \|x\|_{X_\theta}$ holds for all $x\in X_{\theta}$, provided that $|\theta - \theta^*| \leq \frac{\kappa(1-4\eps)\min \{\theta^*, 1- \theta^*\}}{3\kappa + 6M}$, where $M = \max_{j=0,1} \|T\|_{X_j \to Y_j}$.
 \item If $T: X_{\theta^*} \to Y_{\theta^*}$ is invertible, then the same is true for $T: X_{\theta} \to Y_{\theta}$ if $\theta$ is as in (i). The inverse mappings agree on $X_{\theta} \cap X_{\theta^*}$ and their norms are bounded by $\frac1{\eps \kappa}$.
\end{enumerate}
\end{theorem}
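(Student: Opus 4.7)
The proof rests on the Calder\'on characterization of the complex interpolation spaces $X_\theta = [X_0, X_1]_\theta$ via admissible holomorphic functions on the strip $S = \{z \in \CC : 0 < \real z < 1\}$. Concretely, $X_\theta$ is realized as the set of values $f(\theta)$ with $f$ ranging over the Calder\'on family $\mathcal{F}(X_0, X_1)$---the bounded continuous maps $\overline{S} \to X_0 + X_1$ that are holomorphic on $S$ and satisfy $f(j + it) \in X_j$ with $\|f\|_{\mathcal{F}(X)} := \max_{j=0,1} \sup_{t \in \mathbb{R}} \|f(j+it)\|_{X_j} < \infty$---and the norm $\|x\|_{X_\theta}$ is the infimum of $\|f\|_{\mathcal{F}(X)}$ over all such representatives. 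The operator $T$ acts in a holomorphic family: $f \mapsto Tf$ maps $\mathcal{F}(X_0, X_1)$ into $\mathcal{F}(Y_0, Y_1)$ with norm bounded by $M$.

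For part (i), I would fix $x \in X_\theta$ and establish $\|x\|_{X_\theta} \leq (\eps\kappa)^{-1} \|Tx\|_{Y_\theta}$ by exhibiting a particular admissible extension of $x$ with controlled $\mathcal{F}(X)$-norm. Given a near-optimal extension $g \in \mathcal{F}(Y_0, Y_1)$ of $Tx$ with $\|g\|_{\mathcal{F}(Y)} \leq (1+\delta) \|Tx\|_{Y_\theta}$, I would construct $f \in \mathcal{F}(X_0, X_1)$ iteratively so that $Tf = g$ (up to a vanishing residual) and $f(\theta) = x$. Starting from any admissible lift $f_0$ of $x$, one corrects the residual $g - Tf_n$ through increments $h_n \in \mathcal{F}(X_0, X_1)$ vanishing at $\theta$, chosen so that $Th_n$ cancels the residual at $\theta^*$ using the hypothesis there. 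A Schwarz--Pick-type estimate, obtained by dividing by a conformal map $\varphi: S \to \mathbb{D}$ with $\varphi(\theta^*) = 0$, yields the key contraction bound
\[\|F(\theta)\|_{Y_\theta} \leq \frac{|\theta - \theta^*|}{\min\{\theta^*, 1-\theta^*\}}\|F\|_{\mathcal{F}(Y)}\qquad\text{whenever } F(\theta^*)=0,\]
and summing the geometric series of increments $h_n$ yields the desired $f$ as long as the ratio $M|\theta-\theta^*|/(\kappa \min\{\theta^*, 1-\theta^*\})$ is sufficiently small, which is precisely the quantitative distance condition in the statement.

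Part (ii) follows by combining the lower bound from (i), which already gives injectivity of $T: X_\theta \to Y_\theta$ with closed range, together with a parallel iterative construction of a right inverse at $\theta$: for $y \in Y_\theta$, pick an admissible extension $g \in \mathcal{F}(Y_0, Y_1)$ of $y$, set $x_0 := T^{-1} g(\theta^*) \in X_{\theta^*}$ (which is now available since $T$ is invertible at $\theta^*$), extend $x_0$ to $f_0 \in \mathcal{F}(X_0, X_1)$, and iterate the residual-correction scheme on $r_1 := g - Tf_0$. The same Schwarz--Pick contraction argument controls convergence. Taking $x = f(\theta)$ gives $Tx = y$, and the resulting bound $\|x\|_{X_\theta} \lesssim \|y\|_{Y_\theta}$ combined with the injective lower bound from (i) yields invertibility together with the norm estimate $\|T^{-1}\|_{X_\theta \to Y_\theta} \leq 1/(\eps\kappa)$.

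The principal obstacle, and the source of the explicit constants in the statement, is the multiplier construction used to extend a pointwise solution at $\theta^*$ into an admissible function on the whole strip whose $\mathcal{F}$-norm is controlled by a uniform multiple of the pointwise norm at $\theta^*$. Polynomial or linear multipliers destroy admissibility because they fail to remain bounded along the imaginary direction, so one must instead build extensions out of conformal maps between $S$ and $\mathbb{D}$ together with Blaschke-type factors; this pays a cost proportional to $1/\min\{\theta^*, 1-\theta^*\}$ at each step. Tracking these costs through the iteration, together with the factor $(1-4\eps)$ that ensures room for a contraction margin, reproduces the precise denominator $3\kappa + 6M$ and the distance-to-boundary weighting in the theorem.
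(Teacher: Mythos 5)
The paper does not prove this theorem; it quotes it from \cite{ABES-KMS} and \cite{Snei}, so there is no internal argument to compare against. Your Calder\'on framework and Schwarz--Pick mechanism are the right tools, and your iterative scheme is correct in spirit for the surjectivity half of (ii). Your plan for (i), however, has a genuine gap. You propose to correct the residual $g-Tf_n$ by increments $h_n$ ``chosen so that $Th_n$ cancels the residual at $\theta^*$ using the hypothesis there,'' but the only hypothesis at $\theta^*$ in (i) is the lower bound $\|Tw\|_{Y_{\theta^*}}\ge\kappa\|w\|_{X_{\theta^*}}$, which gives injectivity with closed range but not surjectivity; there is in general no $h$ with $Th(\theta^*)$ equal to a prescribed element of $Y_{\theta^*}$, so the correction step cannot be performed. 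The availability of that solving step is precisely what separates (ii) from (i). There is also a more basic structural problem: any $f\in\mathcal{F}(X_0,X_1)$ with $f(\theta)=x$ automatically satisfies $\|f\|_{\mathcal{F}(X)}\ge\|x\|_{X_\theta}$, so if $\|x\|_{X_\theta}$ is normalized to $1$, no extension of $x$ can have small $\mathcal{F}(X)$-norm, and the lower bound cannot be obtained by exhibiting one; nor does a limiting relation $Tf=g$ bound $\|f\|_{\mathcal{F}(X)}$ by $\|g\|_{\mathcal{F}(Y)}$, since $T$ carries no lower bound on the endpoint spaces $X_0,X_1$.

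The lower bound (i) is proved directly and without solving for preimages. Normalize $\|x\|_{X_\theta}=1$; take a near-optimal $f\in\mathcal{F}(X_0,X_1)$ with $f(\theta)=x$ and a near-optimal $g\in\mathcal{F}(Y_0,Y_1)$ with $g(\theta)=Tx$. Apply the Schwarz--Pick estimate to $Tf-g$, which vanishes at $\theta$, to bound $\|(Tf-g)(\theta^*)\|_{Y_{\theta^*}}$ by $\rho\,\|Tf-g\|_{\mathcal{F}(Y)}$ with $\rho$ the pseudohyperbolic distance from $\theta$ to $\theta^*$; the hypothesis at $\theta^*$ then gives $\|f(\theta^*)\|_{X_{\theta^*}}\le\kappa^{-1}\bigl(\|g(\theta^*)\|_{Y_{\theta^*}}+\rho\|Tf-g\|_{\mathcal{F}(Y)}\bigr)$, which is small when $\|Tx\|_{Y_\theta}$ and $\rho$ are small. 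Next choose a near-optimal $u\in\mathcal{F}(X_0,X_1)$ with $u(\theta^*)=f(\theta^*)$ and apply Schwarz--Pick to $f-u$, which vanishes at $\theta^*$, to get $1=\|x\|_{X_\theta}\le\|u(\theta)\|_{X_\theta}+\rho\|f-u\|_{\mathcal{F}(X)}$. Unwinding produces $\|Tx\|_{Y_\theta}\ge\kappa-\rho(\kappa+M)$ up to higher-order corrections, and the quantitative threshold in the statement follows. For (ii), once (i) supplies injectivity with closed range, your iteration using $T^{-1}$ at $\theta^*$ to build approximate preimages, with the Schwarz--Pick contraction shrinking the residual at $\theta$ geometrically, is a correct way to obtain surjectivity.
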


Using the above result, we can easily obtain

\begin{lemma}[Invertibility of $\cL$ in a window around $2$]\label{Snei.lem} Let $p \in (1,n)$ be such that $p' < n$, where $p'$ is the H\"older conjugate of $p$. The operator $\cL$ extends to a bounded operator $Y^{1,p}(\ree) \to (Y^{1,p'}(\ree))^*$. Moreover, the operator is invertible if $|p -2|$ is small enough depending on $n$, $\lambda$, and $\Lambda$.
\end{lemma}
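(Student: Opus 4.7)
The plan is to prove this lemma in two steps: first establish boundedness of the operator for any admissible $p$, then upgrade invertibility from the known $p=2$ case to a small window via {\v{S}}ne{\u{\ii}}berg's Lemma, exploiting the interpolation scale structure highlighted in the preceding remark.

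For the boundedness of $\cL: Y^{1,p}(\ree) \to (Y^{1,p'}(\ree))^*$, I would test the sesquilinear form $B_{\cL}[u,v]$ against $v\in Y^{1,p'}(\ree)$. The principal term $\int A\nabla u\cdot\overline{\nabla v}$ is trivially bounded by H\"older's inequality and $\|A\|_\infty\leq \Lambda$. For the lower-order terms, I would use the $t$-independence of $B_1,B_2\in L^n(\rn)$ together with the Sobolev embedding $Y^{1,p}(\ree)\hookrightarrow L^{p^*}(\ree)$ (and the corresponding embedding for $Y^{1,p'}$). Specifically, applying H\"older in the $x$-slice first (with exponents $n$, $p^*$-type and $p'$-type suitably matched via mixed-norm integration in $t$), the $L^n(\rn)$ norm of $B_j$ and the global $Y^{1,p}$ / $Y^{1,p'}$ norms control the pairing. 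The constraints $p\in(1,n)$ and $p'<n$ guarantee the relevant Sobolev exponents are finite and non-negative.

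For invertibility, I would invoke Theorem \ref{thm:Sneiberg} ({\v{S}}ne{\u{\ii}}berg). Fix $p_1\in(1,2)$ and $p_2=p_1'\in(2,\infty)$, both sufficiently close to $2$ that $p_1,p_2\in(n/(n-1),n)$. Set the interpolation couples
\[
\cl X = (Y^{1,p_1}(\ree),\,Y^{1,p_2}(\ree)),\qquad \cl Y = ((Y^{1,p_1'}(\ree))^*,\,(Y^{1,p_2'}(\ree))^*).
\]
By the remark preceding the lemma, $[Y^{1,p_1},Y^{1,p_2}]_\theta = Y^{1,p_\theta}$; by the duality of complex interpolation, $[(Y^{1,p_1'})^*,(Y^{1,p_2'})^*]_\theta = (Y^{1,p_\theta'})^*$, where $1/p_\theta=(1-\theta)/p_1+\theta/p_2$. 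A direct computation confirms the exponents match: at $\theta^*=1/2$ we get $p_{1/2}=2$ and $(p_{1/2})'=2$, and Proposition \ref{laxmilgram.prop} provides the invertibility of $\cL:Y^{1,2}(\ree)\to(Y^{1,2}(\ree))^*$ with a quantitative lower bound depending only on $n,\lambda,\Lambda$. {\v{S}}ne{\u{\ii}}berg's Lemma then furnishes an open neighborhood of $\theta^*$ on which $\cL:Y^{1,p_\theta}(\ree)\to(Y^{1,p_\theta'}(\ree))^*$ remains invertible, which translates to invertibility for all $p$ in a small window around $2$, the size of this window depending only on $n$, $\lambda$, $\Lambda$ (since the endpoint operator norms depend only on these).

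The main obstacle I anticipate is purely bookkeeping: verifying that the two interpolation scales (domain and codomain) can be parametrized by a common $\theta$ in a way compatible with the duality $p\mapsto p'$, so that {\v{S}}ne{\u{\ii}}berg applies cleanly. The computation $1-1/p_\theta = (1-\theta)/p_1'+\theta/p_2'$ shows that the dual exponent scale coincides with the interpolation of duals, so this obstacle is really just careful exponent tracking. A secondary subtlety is ensuring uniform bounds for $\cL$ across the endpoints $p_1, p_2$ (the constant $M$ in Theorem \ref{thm:Sneiberg}), but since $p_1, p_2$ lie in a bounded range and the relevant Sobolev and H\"older exponents vary continuously with $p$, these bounds are controlled solely in terms of $n$, $\lambda$, $\Lambda$, and the small quantity $\max\{\|B_1\|_n,\|B_2\|_n\}<\eps_0$.
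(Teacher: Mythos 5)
Your proposal is correct and follows essentially the same route the paper intends: boundedness from H\"older and slice-wise Sobolev embeddings using the $t$-independence of $B_1,B_2$, and invertibility by combining the complex interpolation scale for $Y^{1,p}$ (and, by duality, for $(Y^{1,p'})^*$) with the quantitative coercivity at $p=2$ from Proposition \ref{laxmilgram.prop} and {\v S}ne{\u\ii}berg's Lemma. Your exponent bookkeeping ($p_2=p_1'$, $\theta^*=1/2$, and the check that $1-1/p_\theta$ interpolates the dual exponents) is exactly the verification needed to apply Theorem \ref{thm:Sneiberg} on a single parameter $\theta$.
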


\begin{remark}\label{plowerrestrict.rmk}
Here and throughout, we assume that the range of $p$ near $2$ in Lemma \ref{Snei.lem} is such that $p_*=\frac{(n+1)p}{n+1+p}< 2$.
\end{remark}

The following lemma details the modification to the operator output upon multiplying a solution by a cut-off function.
\begin{lemma}\label{cutoffremainder.lem} Let $\om \subset \ree$ be an open set. Suppose that $u\in W^{1,2}_{\loc}(\Omega)$ satisfies $\cL u = 0$ in $\om$ in the weak sense. Then for any $\chi \in  C_c^\infty(\om, \re)$, we have that
\begin{equation}\label{cutoffsolveseq.eq}
\cL(\chi u) =  \div\vec{F} + f
\end{equation}
in $\ree$ in the weak sense, where $\vec F = A (\nabla \chi) u$, and $f = -A \nabla u \cdot \nabla \chi - B_1u \nabla \chi + B_2u \nabla \chi$.
\end{lemma}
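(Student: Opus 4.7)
My plan is to verify the identity directly from the weak formulation given in Definition \ref{weakform.def}. Fix an arbitrary test function $\varphi\in C_c^\infty(\ree)$. Since $\chi$ has compact support in $\om$, the function $\chi u$ (extended by zero) belongs to $W^{1,2}_{\loc}(\ree)$, so $B_{\cL}[\chi u,\varphi]$ makes sense and can be expanded using the product rule $\nabla(\chi u)=\chi\nabla u + u\nabla\chi$. This yields
\begin{equation*}
B_{\cL}[\chi u,\varphi] \,=\, \int_{\ree}\!\chi\big[A\nabla u\cdot\overline{\nabla\varphi}+B_1 u\cdot\overline{\nabla\varphi}+B_2\cdot\nabla u\,\overline\varphi\big] \,+\, \int_{\ree}\!\big[u\,A\nabla\chi\cdot\overline{\nabla\varphi}+u\,(B_2\cdot\nabla\chi)\,\overline\varphi\big].
\end{equation*}

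The crucial observation is that $\chi\varphi\in C_c^\infty(\om)$ because the support of $\chi\varphi$ is contained in $\supp\chi\subset\om$, regardless of the support of $\varphi$. Consequently, $\chi\varphi$ is an admissible test function in the weak formulation of $\cL u=0$ in $\om$. Using that $\chi$ is real-valued, expanding $\overline{\nabla(\chi\varphi)}=\chi\,\overline{\nabla\varphi}+\overline\varphi\,\nabla\chi$, and writing $\overline{\chi\varphi}=\chi\overline\varphi$, the identity $B_{\cL}[u,\chi\varphi]=0$ becomes
\begin{equation*}
\int_{\ree}\!\chi\big[A\nabla u\cdot\overline{\nabla\varphi}+B_1 u\cdot\overline{\nabla\varphi}+B_2\cdot\nabla u\,\overline\varphi\big] \,=\, -\int_{\ree}\!\overline\varphi\,\big[A\nabla u\cdot\nabla\chi+B_1 u\cdot\nabla\chi\big].
\end{equation*}

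Substituting this back into the first display produces
\begin{equation*}
B_{\cL}[\chi u,\varphi] \,=\, \int_{\ree} u\,A\nabla\chi\cdot\overline{\nabla\varphi} \,+\, \int_{\ree}\overline\varphi\,\big[-A\nabla u\cdot\nabla\chi -B_1 u\cdot\nabla\chi +B_2\cdot\nabla\chi\, u\big],
\end{equation*}
which is exactly the weak formulation of $\cL(\chi u)=\div\vec F+f$ (matching against Definition \ref{weakform.def} with the vector field $\vec F=A(\nabla\chi)u$ and scalar $f$ as in the statement), completing the proof. The argument is entirely algebraic bookkeeping; the one point warranting any care is that $\chi\varphi$ genuinely sits in $C_c^\infty(\om)$ so that the equation $\cL u=0$ applies to it, and that $\chi$ being real commutes past the complex conjugation on $\overline{\nabla\varphi}$. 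I anticipate no substantive obstacle.
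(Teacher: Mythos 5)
Your proof is correct and matches the paper's own argument: both expand $B_{\cL}[\chi u,\varphi]$ by the product rule, isolate the term $B_{\cL}[u,\chi\varphi]$, and use the fact that $\chi\varphi\in C_c^\infty(\om)$ is an admissible test function for $\cL u=0$ to kill it, arriving at the identity $B_{\cL}[\chi u,\varphi]=\int_{\ree}\big(\vec F\cdot\overline{\nabla\varphi}+f\,\overline\varphi\big)$. (One small remark, inherited from the paper's own statement and proof rather than a defect of yours: read strictly against Definition \ref{weakform.def}, this identity encodes $\cL(\chi u)=f-\div\vec F$, which is a sign away from the displayed $\div\vec F+f$; the sign of $\vec F$ is immaterial for every downstream use, e.g.\ in Lemma \ref{yougotSneiberg'd.lem}, so nothing is affected.)
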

\begin{proof}
We apply the operator $\cL$ to $u\chi$ and test against $\varphi \in C_c^\infty(\ree)$ with the goal in mind of extracting a term of the form $\langle \cL u, \varphi \chi \rangle = 0$. Observe that
\begin{align*}
\int_{\bb R^{n+1}} A \nabla (u \chi) \cdot \overline{\nabla \varphi} & = \int_{\bb R^{n+1}} A \nabla u \cdot \overline{\nabla (\chi \varphi)} + \int_{\bb R^{n+1}} uA \nabla\chi\cdot  \overline{\nabla \varphi} - \int_{\bb R^{n+1}} [A \nabla u \cdot \nabla \chi] \overline{\varphi},
\\  \int_{\bb R^{n+1}} (B_1 u\chi)\cdot \overline{\nabla\varphi} & = + \int_{\bb R^{n+1}} B_1 u \overline{\nabla(\chi \varphi)} - \int_{\bb R^{n+1}} [B_1 u \nabla \chi] \overline{ \varphi},
\\ \int_{\bb R^{n+1}} B_2\nabla (u\chi) \overline{\varphi} &= \int_{\bb R^{n+1}} B_2 \nabla u \overline{\chi \varphi} + \int_{\bb R^{n+1}}[B_2 u \nabla \chi] \overline{\varphi},
\end{align*}
where we use that $\chi$ is real-valued. Collecting the first terms in each inequality and noting that $\varphi\chi \in C_c^\infty(\om)$, we realize that the contribution of these terms is $\langle \cL u, \varphi \chi \rangle = 0$. Then we have that $\langle \cL (\chi u), \varphi \rangle  =  \langle \div\vec{F} + f, \varphi \rangle$, as desired.\end{proof}

We are now ready to combine the past few results and obtain the local high integrability of the gradient. 

\begin{lemma}[Local high integrability of the gradient of a solution]\label{yougotSneiberg'd.lem}
Let $\om$ be an open set. Suppose that $u\in W_{\loc}^{1,2}(\Omega)$ solves $\cL u = 0$ in $\om$ in the weak sense. Then $u \in W^{1,p}_{\loc}(\om)$, where $p$ is close to $2$ and depends only on $n$, $\lambda$, $\Lambda$, and $\ep_0$. Moreover, for any $\chi \in C_c^\infty(\om, \re)$ we have the estimate
\begin{equation*}
\lVert \chi u \rVert_{Y^{1,p}(\bb R^{n+1})} \le \lVert \cL^{-1}(\div \vec{F} +  f)\rVert_{Y^{1,p}(\bb R^{n+1})} \lesssim \lVert \vec F \rVert_p + \lVert f \rVert_{p_*}, 
\end{equation*}
where $\vec F$ and $f$ are as in Lemma \ref{cutoffremainder.lem}.
\end{lemma}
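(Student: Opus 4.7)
The plan is to combine the cut-off reduction of Lemma \ref{cutoffremainder.lem} with the {\v{S}}ne{\u{\i}}berg-type improvement of invertibility of $\cL$ from Lemma \ref{Snei.lem}. Fix $\chi \in C_c^\infty(\om,\re)$ and let $\vec F$ and $f$ be as in Lemma \ref{cutoffremainder.lem}, so that $\cL(\chi u) = \div\vec F + f$ in $\ree$ in the weak sense. Note that $\chi u$ is compactly supported in $\ree$, and since $u\in W^{1,2}_{\loc}(\om)$ we automatically have $\chi u \in Y^{1,2}(\ree)$.

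Next, I would verify that $\vec F \in L^p(\ree)$ and $f \in L^{p_*}(\ree)$ for $p$ in a small neighborhood of $2$. Sobolev embedding gives $u \in L^{2^*}_{\loc}(\om)$ with $2^* = 2(n+1)/(n-1)$; since $\nabla\chi$ is bounded and compactly supported and $A\in L^\infty$, $\vec F \in L^{2^*}(\ree)$, hence $\vec F \in L^p$ for every $p \le 2^*$. The first-order part $A\nabla u\cdot\nabla\chi$ of $f$ is in $L^2(\ree)$ with compact support. For the lower-order terms $B_i u\cdot\nabla\chi$, H\"older in $x$ (using $B_i \in L^n(\rn)$ and $u\in L^{2^*}_{\loc}$) together with the compact $t$-support of $\nabla\chi$ places them in $L^q(\ree)$ for $1/q = 1/n + (n-1)/(2(n+1))$. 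A short arithmetic check confirms $q > p_*$ when $p = 2$, hence by continuity in $p$ the same holds for $p$ near $2$; thus $f \in L^{p_*}(\ree)$. The right-hand side $\div\vec F + f$ then lies in $(Y^{1,p'}(\ree))^*$: testing against $\varphi\in Y^{1,p'}$ produces the bound $\|\vec F\|_p\|\nabla\varphi\|_{p'} + \|f\|_{p_*}\|\varphi\|_{(p_*)'}$, and the identity $(p_*)' = (p')^*$ combined with the Sobolev embedding $Y^{1,p'}\hookrightarrow L^{(p')^*}$ yields $\|\div\vec F + f\|_{(Y^{1,p'})^*} \lesssim \|\vec F\|_p + \|f\|_{p_*}$.

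With these estimates in hand, Lemma \ref{Snei.lem} ensures that $\cL: Y^{1,p}(\ree) \to (Y^{1,p'}(\ree))^*$ is invertible for $|p-2|$ sufficiently small (depending only on $n, \lambda, \Lambda$), and part (ii) of Theorem \ref{thm:Sneiberg} guarantees that its inverse agrees with the $Y^{1,2}$-inverse on the intersection. Since $\chi u \in Y^{1,2} \cap Y^{1,p}$ solves $\cL(\chi u) = \div\vec F + f$, we conclude $\chi u = \cL^{-1}(\div\vec F + f)$ in $Y^{1,p}$, yielding the claimed chain of inequalities; arbitrariness of $\chi$ gives $u \in W^{1,p}_{\loc}(\om)$. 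The main potential obstacle in this plan is the verification that the cut-off error $f$ has the required subcritical integrability: the critical scaling of $B_i \in L^n$ matches the Sobolev exponent $2^*$ just tightly enough for the H\"older computation to close, and one must confirm that the admissible $p$-range produced by this inequality has nontrivial overlap with the range of invertibility produced by {\v{S}}ne{\u{\i}}berg's Lemma.
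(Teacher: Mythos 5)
Your proposal is correct and follows essentially the same approach as the paper: apply the cut-off reduction of Lemma~\ref{cutoffremainder.lem}, verify that $\vec F$ and $f$ land in the right Lebesgue spaces, and invoke the {\v{S}}ne{\u{\i}}berg-improved invertibility from Lemma~\ref{Snei.lem} to apply $\cL^{-1}$ on $Y^{1,p}$. Your H\"older verification that $f\in L^{p_*}_{\loc}$, together with the arithmetic check that the exponent $q$ produced by $B_i\in L^n$ and $u\in L^{2^*}_{\loc}$ satisfies $q>p_*$ near $p=2$, is in fact more careful than the paper's stated claim $f\in L^1\cap L^2$ (which the terms $B_iu\nabla\chi$ do not in general satisfy); what is actually needed, and what you correctly establish, is the $L^{p_*}$ membership.
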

\begin{proof}
Let $\vec{F}$ and $f$ be as in the previous lemma. One may verify, using the Sobolev embedding and the fact that $\chi$ is smooth and compactly supported, that $\vec{F} \in L^1(\bb R^{n+1}) \cap L^{2^*}(\bb R^{n+1})$ and that $f \in L^1(\bb R^{n+1}) \cap L^2(\bb R^{n+1})$. Choosing $p > 2$ with $|p - 2|$ sufficiently small, we may apply Lemma \ref{Snei.lem} to show that the operator $\cL$ extends to a bounded and invertible operator $Y^{1,p}(\ree) \to (Y^{1,p'}(\ree))^*$. Hence $\cL^{-1}$ is bounded. Applying $\cL^{-1}$ to each side of \eqref{cutoffsolveseq.eq}, we obtain that
$$\lVert \chi u \rVert_{Y^{1,p}} \le \lVert \cL^{-1}(\div \vec{F} +  f)\rVert_{Y^{1,p}} \lesssim \lVert \vec F \rVert_{p} + \lVert f \rVert_{p_*}.$$
Here, we note that $L^{p_*}$ embeds continuously into $(Y^{1,p'})^*$, and $\div \vec F \in (Y^{1,p'})^*$ since $\vec F \in L^p$. This observation uses the identity $[(p')^*]' = p_*$ and the continuous embedding $Y^{1,p'}(\ree)\hookrightarrow L^{(p')^*}(\ree)$.\end{proof}

Finally, we provide a more precise version of the above Lemma, namely the $L^p$-Caccioppoli inequality.

\begin{proposition}[$L^p$-Caccioppoli inequality]\label{Lpcaccop.prop}
Let $\om \subset \ree$ be an open set and let $u \in W_{\loc}^{1,2}(\om)$ solve $\cL u = 0$ in $\om$ in the weak sense. Suppose that $B$ is a ball such that $\kappa B \subset \om$ for some $\kappa > 1$. Then, for every $p>0$ such that $|p -2|$ is small enough  that the conditions of Lemma \ref{yougotSneiberg'd.lem} are satisfied, the estimate
\begin{equation}\label{starp.eq}
\lVert \nabla u \rVert_{L^p(B)} \lesssim \frac{1}{r(B)} \lVert u \rVert_{L^p(\kappa B)}
\end{equation}
holds, where the implicit constants depend on $\kappa$, $p$, $n$, $\lambda$, $\Lambda$, and $\ep_0$. 
\end{proposition}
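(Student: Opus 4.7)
The plan is to apply Lemma \ref{yougotSneiberg'd.lem} to $\chi u$ for a cutoff $\chi\in C_c^\infty(\om,\re)$ with $\chi\equiv 1$ on $B$, $\supp\chi\subset \kappa B$, and $|\nabla\chi|\lesssim 1/r(B)$. Since $\chi u = u$ on $B$, the conclusion of that lemma produces
\[
\|\nabla u\|_{L^p(B)}\le \|\nabla(\chi u)\|_{L^p(\ree)}\lesssim \|\vec F\|_p+\|f\|_{p_*},
\]
where $\vec F=A(\nabla\chi)u$ and $f=-A\nabla u\cdot\nabla\chi-B_1 u\cdot\nabla\chi+B_2 u\cdot\nabla\chi$ are given by Lemma \ref{cutoffremainder.lem}. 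The divergence source is handled at once: $\|\vec F\|_p\lesssim \frac{1}{r(B)}\|u\|_{L^p(\kappa B)}$. The real work is in the $L^{p_*}$ bound for~$f$.

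For the piece $\|A\nabla u\cdot\nabla\chi\|_{p_*}$, I exploit that $p_*<2$ for $p$ close to $2$. H\"older's inequality on $\supp\nabla\chi$ (a set of $(n+1)$-dimensional measure $\lesssim r(B)^{n+1}$) trades $L^{p_*}$ for $L^2$ at the cost of a factor $|\supp\nabla\chi|^{1/p_*-1/2}$, and the classical $L^2$-Caccioppoli inequality (Proposition \ref{classCaccioppoli.prop}), which we have unconditionally, gives $\|\nabla u\|_{L^2}\lesssim \frac{1}{r(B)}\|u\|_{L^2}$ on a slightly larger ball. A final H\"older (for $p>2$), or a Sobolev embedding into $L^{p^*}$ followed by H\"older (for $p<2$, exploiting $p^*>2$), transfers $L^2$ into $L^p$, and the various powers of $r(B)$ collapse via the identity $(n+1)(\tfrac{1}{p_*}-\tfrac{1}{p})=1$ to yield the expected $\frac{1}{r(B)}\|u\|_{L^p(\kappa B)}$ bound. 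For the lower-order terms $\|B_i u\cdot\nabla\chi\|_{p_*}$ I use the $t$-independence of $B_i$ and the hypothesis $B_i\in L^n(\rn)$: a slicewise H\"older estimate with exponents $(n/p_*, n/(n-p_*))$ combined with a mixed-norm H\"older in $t$ and the Sobolev embedding $Y^{1,p}\hookrightarrow L^{p^*}$ produces a bound of the form $\|B_i\|_n\,\|u\|_{L^{p^*}(\supp\nabla\chi)}$.

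To convert this $L^{p^*}$ norm into something absorbable, I choose a slightly larger cutoff $\widetilde\chi$ equal to $1$ on $\supp\nabla\chi$, apply Sobolev--Poincar\'e in the form $\|u\|_{L^{p^*}}\lesssim \|\nabla u\|_{L^p(\supp\widetilde\chi)}+\tfrac{1}{r(B)}\|u\|_{L^p(\supp\widetilde\chi)}$, and iterate on a geometrically nested sequence of dilations $1=\kappa_0<\kappa_1<\dots<\kappa$ with $\kappa_{k+1}-\kappa_k$ shrinking geometrically. The smallness $\|B_i\|_n<\eps_0$ (possibly requiring a further shrinking of the universal $\eps_0$) ensures the iteration is a contraction, so that after finitely many steps the residual $(C\eps_0)^N\|\nabla u\|_{L^p(\kappa B)}$ is negligible (recalling from Lemma \ref{yougotSneiberg'd.lem} that $\nabla u\in L^p_{\loc}(\om)$, so this last norm is finite). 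The main obstacle is clearly the $\nabla u$ that appears in $f$: handled na\"ively it leads to circular reasoning, and the resolution is the passage through $L^2$ via the classical Caccioppoli inequality, which works precisely because $p$ is close to~$2$. A secondary obstacle is the absorption of the $B_i$ contributions, which closes up thanks to the smallness of $\eps_0$ and the finite-step geometric iteration.
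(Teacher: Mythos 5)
Your overall plan coincides with the paper's: take the cutoff, apply Lemma \ref{yougotSneiberg'd.lem}, handle $\vec F$ trivially, and treat the problematic $\nabla u$ in $f$ by descending to $L^2$ via the classical Caccioppoli inequality (Proposition \ref{classCaccioppoli.prop}) and returning to $L^p$ via Jensen; for the main term $\|A\nabla u\cdot\nabla\chi\|_{p_*}$ with $p>2$ this is precisely the paper's computation. You diverge on the lower-order terms. The paper takes $\chi=\eta^2$ so that $|\nabla\chi|\lesssim\eta/r$, giving $\|B_iu\nabla\chi\|_{p_*}\lesssim\frac1r\|B_i(u\eta)\|_{p_*}$; it then passes to $L^2$ by H\"older over a ball (since $p_*<2$), uses the \emph{slicewise} Sobolev embedding on $\rn$ at the $L^2$ level — $\|B_i(u\eta)(\cdot,t)\|_{L^2(\rn)}\lesssim\|B_i\|_n\|\nablap(u\eta)(\cdot,t)\|_{L^2(\rn)}$ — and finishes with $L^2$-Caccioppoli and Jensen, so no residual $\|\nabla u\|_{L^p}$ appears and no absorption iteration is needed. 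Your route through the $(n+1)$-dimensional embedding $Y^{1,p}\hookrightarrow L^{p^*}$ reintroduces $\|\nabla u\|_{L^p}$ and forces a nested-dilation contraction argument; that can be made to close because the term carries the small factor $\|B_i\|_n$, but it is a needless complication compared to the paper's one-pass estimate.

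There is, however, a genuine gap in your claimed treatment of $p<2$ for the main term $\|A\nabla u\cdot\nabla\chi\|_{p_*}$. Tracing the powers of $r$ through your chain (H\"older $L^{p_*}\!\to L^2$, then $L^2$-Caccioppoli, then H\"older $L^2\!\to L^{p^*}$, then Sobolev--Poincar\'e back to $L^p$), the exponent multiplying the residual $\|\nabla u\|_{L^p}$ is
\[
-1+(n+1)\big(\tfrac{1}{p_*}-\tfrac12\big)-1+(n+1)\big(\tfrac12-\tfrac{1}{p^*}\big)=-2+(n+1)\big(\tfrac{1}{p_*}-\tfrac{1}{p^*}\big)=0,
\]
so the leftover $\|\nabla u\|_{L^p}$ comes with an $O(1)$ coefficient. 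Unlike the $B_i$ terms, there is no $\|B_i\|_n$ to supply smallness, and the constants from Lemma \ref{yougotSneiberg'd.lem} and from $L^2$-Caccioppoli are not small, so the iteration you propose cannot contract. The paper's proof in fact only handles $p>2$ (Jensen is invoked "using the fact that $p>2$"), and that suffices: everything downstream (Lemma \ref{Lpcacconslices.lem}, Proposition \ref{nabSL2LpODdecay}) is stated for $p\ge2$. If you want to claim the $p<2$ case honestly, you would need a different mechanism (e.g.\ self-improvement from the $p=2$ reverse-type estimate), not the Sobolev--Poincar\'e loop you describe.
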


\begin{proof}
Set $r := r(B)$ and let $\chi = \eta^2$ with $\eta \in C_c^\infty(\frac{1 + \kappa}{2} B, \re)$, $0 \le \eta \le 1$, $|\nabla \eta| \lesssim \tfrac{1}{r}$. Note that $\chi$ has the same properties as $\eta$. The estimate \eqref{starp.eq} will follow immediately from the estimate
\begin{equation}\label{dubstarp.eq}
\lVert u \chi \rVert_{Y^{1,p}(\ree)} \lesssim \frac{1}{r} \lVert u \rVert_{L^p(\kappa B)},
\end{equation}
since $\lVert \nabla u \rVert_{L^p(B)} \lesssim \lVert (\nabla u) \chi \rVert_{p}$ and (the reverse triangle inequality yields)
$$\lVert (\nabla u) \chi \rVert_{p} - \lVert (\nabla \chi) u \rVert_{p} \lesssim \lVert \nabla (u \chi) \rVert_{p} \le \lVert u \chi \rVert_{Y^{1,p}(\ree)}.$$
We immediately note that we have already established \eqref{dubstarp.eq} in the case $p = 2$; this is the classical Caccioppoli inequality. Applying Lemma \ref{yougotSneiberg'd.lem}, we have that
\begin{equation}\label{termsforcacc.eq}
\lVert \chi u \rVert_{Y^{1,p}(\ree)} \lesssim \lVert \vec F \rVert_{p} + \lVert f \rVert_{p_*},
\end{equation}
where $\vec F$ and $f$ are as in Lemma \ref{cutoffremainder.lem}. The bound 
\begin{equation}\label{vecFbound.eq}
\lVert \vec{F} \rVert_p= \lVert A \nabla \chi u \rVert_p \lesssim \frac{1}{r} \lVert u \rVert_{L^p(\kappa B)}
\end{equation}
is trivial from the properties of $A$ and $\chi$ and desirable from the standpoint of \eqref{dubstarp.eq}.
It remains to find appropriate bounds for the terms appearing in the expression for $f$. To this end, we have by Minkowski's inequality that
\begin{equation*}\label{fmink.eq}
\lVert f \rVert_{p_*} \le \lVert A \nabla u \cdot \nabla \chi \rVert_{p_*} + \lVert  B_1u \nabla \chi \rVert_{p_*}+ \lVert B_2u \nabla \chi \rVert_{p_*} = I + II + III.
\end{equation*}
Before continuing, we remark that the relation $\frac{n +1}{p_*} = \frac{n+1}{(n+1)p}[(n + 1) + p] = \frac{n+1}{p} +1$

holds. Using the $L^2$ Caccioppoli inequality, Jensen's inequality and the fact that $p > 2$, we have that
\begin{multline}\label{f1bound.eq}
I =  \lVert A \nabla u \cdot \nabla \chi \rVert_{p_*} 
  \lesssim r^\frac{n+1}{p}\Big(\dashint_{\frac{1 + \kappa}{2}B} |\nabla u |^{2} \Big)^{\frac{1}{2}}
    \lesssim \frac{1}{r} r^\frac{n+1}{p}\Big(\dashint_{\kappa B} |u |^{2} \Big)^{\frac{1}{2}}
\\   \lesssim \frac{1}{r} r^\frac{n+1}{p}\Big(\dashint_{\kappa B} |u|^{p} \Big)^{\frac{1}{p}}
  \lesssim \frac{1}{r} \Big(\int_{\kappa B} |u|^{p} \Big)^{\frac{1}{p}}.
\end{multline}

Next we bound $II$ and $III$.  The Sobolev embedding on $\rn$ and the Caccioppoli inequality \footnote{More precisely, we use \eqref{dubstarp.eq} with $p = 2$.} yield for $i = 1,2$ the estimate
\begin{multline}\label{f23bound.eq}
\lVert B_i u (\nabla \chi) \rVert_{p_*} \lesssim \frac{1}{r} \lVert B_i (u\eta) \rVert_{p_*}   \lesssim \frac{1}{r} r^\frac{n+1}{p_*} \Big(\dashint_{\frac{1 + \kappa}{2}B} |B_i(u\eta)|^{p_*} \Big)^\frac{1}{p_*}
\\[1mm] \lesssim \frac{1}{r} r^\frac{n+1}{p_*}r^{-\frac{n+1}{2}} \Big(\int_{\frac{1 + \kappa}{2}B} |B_i(u\eta)|^{2} \Big)^\frac{1}{2}
   \lesssim \frac{1}{r} r^\frac{n+1}{p_*}r^{-\frac{n+1}{2}} \Big(\int_{\ree} |\nabla(u\eta)|^{2} \Big)^\frac{1}{2}
\\[1mm] \lesssim \frac{1}{r} r^\frac{n+1}{p} \Big(\dashint_{\kappa B} |u|^{2} \Big)^\frac{1}{2}
  \lesssim \frac{1}{r} \Big(\int_{\kappa B} |u|^{p} \Big)^{\frac{1}{p}}.
\end{multline}
Combining \eqref{vecFbound.eq}, \eqref{f1bound.eq} and \eqref{f23bound.eq} with \eqref{termsforcacc.eq}
and the definitions of $\vec{F}$ and $f$, we obtain \eqref{dubstarp.eq}. As we had reduced the proof of the statement of the Proposition to \eqref{dubstarp.eq}, we have thus shown our claim.\end{proof}

\subsection{Properties of solutions and their gradients on slices}
Our next goal is to study the $t$-regularity of our solutions as well as their properties on `slices', which are sets of the form $\{(x,t) : t = t_0\}$. Let us first note that $t-$derivatives of solutions are solutions.
\begin{proposition}[The $t$-derivatives of solutions are solutions]\label{tsolns.prop}
Let $\om\subset \ree$ be an open set, let $f,\vec F\in L^2_{\loc}(\om)$, and suppose that $u\in W^{1,2}_{\loc}(\om)$ satisfies $\cL u=f-\div\vec F$ in $\om$ in the weak sense. Assume further that $f_t:= \partial_t f\in L^2_{\loc}(\Omega)$ and $\vec F_t:=\partial_t\vec F\in L^2_{\loc}(\om)$. Then the function $v=\partial_t u$ lies in $W^{1,2}_{\loc}(\om)$ and satisfies $\cL v= f_t-\div F_t$ in $\Omega$ in the weak sense.
\end{proposition}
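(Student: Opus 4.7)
The plan is to exploit the $t$-independence of the coefficients $A$, $B_1$, $B_2$ through a difference-quotient argument in the transverse direction. For $h\in\RR$ with $|h|$ small, set
\[
v_h(x,t) := \frac{u(x,t+h)-u(x,t)}{h}, \quad f_h(x,t):=\frac{f(x,t+h)-f(x,t)}{h}, \quad \vec F_h(x,t):=\frac{\vec F(x,t+h)-\vec F(x,t)}{h},
\]
defined on the open set $\om_h:=\{X\in\om : X+he_{n+1}\in\om\}$. First I would observe that since $A, B_1, B_2$ are $t$-independent, the translate $u(\cdot,\cdot+h)$ satisfies $\cL u(\cdot,\cdot+h) = f(\cdot,\cdot+h) - \div \vec F(\cdot,\cdot+h)$ in $\om_h$ in the weak sense; this is seen by changing variables $t\mapsto t-h$ in the weak formulation applied to $\om_h$ and noting that the coefficients are unchanged. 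By linearity, $v_h$ therefore solves $\cL v_h = f_h - \div \vec F_h$ in $\om_h$ in the weak sense.

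Next, fix any ball $B$ with $2B\subset\om$. For $|h|$ small enough that $2B\subset\om_h$, Proposition~\ref{classCaccioppoli.prop} applied to $v_h$ gives
\[
\int_B |\nabla v_h|^2 \lesssim \int_{2B}\Big(\tfrac{1}{r(B)^2}|v_h|^2 + |\vec F_h|^2 + r(B)^2|f_h|^2\Big).
\]
Since $u\in W^{1,2}_{\loc}(\om)$, standard difference-quotient theory for Sobolev functions gives $\|v_h\|_{L^2(2B)}\lesssim \|\partial_t u\|_{L^2(2B+|h|e_{n+1})}$ uniformly for small $h$, and by the hypothesis $\partial_t f,\partial_t \vec F\in L^2_{\loc}(\om)$ the analogous bounds hold for $f_h$ and $\vec F_h$. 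In particular, $\{v_h\}$ is uniformly bounded in $W^{1,2}(B)$ for $|h|$ small.

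By weak compactness, extract a subsequence $h_k\to 0$ with $v_{h_k}\rightharpoonup w$ in $W^{1,2}(B)$. On the other hand, difference quotients converge in $L^2_{\loc}(\om)$ to the weak derivative, so $v_{h_k}\to \partial_t u = v$ in $L^2(B)$; by uniqueness of weak limits, $w=v$ a.e.\ in $B$, and hence $v\in W^{1,2}(B)$. Since $B$ was arbitrary, $v\in W^{1,2}_{\loc}(\om)$. Finally, for any $\varphi\in C_c^\infty(\om)$, choosing $|h|$ small enough that $\supp\varphi\subset\om_h$, the weak formulation for $v_h$ reads
\[
\dint_\om\Big((A\nabla v_h + B_1 v_h)\cdot\overline{\nabla\varphi} + B_2\cdot\nabla v_h\,\overline\varphi\Big) = \dint_\om\Big(f_h\overline\varphi + \vec F_h\cdot\overline{\nabla\varphi}\Big).
\]
Since $v_h\rightharpoonup v$ in $W^{1,2}(\supp\varphi)$ while $f_h\to f_t$ and $\vec F_h\to \vec F_t$ strongly in $L^2(\supp\varphi)$, passing $h\to 0$ yields the weak identity $\cL v = f_t - \div \vec F_t$ in $\om$. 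The main (mild) technical care is in justifying the uniform-in-$h$ bound on $\|v_h\|_{L^2(2B)}$ and the convergence of the RHS, both of which are standard given the hypotheses.
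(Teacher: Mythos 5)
Your proof is correct and takes essentially the same route as the paper: both use transverse difference quotients, exploit $t$-independence of the coefficients to conclude that the difference quotients again satisfy the equation with the correspondingly differenced data, apply Proposition~\ref{classCaccioppoli.prop} to obtain a uniform $W^{1,2}$ bound, and then pass to the weak limit to identify $\partial_t u$ and its equation. The paper's write-up is slightly more compressed (it does not spell out the weak compactness and the $L^2$ identification of the limit as difference quotients of a $W^{1,2}_{\loc}$ function), but the underlying argument is identical.
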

\begin{proof} 
Fix a ball $B\subset 2B\subset \om$ and consider the difference quotients
\begin{equation*}
u_h:=\dfrac{u(\cdot +he_{n+1})-u(\cdot)}{|h|} , \qquad |h|<\dist(B, \partial\om).
\end{equation*} 
We define $f_h$ and $\vec F_h$ similarly. By $t$-independence of the coefficients, we have that $\cL u_h= f_h-\div\vec F_h$ in $B$ for any such $h$. By the Caccioppoli inequality (Proposition \ref{classCaccioppoli.prop}), we obtain that for any $h$ as above, 
\begin{multline*}
\dint_B |\nabla u_h|^2  \lesssim \dint_{2B} \Big( \dfrac{1}{r(B)^2} |u_h|^2 + |\vec F_h|^2+ r(B)^2 |f_h|^2\Big)\\
 \lesssim   \dint_{2B} \Big( \dfrac{1}{r(B)^2} |\partial_t u|^2 + |\vec F_t|^2+ r(B)^2 |f_t|^2\Big).
\end{multline*}
In particular, the difference quotients of $\nabla u$ are bounded, which implies that $\partial_t u\in W^{1,2}_{\loc}(\om)$. Consequently, we must have that the difference quotients $u_h$ converge weakly (in $W^{1,2}_{\loc}(\om)$) to $v=\partial_tu$ (and similarly for $f_h$ and $\vec F_h$). From \eqref{solndef.eq} and the fact that $\cL u_h =f_h-\div\vec F_h$, we conclude that $\cL v = f_t-\div\vec F_t$, as desired.\end{proof}

We now check that $t-$derivatives of solutions are well-behaved on horizontal strips.
\begin{lemma}[Good integrability of the $t$-derivative of a solution on a strip]\label{Le1.5.lem} Denote $\Sigma_a^b: =\big\{ (x,t)\in \ree: a<t<b\big\}$. Suppose that $u$ and $v:= \partial_t u$ are as in Proposition \ref{tsolns.prop} with $\om = \Sigma_a^b$, and suppose further that $ v \in L^2(\Sigma_{a}^{b})$. Then $\nabla v \in L^2(\Sigma_{a'}^{b'})$ for each $a < a' < b' < b$.
\end{lemma}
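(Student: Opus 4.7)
My plan is to lean on Proposition~\ref{tsolns.prop}, which already gives $v \in W^{1,2}_{\loc}(\Sigma_a^b)$ together with the equation $\cL v = f_t - \div \vec F_t$ weakly on $\Sigma_a^b$, and then to upgrade $v \in L^2(\Sigma_a^b)$ to $\nabla v \in L^2(\Sigma_{a'}^{b'})$ by patching interior Caccioppoli estimates (Proposition~\ref{classCaccioppoli.prop}) across a bounded-overlap covering of the sub-strip $\Sigma_{a'}^{b'}$.

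Concretely, I would set $r := \tfrac{1}{4}\min(a' - a,\, b - b') > 0$ and choose a family $\{B_j\}_j$ of balls of radius $r$ whose centers lie in $\Sigma_{a'}^{b'}$, arranged (e.g.\ from a scaled cubical lattice in $\ree$) so that the $B_j$ cover $\Sigma_{a'}^{b'}$ and the dilates $\{2B_j\}_j$ still have finite overlap, with overlap constant $N_n$ depending only on $n$. The choice of $r$ ensures that $2B_j \subset \Sigma_a^b$ for every $j$, so Proposition~\ref{classCaccioppoli.prop} applied with $u$, $f$, $\vec F$ replaced by $v$, $f_t$, $\vec F_t$ yields
\begin{equation*}
\int_{B_j} |\nabla v|^2 \,\lesssim\, \int_{2B_j} \Big( \tfrac{1}{r^2}|v|^2 + |\vec F_t|^2 + r^2|f_t|^2\Big).
\end{equation*}
Summing over $j$ and invoking the finite-overlap property of $\{2B_j\}_j$ will then give
\begin{equation*}
\int_{\Sigma_{a'}^{b'}} |\nabla v|^2 \,\leq\, \sum_j \int_{B_j} |\nabla v|^2 \,\lesssim\, N_n \int_{\Sigma_a^b}\Big(\tfrac{1}{r^2}|v|^2 + |\vec F_t|^2 + r^2|f_t|^2\Big),
\end{equation*}
whose right-hand side is finite by the hypothesis $v \in L^2(\Sigma_a^b)$ (the data terms vanishing outright in the homogeneous case $f \equiv 0 \equiv \vec F$ that drives the applications in the sequel, and being finite in general whenever one additionally assumes $f_t, \vec F_t \in L^2(\Sigma_a^b)$, which will be the setting in each use below).

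I do not expect a serious obstacle here, since Proposition~\ref{tsolns.prop} has already done the hard work of producing $v \in W^{1,2}_{\loc}$ along with its governing equation; the present lemma is essentially a patching statement. The only point requiring care is that the bounded-overlap cover respects the interior distance to $\partial\Sigma_a^b$ in the transverse variable so that each $2B_j$ sits inside $\Sigma_a^b$, which is guaranteed by the explicit choice $r = \tfrac{1}{4}\min(a'-a,\,b-b')$.
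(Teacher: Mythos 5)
Your proposal is correct, and it takes a genuinely different route from the paper. The paper builds a single global cut-off $\chi_R = \phi_R(x)\psi(t)$ (horizontal cut-off times vertical cut-off), re-runs the Caccioppoli testing argument from scratch for this non-ball region, and sends $R \to \infty$; it cannot simply invoke Proposition~\ref{classCaccioppoli.prop} because the cut-off is adapted to the box $B_R \times (a',b')$ rather than a Euclidean ball. You instead tile $\Sigma_{a'}^{b'}$ by balls with bounded overlap of the dilates, apply Proposition~\ref{classCaccioppoli.prop} as a black box on each ball, and sum. Your version is more modular and shorter; the paper's version keeps the whole estimate self-contained (and incidentally displays the explicit dependence on $\min\{a'-a,\,b-b'\}$). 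You are also right to flag the implicit hypothesis: as stated, the lemma inherits only $f_t,\vec F_t \in L^2_{\loc}$ from Proposition~\ref{tsolns.prop}, yet both your bound and the paper's final display require $f_t,\vec F_t \in L^2(\Sigma_a^b)$ (or $f\equiv\vec F\equiv 0$, which is the case in every application in the paper). One very small point to tighten in a final write-up: if you construct the centers from a lattice, the centers nearest $\partial\Sigma_{a'}^{b'}$ may fall just outside the sub-strip, so either clamp the $t$-coordinate of the centers into $[a',b']$, or allow centers within distance $r$ of $\Sigma_{a'}^{b'}$ and note that $r = \tfrac14\min(a'-a,\,b-b')$ still ensures $2B_j \subset \Sigma_a^b$; either fix is routine and does not affect the argument.
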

\begin{proof} Let $\chi_R = \phi(x)\psi(t)$ be a product of infinitely smooth cut-off functions with $0 \le \phi_R, \psi \le 1$,  $\psi \equiv 1$ on $(a', b')$, $\psi \in C_c^\infty(a, b)$, and $\phi_R \equiv 1$ on $B_R$, $\phi_R \in C_c^\infty(B_{2R})$. 
Then, for all $R \gg \min\{a' - a, b - b'\}$, we claim that 
\begin{multline*}
\int_{a'}^{b'} \int_{B_R}|\nabla v|^2\, dx \, dt \lesssim  
\dint_{\ree}\chi_R^2|\nabla v|^2
\\   \lesssim \dint_{\ree}\Big(|v|^2 + |\vec F_t|^2 + |f_t|^2\Big)\big(|\nabla\chi_R|^2 + 1\big)
\\   \lesssim \frac{1}{(\min\{a' - a, b - b', 1\})^2}\int_a^b \int_{\rn}   \Big(|v|^2 + |\vec F_t|^2 + |f_t|^2\Big).
\end{multline*}
We provide the details of the second line in a moment; note that in the third line we used that the dominant contribution for the gradient of $\chi_R$ is its $t$ component when $R$ is large. Sending $R \to \infty$ finishes the proof modulo the aforementioned line.

To see the computation above, denote $\chi := \chi_R$ and observe that
\begin{multline*}
\dint_{\ree}\chi^2|\nabla v|^2 \lesssim \dint_{\bb R^{n+1}}\chi^2\f Re\big( A \nabla v\overline{ \nabla v }\big)\\\leq\f Re\Big[\dint_{\ree}A \nabla v\overline{\nabla (v \chi^2)}  - 2 \dint_{\ree}\chi\overline v  A \nabla v   \nabla \chi\Big]
=:\f Re[I + II].
\end{multline*}
Clearly,
$$|II| \lesssim  \epsilon\dint_{\ree} |\chi \nabla v|^2   + \frac{1}{\epsilon} \dint_{\ree} |\nabla \chi v|^2 ,$$
and the first term can be absorbed to the left-hand side. It remains to handle $I$. We use the equation $\cL v=f_t-\dv\vec F_t$ to write $I = I_1 + I_2 + I_3 + I_4$, where each $I_j$ is a term of the equation and each will be given explicitly below. First, note that
$$|I_4|: =\Big| \dint_{\ree} f_t\overline v \chi^2   \Big| \lesssim \dint_{\ree} |v\chi|^2 + \dint_{\ree} |f_t\chi|^2,$$
which handles this term. Next, we have that
\begin{equation*}
|I_3|:=\Big| \dint_{\ree}\vec F_t\overline{ \nabla(v \chi^2)}   \Big| \lesssim \dint_{\ree} |\vec F_t \nabla v \chi|^2  + \dint_{\ree}|\vec F_t \chi \nabla \chi v|.
\end{equation*}
We handle the first term as in $II$, and we handle the second term as $I_4$. Moving on, we see that
\begin{equation*}
|I_1|  := \Big|\dint_{\ree} B_1 v\overline{\nabla(v\chi^2)}  \Big|  \lesssim  \dint_{\ree} |(B_1 v\chi) \nabla v \chi|   +  \dint_{\ree}|(B_1 v \chi)\nabla\chi v| .
\end{equation*}
Both of the terms above are handled by using the smallness of $B_1$ as in the proof of the Caccioppoli inequality. Now, for the last term, we have that
\begin{equation*}
|I_2|: = \Big| \dint_{\ree} B_2 \nabla v \chi^2\overline v  \Big| \lesssim  \dint_{\ree} \big|(B_2\chi v) \nabla v \chi\big| ,
\end{equation*}
so that we may handle this term exactly as we did $I_1$.\end{proof}

\begin{remark}\label{slicescontinuitysolutionsrmk.rmk} We may bring the above lemma and Lemma \ref{ContSlices.lem} together to conclude that if $u$ solves $\cL u=0$ in $\Sigma_a^b$, then automatically we have the transversal H\"older continuity of its gradient, and  $u \in C_{\loc}^{\alpha'}\Big((a,b), L^{\tfrac{2n}{n-2}}(\rn)\Big)$ for some $\alpha > 0$.
\end{remark}

Next, we present a formula for our equation on a slice.  Recall that
$\tilde A$ denotes the $(n+1)\times n$ submatrix of $A$ consisting of the first $n$ columns of $A$.

\begin{proposition}[Integration by parts on slices for $\cL$]\label{IBPonslicesprop.prop}
Let $u\in Y^{1,2}(\Sigma_a^b)$ and suppose that $\cL u=g$ in $\Sigma_a^b$\, for some $g\in C_c^\infty(\ree)$. Then, for every $t\in (a,b)$ and $\varphi\in W^{1,2}(\rn)$, the identity
\begin{multline*}
\int_\rn\Big((A(x) \nabla u(x,t))_{\|} + (B_1)_\parallel u(x,t)\Big) \cdot  \overline{\nablap\varphi(x)}\, dx  + \int_\rn B_2(x)\cdot \nabla u(x,t) \overline{\varphi(x)}\, dx\\= \int_\rn\Big(\vec{A}_{n+1,\cdot}(x)\cdot \partial_t\nabla u(x,t) + (B_1(x))_\perp \partial_t u(x,t)\Big) \overline{\varphi(x)}\, dx + \int_\rn g(x,t)\overline{\varphi(x)}\, dx
\end{multline*}
holds. If $v,\partial_t v\in Y^{1,2}(\Sigma_a^b)$, and $\cL^* v=0$ in $\Sigma_a^b$ for some $g\in C_c^\infty(\rn)$, then for every $t\in (a,b)$ and $\varphi\in W^{1,2}(\rn)$, the identity 
\begin{multline*}
\int_\rn\Big[\nablap \varphi\cdot \overline{((\overline{B}_2)_{\parallel} v(t))} + \tilde{A}\nablap \varphi \cdot \overline{\nabla v(t)} + B_1 \varphi \cdot \overline{\nabla v(t)}\Big]\\=
  \int_\rn\Big[ \varphi \overline{(\overline{B}_2)_\perp \dno v(t)} +\varphi\vec{A}_{\cdot,n+1}\overline{\nabla\dno v(t)}\Big]
\end{multline*}
holds. Finally, for $v$ and $\varphi$ as above, we also have the identity
\begin{multline*}
\int_\rn \nablap \varphi \cdot \overline{(A^*\nabla v(t))_{\|}}  =  \int_\rn \varphi \cdot \overline{\vec{A}^*_{n+1,\cdot}\dno\nabla v(t)}-\int_{\bb R^n}\nablap \varphi \cdot \overline{(\overline{B}_2)_\parallel v(t)}\\  + \int_\rn\varphi \overline{(\overline{B}_2)_\perp v(t)} -  \int_\rn \varphi \overline{\overline{B}_1 \cdot \nabla v(t)}.
\end{multline*}
\end{proposition}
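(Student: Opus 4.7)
The strategy for all three identities is the same: test the relevant weak equation against a function of the form $\psi_\epsilon(x,t) = \varphi(x)\eta_\epsilon(t - t_0)$, where $\eta_\epsilon \in C_c^\infty(\mathbb{R})$ is a non-negative smooth approximation to $\delta_0$, and then let $\epsilon \to 0$. Expanding
\[
\nabla\psi_\epsilon(x,t) = \bigl(\nabla_\parallel\varphi(x)\bigr)\eta_\epsilon(t-t_0) + \varphi(x)\,e_{n+1}\,\eta_\epsilon'(t-t_0),
\]
each weak form splits into ``tangential'' contributions (multiplied by $\eta_\epsilon$) and ``transverse'' contributions (multiplied by $\eta_\epsilon'$). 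The $t$-independence of $A$, $B_1$, $B_2$ is the essential feature: it permits an integration by parts in $t$ of the transverse pieces, transferring the derivative $\eta_\epsilon'$ onto $u$, $\nabla u$, $v$, or $\nabla v$. Once every term is multiplied by $\eta_\epsilon$ alone, sending $\epsilon \to 0$ produces the identity on the slice $\{t=t_0\}$. Density of $C_c^\infty(\rn)$ in $W^{1,2}(\rn)$ then extends the identity from smooth $\varphi$ to arbitrary $\varphi\in W^{1,2}(\rn)$.

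For the first identity I would apply this procedure to the weak form of $\cL u = g$. The transverse pieces are precisely $\vec{A}_{n+1,\cdot}\cdot\nabla u\,\overline\varphi\,\eta_\epsilon'$ and $(B_1)_\perp u\,\overline\varphi\,\eta_\epsilon'$, which after IBP in $t$ become $-\vec{A}_{n+1,\cdot}\cdot\partial_t\nabla u\,\overline\varphi\,\eta_\epsilon$ and $-(B_1)_\perp \partial_t u\,\overline\varphi\,\eta_\epsilon$; rearranging then yields exactly the stated right-hand side. The passage to the limit is justified by the H\"older continuity of the slices $t\mapsto u(\cdot,t)\in L^{2^*}(\rn)$ and $t\mapsto \nabla u(\cdot,t)\in L^2(\rn)$ furnished by Lemma \ref{ContSlices.lem} and Remark \ref{slicescontinuitysolutionsrmk.rmk}; since $g\in C_c^\infty(\ree)$, Proposition \ref{tsolns.prop} shows that $\partial_t u$ is itself a weak solution of $\cL(\partial_t u) = \partial_t g$, and Lemma \ref{Le1.5.lem} combined with the same continuity results applies to $\partial_t u$ and $\partial_t\nabla u$, making the slice limits $\partial_t u(\cdot,t_0)$ and $\partial_t\nabla u(\cdot,t_0)$ unambiguous elements of the relevant $L^p(\rn)$ spaces.

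For the second identity I would repeat the argument with the weak form of $\cL^* v = 0$,
\[
\dint_{\ree}\bigl[A^*\nabla v\cdot\overline{\nabla\psi_\epsilon} + \overline{B_2}v\cdot\overline{\nabla\psi_\epsilon} + \overline{B_1}\cdot\nabla v\,\overline{\psi_\epsilon}\bigr] = 0,
\]
again performing the IBP in $t$ on the transverse terms to produce $\vec{A}^*_{n+1,\cdot}\cdot\nabla\dno v$ and $(\overline{B_2})_\perp\dno v$. The extra hypothesis $\partial_t v\in Y^{1,2}(\sab)$ is what guarantees the necessary slice regularity of $\dno v$ and $\nabla\dno v$: by (the $\cL^*$ analogue of) Proposition \ref{tsolns.prop}, $\dno v$ is itself a weak solution of $\cL^*=0$, so Lemma \ref{Le1.5.lem} yields $\nabla\dno v\in L^2$ on sub-strips, and Lemma \ref{ContSlices.lem} then gives the H\"older continuity of $\nabla\dno v$ in $t$. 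The third identity follows from the second by purely algebraic manipulation: using $\tilde A\,\nabla_\parallel\varphi\cdot\overline{\nabla v} = \nabla_\parallel\varphi\cdot\overline{(A^*\nabla v)_\parallel}$ (unpacking the definitions of $\tilde A$ and $A^*$), $\vec{A}_{\cdot,n+1}\cdot\overline{\nabla\dno v} = \overline{\vec{A}^*_{n+1,\cdot}\dno\nabla v}$, and $B_1\varphi\cdot\overline{\nabla v} = \varphi\,\overline{\overline{B_1}\cdot\nabla v}$, one rearranges the second identity to isolate the term $\int\nabla_\parallel\varphi\cdot\overline{(A^*\nabla v)_\parallel}$ on the left. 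The main technical obstacle throughout is justifying the limit $\epsilon \to 0$ — that is, checking that every slice trace makes sense in an appropriate function space and that the bilinear pairing is continuous there — and this is exactly what the bootstrap regularity from Proposition \ref{tsolns.prop} together with Lemmas \ref{Le1.5.lem} and \ref{ContSlices.lem} delivers.
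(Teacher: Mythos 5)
Your proposal is correct and matches the paper's own argument essentially line for line: both test the weak formulation against $\varphi(x)\eta_\varepsilon(t-\cdot)$, integrate by parts in $t$ (which is where $t$-independence is used) to transfer $\eta_\varepsilon'$ onto $u$ or $v$, and then invoke the slice continuity from Lemma \ref{ContSlices.lem} (bootstrapped via Proposition \ref{tsolns.prop} and Lemma \ref{Le1.5.lem} to reach the extra $t$-derivatives) to pass to the limit. Your derivation of the third identity as a conjugate/algebraic rearrangement of the second is also the intended route; it is slightly more explicit than the paper, which only writes out the first identity in detail and leaves the others as analogous.
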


\begin{proof}
Fix $\varphi\in C_c^\infty(\rn)$ and $t\in (a,b)$. Let $\varphi_\varepsilon(x,s):= \varphi(x)\eta_\varepsilon(t-s)$ with $\varepsilon<\min\{ b-t,t-a\}$, and where $\eta_\varepsilon(\cdot)= \varepsilon^{-1}\eta(\cdot/\varepsilon)$, $\eta\in C_c^\infty(-1,1)$, $\int_{\bb R} \eta=1$. In particular, $\varphi_\varepsilon\in C_c^\infty(\Sigma_a^b)$ is an admissible test function in the definition of the weak solution. Thus, from the definition of $\cL u =g$, we have that
\begin{multline*}
\dint_{\ree}\Big\{\Big(\big(A(x) \nabla u(x,s)\big)_{\|} + (B_1)_\parallel u(x,s)\Big) \cdot  \overline{\nablap\varphi_\varepsilon(x,s)}  +  B_2(x)\cdot \nabla u(x,s) \overline{\varphi_\varepsilon(x,s)}\Big\}\, dxds\\=
 \dint_{\ree}\Big(\vec{A}_{n+1,\cdot}(x)\partial_s\nabla u(x,s) + (B_1(x))_\perp\partial_su(x,s)+g(x,s)\Big)\overline{\varphi_\varepsilon(x,s)}\, dxds.
\end{multline*}
Notice, for instance, that the map
\begin{equation*}
t\mapsto \int_{\rn}\Big(\big(A(x)\nabla u(x,t)\big)_{\parallel}+ (B_1)_\parallel(x)u(x,t)\Big)\cdot \overline{\nablap \varphi(x)}\,dx
\end{equation*}
is continuous in $(a,b)$, owing to Lemma \ref{ContSlices.lem} and the continuity of the duality pairings in each of its entries. A similar statement holds for all the other integrals. The desired conclusion now follows from the fact that for any continuous function $h: (a,b)\to \CC$, we have that $\lim_{\varepsilon\to 0}\int_\RR \eta_\varepsilon(t-\cdot)h  = h(t)$,  for each  $t\in (a,b)$.\end{proof}

As in \cite{AAAHK}, but now employing Lemma \ref{Lpcaccop.prop}, the $t-$independence of our coefficients allows us to obtain $L^p$ estimates on cubes lying in horizontal slices.

\begin{lemma}[$L^p$ estimates on slices; {\cite[Proposition 2.1]{AAAHK}}]\label{Lpcacconslices.lem}
Let $t\in\bb R$, $Q \subset \rn$ be a cube, and $I_Q$ be the box $I_Q=4Q\times(t-\ell(Q),t+\ell(Q))$. Let $p \ge 2$ with $|p -2|$ small enough that the conclusion of Lemma \ref{Snei.lem} holds. Suppose  that  $u\in W^{1,2}(I_Q)$ satisfies $\cL u = 0$ in $I_Q$.  Then the estimates
\begin{equation}\label{Lpgradslices}
\Big( \frac{1}{|Q|} \int_Q |\nabla u (x,t)|^p \Big)^{1/p} \lesssim  \Big( \frac{1}{|Q^{*}|} \dint_{Q^{*}} |\nabla u (x,t)|^p \Big)^{1/p},
\end{equation}
and
\begin{equation}\label{Lpcacconslices}
\Big( \frac{1}{|Q|} \int_Q |\nabla u (x,t)|^p \Big)^{1/p} \lesssim_p \frac{1}{\ell(Q)} \Big( \frac{1}{|Q^{**}|} \dint_{Q^{**}} |u (x,t)|^p \Big)^{1/p}
\end{equation}
hold, where $Q^*:=2Q\times(t-\ell(Q)/4,t+\ell(Q)/4)$ is an $(n+1)-$dimensional rectangle, and $Q^{**}:=3Q\times(t-\ell(Q)/2,t+\ell(Q)/2)$ is a slight dilation of $Q^*$.
\end{lemma}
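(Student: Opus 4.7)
The proof reduces to establishing the first estimate \eqref{Lpgradslices}: once we have it, \eqref{Lpcacconslices} follows by applying the $L^p$-Caccioppoli inequality (Proposition \ref{Lpcaccop.prop}, applied on balls covering the rectangle $Q^*$ by cubes comfortably inside $Q^{**}$) to bound $\iint_{Q^{*}}|\nabla u|^p$ by $\ell(Q)^{-p}\iint_{Q^{**}}|u|^p$, and then combining with \eqref{Lpgradslices}.

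To prove \eqref{Lpgradslices}, the central idea is to convert an $n$-dimensional slice estimate into an $(n+1)$-dimensional integral estimate via a transverse fundamental theorem of calculus (FTC) argument, leveraging that $\partial_t u$ also solves $\cL v = 0$ by $t$-independence (Proposition \ref{tsolns.prop}). Concretely, I introduce intermediate slabs
\[
\mathcal{Q}_1 := Q \times \bigl(t - \tfrac{\ell(Q)}{16},\, t + \tfrac{\ell(Q)}{16}\bigr),
\qquad
\mathcal{Q}_2 := \tfrac{3}{2} Q \times \bigl(t - \tfrac{\ell(Q)}{8},\, t + \tfrac{\ell(Q)}{8}\bigr),
\]
both comfortably contained in $Q^*$. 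By Lemma \ref{yougotSneiberg'd.lem}, $\nabla u \in L^p_{\loc}$, and Lemma \ref{Le1.5.lem} applied to $v := \partial_t u$ gives $\partial_t \nabla u = \nabla \partial_t u \in L^2_{\loc}(I_Q)$; by Fubini, for a.e.\ $x \in Q$ the map $\tau \mapsto \nabla u(x,\tau)$ is absolutely continuous on the relevant interval, so FTC yields
\[
\nabla u(x,t) \;=\; \nabla u(x,t') \;+\; \int_{t'}^{t}\partial_\tau \nabla u(x,\tau)\, d\tau,
\qquad t' \in \bigl(t - \tfrac{\ell(Q)}{16}, t + \tfrac{\ell(Q)}{16}\bigr).
\]
Raising to the $p$-th power (via $|a+b|^p \lesssim_p |a|^p + |b|^p$ and Hölder's inequality in $\tau$), averaging over $t'$, and integrating in $x$ gives
\[
\int_Q |\nabla u(x,t)|^p\, dx \;\lesssim\; \frac{1}{\ell(Q)}\iint_{\mathcal{Q}_1} |\nabla u|^p \;+\; \ell(Q)^{p-1}\iint_{\mathcal{Q}_1} |\partial_\tau \nabla u|^p.
\]

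The second term is handled as follows. Since $\partial_t u$ solves $\cL(\partial_t u) = 0$ on $I_Q$ (Proposition \ref{tsolns.prop}), the $L^p$-Caccioppoli inequality (Proposition \ref{Lpcaccop.prop}, extended from balls to our rectangles by a covering argument) gives
\[
\iint_{\mathcal{Q}_1}|\nabla \partial_\tau u|^p \;\lesssim\; \ell(Q)^{-p}\iint_{\mathcal{Q}_2}|\partial_\tau u|^p.
\]
Using the trivial pointwise bound $|\partial_\tau u| \le |\nabla u|$ together with $\mathcal{Q}_2 \subset Q^*$, the right-hand side is absorbed into $\ell(Q)^{-p}\iint_{Q^*}|\nabla u|^p$. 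Combining these estimates and recognizing $|Q^*| \simeq |Q|\,\ell(Q)$, after dividing through by $|Q|$, yields the averaged form of \eqref{Lpgradslices}.

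The main obstacle is of a technical rather than conceptual character: I must ensure that $p$ lies in the window around $2$ for which the $L^p$-Caccioppoli inequality is available (Proposition \ref{Lpcaccop.prop}, itself reliant on the Šneĭberg stability of Lemma \ref{Snei.lem}), and the nested cubes $\mathcal{Q}_1 \subset \mathcal{Q}_2 \subset Q^* \subset Q^{**} \subset I_Q$ must be arranged with enough room at each inclusion to accommodate the Caccioppoli enlargement. The key structural input, without which the scheme collapses, is the $t$-independence of the coefficients of $\cL$, which guarantees that $\partial_t u$ solves the same equation as $u$ and thus admits the same $L^p$-Caccioppoli bound.
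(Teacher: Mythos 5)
Your argument is correct and is essentially the same one the paper has in mind: the paper cites the proof of~\cite[Proposition 2.1]{AAAHK} and notes that it ``extends almost verbatim'' once one has the $L^p$ Caccioppoli inequality of Proposition~\ref{Lpcaccop.prop}, and that argument is precisely the transverse fundamental-theorem-of-calculus reduction you set up (average in $t'$, Hölder in $\tau$, then apply $L^p$ Caccioppoli to $\partial_t u$, which is again a solution by $t$-independence via Proposition~\ref{tsolns.prop}). One minor correction: to justify $\nabla\partial_t u\in L^2_{\loc}(I_Q)$, the cleaner reference is Proposition~\ref{tsolns.prop} itself (which already yields $\partial_t u\in W^{1,2}_{\loc}$), or simply the interior Caccioppoli inequality applied to the solution $\partial_t u$; Lemma~\ref{Le1.5.lem} is formulated on infinite slabs $\Sigma_a^b$ rather than on the bounded box $I_Q$, so invoking it directly is a slight mismatch (though the local estimate it rests on is the same).
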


In \cite{AAAHK}, the analogue of the preceding lemma is proved in the purely second order case.  However,
the argument there extends almost verbatim to the present situation, given Lemma \ref{Lpcaccop.prop}.  We omit the details.

Let us consider how the shift operator acts on $\cL^{-1}$. For each $\tau\in\bb R$, denote by $\n T^{\tau}$ the (positive) \emph{shift by }$\tau$ in the $t-$direction: If $u\in C_c^{\infty}(\bb R^{n+1})$, then $(\n T^{\tau}u)=u(\cdot,\cdot+\tau)$. More generally, if $f\in\n D'$ is a distribution, we define the distribution $\n T^{\tau}f$ by $\langle\n T^{\tau}f,\varphi\rangle=\langle f,\n T^{-\tau}\varphi\rangle$,  for each $\varphi\in\n D$. 

\begin{proposition}\label{prop.tder} Suppose that $u\in W^{1,2}_{\loc}(\bb R^{n+1}_+)$ solves $\cL u=0$ in $\bb R^{n+1}_+$. Then
\begin{enumerate}[i)]
\item\label{item.tderlm} Let $f\in(\Ya)^{\ast}$ and fix $s\in\bb R$. Then $\n T^{s}\cL ^{-1}f\in\Ya$ and satisfies $\n T^s\cL ^{-1}f=\cL ^{-1}\n T^{s}f$.
\item\label{item.tderlocshift} Let $s>0$. Then $\n T^su\in W^{1,2}_{\loc}(\bb R^{n+1}_+)$ and $\cL \n T^su=0$ in $\bb R^{n+1}_+$.
\item\label{item.tderloc} We have that $D_{n+1}u\in W^{1,2}_{\loc}(\bb R^{n+1}_+)$ and $\cL D_{n+1}u=0$ in $\bb R^{n+1}_+$.
\item\label{item.tdershift} For any $s>0$, we have that $D_{n+1}\n T^su\in Y^{1,2}(\bb R^{n+1}_+)\cap L^2(\bb R^{n+1}_+)=W^{1,2}(\bb R^{n+1}_+)$. In particular, for any $t>0$, the trace $\Tr_t D_{n+1}u$ is an element of $H^{\frac12}(\bb R^n)=L^2(\bb R^n)\cap\Hf$. Moreover, for each $t>0$, the estimate
\begin{equation}\label{eq.l2gradslice}
\Vert t\Tr_t\nabla\partial_tu\Vert_{L^2(\bb R^n)}\lesssim\Vert u\Vert_{Y^{1,2}(\bb R^{n+1}_{t/2})}
\end{equation}
holds. In particular, for each $s>0$ we have that
\begin{equation}\label{eq.supl2gradshift}
\sup_{t\geq0}\Vert(t+s)\Tr_t\nabla\partial_t\n T^su\Vert_{L^2(\bb R^n)}\lesssim\Vert u\Vert_{\Y}.
\end{equation}
Finally, for each $t > 0$ and $\zeta\in\Hfm$, we have the identity
\begin{equation}\label{eq.tdermovetder}
(\Tr_tD_{n+1}u,\zeta)=\frac{d\,}{dt}(\Tr_tu,\zeta).
\end{equation}
\end{enumerate}
\end{proposition}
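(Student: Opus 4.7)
The plan for this proposition is to work through the four assertions in order, leveraging the $t$-independence of the coefficients at nearly every step.

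Parts (i)--(iii) all exploit the fact that $t$-independence makes $\cL$ commute with both the shift $\n T^s$ and with $D_{n+1}$ in the weak sense. For (i), testing with $\varphi \in C_c^\infty(\ree)$ and changing variables yields $\langle \cL(\n T^s v), \varphi\rangle = \langle \cL v, \n T^{-s}\varphi\rangle$, so when $v = \cL^{-1}f$ we have $\cL(\n T^s v) = \n T^s f$; the invertibility in Proposition \ref{laxmilgram.prop} then forces $\n T^s v = \cL^{-1}\n T^s f$. For (ii), the shift $\n T^s u$ is defined on $\ree_{-s} \supset \ree_+$ when $s > 0$, and $\cL(\n T^s u) = \n T^s(\cL u) = 0$ weakly there, with $W^{1,2}_{\loc}$ regularity preserved under translation. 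Part (iii) is an immediate application of Proposition \ref{tsolns.prop} with $f = 0$ and $\vec F = 0$.

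For the first assertion of (iv), observe that $\partial_t u \in L^2(\ree_+)$ since $u \in Y^{1,2}(\ree_+)$. Applying Lemma \ref{Le1.5.lem} on $\Sigma_0^R$ with $a' = s$ and $b' = R-1$, the relevant constant $\min\{s, R-1, 1\}^{-2}$ stabilizes to $\min\{s,1\}^{-2}$ as $R\to\infty$, while the right-hand side is controlled uniformly by $\iint_{\ree_+}|\nabla u|^2 < \infty$. Monotone convergence then gives $\nabla \partial_t u \in L^2(\ree_s)$, and combined with $\partial_t u \in L^2(\ree_s)$ we conclude $\partial_t u \in W^{1,2}(\ree_s)$. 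Shifting yields $D_{n+1} \n T^s u \in W^{1,2}(\ree_+) = Y^{1,2}(\ree_+) \cap L^2(\ree_+)$, where the identification uses the Sobolev embedding. The trace statement $\Tr_t D_{n+1} u \in H^{1/2}(\rn)$ then follows from standard trace theory (Lemma \ref{lm.ytrace} and Proposition \ref{troncubes.prop}).

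For the slice bound \eqref{eq.l2gradslice}, apply Lemma \ref{Lpcacconslices.lem} with $p=2$ to the solution $v := \partial_t u$ from part (iii). Cover $\rn$ by a grid of pairwise disjoint cubes $\{Q_j\}$ of side length $\ell = t/2$, so that the associated rectangles $Q_j^{**} = 3Q_j \times (3t/4, 5t/4)$ sit inside $\ree_{t/2}$ with bounded overlap. Summing \eqref{Lpcacconslices} gives
\[
\int_\rn |\Tr_t \nabla \partial_t u|^2 \lesssim \frac{1}{t^2} \sum_j \iint_{Q_j^{**}} |\partial_t u|^2 \lesssim \frac{1}{t^2}\iint_{\ree_{t/2}} |\nabla u|^2 \lesssim \frac{1}{t^2}\|u\|_{Y^{1,2}(\ree_{t/2})}^2,
\]
which is \eqref{eq.l2gradslice}. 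For \eqref{eq.supl2gradshift}, use $\Tr_t \nabla \partial_t \n T^s u = \Tr_{t+s}\nabla \partial_t u$ and apply \eqref{eq.l2gradslice} at height $t+s$; since $\ree_{(t+s)/2} \subset \ree_+$, one concludes $\|(t+s) \Tr_t \nabla \partial_t \n T^s u\|_{L^2} \lesssim \|u\|_{Y^{1,2}(\ree_+)}$ uniformly in $t \ge 0$.

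The differentiation identity \eqref{eq.tdermovetder} is the main obstacle. The plan is to establish it first for $\zeta \in C_c^\infty(\rn)$ (which sits in $\Hfm$ for $n\ge 3$ by the Fourier-side characterization), and then extend by density. Set $h(t) := (\Tr_t u, \zeta)$ and $g(t) := (\Tr_t \partial_t u, \zeta)$; note that $g \in L^1_{\loc}((0,\infty))$ since $\partial_t u \in L^2_{\loc}$ and $\zeta$ is smooth with compact support. Mollifying $u$ in $t$ by $\eta_\varepsilon$, one verifies that $h\ast \eta_\varepsilon$ is smooth with $(h\ast\eta_\varepsilon)' = g \ast \eta_\varepsilon$, and sending $\varepsilon \to 0$ identifies $h \in W^{1,1}_{\loc}((0,\infty))$ with weak derivative $g$. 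Lemma \ref{ContSlices.lem} applied to $\partial_t u$, which by paragraph 2 is $Y^{1,2}$ on any compact strip $\Sigma_a^b \Subset \ree_+$, gives continuity of $t \mapsto \Tr_t \partial_t u$ into $L^{2^*}_{\loc}(\rn)$, so $g$ is continuous and $h$ is $C^1$ with $h'(t) = (\Tr_t D_{n+1} u, \zeta)$ pointwise. The extension to $\zeta \in \Hfm$ uses the density of $C_c^\infty(\rn)$ in $\Hfm$ together with the locally uniform $\Hf$-bounds on $\Tr_t u$ and $\Tr_t D_{n+1} u$ provided by Lemma \ref{lm.ytrace} and paragraph 2, allowing one to pass to the limit in both $h$ and $h'$ uniformly on compact $t$-intervals.
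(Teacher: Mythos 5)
Your treatment of (i)--(iii) matches the paper's intent (the paper explicitly defers these to the argument of Proposition \ref{tsolns.prop}), and your handling of (iv) is correct but departs from the paper's route in two places worth noting.

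For the assertion $D_{n+1}\n T^s u \in W^{1,2}(\ree_+)$, the paper works directly on $\ree_s$, covering it by a grid of $(n+1)$-dimensional cubes of side $s/2$ and applying the local Caccioppoli inequality on each; you instead invoke the strip-Caccioppoli estimate of Lemma \ref{Le1.5.lem} on $\Sigma_0^R$ and send $R\to\infty$. Both routes are valid. Yours requires a bit of care because the constant in Lemma \ref{Le1.5.lem} is $\min\{a'-a,b-b',1\}^{-2}$, so you only obtain $\|\nabla\partial_t u\|_{L^2(\ree_s)}\lesssim \min\{s,1\}^{-1}\|u\|_{\Y}$ rather than the sharper $s^{-1}$ of the cube argument; this is harmless for the qualitative membership you need. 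One arithmetic point: in your derivation of \eqref{eq.l2gradslice}, summing \eqref{Lpcacconslices} with $p=2$ over cubes of side $\ell=t/2$ actually yields $\int_\rn |\Tr_t\nabla\partial_t u|^2 \lesssim \ell^{-3}\iint_{\ree_{t/2}}|\partial_t u|^2$, i.e.\ the factor is $t^{-3}$, not the $t^{-2}$ written in your display (the extra $\ell^{-1}$ comes from the ratio $|Q|/|Q^{**}|$ being an $n$-dimensional measure divided by an $(n+1)$-dimensional one). This does not affect the downstream uses in \eqref{eq.supl2gradshift} and \eqref{eq.tcloseto0}, where there is room to spare, but the bookkeeping should be fixed.

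Where you genuinely diverge is \eqref{eq.tdermovetder}. The paper differentiates $g(\tau)=(\Tr_\tau u,\zeta)$ by rewriting the difference quotient as a boundary trace of $\tfrac{\n T^h\n T^t u - \n T^t u}{h}$, asserting that this quotient converges to $D_{n+1}\n T^t u$ in $\Y$ (using translation continuity in $Y^{1,2}$ plus the $W^{1,2}$ control already established), and then passing through the continuity of $\Tr_0:\Y\to\Hf$. You instead localize to $\zeta\in C_c^\infty(\rn)$, verify by mollification in $t$ that $h(t)=(\Tr_t u,\zeta)$ lies in $W^{1,1}_{\loc}$ with weak derivative $g(t)=(\Tr_t\partial_t u,\zeta)$, upgrade to $C^1$ via the continuity of the transverse trace from Lemma \ref{ContSlices.lem}, and then close by density in $\Hfm$ using the locally uniform $\Hf$ bounds on $\Tr_t u$ and $\Tr_t D_{n+1} u$. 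Your argument is longer but makes explicit the approximation steps that the paper leaves implicit behind ``by our previous computations''; the paper's argument is slicker but leans on the (unstated, though standard) continuity of translation in $Y^{1,2}(\ree_+)$. Both close the proof.
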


\begin{proof} The proofs of \ref{item.tderlm}), \ref{item.tderlocshift}), and \ref{item.tderloc}) are very similar to the proof of Proposition \ref{tsolns.prop}, and are thus omitted. We prove \ref{item.tdershift}), and to this end fix $s>0$. By assumption, it is clear that $\n T^su\in\Y$, and by \ref{item.tderlocshift}), we have that $\cL \n T^su=0$ in $\bb R^{n+1}_+$. Hence, by \ref{item.tderloc}), we have that $D_{n+1}\n T^su\in W^{1,2}_{\loc}(\bb R^{n+1}_+)$ and $\cL D_{n+1}\n T^su=0$ in $\bb R^{n+1}_+$. Let $\bb G(s/2)$ be a grid of pairwise disjoint cubes $R\subset\bb R^{n+1}_s$ such that $\bb R^{n+1}_s=\cup_{R\in\bb G(s/2)}R$ and $\ell(R)=\frac s2$. Consider the estimate
\begin{gather*}
\dint_{\bb R^{n+1}_+}|\nabla D_{n+1}\n T^su|^2=\dint_{\bb R^{n+1}_s}|\nabla D_{n+1}u|^2=\sum\limits_{R\in\bb G(s/2)}\dint_R|\nabla D_{n+1}u|^2\\ \lesssim\sum\limits_{R\in\bb G(s/2)}\frac1{s^2}\dint_{\tilde R}|D_{n+1}u|^2\lesssim\frac1{s^2}\Vert D_{n+1}u\Vert_{L^2(\bb R^{n+1}_{s/2})}^2\leq\frac1{s^2}\Vert u\Vert_{\Y}^2,
\end{gather*}
which proves that $\nabla D_{n+1}\n T^su\in L^2(\bb R^{n+1}_+)$. Since $D_{n+1}\n T^su\in L^2(\bb R^{n+1}_+)$ by the assumption that $u\in\Y$, it is proven that $D_{n+1}\n T^su\in W^{1,2}(\bb R^{n+1}_+)$. Hence, for each $t\geq0$, $\Tr_t D_{n+1}\n T^su\in H^{\frac12}(\bb R^n)$. But $\Tr_t D_{n+1}\n T^su=\Tr_{t+s} D_{n+1}u$. The estimate (\ref{eq.l2gradslice}) is true by Caccioppoli on slices (Proposition \ref{Lpcacconslices.lem}), as follows: break $\bb R^n$ into a grid $\bb G_n(t/2)$ of cubes $Q\subset\bb R^n$, $\ell(Q)=t/2$, and use Caccioppoli on slices in each cube.

It remains to check the identity (\ref{eq.tdermovetder}), so fix $t>0$. We have seen that $\Tr_{\tau}D_{n+1}u\in\Hf$ for each $\tau>0$. Fix $\zeta\in\Hfm$, and define $g(\tau):=(\Tr_{\tau}u,\zeta)$ for each $\tau>0$. We will show that $g$ is differentiable at $t$, and compute its derivative. To this end, note that
\begin{equation*}
\tfrac{g(t+h)-g(t)}h=\tfrac{(\Tr_{t+h}u,\zeta)-(\Tr_tu,\zeta)}h=\big(\Tr_t\tfrac{\n T^hu-u}h,\zeta\big)=\big(\Tr_0\tfrac{\n T^h\n T^tu-\n T^tu}h,\zeta\big).
\end{equation*}
By our previous computations, we have that $
\frac{\n T^h\n T^tu-\n T^tu}h\longrightarrow D_{n+1}\n T^tu$   in $\Y$   as $h\ra0$, which implies that $\Tr_0\Big(\frac{\n T^h\n T^tu-\n T^tu}h\Big)\longrightarrow\Tr_0D_{n+1}\n T^tu$  in $\Hf$  as $h\ra0$, and hence we have that $\tfrac{g(t+h)-g(t)}h\longrightarrow(\Tr_0 D_{n+1}\n T^tu,\zeta)=(\Tr_t D_{n+1}u,\zeta)$ as $h\ra0$. This finishes the proof.\end{proof}

\section{Abstract Layer Potential Theory}\label{AbsLPthry.sec}

In this section, we develop the abstract layer potential theory. Our methods often closely follow the constructions of Ariel Barton \cite{Bar}; but see also \cite{Ro}.

\begin{definition}[Single layer potential] Define the \emph{single layer potential of $\cL $} as the operator $\m S^\cL :\Hfm\ra\Ya$ given by $\m S^\cL :=\big(\Tr_0\circ (\cL^{-1})^*\big)^*$, which is well defined by virtue of Lemma \ref{lm.ytrace} and Proposition \ref{laxmilgram.prop}. For $t\in\bb R$, we denote $\m S^\cL_t:=\Tr_t\circ\m S^\cL$. When the operator under consideration is clear from the context, we will sometimes drop the superscript, so that we write $\m S=\m S^{\cL}$. For each $t\in\bb R$, ${\bf f}:\bb R^n\ra\bb C^{n+1}$ and $\vec{f}:\bb R^n\ra\bb C^n$, define $(\m S_t^\cL\nabla_{\|})\vec{f}:=-\m S_t^\cL(\text{div}\vec{f})$, $\m S_{t}^{\cL}D_{n+1}:=-\partial_t\m S_{t}^{\cL}$, and 
$(\m S_t^\cL\nabla){\bf f}=(\m S_t^\cL\nabla_{\|}){\bf f}_{\|}+\m S_t^\cL D_{n+1}{\bf f}_{n+1}$.
\end{definition}

Let us elucidate a few properties of this ``abstract'' single layer potential.

\begin{proposition}[Properties of the single layer potential]\label{prop.slproperties} Fix $\gamma\in\Hfm$. The following statements hold.
\begin{enumerate}[i)]
\item\label{item.slpropertieslm} The function $\m S^\cL\gamma\in\Ya$ is the unique element in $\Ya$ such that
\begin{equation}\label{eq.laxmilgramforsl}
B_\cL[\m S^\cL\gamma,\Phi]=\langle\gamma,\Tr_0\Phi\rangle,\qquad\text{for all }\Phi\in\Ya.
\end{equation}
Accordingly, $\m S^L:\Hfm\ra\Ya$ is a bounded linear operator. 
\item\label{item.slpropertiesws} The function $\m S^\cL\gamma$ satisfies $\cL\m S^\cL\gamma=0$ in $\Omega$, where $\Omega=\bb R^{n+1}_+,\bb R^{n+1}_-$.
\item\label{item.slpropertiesbetterws}  Suppose that $\gamma$ has compact support. Then $\cL\m S^\cL\gamma=0$ in $\bb R^{n+1}\backslash\supp \gamma$.
\item\label{item.slpropertiesbound} Define $p_-,p_+$ as in Proposition \ref{prop.fracemb} and suppose that $\gamma\in L^{p_-}(\bb R^n)$. Then the bound $\lVert\Tr_t\m S^{\cL}\gamma\rVert_{L^{p_+}(\rn)} \lesssim \lVert\gamma\rVert_{L^{p_-}(\rn)}$ holds for each $t\in\bb R$.
\item\label{item.slpropertiesadj} For each $t\in\bb R$, the operators $\m S^\cL_t$ and $\m S^{\cL^*}_{-t}$ are adjoint   to one another. That is, for each $\gamma,\psi\in\Hfm$, the identity $\langle\m S^\cL_t\gamma,\psi\rangle=\langle\gamma,\m S^{\cL^*}_{-t}\psi\rangle$ holds.
\item\label{item.slpropertieschar} For each $t\in\bb R$, we have the characterization
\begin{equation}\label{eq.slshift}
\n T^{-t}\m S^\cL\gamma=\big(\Tr_t\circ(\cL^{-1})^*\big)^*.
\end{equation}
\item\label{item.slpropertiesmovetder}  For each $t\in\bb R\backslash\{0\}$, we have that $\Tr_tD_{n+1}\m S^\cL\gamma\in\Hf$. Moreover, for each $t\in\bb R\backslash\{0\}$ and each $\zeta\in\Hfm$, we have that $\langle\Tr_tD_{n+1}\m S^\cL\gamma,\zeta\rangle=\frac{d\,}{dt}\langle\m S^\cL_t\gamma,\zeta\rangle=-\langle\gamma,\Tr_{-t}D_{n+1}\m S^{\cL^*}\zeta\rangle$.
\item\label{item.slpropertiesgradadj} Let $t\in\bb R\backslash\{0\}$. Let ${\bf g}=(\vec{g_{\|}},g_{\perp}):\bb R^n\ra\bb C^{n+1}$ be such that  $g_{\parallel}, g_{\perp}\in C_c^{\infty}(\bb R^n)$. In the sense of distributions, we have the adjoint relation
\begin{equation}\label{eq.slgradadj}
\langle\nabla\m S_t^\cL\gamma,{\bf g}\rangle_{\n D',\n D}=\langle\gamma,(\m S_{-t}^{\cL^*}\nabla){\bf g}\rangle_{\Hfm, \Hf}.
\end{equation}
\end{enumerate}
\end{proposition}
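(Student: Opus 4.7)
The plan is to decompose $\mathbf{g} = (\vec{g_{\|}}, g_\perp)$ and treat the parallel and transversal parts of $\nabla \m S_t^\cL \gamma$ separately. In each case the strategy is the same: unwind the distributional pairing, check that the result is a legitimate $\Hf$--$\Hfm$ duality pairing, and then invoke one of the previously-established adjoint relations (item \ref{item.slpropertiesadj}) for the parallel part and item \ref{item.slpropertiesmovetder}) for the transversal part). Summing the two contributions and rewriting them via the defining conventions $(\m S_{-t}^{\cL^*}\nablap)\vec{g_{\|}} := -\m S_{-t}^{\cL^*}(\divp \vec{g_{\|}})$ and $\m S_{-t}^{\cL^*} D_{n+1} g_\perp := -\partial_{-t}\m S_{-t}^{\cL^*} g_\perp$ will then yield the claimed identity.

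For the parallel component, I would integrate by parts distributionally to obtain
\[
\langle \nablap \m S_t^\cL \gamma, \vec{g_{\|}}\rangle_{\n D',\n D} \;=\; -\langle \m S_t^\cL \gamma, \divp \vec{g_{\|}}\rangle,
\]
and observe that the right-hand side is a valid $\Hf$--$\Hfm$ duality pairing: we have $\m S_t^\cL \gamma \in \Hf$ by Lemma \ref{lm.ytrace} applied to $\m S^\cL\gamma \in \Ya$, while $\divp \vec{g_{\|}} \in \Hfm$ by Proposition \ref{mappropgrad.prop}. Applying item \ref{item.slpropertiesadj}) then transfers $\m S_t^\cL$ onto $\m S_{-t}^{\cL^*}$, producing $-\langle \gamma, \m S_{-t}^{\cL^*}(\divp \vec{g_{\|}})\rangle = \langle \gamma, (\m S_{-t}^{\cL^*}\nablap)\vec{g_{\|}}\rangle$.

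For the transversal component, I would interpret the $(n+1)$-st entry of $\nabla \m S_t^\cL \gamma$ as the trace of the ambient transversal derivative, i.e.\ $\Tr_t D_{n+1} \m S^\cL \gamma$, which for $t \neq 0$ lies in $\Hf$ by item \ref{item.slpropertiesmovetder}); its pairing with $g_\perp \in \Hfm$ is therefore a bona fide $\Hf$--$\Hfm$ duality pairing. Using the last identity of item \ref{item.slpropertiesmovetder}) gives $\langle \Tr_t D_{n+1} \m S^\cL \gamma, g_\perp\rangle = -\langle \gamma, \Tr_{-t} D_{n+1} \m S^{\cL^*} g_\perp\rangle$; the same item identifies $\Tr_{-t} D_{n+1} \m S^{\cL^*} g_\perp$ with $\partial_{-t} \m S_{-t}^{\cL^*} g_\perp$, so by the definition of $\m S_{-t}^{\cL^*} D_{n+1}$ the right-hand side equals $\langle \gamma, \m S_{-t}^{\cL^*} D_{n+1} g_\perp\rangle$. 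Summing the parallel and transversal contributions then yields the desired identity.

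The main subtlety—rather than an obstacle—is ensuring that every formal distributional manipulation is in fact carried out inside an honest $\Hf$--$\Hfm$ duality pairing, so that items \ref{item.slpropertiesadj}) and \ref{item.slpropertiesmovetder}) genuinely apply. This is exactly what the regularity hypotheses $\vec{g_{\|}} \in \Hf$ and $g_\perp \in \Hfm$ are designed to provide, and it explains why the restriction $t \neq 0$ is essential: item \ref{item.slpropertiesmovetder}) only supplies the $\Hf$ membership of $\Tr_t D_{n+1}\m S^\cL\gamma$ off of the boundary slice.
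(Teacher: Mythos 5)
Your proof is correct and follows essentially the same route as the paper's: both decompose $\mathbf{g}$ into parallel and transversal components, handle the parallel part by integrating by parts distributionally and invoking item v), and handle the transversal part via item vii) together with the defining convention $\m S_t^\cL D_{n+1} := -\partial_t \m S_t^\cL$. The only difference is that you make more explicit the verification that each intermediate pairing is a legitimate $\Hf$--$\Hfm$ duality pairing, which the paper leaves implicit.
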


\noindent\emph{Proof.} Fix $\gamma\in\Hfm$. Proof of \ref{item.slpropertieslm}).  Since $\Tr_0:\Ya\ra\Hf$ is a bounded linear operator, then $T_{\gamma}:=\langle\gamma,\Tr_0\cdot\rangle$ is a bounded linear functional on $\Ya$. By the Lax-Milgram theorem, there exists a unique $u_\gamma\in\Ya$ such that $B_{\cL}[u_\gamma,\Phi]=\langle T_\gamma,\Phi\rangle=\langle\gamma,\Tr_0\Phi\rangle$, for all $\Phi\in\Ya$. Now let $\Psi\in(\Ya)^*$ be arbitrary, and observe that
\begin{multline*}
\langle\Psi,\m S^\cL\gamma\rangle=\big\langle\Psi,\big(\Tr_0\circ (\cL^{-1})^*\big)^*\gamma\big\rangle=\big\langle\Tr_0\circ (\cL^{-1})^*\Psi,\gamma\big\rangle=\overline{\langle T_\gamma,(\cL^*)^{-1}\Psi\rangle}\\=\overline{B_\cL[u_\gamma,(\cL^*)^{-1}\Psi]}=\overline{\langle\cL u_\gamma,(\cL^*)^{-1}\Psi\rangle}=\overline{\langle u_\gamma,\Psi\rangle}=\langle \Psi,u_\gamma\rangle.
\end{multline*}

Proof of \ref{item.slpropertiesws}). Let $\Phi\in C_c^{\infty}(\bb R^{n+1}_+)$, and let $\tilde\Phi$ be an extension of $\Phi$ to $C_c^{\infty}(\bb R^{n+1})$ with $\tilde\Phi\equiv0$ on $\bb R^{n+1}\backslash\supp\Phi$. In particular, $\Tr_0\tilde\Phi\equiv0$. Then (\ref{eq.laxmilgramforsl}) gives that $B_\cL[\m S^\cL\gamma,\Phi]=B_\cL[\m S^\cL\gamma,\tilde\Phi]=0$. Since $\Phi$ was arbitrary, the claim follows.

Proof of \ref{item.slpropertiesbetterws}). Let $\Omega:=\bb R^{n+1}\backslash\supp \gamma$, and let $\Phi\in C_c^{\infty}(\Omega)$. Let $\tilde\Phi$ be an extension of $\Phi$ to $C_c^{\infty}(\bb R^{n+1})$ with $\tilde\Phi\equiv0$ on $\bb R^{n+1}\backslash\supp \gamma$. In particular, the  supports of $\tilde\Phi$ and $\gamma$ are disjoint. It follows that $\langle\gamma,\Tr_0\tilde\Phi\rangle=0$. Using (\ref{eq.laxmilgramforsl}) now yields the result.

Proof of \ref{item.slpropertiesbound}). By the boundedness of $\s^{\cL}$  and the Sobolev embeddings, we have that
\[
\lVert \s^{\cL}_t g  \rVert_{L^{p_+}(\rn)} \lesssim \lVert \s^{\cL}_t g \rVert_{\Hf} \lesssim \| \s^{\cL} g\|_{\yot} \lesssim \lVert g \rVert_{\Hfm} \lesssim \lVert g \rVert_{L^{p_-}(\rn)}.
\]

Proof of \ref{item.slpropertiesadj}). Fix $t\in\bb R$ and $\gamma,\zeta\in\Hfm$. By the Lax-Milgram theorem, there exists a unique $v^{\zeta,t}\in\Ya$ such that $B_{\cL^*}[v^{\zeta,t},\Phi]=\langle\zeta,\Tr_t\Phi\rangle$, for all $\Phi\in\Ya$. Observe that
\begin{align*}
\langle\Tr_t\m S^\cL\gamma,\zeta\rangle=\overline{\langle\zeta,\Tr_t\m S^\cL\gamma\rangle}=\overline{B_{\cL^*}[v^{\zeta,t},\m S^\cL\gamma]}=B_\cL[\m S^\cL\gamma,v^{\zeta,t}]=\langle\gamma,\Tr_0v^{\zeta,t}\rangle. 
\end{align*}
Thus it suffices to show that $\Tr_0v^{\zeta,t}$ and $\m S^{\cL^*}_{-t}\zeta$ coincide as elements in $\Hf$. In turn, this will follow if we prove that $\m S^{\cL^*}\zeta=\n T^tv^{\zeta,t}=v^{\zeta,t}(\cdot,\cdot+t)$,  in $\Ya$. Let $\Phi\in\Ya$ be arbitrary. Note then that $\n T^t\Phi$ also lies in $\Ya$. By the $t-$independence of the coefficients of  $\cL$ and a change of variables we have that
\begin{gather}\nonumber
B_{\cL^*}[\n T^tv^{\zeta,t},\n  T^t\Phi]=B_{\cL^*}[v^{\zeta,t},\Phi]=\langle\gamma,\Tr_t\Phi\rangle=\langle\gamma,\Tr_0\n T^t\Phi\rangle.
\end{gather}
By (\ref{eq.laxmilgramforsl}) with $\cL$ replaced by $\cL^*$ throughout, $\m S^{\cL^*}\zeta$ is the unique element of $\Ya$ for which the above identity can  hold for all $\Phi\in\Ya$, as desired.

Proof of \ref{item.slpropertieschar}). In \ref{item.slpropertiesadj}), we proved that for each $\gamma\in\Hfm$, $\m S^\cL\gamma=\n T^t\cL^{-1}(T_{\gamma}^t)$, where $T_{\gamma}^t\in(\Ya)^*$ is given by $\langle T_{\gamma}^t,\Phi\rangle=\langle\gamma,\Tr_t\Phi\rangle$ for $\Phi\in\Ya$. Hence $\n T^{-t}\m S^\cL\gamma=\cL^{-1}(T_{\gamma}^t)$. Reproduce the proof of \ref{item.slpropertieslm}) in reverse to obtain the claim.

Proof of \ref{item.slpropertiesmovetder}). Let $t>0$ (the case $t<0$ is analogous). By \ref{item.slpropertiesws}) we have that $\cL\m S^\cL\gamma=0$ in $\bb R^{n+1}_+$. Therefore, using Proposition \ref{prop.tder} \ref{item.tdershift}) we have that $\Tr_{\tau}D_{n+1}\m S^\cL\gamma\in\Hf$ for each $\tau>0$. Using (\ref{eq.tdermovetder}) and \ref{item.slpropertiesadj}), we calculate that
\begin{multline*}
\tfrac{d\,}{d\tau}\langle\Tr_{\tau}\m S^\cL\gamma,\zeta\rangle\big|_{\tau=t}=\overline{\tfrac{d\,}{d\tau}\langle\zeta,\Tr_{\tau}\m S^\cL\gamma\rangle}\big|_{\tau=t}=\overline{\tfrac{d\,}{d\tau}\langle\Tr_{-\tau}\m S^{\cL^*}\zeta,\gamma\rangle}\big|_{\tau=t}\\ =-\overline{\tfrac{d\,}{d(-\tau)}\langle\Tr_{-\tau}\m S^{\cL^*}\zeta,\gamma\rangle}\big|_{-\tau=-t}=-\overline{\langle\Tr_{-t}D_{n+1}\m S^{\cL^*}\zeta,\gamma\rangle}=-\langle\gamma,\Tr_{-t}D_{n+1}\m S^{\cL^*}\zeta\rangle.
\end{multline*}

Proof of \ref{item.slpropertiesgradadj}). It is clear by an easy induction procedure that (\ref{item.slpropertiesmovetder}) holds for higher $t-$derivatives in the expected manner. Note that
\begin{multline*}
\langle\nabla\m S^\cL_t\gamma,{\bf g}\rangle_{\n D',\n D}=\langle\nabla_{\|}\m S^\cL_t\gamma,\vec{g_{\|}}\rangle_{\n D',\n D}+\langle\Tr_tD_{n+1}\m S^\cL\gamma,g_{\perp}\rangle_{\n D',\n D}\\ =-\langle\m S^\cL_t\gamma,\divg\vec{g_{\|}}\rangle_{\n D',\n D}-\langle\gamma,\Tr_{-t}D_{n+1}\m S^{\cL^*}g_{\perp}\rangle_{\Hfm,\Hf}\\ =-\langle\gamma,\m S^{\cL^*}_{-t}\divg\vec{g_{\|}}\rangle_{\Hfm,\Hf}+\langle\gamma,(\m S^{\cL^*}_{-t}D_{n+1})g_{\perp}\rangle_{\Hfm,\Hf} =\langle\gamma,(\m S^{\cL^*}_{-t}\nabla){\bf g}\rangle.
\end{multline*}\hfill{$\square$}

In preparation for defining the double layer potential, let us make the following remark.

\begin{remark*} Given $\varphi\in\Hf$, there exists $\Phi\in\Ya$ with $\Tr_0\Phi=\varphi$ and \\ $\Vert\Phi\Vert_{\Ya}\lesssim\Vert\varphi\Vert_{\Hf}$.
\end{remark*}

For a fixed $u\in\Y$, let $\n F^+_u$ be the functional on $\Ya$ defined by
\[
\langle\n F^+_u,v\rangle:=B_{\cL, \bb R^{n+1}_+}[u,v]=\dint\limits_{\bb R^{n+1}_+}\Big[A\nabla u\cdot\overline{\nabla v}+B_1u\cdot\overline{\nabla v}+B_2\cdot\nabla u\overline{v}\Big],
\]
for each $v\in\Ya$. Then $\n F^+_u$ is clearly bounded on $\Ya$. We define $B_{\cL,\bb R^{n+1}_-}$ and $\n F^-_u$ in a similar way (using $\bb R^{n+1}_-$ instead of $\bb R^{n+1}_+$), and we note that if $u\in\Ya$, then $\cL u=\n F^+_u+\n F^-_u$.

\begin{definition}[Double layer potential] Given $\varphi\in\Hf$, let $\Phi\in\Ya$ be any extension of $\varphi$ to $\bb R^{n+1}$. Define $\m D^{\cL,+}(\varphi):=-\Phi\big|_{\bb R^{n+1}_+}~+~\cL^{-1} (\n F^+_{\Phi} )~\big|_{\bb R^{n+1}_+}$ (see below for a proof that this is well defined). We call the operator $\m D^{\cL,+}:\Hf\ra\Y$ the \emph{double layer potential} associated to operator $\cL$ on the upper half-space. Analogously, we define $\m D^{\cL,-}$, the double-layer potential associated to operator $\cL$ on the lower half-space, by extending $\varphi$ to $\bb R^{n+1}_-$. We define $\m D^{\cL^*,\pm}$ similarly, by replacing $\cL$ with $\cL^*$.
\end{definition}

\begin{proposition}[Properties of the double layer potential]\label{prop.dlproperties} Fix $\varphi\in\Hf$ and let $\Phi$ be any $\Ya-$extension of $\varphi$ to $\bb R^{n+1}$ with $\Tr_0\Phi=\varphi$. The following statements hold.
\begin{enumerate}[i)]
\item\label{item.dlpropertieswd} The double layer potential $\m D^{\cL,+}$ is well defined.
\item\label{item.dlpropertieschar} We have the characterizations
\begin{equation}\label{eq.dl2}
\m D^{\cL,+}\varphi=-\cL^{-1}(\n F^-_\Phi)\big|_{\bb R^{n+1}_+},\qquad\m D^{\cL,-}\varphi=-\cL^{-1}(\n F^+_\Phi)\big|_{\bb R^{n+1}_-}.
\end{equation}
\item\label{item.dlpropertiesbound} The bound $\Vert\m D^{\cL,+}\varphi\Vert_{Y^{1,2}(\bb R^{n+1}_{+})}\lesssim\Vert\varphi\Vert_{\Hf}$	holds.
\item\label{item.dlpropertiesws} The function $\m D^{\cL,+}\varphi$ satisfies $\cL\m D^{\cL,+}\varphi=0$ in the weak sense in $\bb R^{n+1}_+$.
\end{enumerate}
\end{proposition}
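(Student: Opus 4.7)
My plan is to prove (ii) first, since the characterization there streamlines the arguments for the remaining three claims. Starting from the observation that $\cL\Phi=\n F^+_\Phi+\n F^-_\Phi$ in $(\Ya)^*$ (which follows immediately from splitting the integral defining $B_\cL$ as $B_{\cL,\bb R^{n+1}_+}+B_{\cL,\bb R^{n+1}_-}$), I apply $\cL^{-1}$ to both sides to get $\cL^{-1}(\n F^+_\Phi)=\Phi-\cL^{-1}(\n F^-_\Phi)$ in $\Ya$, and substitute into the definition to conclude
$$\m D^{\cL,+}\varphi = -\Phi\big|_{+}+\cL^{-1}(\n F^+_\Phi)\big|_{+} = -\cL^{-1}(\n F^-_\Phi)\big|_{+},$$
which is exactly the first identity in \eqref{eq.dl2}; the second follows symmetrically.

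For well-definedness (i), I will use the characterization just obtained. Given two extensions $\Phi_1,\Phi_2\in\Ya$ of $\varphi$, I set $\Psi:=\Phi_1-\Phi_2$, so that $\Tr_0\Psi=0$, and I must show $\cL^{-1}(\n F^-_\Psi)\big|_{+}=0$. The key step is to construct $\Xi:=\Psi\cdot\bbm{1}_{\bb R^{n+1}_-}$ (extension of $\Psi|_{-}$ by zero to the upper half-space) and verify that $\Xi\in\Ya$, which is the standard statement that a $Y^{1,2}$ function with zero trace on $\partial\bb R^{n+1}_+$ lies in $Y^{1,2}_0$ of each half-space (proved by cut-off and mollification near the hyperplane $\{t=0\}$). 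Since $\Xi\equiv 0$ on $\bb R^{n+1}_+$, we have $\n F^+_\Xi=0$, hence $\cL\Xi=\n F^-_\Xi=\n F^-_\Psi$ in $(\Ya)^*$, which gives $\cL^{-1}(\n F^-_\Psi)=\Xi$, and restricting to $\bb R^{n+1}_+$ yields zero.

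The bound (iii) is then a chain of estimates: the functional $\n F^-_\Phi$ satisfies $\|\n F^-_\Phi\|_{(\Ya)^*}\lesssim\|\Phi\|_{\Ya}$ by boundedness of $A$, $B_1$, $B_2$ and the Sobolev embedding controlling the $B_1$, $B_2$ contributions (essentially the same estimate used in Proposition \ref{laxmilgram.prop}); Lax-Milgram (Proposition \ref{laxmilgram.prop}) gives $\|\cL^{-1}(\n F^-_\Phi)\|_{\Ya}\lesssim\|\n F^-_\Phi\|_{(\Ya)^*}$; and by the remark immediately preceding the definition, I may select the extension $\Phi$ so that $\|\Phi\|_{\Ya}\lesssim\|\varphi\|_{\Hf}$. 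Concatenating and restricting to $\bb R^{n+1}_+$ yields (iii).

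Finally, for (iv), given $\eta\in C_c^\infty(\bb R^{n+1}_+)$, I extend it by zero to $\tilde\eta\in C_c^\infty(\bb R^{n+1})\subset\Ya$ and compute, using the characterization (ii):
$$\langle \cL\m D^{\cL,+}\varphi,\eta\rangle = -\langle\cL\cL^{-1}(\n F^-_\Phi),\tilde\eta\rangle = -\langle\n F^-_\Phi,\tilde\eta\rangle = 0,$$
where the last equality holds because $\n F^-_\Phi$ is defined by integration over $\bb R^{n+1}_-$ while $\tilde\eta\equiv 0$ there. The main obstacle I anticipate is the rigorous justification in (i) that a zero-trace $\Ya$ function can be split along the hyperplane into two $\Ya$ functions, but this is a routine consequence of the trace theory and approximation by functions vanishing in a neighborhood of $\partial\bb R^{n+1}_+$, consistent with the framework established in Lemma \ref{lm.ytrace}.
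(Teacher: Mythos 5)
Your proof is correct and follows essentially the same route as the paper's, differing only in presentation order: you prove the characterization (ii) first (which is legitimate since the identity $\cL\Phi=\n F^+_\Phi+\n F^-_\Phi$ holds for each fixed extension, no well-definedness required), and then deduce (i) from it, whereas the paper proves (i) directly from the original definition and obtains (ii) afterwards. Your auxiliary function $\Xi$ is the extension by zero to the upper half-space, while the paper's $w$ is the extension by zero to the lower half-space; these are the two pieces of the decomposition $\Phi-\Phi' = w + \Xi$ and the two choices are interchangeable. Both arguments rest on the same unproved (in the paper as well) technical point that a zero-trace $\Ya$ function restricted to a half-space and extended by zero remains in $\Ya$, which you correctly identify as the only substantive lemma needed; the paper asserts this without comment. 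Parts (iii) and (iv) are identical to the paper's, except that the paper sharpens the estimate in (iii) slightly by observing $\|\n F^-_\Phi\|_{(\Ya)^*}\lesssim\|\Phi\|_{\Ym}$ rather than $\lesssim\|\Phi\|_{\Ya}$, a cosmetic improvement that makes no difference to the conclusion.
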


\noindent\emph{Proof.} Proof of \ref{item.dlpropertieswd}). Let $\Phi,\Phi'\in\Ya$ be any two extensions of $\varphi$ to $\bb R^{n+1}$. Then $(\Phi-\Phi')(\cdot,0)=0$. If $w$ is defined as $w|_{\bb R^{n+1}_+}=\Phi-\Phi'$ with $w|_{\bb R^{n+1}_-}\equiv0$, then $w\in\Ya$. Thus observe that $\langle\cL w,\Psi\rangle=B_\cL[w,\Psi]=\langle\n F^+_{\Phi-\Phi'},\Psi\rangle$, for all $\Psi\in\Ya$, whence we conclude that $w=\cL^{-1}(\n F^+_{\Phi-\Phi'})$. Hence
\begin{multline*}
\big[-\Phi~+~\cL^{-1}(\n F^+_{\Phi})\big]_{\bb R^{n+1}_+}-\big[-\Phi'~+~\cL^{-1}(\n F^+_{\Phi'} ) \big]_{\bb R^{n+1}_+}\\ =\big[\Phi'-\Phi+\cL^{-1}(\n F^+_{\Phi}-\n F^+_{\Phi'})\big]_{\bb R^{n+1}_+}=\big[\Phi'-\Phi+\cL^{-1}(\n F^+_{\Phi-\Phi'})\big]_{\bb R^{n+1}_+}\equiv0.
\end{multline*}

Proof of \ref{item.dlpropertieschar}). Simply note that
\begin{equation}\nonumber
\m D^{\cL,+}\varphi=\big[-\Phi+\cL^{-1}(\n F^+_{\Phi})\big]_{\bb R^{n+1}_+}=\big[\cL^{-1}(-\cL\Phi+\n F^+_{\Phi} )\big]_{\bb R^{n+1}_+}=\big[\cL^{-1}(-\n F^-_{\Phi} )\big]_{\bb R^{n+1}_+}.
\end{equation}

Proof of \ref{item.dlpropertiesbound}). Owing to (\ref{eq.dl2}) we write
\begin{align}\nonumber
\Vert\m D^{\cL,+}\varphi\Vert_{\Y}=\Vert \cL^{-1}(\n F_{\Phi}^-)\Vert_{\Y}\lesssim\Vert\n F^-_{\Phi}\Vert_{(\Ya)^*}.
\end{align}
Let $0\neq\Psi\in\Ya$. We have
\begin{equation}\nonumber
|(\n F^-_{\Phi},\Psi)|=|B_{\cL,\bb R^{n+1}_-}[\Phi,\Psi]|\lesssim\Vert\Phi\Vert_{\Ym}\Vert\Psi\Vert_{\Ya},\nonumber
\end{equation}
whence we deduce that $\Vert\n F^-_{\Phi}\Vert_{(\Ya)^*}\lesssim\Vert\Phi\Vert_{\Ym}\lesssim\Vert\varphi\Vert_{\Hf}$. Putting these estimates together we obtain the desired result.

Proof of \ref{item.dlpropertiesws}). Let $\Psi\in C_c^{\infty}(\bb R^{n+1}_+)$ and extend it as a function in $\Psi\in C_c^{\infty}(\bb R^{n+1})$ so that $\Psi \equiv 0$ in $\bb R^{n+1}_-$. Observe that
\begin{multline}
B_{\cL,\bb R^{n+1}_+}[\m D^{\cL,+}\varphi,\Psi]=B_{\cL,\bb R^{n+1}_+}[-\cL^{-1}(\n F^-_{\Phi}),\Psi]=B_\cL[-\cL^{-1}(\n F^-_{\Phi}),\Psi]\nonumber\\ =-\langle\n F^-_{\Phi},\Psi\rangle=-B_{\cL,\bb R^{n+1}_-}[\Phi,\Psi]\equiv0.
\end{multline}\hfill{$\square$}

We may now introduce the definition of the conormal derivative. First let us make the quick observation that since $Y^{1,2}_0(\bb R^{n+1}_+)\hookrightarrow\Y$, then we have a surjection $(Y^{1,2}(\bb R^{n+1}_+))^*\ra(Y^{1,2}_0(\bb R^{n+1}_+))^*$ given by restriction of the test space for the functional. In particular, if $f\in(Y^{1,2}(\bb R^{n+1}_+))^*$, then we can also think of $f\in(Y^{1,2}_0(\bb R^{n+1}_+))^*$.

\begin{definition}[Conormal derivative]\label{def.conormal} Suppose that $u\in\Y$, $f\in(Y^{1,2}(\bb R^{n+1}_+))^*$ (note carefully that this space is not $(Y^{1,2}_0(\bb R^{n+1}_+))^*$), and that $\cL u=f$ in $\bb R^{n+1}_+$ in the  sense   that for each $\Phi\in C_c^{\infty}(\bb R^{n+1}_+)$, the identity 
\begin{equation}\label{solndef.eq2}
B_{\cL, \bb R^{n+1}_+}[u,\Phi]=\langle f,\Phi\rangle_{(Y_0^{1,2}(\bb R^{n+1}_+))^*, Y_0^{1,2}(\bb R^{n+1}_+)}
\end{equation}
holds.  Define the \emph{conormal derivative} of $u$ associated to $\cL$ with respect to $\bb R^{n+1}_+$, $\partial_{\nu}^{\cL,+}u\in\Hfm$, by
\begin{equation}\nonumber
\langle\partial_{\nu}^{\cL,+}u,\varphi\rangle=B_{\cL,\bb R^{n+1}_+}[u,\Phi]-\langle f,\Phi\rangle_{(Y^{1,2}(\bb R^{n+1}_+))^*,\Y}~,\qquad\varphi\in\Hf,
\end{equation}
where $\Phi\in\Y$ is any bounded extension of $\varphi$ to $\bb R^{n+1}_+$. Note that we also define the objects $\partial_{\nu}^{\cL^*,+}u,\partial_{\nu}^{\cL,-}u,\partial_{\nu}^{\cL^*,-}u$ similarly. 
\end{definition}

When $f=\tilde f-\dv\tilde F$ and $\tilde f, |\tilde F|$ verify the assumptions in (\ref{eq.extendcond}) (with $\Omega=\bb R^{n+1}_+$, $D=\bb R^n$, and $I=(0,\infty)$), then  the sense (\ref{solndef.eq2}) of weak solutions coincides with the one previously given in Definition \ref{weakform.def} (see Remark \ref{rm.extend}). In particular, if $f\equiv0$, the two senses (\ref{solndef.eq}), (\ref{solndef.eq2}) of weak solutions coincide, and there is no ambiguity.

Let us show that $\partial_{\nu}^{\cL,+}u$ is well defined. Let $\Phi,\Phi'$ be $\Y-$extensions of $\varphi$ with $\Tr_0\Phi=\Tr_0\Phi'=\varphi$. Then $\Phi-\Phi'\in Y^{1,2}_0(\bb R^{n+1}_+)$, and so
\[
B_{\cL,\bb R^{n+1}_+}[u,\Phi]-B_{\cL,\bb R^{n+1}_+}[u,\Phi']=B_{\cL,\bb R^{n+1}_+}[u,\Phi-\Phi']=\langle f,\Phi-\Phi'\rangle_{(Y^{1,2}_0(\bb R^{n+1}_+))^*, Y^{1,2}_0(\bb R^{n+1}_+)}
\]
since $u$ solves $\cL u=f$ in $\bb R^{n+1}_+$ in the sense   (\ref{solndef.eq2}). Finally, observe that
\begin{multline}\nonumber
\langle f,\Phi\rangle_{(Y^{1,2}(\bb R^{n+1}_+))^*,\Y}-\langle f,\Phi'\rangle_{(Y^{1,2}(\bb R^{n+1}_+))^*,\Y}\\=\langle f,\Phi-\Phi'\rangle_{(Y^{1,2}(\bb R^{n+1}_+))^*,\Y}=\langle f,\Phi-\Phi'\rangle_{(Y^{1,2}_0(\bb R^{n+1}_+))^*, Y^{1,2}_0(\bb R^{n+1}_+)},
\end{multline}
so that, upon subtracting these two identities, we see that $\partial_{\nu}^{\cL,+}u$ does not depend on the particular extension $\Phi$ taken. It remains to show that  $\partial_{\nu}^{\cL,+}u\in\Hfm$. Observe that
\begin{multline*}
|\langle\partial_{\nu}^{\cL,+}u,\varphi\rangle| \leq|B_{\cL,\bb R^{n+1}_+}[u,\Phi]|+|\langle f,\Phi\rangle_{(Y^{1,2}(\bb R^{n+1}_+))^*,\Y}|\\  \lesssim\big(\Vert u\Vert_{\Y}+\Vert f\Vert_{(Y^{1,2}(\bb R^{n+1}_+))^*}\big)\Vert\Phi\Vert_{\Y}\\  \lesssim\big(\Vert u\Vert_{\Y}+\Vert f\Vert_{(Y^{1,2}(\bb R^{n+1}_+))^*}\big)\Vert\varphi\Vert_{\Hf}.
\end{multline*}

It will also be helpful to consider conormal derivatives on slices other than $t=0$, denoted $\partial_{\nu,t}^{\cL,\pm}$. The definition is entirely analogous.

The following identities tie these  definitions of the conormal derivatives together.

\begin{lemma}\label{lm.conormals} Let $\gamma\in\Hfm$. The following statements are true.
	\begin{enumerate}[i)]
		\item\label{item.conormals+} Suppose that $u\in\Y$ solves $\cL u=0$ in $\bb R^{n+1}_+$ in the weak sense. Then for any $t>0$, $\partial_{\nu}^{\cL,+}\n T^tu=\partial_{\nu,t}^{\cL,+}u$. Moreover, for any $t>0$, $\partial_{\nu,t}^{\cL,+}u\in L^2(\bb R^n)$, and we have the identity
		\begin{equation}\label{eq.conormalchar}
		\partial_{\nu,t}^{\cL,+}u=-e_{n+1}\cdot\Tr_t[A\nabla u+B_1u]\qquad\text{in }L^2(\bb R^n).
		\end{equation}
		\item\label{item.conormals-} Suppose that $u\in\Ym$ solves $\cL u=0$ in $\bb R^{n+1}_-$. Then for any $t>0$, $\partial_{\nu}^{\cL,-}\n T^{-t}u=\partial_{\nu,-t}^{\cL,-}u$.
		\item\label{item.conormalspm} Let $t>0$. Then for each $\gamma\in\Hfm$, the identity $-\partial_{\nu,-t}^{\cL,-}\m S^\cL\gamma=\partial_{\nu,-t}^{\cL,+}\m S^\cL\gamma$	holds in the space $\Hfm$.
	\end{enumerate}
\end{lemma}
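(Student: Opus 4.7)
The plan is to prove items (i) and (ii) first, essentially by translation plus an interior-to-boundary integration by parts, and then to derive (iii) through a jump calculation at the slice $\{x_{n+1}=0\}$ for the single layer potential.

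For (i), given $\varphi \in \Hf$, choose $\tilde\Phi \in Y^{1,2}(\bb R^{n+1}_t)$ extending $\varphi$ at the slice $\{x_{n+1}=t\}$, and set $\Phi := \n T^t \tilde\Phi$. Then $\Phi \in \Y$ with $\Tr_0 \Phi = \varphi$, and the $t$-independence of $A, B_1, B_2$ together with a change of variables yields
\[ B_{\cL, \bb R^{n+1}_+}[\n T^t u, \Phi] = B_{\cL, \bb R^{n+1}_t}[u, \tilde\Phi], \]
which is the defining expression for $\langle \partial_{\nu, t}^{\cL, +}u, \varphi\rangle$ (both source terms vanish since $\cL u = 0$ in $\bb R^{n+1}_+$ gives $\cL\n T^t u = 0$ in $\bb R^{n+1}_+$ by Proposition \ref{prop.tder}). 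For the $L^2$ characterization, Caccioppoli on slices (Lemma \ref{Lpcacconslices.lem}) gives $\Tr_t\nabla u \in L^2(\rn)$, while Lemma \ref{ContSlices.lem} combined with the Sobolev embedding gives $\Tr_t u \in L^{2n/(n-2)}(\rn)$; together with $B_1 \in L^n(\rn)$ and H\"older's inequality, $\Tr_t[A\nabla u + B_1 u] \in L^2(\rn)$. After approximating $\tilde\Phi$ by smooth compactly supported functions, the divergence theorem on $\bb R^{n+1}_t$ combined with $\cL u = 0$ leaves only the boundary integral at $\{x_{n+1}=t\}$, whose outward normal is $-e_{n+1}$; this yields $\partial_{\nu,t}^{\cL,+}u = -e_{n+1}\cdot\Tr_t[A\nabla u + B_1 u]$. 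Item (ii) follows by the same argument in the lower half-space; the only change is that the outward normal becomes $+e_{n+1}$, giving the companion formula $\partial_{\nu,-t}^{\cL,-}u = +e_{n+1}\cdot\Tr_{-t}[A\nabla u + B_1 u]$ whenever $u$ solves $\cL u = 0$ in $\bb R^{n+1}_-$.

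For (iii), set $u = \m S^\cL\gamma$ and fix $t > 0$. By Proposition \ref{prop.slproperties} \ref{item.slpropertiesws}), $u$ solves $\cL u = 0$ in $\bb R^{n+1}_-$; since $\bb R^{n+1}_{-,-t} \subset \bb R^{n+1}_-$, the $L^2$ formula from (ii) gives $\partial_{\nu,-t}^{\cL,-}u = +e_{n+1}\cdot\Tr_{-t}[A\nabla u + B_1 u]$. For the upper conormal, $u$ does \emph{not} solve $\cL u = 0$ on all of $\bb R^{n+1}_{-t}$, but \eqref{eq.laxmilgramforsl} shows $\cL u = f$ in $\bb R^{n+1}_{-t}$ in the sense of Definition \ref{def.conormal}, where $\langle f, \Psi\rangle := \langle\gamma, \Tr_0 \Psi\rangle$ for $\Psi \in Y^{1,2}(\bb R^{n+1}_{-t})$. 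Given an extension $\Phi \in Y^{1,2}(\bb R^{n+1}_{-t})$ of $\varphi \in \Hf$ with $\Tr_{-t}\Phi = \varphi$, split
\[ B_{\cL, \bb R^{n+1}_{-t}}[u, \Phi] = B_{\cL, \Sigma_{-t}^0}[u, \Phi] + B_{\cL, \bb R^{n+1}_+}[u, \Phi]. \]
Since $\cL u = 0$ on each strip, the same IBP as in (i) reduces each form to boundary integrals at $\{x_{n+1}=-t\}$ and $\{x_{n+1}=0^\pm\}$; the two contributions at $\{x_{n+1}=0\}$ combine into $\int_\rn e_{n+1}\cdot\{(A\nabla u + B_1 u)|_{0^-} - (A\nabla u + B_1 u)|_{0^+}\}\overline{\Tr_0\Phi}\, dx$, which by applying \eqref{eq.laxmilgramforsl} to appropriate smooth test functions (and passing to the limit) equals $\langle\gamma, \Tr_0\Phi\rangle = \langle f, \Phi\rangle$. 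After subtracting $\langle f,\Phi\rangle$ as prescribed by Definition \ref{def.conormal}, only the slice-$(-t)$ term survives, giving $\partial_{\nu,-t}^{\cL,+}u = -e_{n+1}\cdot\Tr_{-t}[A\nabla u + B_1 u]$, which is exactly $-\partial_{\nu,-t}^{\cL,-}u$.

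The main obstacle will be justifying the one-sided integration by parts in the strip $\Sigma_{-t}^0$: one needs the one-sided trace $\Tr_{0^-}[A\nabla u + B_1 u]$ to exist in $L^2(\rn)$ despite the discontinuity of $\nabla u$ across $\{x_{n+1}=0\}$, and to control the boundary contributions at horizontal infinity. These will be handled by applying Caccioppoli on slices \emph{inside} the strip (where $u$ does solve $\cL u = 0$), the one-sided transversal H\"older regularity of $\nabla u$ furnished by Lemma \ref{ContSlices.lem} and Remark \ref{slicescontinuitysolutionsrmk.rmk}, and standard truncation/density approximations of $\Phi$ by $C_c^\infty$ functions.
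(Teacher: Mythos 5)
Your translation argument for parts (i) and (ii) is exactly the paper's: $B_{\cL,\bb R^{n+1}_+}[\n T^tu,\Phi]=B_{\cL,\bb R^{n+1}_t}[u,\n T^{-t}\Phi]$ by $t$-independence, and the characterization \eqref{eq.conormalchar} reduces to justifying an integration by parts on a half-space. On that last point you are a bit thin: approximating only $\tilde\Phi$ by smooth functions does not suffice, because $A\nabla u+B_1u$ is merely an $L^2_{\loc}$ vector field whose distributional divergence (equal to $B_2\cdot\nabla u$) is known only weakly. The paper mollifies the vector field $A\nabla u+B_1u$ itself by an $(n+1)$-dimensional approximate identity $P_\eps$, establishes strong $L^2$ convergence on the slice, and then applies the classical divergence theorem to the smooth regularization over a box $R_Q^{t_0}$. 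This is what turns the weak equation into an honest boundary identity; your sketch gestures at the same goal but does not supply the regularization that makes it run.

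Part (iii) is where you have a genuine gap. Your decomposition
$B_{\cL,\bb R^{n+1}_{-t}}[u,\Phi]=B_{\cL,\Sigma_{-t}^0}[u,\Phi]+B_{\cL,\bb R^{n+1}_+}[u,\Phi]$
forces you to produce one-sided traces $\Tr_{0^\pm}[A\nabla u+B_1u]$ in a strong enough sense to identify the cross-terms at $\{x_{n+1}=0\}$ with $\langle\gamma,\Tr_0\Phi\rangle$. For an arbitrary $\gamma\in\Hfm$ these one-sided $L^2$ traces need not exist: the Caccioppoli-on-slices estimate you propose to invoke degenerates as one approaches the singular slice, since the Whitney boxes shrink and the constants blow up like $t^{-1}$ (for $u=\m S^\cL\gamma$ one only gets $\|\nabla u(\cdot,t)\|_{L^2(\rn)}^2\lesssim t^{-1}\|\nabla u\|_{L^2(\Sigma_{t/2}^{3t/2})}^2$, which is not uniform in $t$). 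You would essentially be re-proving the jump relation (Proposition \ref{prop.jump} \ref{item.sljump}) by a route that is not clearly available. The paper sidesteps all of this with a purely algebraic manipulation of bilinear forms: it splits the full-space form, $B_{\cL,\bb R^{n+1}_{+,-t}}[\m S^\cL\gamma,\Phi]=B_{\cL}[\m S^\cL\gamma,\Phi]-B_{\cL,\bb R^{n+1}_{-,-t}}[\m S^\cL\gamma,\Phi]$, then uses the defining Lax--Milgram identity \eqref{eq.laxmilgramforsl} to replace $B_{\cL}[\m S^\cL\gamma,\Phi]$ by $\langle\gamma,\Tr_0\Phi\rangle$, which exactly cancels the source term in Definition \ref{def.conormal}. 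No traces at $\{x_{n+1}=0^\pm\}$, no IBP in the strip, no smallness of $t$ needed. You should adopt this decomposition for (iii).
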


\noindent\emph{Proof.} Proof of \ref{item.conormals+}) and \ref{item.conormals-}).   Let $\varphi\in\Hf$, and $\Phi\in\Y$ is any extension of $\varphi$. Thenm
\begin{multline*}
\langle\partial_{\nu}^{\cL,+}\n T^{t}u,\varphi\rangle=B_{\cL,\bb R^{n+1}_+}[\n T^tu,\Phi]=B_{\cL,\bb R^{n+1}_t}[u,\n T^{-t}\Phi]\\ =\langle\partial_{\nu,t}^{\cL,+}u,\Tr_t\n T^{-t}\Phi\rangle=\langle\partial_{\nu,t}^{\cL,+}u,\varphi\rangle.
\end{multline*}
We turn our attention now to (\ref{eq.conormalchar}). By Remark \ref{slicescontinuitysolutionsrmk.rmk}, we have that $F(x,t)=-e_{n+1}\cdot\Tr_t[A\nabla u+B_1u ]$ is continuous in $t$ taking values in $L^2(\bb R^n)$. In order to prove the lemma we will regularize our coefficients and solution simultaneously.

Let $P_{\ep}$ be an $(n+1)-$dimensional approximate identity; that is, $P_{\ep}(f)=\eta_{\ep}*f$, where $\eta_{\ep}(X)=\frac1{\ep^{n+1}}\eta(X/\ep)$ ($X\in\bb R^{n+1}$), $\eta\in C_c^{\infty}(B(0,1))$, $\eta$ non-negative and radially decreasing with $\int_{\bb R^{n+1}}\eta=1$. We claim that
\begin{equation}\label{eq.claimpe}
-e_{n+1}\cdot P_{\ep}(A\nabla u+B_1u)(x,t_0)\longrightarrow-e_{n+1}\cdot(A\nabla u+B_1u)(x,t_0)
\end{equation}
strongly in $L^2(\bb R^n)$. Assume (\ref{eq.claimpe}) for a moment. Then to show \ref{item.conormals+}) and \ref{item.conormals-}) in the lemma, it is enough to show that for every $\Phi\in C_c^{\infty}(\bb R^{n+1})$ with $\Phi(x,t_0)=\varphi(x)$, we have that
\begin{equation}\nonumber
\lim_{\ep\ra0}\int_{\bb R^n}-e_{n+1}\cdot P_{\ep}(A\nabla u+B_1u)(x,t_0)\overline{\varphi(x)}\,dx=\dint_{\bb R^{n+1}_{t_0}}A\nabla u\overline{\nabla\Phi}+B_1u\overline{\nabla\Phi}+B_2\cdot\nabla u\overline{\Phi}.
\end{equation}
To prove the above equality, first define for any cube $Q\subset\bb R^n$, $R_Q^{t_0}:=Q\times[t_0,t_0+\ell(Q)]$. Now choose any cube $Q\subset\bb R^n$ such that $\supp{\Phi}\cap\{t\geq t_0\}\subset\overline{R_{\frac12Q}^{t_0}}$. Integrating by parts, we have for $0<\ep\ll\min\{\ell(Q),t_0\}$ the identity
\begin{multline}\label{eq.passtolimit}
\int_{\bb R^n}-e_{n+1}\cdot P_{\ep}(A\nabla u+B_1u)(x,t_0)\overline{\varphi(x)}\,dx=\dint_{R_Q^{t_0}}\divg [P_{\ep}(A\nabla u+B_1u)\overline{\Phi} ]\\ =\dint_{R_Q^{t_0}}\divg [P_{\ep}(A\nabla u+B_1u) ]\overline{\Phi}+\dint_{R_Q^{t_0}}P_{\ep}(A\nabla u+B_1u)\cdot\overline{\nabla\Phi}.
\end{multline}
Now let $X=(x,t)\in\supp\Phi\cap\{t\geq t_0\}$, and $\ep<\frac{t_0}2$. Then, since $\cL u=0$ in $\bb R^{n+1}_+$, we have that
\begin{multline*}
\divg_X[P_{\ep}(A\nabla u+B_1u)](X) =\divg_X\dint_{\bb R^{n+1}}\eta_{\ep}(X-Y)(A\nabla_Y u+B_1u)(Y)\,dY\\  =-\dint_{\bb R^{n+1}}\nabla_Y\eta_{\ep}(X-Y)(A\nabla_Y u+B_1u)(Y)\,dY\\  =\dint_{\bb R^{n+1}_+}\eta_{\ep}(X-Y)B_2\nabla_Yu(Y)\,dY=P_{\ep}(B_2\nabla u)(X),
\end{multline*}
and therefore the identity
\begin{equation}\label{eq.claimpe2}
\dint_{R_Q^{t_0}}\divg [P_{\ep}(A\nabla u+B_1u) ]\overline{\Phi}=\dint_{R_Q^{t_0}}P_{\ep}(B_2\nabla u)\overline{\Phi}
\end{equation}
holds. Finally, we want to pass in the limit as $\ep\ra0$ the identity (\ref{eq.passtolimit}) while using (\ref{eq.claimpe2}), so we use the Lebesgue dominated convergence theorem. Observe that for some $p>1$, $|A\nabla u|+|B_1u|+|B_2\nabla u|\in L^p(U_Q^{t_0})$, where $U_Q^{t_0}:=R_Q^{t_0}+B(0,\frac{t_0}4)$ (the $\frac{t_0}4$-neighborhood of $R_Q^{t_0}$). It follows that for $\ep\in(0,\frac{t_0}4)$,
\[
P_{\ep}(A\nabla u+B_1u)(x,t)+P_{\ep}(B_2\nabla u)(x,t)\leq\n M\big(\big[|A\nabla u|+|B_1u|+|B_2\nabla u|\big]{\mathbbm 1}_{U_Q^{t_0}}\big)(x,t)
\]
for all $(x,t)\in R_Q^{t_0}$, where $\n M$ is the usual Hardy-Littlewood maximal operator in $\ree$. Hence we have that
\begin{multline*}
\lim_{\ep\ra0}\int_{\bb R^n}-e_{n+1}\cdot P_{\ep}(A\nabla u+B_1u)(x,t_0)\overline{\varphi(x)}\,dx\\ =\lim_{\ep\ra0}\dint_{R_Q^{t_0}}P_{\ep}(A\nabla u +B_1u)\overline{\nabla\Phi}+P_{\ep}(B_2\nabla u)\overline{\Phi}\\ =\dint_{R_Q^{t_0}}A\nabla u +B_1u\overline{\nabla\Phi}+B_2\nabla u\overline{\Phi}.
\end{multline*}

Thus it remains to prove (\ref{eq.claimpe}). Set $F_{\ep}(x,t):=-e_{n+1}\cdot P_{\ep}(A\nabla u+B_1u)(x,t)$, $F(x,t):=-e_{n+1}\cdot(A\nabla u+B_1u)(x,t)$. For $\ep<\frac{t_0}2$,   we have that
\begin{multline*}
\limsup_{\ep\ra0}\Vert F_{\ep}(\cdot,t_0)-F_0(\cdot,t_0)\Vert_{2}\\ =\limsup_{\ep\ra0}\Big(\int_{\bb R^n}\Big|\dint_{\bb R^{n+1}}\big[F_0(x-\ep y,t_0-\ep s)-F_0(x,t_0)\big]\eta_{\ep}(y,s)\,dy\,ds\Big|^2\,dx\Big)^{\frac12}\\ \leq\limsup_{\ep\ra0}\dint_{\bb R^{n+1}}\eta(y,s)\Big[\int_{\bb R^n}|F_0(x-\ep y,t_0-\ep s)-F_0(x,t_0)|^2\,dx\Big]^{\frac12}\,dy\,ds\\ \leq\limsup_{\ep\ra0}\sup_{|y|,|s|<1}\Vert F_0(\cdot-\ep y,t_0-\ep s)-F_0(\cdot,t_0)\Vert_2\\ \leq\limsup_{\ep\ra0}\sup_{|\hat y|,|\hat s|<\ep}\Vert F_0(\cdot-\hat y,t_0-\hat s)-F_0(\cdot-\hat y,t_0)\Vert_2+\Vert F_0(\cdot-\hat y,t_0)-F_0(\cdot,t_0)\Vert_2,
\end{multline*}
which drops to $0$ as $\ep\ra0$, finishing the proof.

Proof of \ref{item.conormalspm}). Let $\varphi\in\Hf$ and let $\Phi\in\Ya$ be any extension of $\varphi$. Note that $\cL\m S^{\cL}\gamma=0$ in $\bb R^{n+1}_{-,-t}$, while $\cL\m S^\cL\gamma=T_{\gamma}$ in $\bb R^{n+1}_{+,-t}$  in the sense (\ref{solndef.eq2}), where $T_{\gamma}\in(\Ya)^*$ is the distribution given by $\langle T_{\gamma},\Psi\rangle=\langle \gamma,\Tr_0\Psi\rangle$, for $\Psi\in\Ya$. Then, 
\begin{multline*}
\langle\partial_{\nu,-t}^{\cL,+}\m S^\cL\gamma,\varphi\rangle=B_{\cL,\bb R^{n+1}_{+,-t}}[\m S^\cL\gamma,\Phi]-\langle\gamma,\Tr_0\Phi\rangle\\ =-B_{\cL,\bb R^{n+1}_{-,-t}}[\m S^\cL\gamma,\Phi]+B_{\cL}[\m S^\cL\gamma,\Phi]-\langle\gamma,\Tr_0\Phi\rangle\\ =-B_{\cL,\bb R^{n+1}_{-,-t}}[\m S^\cL\gamma,\Phi]+\langle\gamma,\Tr_0\Phi\rangle-\langle\gamma,\Tr_0\Phi\rangle=-\langle\partial_{\nu,-t}^{\cL,-}\m S^\cL\gamma,\varphi\rangle.
\end{multline*}\hfill{$\square$}

\subsection{Green's formula and jump relations}

Let us remark that the functional $\n F^+_u$ makes sense even if we only have that $u\in\Y$ and $u\notin\Ya$. Also, if $\Omega\subset\bb R^{n+1}$  is an open set with Lipschitz boundary, and $f\in(Y^{1,2}(\Omega))^*$, define the functional ${\mathbbm 1}_{\Omega}f \in(Y^{1,2}(\bb R^{n+1}))^*$ by $\langle{\mathbbm 1}_{\Omega}f,\Psi\rangle:=\langle f,{\mathbbm 1}_{\Omega}\Psi\rangle$  for each $\Psi\in Y^{1,2}(\bb R^{n+1})$.

\begin{theorem}[Green's formula] \label{thm.greenformula} Suppose that $u\in\Y$ solves $\cL u=f$ in $\bb R^{n+1}_+$ for some $f\in(Y^{1,2}(\bb R^{n+1}_+))^*$  in the sense (\ref{solndef.eq2}). Then the following statements hold.
	\begin{enumerate}[i)]
		\item\label{item.slgreen} We have the identity
		\begin{equation}\label{eq.slgreen}
		\m S^\cL(\partial_{\nu}^{\cL,+}u)=\cL^{-1}(\n F^+_u)-\cL^{-1}({\mathbbm 1}_{\bb R^{n+1}_+}f)\qquad\text{in }\Ya.
		\end{equation}
		\item\label{item.greenformula} The identity $u=-\m D^{\cL,+}(\Tr_0u)~+~\m S^\cL(\partial_{\nu}^{\cL,+}u)|_{\bb R^{n+1}_+}~+~\cL^{-1}({\mathbbm 1}_{\bb R^{n+1}_+} f)|_{\bb R^{n+1}_+}$	holds in   $\Y$.
		\item\label{item.greenformula2} We have that $-\cL^{-1}({\mathbbm 1}_{\bb R^{n+1}_+}f)|_{\bb R^{n+1}_-}=\m D^{\cL,-}(\Tr_0 u)+\m S^\cL(\partial_{\nu}^{\cL,+}u)|_{\bb R^{n+1}_-}$  in   $\Ym$.
		\item\label{item.greenformula3} Suppose that $\cL u=0$ in $\ree_-$. Then   $\cD^{\cL,+} (\tr_0 u)= -\s^{\cL}(\partial_{\nu}^{\cL,-}u)$	holds in $\reu$.
	\end{enumerate}
\end{theorem}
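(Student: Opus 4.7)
The plan is to apply the second characterization of the double layer potential from part \ref{item.dlpropertieschar}) of Proposition \ref{prop.dlproperties}, namely
\[
\cD^{\cL,+}(\Tr_0 u)=-\cL^{-1}(\n F^-_\Phi)\big|_{\ree_+},
\]
valid for \emph{any} $\Ya$--extension $\Phi$ of $\Tr_0 u$. Since $\n F^-_\Phi$ depends only on $\Phi|_{\ree_-}$, I would exploit this freedom by choosing a $\Phi$ whose lower-half-space piece coincides with $u$ itself.

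To construct such a $\Phi$, I would take any $v\in\Y$ with $\Tr_0 v=\Tr_0 u$ (such a $v$ exists by Lemma \ref{lm.ytrace} and the standard right-inverse theorem for traces of $Y^{1,2}$--functions) and set $\Phi:=u\,\mathbbm{1}_{\ree_-}+v\,\mathbbm{1}_{\ree_+}$. Because the traces of $u$ and $v$ agree on $\rn\times\{0\}$, no singular part appears in the distributional gradient of $\Phi$ across the interface, and hence $\Phi\in\Ya$ with $\Phi|_{\ree_-}=u$.

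Next I would identify $\n F^-_\Phi=\n F^-_u\in(\Ya)^*$ via the conormal derivative. For an arbitrary $\Psi\in\Ya$, its restriction $\Psi|_{\ree_-}$ lies in $Y^{1,2}(\ree_-)$ with trace $\Tr_0\Psi$, and applying the $\ree_-$-analogue of Definition \ref{def.conormal} with zero forcing gives
\[
\n F^-_u(\Psi)=B_{\cL,\ree_-}[u,\Psi]=\langle\partial_\nu^{\cL,-}u,\Tr_0\Psi\rangle.
\]
This is exactly the action of the functional $T_{\partial_\nu^{\cL,-}u}\in(\Ya)^*$ defined by $T_\gamma:\Psi\mapsto\langle\gamma,\Tr_0\Psi\rangle$, and by part \ref{item.slpropertieslm}) of Proposition \ref{prop.slproperties} we have $\m S^\cL(\partial_\nu^{\cL,-}u)=\cL^{-1}(T_{\partial_\nu^{\cL,-}u})$. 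Substituting produces $\cD^{\cL,+}(\Tr_0 u)=-\m S^\cL(\partial_\nu^{\cL,-}u)|_{\ree_+}$, as required. The only step that demands any care is the gluing construction that yields $\Phi\in\Ya$, which is routine provided the traces agree across $\partial\ree_+$.
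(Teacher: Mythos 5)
Your proposal addresses only part \ref{item.greenformula3}) of the theorem; parts \ref{item.slgreen}), \ref{item.greenformula}), and \ref{item.greenformula2}) are left untouched. This is a genuine omission: part \ref{item.slgreen}) in particular requires its own argument (the paper runs a duality computation against an arbitrary $\Psi\in(\Ya)^*$, using $\m S^\cL=(\Tr_0\circ(\cL^{-1})^*)^*$ and the definition of the conormal derivative to pass from $\langle\Psi,\m S^\cL\partial_\nu^{\cL,+}u\rangle$ to $\langle\Psi,\cL^{-1}(\n F^+_u)-\cL^{-1}(\mathbbm{1}_{\ree_+}f)\rangle$), and parts \ref{item.greenformula}) and \ref{item.greenformula2}) are then derived from \ref{item.slgreen}) together with the characterization \eqref{eq.dl2}. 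None of this appears in your write-up, so the theorem as a whole is not proved.

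That said, your argument for part \ref{item.greenformula3}) is correct, and it is organized a little differently than the paper's route. The paper handles \ref{item.greenformula3}) as a mirror of \ref{item.greenformula2}), which in turn pairs against test functionals $\Psi\in(\Ya)^*$ supported in the appropriate half-space and combines \eqref{eq.dl2} with the already-established identity \eqref{eq.slgreen}. You instead fix a well-chosen extension $\Phi\in\Ya$ with $\Phi|_{\ree_-}=u$ — the gluing $\Phi=u\,\mathbbm{1}_{\ree_-}+v\,\mathbbm{1}_{\ree_+}$ across a matching trace is sound, since the agreement of traces prevents a singular part of $\nabla\Phi$ on $\{t=0\}$ — and then unwind $\n F^-_\Phi=\n F^-_u=T_{\partial_\nu^{\cL,-}u}$ directly from Definition \ref{def.conormal} (with zero forcing in $\ree_-$), finishing with the characterization $\m S^\cL\gamma=\cL^{-1}(T_\gamma)$ from Proposition \ref{prop.slproperties}\,\ref{item.slpropertieslm}). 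The identification $\n F^-_u=T_{\partial_\nu^{\cL,-}u}$ you extract is precisely the $f=0$, lower-half-space version of \eqref{eq.slgreen}, so your argument reproves that special case of \ref{item.slgreen}) implicitly; it is a slightly more direct presentation for the homogeneous case, but it does not dispense with the need to prove the inhomogeneous identity \eqref{eq.slgreen} for parts \ref{item.slgreen})–\ref{item.greenformula2}).
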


\noindent\emph{Proof.} Proof of \ref{item.slgreen}). Let $\Psi\in(\Ya)^*$. Then
\begin{multline*}
\langle\Psi,\m S^\cL\partial_{\nu}^{\cL,+}u\rangle= \langle\Tr_0(\cL^*)^{-1}\Psi,\partial_{\nu}^{\cL,+}u\rangle \\ =\overline{B_{\cL,\bb R^{n+1}_+}[u,(\cL^*)^{-1}\Psi]}-\overline{\langle f,(\cL^*)^{-1}\Psi\rangle}_{(\Y)^*,\Y} \\ =\overline{\langle\n F^+_u, (\cL^{-1})^*\Psi\rangle}-\overline{\langle{\mathbbm 1}_{\bb R^{n+1}_+}f,(\cL^{-1})^*\Psi\rangle}=\overline{\langle\cL^{-1}(\n F^+_u), \Psi\rangle}-\overline{\langle\cL^{-1}({\mathbbm 1}_{\bb R^{n+1}_+}f),\Psi\rangle} \\ =\langle\Psi,\cL^{-1}(\n F^+_u)-\cL^{-1}({\mathbbm 1}_{\bb R^{n+1}_+}f)\rangle.
\end{multline*}

Proof of \ref{item.greenformula}). Let $\Psi\in(\Ya)^*$ have compact support within $\bb R^{n+1}_+$. Using (\ref{eq.slgreen}), we have that
\begin{multline*}
\langle\Psi,\m S^\cL\partial_{\nu}u-\m D^{\cL,+}\Tr_0u\rangle \\ =\langle\Psi,\cL^{-1}(\n F^+_u)\rangle-\langle\Psi,\cL^{-1}({\mathbbm 1}_{\bb R^{n+1}_+}f)\rangle-\big[-\langle\Psi,u|_{\bb R^{n+1}_+}\rangle+\langle\Psi,\cL^{-1}(\n F^+_u)|_{\bb R^{n+1}_+}\rangle\big] \\ = \langle\Psi,u-\cL^{-1}({\mathbbm 1}_{\bb R^{n+1}_+}f)\rangle.
\end{multline*}

Proof of \ref{item.greenformula2}). Let $\Psi\in(\Ya)^*$ have compact support within $\bb R^{n+1}_-$. Using (\ref{eq.dl2}) and (\ref{eq.slgreen}), we have that
\begin{multline*}
\langle\Psi,\m D^{\cL,-}(\Tr_0 u)+\m S^\cL(\partial_{\nu}^{\cL,+}u)\rangle=\langle\Psi,-\cL^{-1}(\n F^+_u)|_{\bb R^{n+1}_-}+\cL^{-1}(\n F^+_u)-\cL^{-1}({\mathbbm 1}_{\bb R^{n+1}_+}f)\rangle\\ =-\langle\Psi,\cL^{-1}({\mathbbm 1}_{\bb R^{n+1}_+}f)\rangle.
\end{multline*}

The proof of  \ref{item.greenformula3}) is the same as \ref{item.greenformula2}), and is thus omitted.\hfill{$\square$}

Let us now consider some adjoint relations for the double layer potential. First, for any $u\in\Ya$, denote by ${\n F^*_u}^+\in(\Ya)^*$ the functional given by $\langle{\n F^*_u}^+,v\rangle:=B_{\cL^*,\bb R^{n+1}_+}[u,v]$ for $v\in\Ya$.

\begin{proposition}\label{prop.adj} We have the following identities.
	\begin{enumerate}[i)]
		\item\label{item.dlconormaladjoint} For each $\varphi,\psi\in\Hf$, the identity $\langle\partial_{\nu}^{\cL,+}\m D^{\cL,+}\varphi,\psi\rangle=\langle\varphi,\partial_{\nu}^{\cL^*,+}\m D^{\cL^*,+}\psi\rangle$ holds.
		\item\label{item.dladjoint} For each $\gamma\in\Hfm$, $\varphi\in\Hf$, $t\geq0$, the adjoint relation
		\begin{equation}\label{eq.dladjoint}
		\langle\gamma,\Tr_t\m D^{\cL,+}\varphi\rangle=-\langle\partial_{\nu}^{\cL^*,-}\n T^{-t}\m S^{\cL^*}\gamma,\varphi\rangle=-\langle\partial_{\nu,-t}^{\cL^*,-}\m S^{\cL^*}\gamma,\varphi\rangle=\langle\partial_{\nu,-t}^{\cL^*,+}\m S^{\cL^*}\gamma,\varphi\rangle
		\end{equation}
		holds. In the case that $t=0$, we may write
		\begin{equation}\label{eq.dladjointt=0}
		\langle\gamma,\Tr_0\m D^{\cL,+}\varphi\rangle=-\langle\gamma,\varphi\rangle+\langle\partial_{\nu}^{\cL^*,+}\m S^{\cL^*}\gamma,\varphi\rangle.
		\end{equation}
		\item\label{item.dlmovetder}  Fix $\varphi\in H^{\frac12}(\bb R^n)$. For each $t>0$, and every $\zeta\in\Hfm$, we have the identity \\$	\langle\Tr_tD_{n+1}\m D^{\cL,+}\varphi,\zeta\rangle=\frac{d\,}{dt} \langle\Tr_t\m D^{\cL,+}\varphi,\zeta\rangle=\langle\varphi,\partial_{\nu,-t}^{\cL^*,-}D_{n+1}\m S^{\cL^*}\zeta\rangle_{L^2,L^2}$.
		\item\label{item.dlgradadj} Fix $t>0$. Let ${\bf g}=(\vec{g_{\|}},g_{\perp}):\bb R^n\ra\bb C^{n+1}$ be such that  $g_{\parallel}, g_{\perp}\in C_c^{\infty}(\bb R^n)$. In the sense of distributions, we have the adjoint relation
		\begin{equation}\label{eq.dlgradadj}
		\langle\nabla\Tr_t D_{n+1}\m D^{\cL,+}\varphi,{\bf g}\rangle_{\n D',\n D}=\langle\varphi,D_{n+1}\partial_{\nu,-t}^{\cL^*,-}(\m S^{\cL^*}\nabla){\bf g}\rangle_{L^2,L^2}
		\end{equation}
	\end{enumerate}
\end{proposition}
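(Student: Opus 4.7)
The plan is to mirror the proof strategy of item~\ref{item.slpropertiesgradadj} of Proposition~\ref{prop.slproperties}, splitting the gradient into horizontal and vertical pieces and dispatching each via earlier identities. (I read the right--hand side as $\langle\varphi,\partial_{\nu,-t}^{\cL^*,-}D_{n+1}(\m S^{\cL^*}\nabla){\bf g}\rangle_{L^2,L^2}$; the ``$\zeta$'' in the statement is understood to be ${\bf g}$.) First, I would decompose
\[
\langle\nabla\Tr_tD_{n+1}\m D^{\cL,+}\varphi,{\bf g}\rangle_{\n D',\n D}
=\langle\nabla_{\|}\Tr_tD_{n+1}\m D^{\cL,+}\varphi,\vec{g_{\|}}\rangle
+\langle D_{n+1}\Tr_tD_{n+1}\m D^{\cL,+}\varphi,g_{\perp}\rangle,
\]
recalling that, by Proposition~\ref{prop.tder}, $D_{n+1}\m D^{\cL,+}\varphi$ is itself a weak solution of $\cL$ in $\ree_+$, so the various traces make sense.

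For the horizontal piece, I would integrate by parts in $\rn$ to transfer $\nabla_{\|}$ onto $\vec{g_{\|}}$, noting that $\divg\vec{g_{\|}}\in\Hfm$ by Proposition~\ref{mappropgrad.prop}, and then invoke item~\ref{item.dlmovetder} with the test distribution $\zeta=-\divg\vec{g_{\|}}$:
\[
\langle\nabla_{\|}\Tr_tD_{n+1}\m D^{\cL,+}\varphi,\vec{g_{\|}}\rangle
=-\langle\Tr_tD_{n+1}\m D^{\cL,+}\varphi,\divg\vec{g_{\|}}\rangle
=\langle\varphi,\partial_{\nu,-t}^{\cL^*,-}D_{n+1}(\m S^{\cL^*}\nabla_{\|})\vec{g_{\|}}\rangle,
\]
where the final step also uses the definition $(\m S^{\cL^*}\nabla_{\|})\vec{f}=-\m S^{\cL^*}(\divg\vec{f})$.

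For the vertical piece, I would exploit identity~\eqref{eq.tdermovetder} applied to the solution $D_{n+1}\m D^{\cL,+}\varphi$ (so that $D_{n+1}\Tr_t=\Tr_tD_{n+1}$ pairs correctly against $g_{\perp}\in\Hfm$), reducing the expression to $\langle\Tr_tD_{n+1}^2\m D^{\cL,+}\varphi,g_{\perp}\rangle$. Then, since $D_{n+1}\m D^{\cL,+}\varphi$ is a solution, item~\ref{item.dlmovetder} applies verbatim to it with $\zeta=g_{\perp}$; combined with the identity $\m S^{\cL^*}D_{n+1}=-\partial_t\m S^{\cL^*}$ from the definition of the single layer potential, this yields
\[
\langle\Tr_tD_{n+1}^2\m D^{\cL,+}\varphi,g_{\perp}\rangle
=\langle\varphi,\partial_{\nu,-t}^{\cL^*,-}D_{n+1}(\m S^{\cL^*}D_{n+1})g_{\perp}\rangle.
\]
Summing the two pieces and collapsing them using $(\m S^{\cL^*}\nabla){\bf g}=(\m S^{\cL^*}\nabla_{\|})\vec{g_{\|}}+(\m S^{\cL^*}D_{n+1})g_{\perp}$ gives~\eqref{eq.dlgradadj}.

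I expect the main obstacle to lie in the vertical piece: applying item~\ref{item.dlmovetder} to $D_{n+1}\m D^{\cL,+}\varphi$ in place of $\m D^{\cL,+}\varphi$ requires that this differentiated object retain enough transversal regularity to legitimately carry a conormal derivative on the slice $\{t\}$, and that its $H^{1/2}_0$-trace and the associated duality pairings behave as in the proof of item~\ref{item.dlmovetder}. I would justify this by applying Lemma~\ref{Le1.5.lem} and Proposition~\ref{prop.tder} on interior strips $\Sigma_{t/2}^{2t}$ to promote higher transversal Sobolev regularity and then approximating $g_{\perp}\in\Hfm$ by smoother elements, passing to the limit in both sides thanks to the continuity of $\partial_{\nu,-t}^{\cL^*,-}$ on $\Hfm\to\Hfm$. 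Once this interior regularity is secured, the algebraic manipulation reduces to iterating item~\ref{item.dlmovetder} and invoking the adjoint identities already established in Proposition~\ref{prop.slproperties}.
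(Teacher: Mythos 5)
Your proposal addresses only part \ref{item.dlgradadj} of the proposition and treats parts \ref{item.dlconormaladjoint}, \ref{item.dladjoint}, and \ref{item.dlmovetder} as ``earlier identities'' already in hand. Those three parts constitute almost the entire content of the proposition, and none of them is established in your write-up. Part \ref{item.dlconormaladjoint} requires a nontrivial manipulation of the form $B_{\cL,\reu}$ to exhibit the symmetry $B_{\cL^*,\reu}[\Psi,\cL^{-1}(\n F^+_\Phi)]=B_{\cL^*,\reu}[(\cL^*)^{-1}({\n F^*_\Psi}^+),\Phi]$; part \ref{item.dladjoint} requires the translation identity \eqref{eq.slshift} together with the characterization of $\m S^\cL$ via \eqref{eq.laxmilgramforsl} to produce the key duality between the double layer of $\cL$ and the single layer of $\cL^*$; and part \ref{item.dlmovetder} then requires the $t$--differentiation argument using \eqref{eq.tdermovetder}, \eqref{eq.conormalchar}, and the chain rule through $\Tr_{-\tau}$. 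Until these are in place, invoking \ref{item.dlmovetder} for the proof of \ref{item.dlgradadj} is circular.

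Concerning your argument for part \ref{item.dlgradadj} on its own: it is in the right spirit and coincides with the paper's one-line remark that \ref{item.dlgradadj} follows from \ref{item.dlmovetder} ``similarly as in Proposition~\ref{prop.slproperties}~\ref{item.slpropertiesgradadj}).'' Your handling of the horizontal piece is fine, and you are right that the $\zeta$ on the right-hand side of \eqref{eq.dlgradadj} should be read as ${\bf g}$. For the vertical piece, however, your phrasing ``item~\ref{item.dlmovetder} applies verbatim to $D_{n+1}\m D^{\cL,+}\varphi$'' is not literally correct: $D_{n+1}\m D^{\cL,+}\varphi$ is not itself a double layer potential of some boundary datum, so one cannot invoke \ref{item.dlmovetder} as a black box for it. What you actually want (and what you seem to be doing) is to differentiate the \emph{conclusion} of \ref{item.dlmovetder} once more in $t$, using \eqref{eq.tdermovetder} (valid since $D_{n+1}\m D^{\cL,+}\varphi$ is a solution in $\reu$ by Proposition~\ref{prop.tder}) on the left and the conormal formula \eqref{eq.conormalchar} together with the chain rule through $\Tr_{-t}$ on the right; the sign that falls out is absorbed precisely by the convention $\m S^{\cL^*}_t D_{n+1}=-\partial_t\m S^{\cL^*}_t$, exactly as you observe. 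That is the clean route, and it also avoids the regularity worries you raise at the end, since the transversal smoothness needed for the extra $t$-derivative is already supplied by Proposition~\ref{prop.tder}~\ref{item.tdershift} and Lemma~\ref{Le1.5.lem} for the solution $\m D^{\cL,+}\varphi$ on the strip $\Sigma_{t/2}^{2t}$ --- you do not need to treat $D_{n+1}\m D^{\cL,+}\varphi$ as a new fundamental object.
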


\noindent\emph{Proof.} Proof of \ref{item.dlconormaladjoint}). Let $\Phi,\Psi$ be extensions of $\varphi,\psi$ respectively to $\Ya$. Then,
\begin{multline*}
\langle\partial_{\nu}^{\cL,+}\m D^{\cL,+}\varphi,\psi\rangle=B_{\cL,\bb R^{n+1}_+}[\m D^{\cL,+}\varphi,\Psi]=-B_{\cL,\bb R^{n+1}_+}[\Phi,\Psi]+B_{\cL,\bb R^{n+1}_+}[\cL^{-1}(\n F^+_{\Phi}),\Psi]\\ =-\overline{B_{\cL^*,\bb R^{n+1}_+}[\Psi,\Phi]}+\overline{B_{\cL^*,\bb R^{n+1}_+}[\Psi,\cL^{-1}(\n F^+_{\Phi})]}\\ =\overline{B_{\cL^*,\bb R^{n+1}_+}[\m D^{\cL^*,+}\psi,\Phi]}-\overline{B_{\cL^*,\bb R^{n+1}_+}[(\cL^*)^{-1}({\n F^*_{\Psi}}^+),\Phi]}+\overline{B_{\cL^*,\bb R^{n+1}_+}[\Psi,\cL^{-1}(\n F^+_{\Phi})]}\\ =\langle\varphi,\partial_{\nu}^{\cL^*,+}\m D^{\cL^*,+}\psi\rangle+\overline{\Big[-B_{\cL^*,\bb R^{n+1}_+}[(\cL^*)^{-1}({\n F^*_{\Psi}}^+),\Phi]+B_{\cL^*,\bb R^{n+1}_+}[\Psi,\cL^{-1}(\n F^+_{\Phi})]\Big]},
\end{multline*}
where in the first equality we used the definition of the conormal derivative, in the second equality we used the definition of the double layer potential. Hence it suffices to show that $B_{\cL^*,\bb R^{n+1}_+}[\Psi,\cL^{-1}(\n F^+_{\Phi})]=B_{\cL^*,\bb R^{n+1}_+}[(\cL^*)^{-1}({\n F^*_{\Psi}}^+),\Phi]$. Simply note that
\begin{multline*}
B_{\cL^*,\bb R^{n+1}_+}[\Psi,\cL^{-1}(\n F^+_{\Phi})] =\langle{\n F^*_{\Psi}}^+, \cL^{-1}(\n F^+_{\Phi})\rangle=\langle(\cL^{-1})^*({\n F^*_{\Psi}}^+),\n F^+_{\Phi}\rangle\\  =\overline{B_{\cL,\bb R^{n+1}_+}[\Phi,(\cL^{-1})^*({\n F^*_{\Psi}}^+)]}=B_{\cL^*,\bb R^{n+1}_+}[(\cL^*)^{-1}({\n F^*_{\Psi}}^+),\Phi],
\end{multline*}
where in the first equality we used the definition of the functional ${\n F^*_{\Psi}}^+$, and in the third equality we used the definition of $\n F^+_{\Phi}$. The desired identity follows.

Proof of \ref{item.dladjoint}). Let $\gamma\in\Hfm$, $\varphi\in\Hf$, and let $\Phi\in\Ya$ be an extension of $\varphi$ such that $\Tr_0\Phi=\varphi$. By the definition of $\m D^{\cL,+}\varphi$, we have that $\langle\gamma,\Tr_t\m D^{\cL,+}\varphi\rangle=-\langle\gamma,\Tr_t\Phi\rangle+\langle\gamma,\Tr_t \cL^{-1}(\n F^+_{\Phi})\rangle$. By (\ref{eq.slshift}), we have that
\begin{multline*}
\langle\gamma,\Tr_t \cL^{-1}(\n F^+_{\Phi})\rangle=\langle(\Tr_t\circ \cL^{-1})^*\gamma,\n F^+_{\Phi}\rangle=\langle\n T^{-t}\m S^{\cL^*}\gamma,\n F^+_{\Phi}\rangle\\ =\overline{\langle\n F^+_{\Phi},\n T^{-t}\m S^{\cL^*}\gamma\rangle}=\overline{B_{\cL,\bb R^{n+1}_+}[\Phi,\n T^{-t}\m S^{\cL^*}\gamma]}=B_{\cL^*,\bb R^{n+1}_+}[\n T^{-t}\m S^{\cL^*}\gamma,\Phi]\\ =B_{\cL^*}[\n T^{-t}\m S^{\cL^*}\gamma,\Phi]-B_{\cL^*,\bb R^{n+1}_-}[\n T^{-t}\m S^{\cL^*}\gamma,\Phi]=\langle\gamma,\Tr_t\Phi\rangle-\langle\partial_{\nu}^{\cL^*,-}\n T^{-t}\m S^{\cL^*}\gamma,\varphi\rangle,
\end{multline*}
where in the last equality we used (\ref{eq.slshift}) combined with (\ref{eq.laxmilgramforsl}) for the first term, and for the second term we used the definition of the conormal derivative and the fact that $\cL\n T^{-t}\m S^{\cL^*}=0$ in $\bb R^{n+1}_-$. From this calculation, the first equality in (\ref{eq.dladjoint}) follows. The second and third equalities are straightforward consequences of Lemma \ref{lm.conormals}. To see that (\ref{eq.dladjointt=0}) is true, simply observe that  when $t=0$, we have that $\cL^*\m S^{\cL^*}\gamma=0$ in  $\bb R^{n+1}_+$ and in $\bb R^{n+1}_-$, whence   we deduce that $\langle\partial_{\nu}^{\cL^*,-}\n T^{-0}\m S^{\cL^*}\gamma+\partial_{\nu}^{\cL^*,+}\n T^{-0}\m S^{\cL^*}\gamma,\varphi\rangle=B_{\cL^*}[\m S^{\cL^*}\gamma,\Phi]=\langle\gamma,\Tr_0\Phi\rangle$. Adding and subtracting $\langle\partial_{\nu}^{\cL^*,+}\n T^{-t}\m S^{\cL^*}\gamma,\varphi\rangle$ to the right-hand side of (\ref{eq.dladjoint}) now proves the claim.

Proof of \ref{item.dlmovetder}). Let $t>0$. By Proposition \ref{prop.dlproperties} \ref{item.dlpropertiesws}), we have that $\cL\m D^{\cL,+}\varphi=0$ in $\bb R^{n+1}_+$. Therefore, using Proposition \ref{prop.tder} \ref{item.tdershift}) we see that $\Tr_{\tau}D_{n+1}\m D^{\cL,+}\varphi\in H^{\frac12}(\bb R^n)$ for each $\tau>0$. Similarly, we have that $\Tr_{-\tau}\nabla D_{n+1}\m S^{\cL^*}\zeta\in L^2(\bb R^n)$ for each $\tau>0$. Using (\ref{eq.tdermovetder}) and \ref{item.dladjoint}), we calculate that $
\frac{d\,}{d\tau}(\Tr_t\m D^{\cL,+}\varphi,\zeta)\big|_{\tau=t}=-\frac{d\,}{d\tau}(\varphi,\partial_{\nu,-t}^{\cL^*,-}\m S^{\cL^*}\zeta)\big|_{\tau=t}$. Now we use the characterization of the conormal derivative, (\ref{eq.conormalchar}), to obtain
\begin{multline*}
-\frac{d\,}{d\tau}(\varphi,\partial_{\nu,-t}^{\cL^*,-}\m S^{\cL^*}\zeta)\big|_{\tau=t} =-\frac{d\,}{d\tau}\big(\varphi,\big[e_{n+1}\cdot\Tr_{-\tau}(A^*\nabla+\overline{B_2})\m S^{\cL^*}\zeta\big]\big)_{2,2}\big|_{\tau=t}\\ =\big(\varphi,\big[e_{n+1}\cdot\Tr_{-t}(A^*\nabla+\overline{B_2})D_{n+1}\m S^{\cL^*}\zeta\big]\big)_{2,2}.
\end{multline*}

Finally, \ref{item.dlgradadj}) follows from \ref{item.dlmovetder}) similarly as in Proposition \ref{prop.slproperties} \ref{item.slpropertiesgradadj}).\hfill{$\square$}

Let us now establish  standard jump relations.

\begin{proposition}[Jump relations]\label{prop.jump} Let $\varphi\in\Hf$ and $\gamma\in\Hfm$. 
\begin{enumerate}[i)]
\item\label{item.dljump} The identity $\Tr_0\m D^{\cL,+}\varphi+\Tr_0\m D^{\cL,+}\varphi=-\varphi$ holds in $\Hf$.
\item\label{item.sljump} The identity $\partial_{\nu}^{\cL,+}\m S^\cL\gamma+\partial_{\nu}^{\cL,-}\m S^\cL\gamma=\gamma$ holds in $\Hfm$.
\item\label{item.dlcont} The identity $\partial_{\nu}^{\cL,+}\m D^{\cL,+}\varphi=\partial_{\nu}^{\cL,-}\m D^{\cL,-}\varphi$ holds in $\Hfm$.
\item\label{item.slcont} The identity $\Tr_0(\m S^\cL\gamma|_{\bb R^{n+1}_+})=\Tr_0(\m S^\cL\gamma|_{\bb R^{n+1}_-})$ holds in $\Hf$.
\end{enumerate}
\end{proposition}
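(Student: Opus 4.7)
My plan is to deduce all four identities from the variational definitions of $\m S^\cL$ and $\m D^{\cL,\pm}$, the Green-type Definition \ref{def.conormal} of the conormal derivative, and the invertibility of $\cL$ on $Y^{1,2}(\bb R^{n+1})$. I would treat the items in increasing order of difficulty. For item \ref{item.slcont}, $\m S^\cL\gamma$ is a single element of $Y^{1,2}(\bb R^{n+1})$ by Proposition \ref{prop.slproperties}, and consistency of the trace operators on $\bb R^{n+1}$ and $\bb R^{n+1}_\pm$ from Lemma \ref{lm.ytrace} makes the claim tautological. For item \ref{item.dljump}, I fix any extension $\Phi\in Y^{1,2}(\bb R^{n+1})$ of $\varphi$; the definition gives $\Tr_0\m D^{\cL,\pm}\varphi=-\varphi+\Tr_0\cL^{-1}(\n F^\pm_\Phi)$, and summing and using $\n F^+_\Phi+\n F^-_\Phi=\cL\Phi$ in $(Y^{1,2}(\bb R^{n+1}))^*$ produces $\Tr_0(\m D^{\cL,+}\varphi+\m D^{\cL,-}\varphi)=-2\varphi+\Tr_0\Phi=-\varphi$.

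For item \ref{item.sljump}, I pair with an arbitrary $\varphi\in\Hf$ and pick an extension $\Phi$. Since $\cL\m S^\cL\gamma=0$ in $\bb R^{n+1}_\pm$ by Proposition \ref{prop.slproperties}\ref{item.slpropertiesws}, Definition \ref{def.conormal} applies with $f=0$ on each half-space, and summing gives
\begin{equation*}
\langle\partial_\nu^{\cL,+}\m S^\cL\gamma+\partial_\nu^{\cL,-}\m S^\cL\gamma,\varphi\rangle = B_\cL[\m S^\cL\gamma,\Phi] = \langle\gamma,\varphi\rangle,
\end{equation*}
where the last equality is \eqref{eq.laxmilgramforsl}.

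Item \ref{item.dlcont} is the core technical step. The key observation is that the two globally defined resolvent objects $u_\pm:=\cL^{-1}(\n F^\pm_\Phi)\in Y^{1,2}(\bb R^{n+1})$ satisfy $u_++u_-=\Phi$, since $\cL(u_++u_--\Phi)=0$ and $\cL$ is invertible. By \eqref{eq.dl2}, $\m D^{\cL,+}\varphi = -u_-|_{\bb R^{n+1}_+}$ and $\m D^{\cL,-}\varphi=-u_+|_{\bb R^{n+1}_-}$, and $\cL u_\mp=0$ on the opposite half-space because $\n F^\mp_\Phi$ annihilates test functions there. Testing against $\psi\in\Hf$ with extension $\Psi$, Definition \ref{def.conormal} yields $\langle\partial_\nu^{\cL,+}\m D^{\cL,+}\varphi,\psi\rangle = -B_{\cL,\bb R^{n+1}_+}[u_-,\Psi]$. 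Now I split $B_\cL=B_{\cL,\bb R^{n+1}_+}+B_{\cL,\bb R^{n+1}_-}$ and apply the identities $B_\cL[u_-,\Psi]=\langle\n F^-_\Phi,\Psi\rangle=B_{\cL,\bb R^{n+1}_-}[\Phi,\Psi]$ together with $u_-=\Phi-u_+$: this reduces $-B_{\cL,\bb R^{n+1}_+}[u_-,\Psi]$ to $-B_{\cL,\bb R^{n+1}_-}[u_+,\Psi]=\langle\partial_\nu^{\cL,-}\m D^{\cL,-}\varphi,\psi\rangle$, as desired.

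The anticipated main obstacle is purely bookkeeping: consistently using the alternative expression in Proposition \ref{prop.dlproperties}\ref{item.dlpropertieschar} to represent $\m D^{\cL,\pm}\varphi$ as the restriction of a globally defined $Y^{1,2}(\bb R^{n+1})$ function that solves $\cL = 0$ on the ``other'' half-space, so that each application of Definition \ref{def.conormal} proceeds with $f=0$. Beyond the Lax--Milgram invertibility of $\cL$ supplied by Proposition \ref{laxmilgram.prop}, no genuinely new analytic estimate is needed.
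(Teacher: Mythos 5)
Your proposal is correct, and for items \ref{item.slcont}, \ref{item.sljump}, and \ref{item.dlcont} it follows essentially the same path as the paper: \ref{item.slcont} is tautological from membership in $\Ya$, \ref{item.sljump} falls out of summing the two conormal-derivative definitions and using \eqref{eq.laxmilgramforsl}, and your manipulation for \ref{item.dlcont} via $u_\pm:=\cL^{-1}(\n F^\pm_\Phi)$, the identity $u_++u_-=\Phi$, and the splitting $B_\cL=B_{\cL,\bb R^{n+1}_+}+B_{\cL,\bb R^{n+1}_-}$ is the same algebra the paper performs, just organized around the characterization $\m D^{\cL,\pm}\varphi=-u_\mp|_{\bb R^{n+1}_\pm}$ from \eqref{eq.dl2}. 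Where you genuinely diverge is item \ref{item.dljump}: you add the two defining formulas for $\m D^{\cL,\pm}\varphi$ directly and exploit linearity of $\cL^{-1}$ together with $\n F^+_\Phi+\n F^-_\Phi=\cL\Phi$ and the Lax--Milgram inverse, whereas the paper pairs against $\gamma\in\Hfm$ and invokes the adjoint identity \eqref{eq.dladjointt=0} relating $\Tr_0\m D^{\cL,+}$ to $\partial_\nu^{\cL^*,+}\m S^{\cL^*}$. Your route is more elementary and self-contained (it uses nothing beyond Proposition \ref{laxmilgram.prop} and the definitions), while the paper's route leans on machinery it has already built in Proposition \ref{prop.adj}; both are valid, and your version has the modest advantage of not requiring the adjoint duality to have been set up first.
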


\noindent\emph{Proof.} The statement \ref{item.slcont}) is immediate from the fact that $\m S^\cL\gamma\in\Ya$. The statement \ref{item.sljump}) follows from the definition of the conormal derivative and the fact that $\cL\m S^\cL\gamma=0$ in $\bb R^{n+1}\backslash\{t=0\}$.

Proof of \ref{item.dljump}). Let $\gamma\in\Hfm$, and let $\Phi\in\Ya$ be any extension of $\varphi$. Using (\ref{eq.dladjointt=0}), we see that
\begin{multline*}
\langle\gamma,\Tr_0[\m D^{\cL,+}\varphi+\m D^{\cL,-}\varphi]\rangle=-2\langle\gamma,\varphi\rangle+\langle\partial_{\nu}^{\cL^*,-}\m S^{\cL^*}\gamma+\partial_{\nu}^{\cL^*,+}\m S^{\cL^*}\gamma,\varphi\rangle\\ =-2\langle\gamma,\varphi\rangle+B_{\cL^*}[\m S^{\cL^*}\gamma,\Phi]=-2\langle\gamma,\varphi\rangle+\langle\gamma,\Tr_0\Phi\rangle=-\langle\gamma,\varphi\rangle.
\end{multline*}

Proof of \ref{item.dlcont}). Let $\psi\in\Hf$, and let $\Phi,\Psi\in\Ya$ be extensions of $\varphi,\psi$ respectively such that $\Tr_0\Phi=\varphi, \Tr_0\Psi=\psi$. Also recall that $\cL\m D^{\cL,+}\varphi=0$ in $\bb R^{n+1}_+$, and $\cL\m D^{\cL,-}\varphi=0$ in $\bb R^{n+1}_-$. Then,
\begin{multline*}
\langle\partial_{\nu}^{\cL,+}\m D^{\cL,+}\varphi,\psi\rangle=B_{\cL,\bb R^{n+1}_+}[\m D^{\cL,+}\varphi,\Psi]=-B_{\cL,\bb R^{n+1}_+}[\Phi,\Psi]+B_{\cL,\bb R^{n+1}_+}[\cL^{-1}(\n F^+_{\Phi}),\Psi]\\ =-B_{\cL,\bb R^{n+1}_+}[\Phi,\Psi]+B_{\cL}[\cL^{-1}(\n F^+_{\Phi}),\Psi]-B_{\cL,\bb R^{n+1}_-}[\cL^{-1}(\n F^+_{\Phi}),\Psi]\\ =-B_{\cL,\bb R^{n+1}_+}[\Phi,\Psi]+\langle\n F^+_{\Phi},\Psi\rangle-B_{\cL,\bb R^{n+1}_-}[\cL^{-1}(\cL\Phi),\Psi]+B_{\cL,\bb R^{n+1}_-}[\cL^{-1}(\n F^-_{\Phi}),\Psi]\nonumber\\ =-B_{\cL,\bb R^{n+1}_-}[\Phi,\Psi]+B_{\cL,\bb R^{n+1}_-}[\cL^{-1}(\n F^-_{\Phi}),\Psi]=B_{\cL,\bb R^{n+1}_-}[\m D^{\cL,-}\varphi,\Psi] =\langle\partial_{\nu}^{\cL,-}\m D^{\cL,-}\varphi,\psi\rangle.
\end{multline*}\hfill{$\square$}

\subsection{Initial $L^2$ estimates for the single layer potential}
We now establish several estimates for the single layer potential. This will allow us to prove the square function estimates, via a $Tb$ theorem, in the next section. We begin with a perturbation result.
\begin{proposition}[Initial slice estimates]\label{L2avgandsliceboundsperturb.prop}  The following statements hold provided that $\max\{\Vert B_1\Vert_n, \Vert B_2\Vert_n\}$ is small enough, depending only on $n$, $\lambda$, and $\Lambda$.
	\begin{enumerate}[i)]
		\item\label{item.avgslice}  For each $f\in C_c^{\infty}(\bb R^n)$, each $a>0$, and each $m\geq1$, we have the estimate
		\begin{equation}\label{eq.avgslice}
		\dashint_a^{2a}\int_{\bb R^n}|t^m\nabla\partial_t^{m} \s^{\cL}f|^2\,dt\lesssim_m\Vert f\Vert_2^2.
		\end{equation}
		\item For each $f\in C_c^{\infty}(\bb R^n)$, each $t\geq0$, and each $m\geq2$, we have the estimate
		\begin{equation}\label{eq.sliceest}
		\Vert t^m\nabla\partial_t^m \s_t^{\cL}f\Vert_{L^2(\bb R^n)}\lesssim_m\Vert f\Vert_2.
		\end{equation}
	\end{enumerate}
\end{proposition}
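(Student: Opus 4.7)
The plan is to exploit the $t$-independence of the coefficients, which (by Proposition \ref{tsolns.prop}) propagates the solution property to every $t$-derivative of $u := \mathcal{S}^{\mathcal{L}} f$, and to combine this with iterated Caccioppoli-type inequalities in order to reduce both bounds to a single base-case $L^2$ estimate on $\partial_t u$.

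For part (i), I would cover $\mathbb{R}^n \times (a, 2a)$ by a Whitney-type family of balls of radius comparable to $a$ and apply the classical Caccioppoli inequality (Proposition \ref{classCaccioppoli.prop}) to the solution $\partial_t^m u$, yielding
\[
\int_a^{2a} \int_{\mathbb{R}^n} |\nabla \partial_t^m u|^2 \, dx \, dt \lesssim \frac{1}{a^2} \int_{a/2}^{5a/2} \int_{\mathbb{R}^n} |\partial_t^m u|^2 \, dx \, dt.
\]
Using the pointwise bound $|\partial_t^k u| \leq |\nabla \partial_t^{k-1} u|$ and iterating the procedure $m$ times on progressively nested strips, I would reduce the target estimate to the base-case bound
\[
\dashint_{c_1 a}^{c_2 a} \int_{\mathbb{R}^n} |\partial_t u(x,t)|^2 \, dx \, dt \lesssim \|f\|_2^2
\]
for some $0 < c_1 < c_2$ depending on $m$; I shall call this bound $(\ast)$.

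The estimate $(\ast)$ is the heart of the argument and the main obstacle. My plan here is to combine the slice integration-by-parts formula (Proposition \ref{IBPonslicesprop.prop}) with the adjoint characterizations of $\mathcal{S}^{\mathcal{L}}$ from Proposition \ref{prop.slproperties}(v), (vii), and (ix). In particular, item (vii) provides, for $t>0$ and $\zeta \in L^2(\mathbb{R}^n)$,
\[
\langle \Tr_t D_{n+1} \mathcal{S}^{\mathcal{L}} f, \zeta\rangle = -\langle f, \Tr_{-t} D_{n+1} \mathcal{S}^{\mathcal{L}^*} \zeta\rangle,
\]
which, via a self-dual bootstrap between $\mathcal{L}$ and $\mathcal{L}^*$, converts $(\ast)$ into an $L^2\to L^2$ statement for $\Tr_{\pm t} D_{n+1} \mathcal{S}^{\mathcal{L}^{(*)}}$ at comparable scales. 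The smallness of $\|B_1\|_n + \|B_2\|_n$ (Proposition \ref{laxmilgram.prop}) will be used to absorb the lower-order contributions back into the ellipticity estimate, and the interior regularity provided by the $L^p$-Caccioppoli inequality (Proposition \ref{Lpcaccop.prop}) should allow us to handle the errors arising from the non-constant leading coefficients uniformly in $a>0$.

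For part (ii), restricted to $m \geq 2$, I would apply Caccioppoli on slices (Lemma \ref{Lpcacconslices.lem}) directly to $\partial_t^m u$ and sum over a partition of $\mathbb{R}^n$ into cubes of side length $\sim t$; using $s \sim t$ on $(t/2, 3t/2)$ to redistribute the powers of $s$, this yields
\[
\|t^m \nabla \partial_t^m u(\cdot,t)\|_{L^2(\mathbb{R}^n)}^2 \lesssim \frac{1}{t} \int_{t/2}^{3t/2} \int_{\mathbb{R}^n} |s^{m-1}\nabla \partial_s^{m-1} u|^2 \, dx \, ds.
\]
Invoking part (i) with exponent $m - 1 \geq 1$ on the scales $a = t/2$ and $a = 3t/4$ then bounds the right-hand side by $\|f\|_2^2$, completing the argument. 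The restriction $m \geq 2$ is precisely what permits this reduction to the averaged bound of part (i) applied at a strictly positive exponent.
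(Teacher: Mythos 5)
Your reduction of part (ii) to part (i) via Caccioppoli on slices is exactly the paper's first step, so that portion is sound. The iterated Caccioppoli argument for part (i) also works formally: chasing powers of $a$ through $m$ applications of Proposition \ref{classCaccioppoli.prop} does reduce the target to the base-case bound
\[
\dashint_{c_1 a}^{c_2 a}\int_{\rn}|\partial_t \mathcal{S}^{\cL}f|^2\,dx\,dt \lesssim \|f\|_2^2.
\]

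The gap is in how you propose to prove that base case. The ``self-dual bootstrap'' via Proposition \ref{prop.slproperties}~\ref{item.slpropertiesmovetder}) is circular: the adjoint identity
\[
\langle \Tr_t D_{n+1}\mathcal{S}^{\cL}f,\zeta\rangle = -\langle f,\Tr_{-t}D_{n+1}\mathcal{S}^{\cL^*}\zeta\rangle
\]
converts an $L^2\to L^2$ bound for $\Tr_t D_{n+1}\mathcal{S}^{\cL}$ into the \emph{same} $L^2\to L^2$ bound for $\Tr_{-t}D_{n+1}\mathcal{S}^{\cL^*}$, and since $\cL^*$ is an operator of exactly the same class, you have only restated the problem, not solved it. Likewise, the a priori bound $\|\mathcal{S}^{\cL}f\|_{Y^{1,2}(\ree)}\lesssim\|f\|_{\dyoh}$ cannot close the estimate because $L^2(\rn)$ does not embed into $\dyoh$; there is nothing to absorb. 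Your appeals to Propositions \ref{laxmilgram.prop} and \ref{Lpcaccop.prop} don't supply the missing quantitative input either, since those control the gradient globally in terms of the \emph{data norm in the dual Sobolev space}, not in $L^2$.

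What you are missing is any hard estimate that breaks the self-duality. The paper supplies it by a perturbation argument: write $\mathcal{S}^{\cL} - \mathcal{S}^{\cL_0}$ with $\cL_0 = -\Delta$ using the resolvent identity,
\[
\mathcal{S}^{\cL}-\mathcal{S}^{\cL_0}
= \cL^{-1}(\cL_0-\cL)\mathcal{S}^{\cL_0}
= \cL^{-1}\div(I-A)\nabla\mathcal{S}^{\cL_0} -\cL^{-1}\div(B_1\mathcal{S}^{\cL_0}) - \cL^{-1}(B_2\cdot\nabla\mathcal{S}^{\cL_0}),
\]
and then import the known averaged slice bounds for the \emph{Laplacian's} single layer potential from \cite{AAAHK}. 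Each of the three error terms is then estimated by splitting the $t$-dependence with a cutoff, exploiting the boundedness of $\nabla\cL^{-1}\div$ on $L^2(\ree)$, the slice bounds $\sup_t\|\nabla\mathcal{S}_t^{\cL_0}\|_{L^2\to L^2}\lesssim 1$, and the square function estimate for $\mathcal{S}^{\cL_0}$. The known result for the constant-coefficient case is precisely the external input that your self-referential argument lacks, and the point of the decomposition is to localize and then absorb the $\cL$-dependence into operators whose boundedness has already been established. Without such a comparison to a concrete reference operator, there is no way to convert the qualitative statement ``$\mathcal{S}^{\cL}f\in Y^{1,2}$'' into the quantitative $L^2$ slice bound you need.
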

\begin{proof}
First we see that the second estimate is a consequence of the first by the Caccioppoli inequality on slices \eqref{Lpcacconslices}. In particular, we have that
\begin{equation*}
\|t^m \nabla \partial_t^m \s_t^{\cL} f\|_2^2   = \sum_{Q \in \dd_t} \int_{Q} |t^m\nabla \partial_t^m \s_t^{\cL}f|^2 \, dx
 \lesssim \dashint_t^{2t} \int_{\rn}  |s^m\nabla \partial_s^{m-1} \s_s^{\cL} f|^2\, dx \, ds,
\end{equation*}
where $\dd_t$ is a grid of $n$-dimensional cubes of side length $t$. Thus it suffices to show \ref{item.avgslice}).

To this end, we know from \cite{AAAHK} that \ref{item.avgslice}) holds with $\s^{\cL}$ replaced by $\s^{\cL_0}$, where $\cL_0 = -\Delta$. Thus, to prove \ref{item.avgslice}), we show that $\dashint_a^{2a}\int_{\bb R^n}|t^m\nabla\partial_t^{m} (\s^{\cL} - \s^{\cL_0})f|^2\, dt\lesssim\Vert f\Vert_2$. Observe that
\begin{multline*}
\s^{\cL} - \s^{\cL_0}  = (\tr_0 \circ ((\cL^*)^{-1} - (\cL_0^*)^{-1}))^*
    = (\tr_0 \circ ((\cL_0^*)^{-1}(\cL_0^* - \cL^*)(\cL^*)^{-1}))^*
\\   = ((\cL_0^* - \cL^*)(\cL^*)^{-1})^*\s^{\cL_0}
   = \cL^{-1}(\cL_0 - \cL) \s^{\cL_0}
\\   = -\div(I - A)\nabla\s^{\cL_0}  -\cL^{-1}\div(B_1 \s^{\cL_0}) - \cL^{-1}B_2 \cdot \nabla \s^{\cL_0}.
\end{multline*}
Now let $f \in C_c^\infty(\rn)$. Then we have that
\begin{multline*}
 \dashint_a^{2a}\int_{\bb R^n}|t^m\nabla D_{n+1}^{m} (\s^{\cL} - \s^{\cL_0})f|^2\, dt ~ \lesssim
\dashint_a^{2a}\int_{\bb R^n}|t^m\nabla D_{n+1}^{m} \cL^{-1}\div(I - A)\nabla \s^{\cL_0}f)|^2\, dt 
\\    + \dashint_a^{2a}\int_{\bb R^n}|t^m\nabla D_{n+1}^{m} \cL^{-1}\div(B_1 \s^{\cL_0}f)|^2\, dt\\   + \dashint_a^{2a}\int_{\bb R^n}|t^m\nabla D_{n+1}^{m}  \cL^{-1}B_2 \cdot \nabla \s^{\cL_0}f|^2\, dt
   \qquad =: I+II+III.
\end{multline*}
We prove only the bound $II \lesssim \|f\|_2^2$ as the bounds for $I$ and $III$ are entirely analogous, and we will indicate the small differences after we bound $II$. Let $\psi = \psi(t)$ be such that $\psi \in C_c^\infty(-a/5, a/5)$, $\psi \equiv 1$ on $(-a/10, a/10)$, $0 \le \psi \le 1$, $\tfrac{d^k}{dt^k} \psi \lesssim_k (1/a)^k$. Writing
$1 = \psi + (1 - \psi)$, we have that
\begin{multline*}
II  \le \dashint_a^{2a}\int_{\bb R^n}|t^m\nabla D_{n+1}^{m} \cL^{-1}\div(B_1 \s^{\cL_0}f)|^2\, dt
\\  \le  \dashint_a^{2a}\int_{\bb R^n}|t^m\nabla D_{n+1}^{m} \cL^{-1}\div(\psi B_1 \s^{\cL_0}f)|^2\, dt
\\   \qquad + \dashint_a^{2a}\int_{\bb R^n}|t^m\nabla D_{n+1}^{m} \cL^{-1}\div((1-\psi)B_1 \s^{\cL_0}f)|^2\, dt =: II_1 + II_2.
\end{multline*}
To bound $II_1$, we notice that if $g = \div(\psi B_1 \s^{\cL_0}f)$, then $g \equiv 0$ on $\rn \times (a/5, \infty)$. It follows that each $D_{n+1}^k\cL^{-1}g =\cL^{-1} D_{n+1}^k g$, $k = 0, 1, \dots m$ is a (null) solution in $\rn \times (a/5, \infty)$. Let $\dd_a$ be a grid of $n$-dimensional cubes with side length $a$. Applying the Caccioppoli inequality $m$ times and using that $t \approx a$ on $(a,2a)$, we see that
\begin{multline*}
II_1  \lesssim a^{2m - 1} \int_a^{2a} \int_{\rn} |\nabla D_{n+1}^m\cL^{-1}\div(\psi B_1 \s^{\cL_0} f)|^2  
\\  \lesssim a^{2m -1} \sum_{Q \in \dd_a} \int_a^{2a} \int_Q  |\nabla D_{n+1}^m\cL^{-1}\div(\psi B_1 \s^{\cL_0} f)|^2  
\\   \lesssim a^{-1} \sum_{Q \in \dd_a} \int_{a/2}^{4a} \int_{2Q}  |D_{n+1}\cL^{-1}\div(\psi B_1 \s^{\cL_0} f)|^2 
   \lesssim a^{-1}  \int_{\re} \int_{\rn}  |\nabla\cL^{-1}\div(\psi B_1 \s^{\cL_0} f)|^2  
\\   \lesssim  a^{-1}  \int_{\re} \int_{\rn}| \psi B_1 \s^{\cL_0} f|^2     \lesssim \dashint_{-a/5}^{a/5} \int_{\rn}|B_1 \s^{\cL_0} f|^2   \lesssim \| f \|_{2}^2,
\end{multline*}
where we used that $\sup_{t \neq 0}\| B_1\s_t^{\cL_0}\|_{L^2(\rn) \to L^2(\rn)} \lesssim\sup_{t\neq0}\| \nabla \s_t^{\cL_0}\|_{L^2(\rn) \to L^2(\rn)} \le C$ (see \cite[Lemma 4.18]{AAAHK}) and that $\nabla \cL^{-1} \div : L^2(\ree) \to L^2(\ree)$.

Now we deal with $II_2$. Set $g = (1 -\psi)B_1 \s^{\cL_0}f$. Then, we have that
\begin{align*}
D_{n+1}^m g &= (1 -\psi)B_1D_{n+1}^m \s^{\cL_0}f  + \sum_{k = 1}^m \psi^{(k)}B_1 D_{n+1}^{m-k} \s^{\cL_0}f
 =: F_0 + \sum_{k = 1}^m F_k.
\end{align*}
where $\psi^{(k)}=\tfrac{d^k}{dt^k} \psi$. The triangle inequality yields that
\begin{align*}
II_2 \le \sum_{k = 0}^m  \dashint_a^{2a}\int_{\bb R^n}|t^m\nabla D_{n+1}^{m} \cL^{-1}\div(F_k)|^2 \, dt =: \sum_{k = 0}^m II_{2,k}.
\end{align*}
For $II_{2,k}$, $k =1,2 \dots m$, we use that $t \approx a$ in the region of integration, the properties of $\psi$,  and that 
$\nabla \cL^{-1} \div: L^2 \to L^2$ to obtain that
\begin{multline*}
II_{2,k}  \lesssim a^{2m - 1}\int_{\re}\int_{\rn} |\psi^{(k)} B_1 \partial_t^{m-k} \s^{\cL_0} f|^2  \, dt
\\   \lesssim a^{2m - 2k - 1}\int\limits_{a/10 \le |t| \le a/5} \int_{\rn} | B_1 \partial_t^{m-k} \s^{\cL_0} f|^2   \, dt
\\    \lesssim \dashint_{-a/5}^{a/5} \int_{\rn} |t^{m-k} B_1 \partial_t^{m-k} \s^{\cL_0} f|^2   \, dt
    \lesssim \| f \|_2^2,
\end{multline*}
where we used \cite[Lemma 2.10]{AAAHK} in the last line. Finally, to handle $II_{2,0}$, we use that $(1- \psi) = 0$ if $|t| < a/10$, and that $\nabla \cL \div: L^2 \to L^2$ to obtain that
\begin{multline*}
II_{2,0}  \lesssim a^{2m - 1} \int_{\re} \int_{\rn} |(1- \psi) B_1 \partial_t^m \s^{\cL_0} f|^2   \, dt
   \lesssim a^{2m - 1} \int_{|t| > a/10} \int_{\rn} |B_1 \partial_t^m \s^{\cL_0} f|^2   \, dt
\\  \lesssim \int_{|t| > a/10} \int_{\rn} | t^{m+1} B_1 \partial_t^m \s^{\cL_0} f|^2\, \frac{dt}{t}
   \lesssim \| f \|_{2}^2,
\end{multline*}
where we used  the estimate $\|| t^{m+1} B_1 \partial_t^m \s^{\cL_0} f|\|_2^2 \lesssim  \|| t^{m+1} \partial_t^m \nabla\s^{\cL_0} f|\|_2^2 \lesssim \| f \|_2^2$ in the last line. To see this last estimate, we simply use the ``travelling up'' procedure for square functions (see Proposition \ref{lm.travelup} below) and that $\cL_0 = \Delta$ has good square function estimates.  
We now observe   that handling the term $III$  amounts to replacing the use of the mapping property $\nabla \cL^{-1} \div: L^2 \to L^2$ by the fact that $\nabla \cL^{-1} B_2: L^2 \to L^2$. The term $I$ is handled exactly the same way, using the $L^\infty$ bound for $(I - A)$, without appealing to the mapping properties of multiplication by $B_1$.\end{proof}

\begin{remark}\label{singlelayerbddonL2.rmk}
Note that, from now on, it makes sense to write the objects appearing in \eqref{eq.avgslice} and \eqref{eq.sliceest} for $f$ in $L^2(\bb R^n)$ after we have made extensions by continuity.
\end{remark}

Before proceeding, we will need some identities improving on the duality results in Section \ref{AbsLPthry.sec} for the single and double layers. To ease the notation, we will use $(G)_t$ to denote the trace at $t$ of a function $G$ defined in $\reu$.

\begin{proposition}[Further distributional identities of the layer potentials]\label{dualwtder.prop}
	For any $t\neq 0$ and $m\geq1$, the following statements are true.
	\begin{enumerate}[i)]
		\item For any $f\in C_c^\infty(\rn)$ and any $\vec g\in L^2(\rn;\cee)$, we have that 
		\begin{equation*}
		\dfrac{d^m}{dt^m} \langle \nabla \s^{\cL}_t f,\vec g\rangle = \langle (D_{n+1}^m\nabla \s^{\cL}[f])_t,\vec g \rangle.
		\end{equation*}
		\item For any $f\in L^2(\rn)$ and any $\vec g\in C_c^\infty(\rn;\cee)$, we have that
		\begin{equation*}
		\dfrac{d^m}{dt^m} \langle f, ((\s^{\cL^*}\nabla)[\vec g])_{-t}\rangle = (-1)^m\langle f, (D_{n+1}^m (\s^{\cL^*}\nabla)[\vec g])_{-t}\rangle.
		\end{equation*}
		\item If $m \ge 2$, then for every $f\in L^2(\rn)$ and  $\vec g\in L^2(\rn,\cee))$, we have the identity
		\begin{equation*}
		\langle (D_{n+1}^m\nabla \s^{\cL}[f])_t,\vec g \rangle= (-1)^m\langle f, (D_{n+1}^m (\s^{\cL^*}\nabla)[\vec g])_{-t}\rangle.
		\end{equation*}
	\end{enumerate}
\end{proposition}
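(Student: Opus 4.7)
My plan is to treat (i) and (ii) by the same regularity machinery---justifying differentiation under a duality pairing by propagating the transversal regularity of $\s^\cL f$ (respectively $(\s^{\cL^*}\nabla)[\vec g]$) to successive $t$-derivatives---and then derive (iii) by combining (i) and (ii) with the gradient-adjoint relation in Proposition \ref{prop.slproperties} \ref{item.slpropertiesgradadj}, together with a density argument. The $(-1)^m$ factor in (ii) will come from the chain rule applied to the reflection $t\mapsto -t$.

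For (i), I would fix $f\in C_c^\infty(\rn)$, so that $u := \s^\cL f \in Y^{1,2}(\ree)$ solves $\cL u = 0$ weakly in both $\reu$ and $\bb R^{n+1}_-$. By iterating Proposition \ref{tsolns.prop}, each $\partial_t^k u$ is again a weak solution of $\cL(\cdot)=0$ in each half-space; iterating Lemma \ref{Le1.5.lem} places $\nabla\partial_t^k u$ in $L^2(\Sigma_a^b)$ on any strip bounded away from $\{t = 0\}$. Caccioppoli on slices (Proposition \ref{Lpcacconslices.lem}) will upgrade the strip bound to a slicewise bound $\nabla\partial_t^k u(\cdot, t)\in L^2(\rn)$ for every $t\neq 0$, while Lemma \ref{ContSlices.lem} combined with Theorem \ref{diffboch.thm} delivers the continuity and differentiability of $t\mapsto \nabla\partial_t^k u(\cdot, t)\in L^2(\rn)$ with derivative $\nabla\partial_t^{k+1}u(\cdot, t)$. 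This justifies commuting $\tfrac{d^m}{dt^m}$ with the $L^2$-pairing against $\vec g$, and (i) follows by induction on $m$. Part (ii) will follow by the same argument applied to $v := (\s^{\cL^*}\nabla)[\vec g] = -\s^{\cL^*}(\divg\vec g_\|) - D_{n+1}\s^{\cL^*} g_{n+1}$, which---since $\vec g\in C_c^\infty$ forces $\divg\vec g_\|,\, g_{n+1}\in\Hfm$---solves $\cL^* v = 0$ in each half-space; the chain rule through $t\mapsto -t$ will supply the sign $(-1)^m$.

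For (iii), I would first take $f,\vec g\in C_c^\infty$. Proposition \ref{prop.slproperties} \ref{item.slpropertiesgradadj} applies (the hypotheses $\vec g_\|\in\Hf$ and $g_{n+1}\in\Hfm$ are trivially met) and gives, for $t\neq 0$,
\begin{equation*}
\langle\nabla\s^\cL_t f,\vec g\rangle = \langle f, (\s^{\cL^*}_{-t}\nabla)\vec g\rangle.
\end{equation*}
Differentiating both sides $m$ times in $t$ and applying (i) on the left and (ii) on the right yields the identity in (iii) for smooth data. To extend it to $f,\vec g\in L^2$ I would invoke Proposition \ref{L2avgandsliceboundsperturb.prop}, which provides $\|D_{n+1}^m\nabla\s^\cL_t f\|_{L^2(\rn)}\lesssim t^{-m}\|f\|_2$ for $m\geq 2$; this makes the left-hand bilinear form bounded on $L^2\times L^2$, and interpreting the right-hand pairing through the $L^2$-adjoint of $(D_{n+1}^m\nabla\s^\cL)_t$ (which agrees with the abstract definition on $C_c^\infty$ by the first step) allows the identity to pass to $L^2(\rn)\times L^2(\rn;\cee)$ by continuity. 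The restriction $m\geq 2$ enters precisely here, through the slice bound \eqref{eq.sliceest}. I expect the main obstacle to be the $m=1$ case of (i): since the strip bound on $\nabla u$ is only integral in $t$, legitimizing the commutation of derivative and pairing at a single slice requires combining Caccioppoli on slices with the transversal continuity in Lemma \ref{ContSlices.lem} rather than invoking a uniform-in-$t$ slice estimate directly. Once that bookkeeping is in place, the rest of the proof is essentially formal.
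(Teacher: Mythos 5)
Your proposal matches the paper's proof in both structure and ingredients: parts (i) and (ii) are established for smooth data by propagating transversal regularity of $\s^{\cL}f$ (via Proposition \ref{tsolns.prop}, Lemma \ref{Le1.5.lem}, Caccioppoli on slices, and Lemma \ref{ContSlices.lem}) to obtain smoothness of $t\mapsto\nabla u(\cdot,t)$ in $L^2(\rn)$, and part (iii) is obtained by differentiating the gradient-adjoint relation of Proposition \ref{prop.slproperties} \ref{item.slpropertiesgradadj}, invoking (i) and (ii), and then passing to $L^2$ by density via the slice bounds of Proposition \ref{L2avgandsliceboundsperturb.prop}. No essential differences.
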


\begin{proof} Let us first show the identities with $f\in C_c^\infty(\rn)$ and $\vec g\in C_c^\infty(\rn;\cee)$. For the first equality, note that $u:=\s^{\cL}[f]\in \yot$ and $\cL u=0$ in $\ree\setminus \{ x_{n+1}=0\}$. In particular, $\partial_t u \in W^{1,2}(\Sigma_a^b)$ for any $a<b$ such that $0\notin [a,b]$, by Lemma \ref{Le1.5.lem}. By iteration we have that $\partial_t^m \nabla u \in L^2(\Sigma_a^b)$. In particular, arguing as in Lemma \ref{ContSlices.lem}, we realize that the map $t\mapsto  \nabla u(\cdot, t)$ is smooth (with values in $L^2(\rn;\cee)$). The first equality for $m=1$ then boils down to proving the weak convergence of the difference quotients to the derivative in $L^2(\rn)$; that is, showing that
	\begin{equation*}
	\lim_{h\to 0} \dfrac{\nabla u(t+h)-\nabla u(t)}{h} = \partial_t\nabla u (t), \qquad \textup{ weakly in } L^2(\rn).
	\end{equation*}
	But this follows from the smoothness of our map. The case of general $m$ now follows by induction.

For the second equality, by definition we have that 
\begin{equation*}
((\s^{\cL^*}\nabla)[\vec g])_s = -(\s^{\cL^*}[\divp g_{\|}])_s -(D_{n+1}\s^{\cL^*}[g_{\perp}])_s,
\end{equation*} 
and since $\vec g\in C_c^\infty(\rn; \cee)$, we can apply the same argument as above to conclude that 
\begin{equation*}
\dfrac{d^m}{dt^m} \langle f,((\s^{\cL^*}\nabla)[g])_{-t}\rangle = (-1)^m\langle f, (D_{n+1}^m (\s^{\cL^*}\nabla)[g_{\perp}])_{-t}\rangle.
\end{equation*}

The third equality now follows by duality: For $f\in C_c^\infty(\rn)$ and $g\in C_c^\infty(\rn;\cee)$, we have that
\begin{multline*}
\langle (D_{n+1}^m\nabla \s^{\cL}[f])_t,\vec g \rangle  =\dfrac{d^m}{dt^m} \langle \nabla \s^{\cL}_t f,\vec g\rangle =\dfrac{d^m}{dt^m} \langle f,((\s^{\cL}\nabla)[\vec g])_{-t}\rangle\\
= (-1)^m\langle f, (D_{n+1}^m (\s^{\cL}\nabla)[\vec g])_{-t}\rangle
\end{multline*}

Finally, the identities are extended to the respective $L^2$ spaces via a straightforward density argument using Proposition \ref{L2avgandsliceboundsperturb.prop}.\end{proof}

We now present an off-diagonal decay result.

\begin{proposition}[Good off-diagonal decay]\label{nabSL2LpODdecay}
Let $Q \subset \rn$ be a cube and  $g \in L^2(Q)$ with $\supp g \subseteq Q$. If $p\in[2,p_+]$ is such that $|p -2|$ is small enough   that Lemma \ref{Snei.lem} holds, we have  that
\begin{equation}\nonumber
\Big(\int_{R_0} |t^m (\partial_t)^m \nabla \s_t^{\cL} g(x)|^p \, dx \Big)^\frac{1}{p} \lesssim 2^{-(m + 1)}t^m \ell(Q)^{-n(1/2 - 1/p)}\ell(Q)^{-m} \lVert g \rVert_{L^2(Q)},
\end{equation}
provided $t\approx \ell(Q)$. Moreover, for any $k\geq 1 $ and any $t\in \RR$, the estimate
$$\Big(\int_{R_k} |t^m (\partial_t)^m \nabla \s_t^{\cL} g(x)|^p \, dx \Big)^\frac{1}{p} \lesssim 
2^{nk \alpha}2^{-k(m + 1)}t^m \ell(Q)^{-n(1/2 - 1/p)}\ell(Q)^{-m} \lVert g \rVert_{L^2(Q)},$$
where $\alpha = \alpha(p) = \frac{1}{p}(1 - \tfrac{p}{p_+})$ and the annular regions  $R_k=R_k(Q)$ are defined by $R_0:= 2Q$, $R_k: = 2^{k+1}Q\setminus 2^kQ$, for all $k\geq 1$. In particular, if $t \approx \ell(Q)$ we have that
$$\Big(\int_{R_k} |t^m (\partial_t)^m \nabla \s_t^{\cL}g(x)|^p \, dx \Big)^\frac{1}{p} \lesssim 
2^{nk \alpha}2^{-k(m + 1)} \ell(Q)^{-n(1/2 - 1/p)} \lVert g \rVert_{L^2(Q)}.$$
\end{proposition}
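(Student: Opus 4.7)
The plan is to split the argument into the local estimate ($k=0$) and the off-diagonal estimate ($k\ge 1$). The common starting point is that, since $\supp g \subset Q$, Proposition \ref{prop.slproperties}\,\ref{item.slpropertiesbetterws}) gives that $u_0 := \s^{\cL} g$ is a weak solution of $\cL u_0 = 0$ in $\ree \setminus (Q \times \{0\})$. Iterating Proposition \ref{tsolns.prop}, the same is true of $u_j := \partial_t^j \s^{\cL} g$ for every $j \ge 0$. Throughout, I will use the pointwise bound $|\partial_t u_j| \le |\nabla u_j|$.

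For the local case $k = 0$, where $t \approx \ell(Q) > 0$, I iterate the slice Caccioppoli inequality \eqref{Lpcacconslices} $m + 1$ times (alternating with the pointwise bound above) to obtain, for some $\kappa > 2$,
\begin{equation*}
\|\nabla u_m(\cdot, t)\|_{L^p(2Q)} \lesssim \ell(Q)^{-(m+1)} \|\s^{\cL}_t g\|_{L^p(\kappa Q)}.
\end{equation*}
Each step is valid because the boxes $I_P$ that appear lie in $\ree_+$ and hence miss $Q \times \{0\}$. The right-hand side is handled by Proposition \ref{prop.slproperties}\,\ref{item.slpropertiesbound}) together with H\"older's inequality on $\kappa Q$ and on $Q$ (using $\supp g \subset Q$):
\begin{equation*}
\|\s^{\cL}_t g\|_{L^p(\kappa Q)} \lesssim \ell(Q)^{n(1/p - 1/p_+)} \|g\|_{L^{p_-}(Q)} \lesssim \ell(Q)^{n\alpha + 1/2} \|g\|_{L^2(Q)}.
\end{equation*}
Multiplying by $t^m \approx \ell(Q)^m$ and using the identity $n\alpha = n/p - (n-1)/2$, the $\ell(Q)$-exponent becomes $-n(1/2 - 1/p) - m$, matching the claim.

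For the off-diagonal case $k \ge 1$ and arbitrary $t \in \mathbb{R}$, I cover the slice $R_k \times \{t\}$ by an $O(1)$ family of $n$-cubes $P$ centered in $R_k$ with $\ell(P) = c \cdot 2^k \ell(Q)$, where $c > 0$ is a small absolute constant chosen so that $I_P$ is disjoint from $Q \times \{0\}$; this is possible because $\dist(P, Q) \approx 2^k \ell(Q)$ for $k \ge 1$. The slice-to-solid transfer \eqref{Lpgradslices} gives $\|\nabla u_m(\cdot, t)\|_{L^p(P)} \lesssim \ell(P)^{-1/p} \|\nabla u_m\|_{L^p(P^*)}$, producing a factor of $(2^k \ell(Q))^{-1/p}$ after summing over the finitely many $P$. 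On a slightly larger ball $\tilde B \subset \ree \setminus (Q \times \{0\})$ of radius $\approx 2^k \ell(Q)$ containing $\bigcup_P P^*$, I iterate the $L^p$-Caccioppoli inequality (Proposition \ref{Lpcaccop.prop}) $m + 1$ times along the chain $u_0, u_1, \ldots, u_m$, alternating with $|\partial_t u_j| \le |\nabla u_j|$, to obtain
\begin{equation*}
\|\nabla u_m\|_{L^p(\tilde B')} \lesssim (2^k \ell(Q))^{-(m+1)} \|u_0\|_{L^p(\tilde B)},
\end{equation*}
where $\tilde B'$ is a concentric dilate of $\tilde B$. To bound $\|u_0\|_{L^p(\tilde B)}$, I apply H\"older on $\tilde B$, the Sobolev embedding $Y^{1,2}(\ree) \hookrightarrow L^{2^*}(\ree)$, the $Y^{1,2}$-boundedness of $\s^{\cL}$ from Proposition \ref{prop.slproperties}\,\ref{item.slpropertieslm}), the embedding $L^{p_-}(\rn) \hookrightarrow \Hfm$, and H\"older on $Q$, arriving at
\begin{equation*}
\|u_0\|_{L^p(\tilde B)} \lesssim (2^k \ell(Q))^{(n+1)(1/p - 1/2^*)} \ell(Q)^{1/2} \|g\|_{L^2(Q)}.
\end{equation*}
Collecting factors and using $n\alpha = n/p - (n-1)/2$ together with $(n+1)/2^* = (n-1)/2$, the exponents telescope into the claimed bound after multiplication by $t^m$.

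The main technical obstacle I anticipate is arranging the geometry so that $\tilde B$ and all its concentric dilates used in the Caccioppoli iteration remain disjoint from $Q \times \{0\}$ (the only locus where $u_j$ fails to solve the equation), while still allowing the slice $R_k \times \{t\}$ to be covered by $O(1)$ cubes $P$. For $|t| \lesssim 2^k \ell(Q)$ this is straightforward with $r \approx 2^k \ell(Q)$; for $|t| \gg 2^k \ell(Q)$, one could use balls of radius $\approx |t|$ to get a strictly tighter estimate, but keeping $r \approx 2^k \ell(Q)$ already yields the stated bound, with the loose $t^m$ factor on the right-hand side absorbing the slack.
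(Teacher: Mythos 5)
Your argument is correct and follows the same basic skeleton as the paper's proof: isolate the cube $Q$ where $\s^{\cL} g$ fails to solve the equation, transfer from the slice to the solid via \eqref{Lpgradslices} or \eqref{Lpcacconslices}, strip the $m+1$ gradient/$t$-derivative layers by iterated $L^p$-Caccioppoli (Proposition \ref{Lpcaccop.prop}), and close with H\"older and the mapping properties of the single layer. The place where you genuinely diverge is the closing step: the paper applies H\"older only in $t$ to reduce to a $\sup_t$ of slice $L^p$ norms of $\s_t^{\cL}g$, then applies H\"older in $x$ to pass to $L^{p_+}$ and invokes the slice mapping bound of Proposition \ref{prop.slproperties}\,\ref{item.slpropertiesbound}); you instead apply H\"older on the full $(n+1)$-dimensional ball $\tilde B$ straight to $L^{2^*}$ and close using the $Y^{1,2}(\ree)$-boundedness of $\s^{\cL}$ (Proposition \ref{prop.slproperties}\,\ref{item.slpropertieslm})). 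Both routes land on the same exponent $n\alpha$ because $(n+1)\big(\tfrac1p - \tfrac1{2^*}\big) = \tfrac np - \tfrac{n-1}2 = n\alpha$, and yours is arguably a touch cleaner since it avoids passing back to a slice quantity. One small notational caveat: in the $k=0$ paragraph you say you ``iterate the slice Caccioppoli inequality $m+1$ times,'' but that inequality takes a slice quantity to a solid one and cannot be iterated by itself; what you mean (and what makes your intermediate bound correct) is one application of the slice-to-solid transfer followed by $m$ (or $m+1$, depending on whether you retain the gradient) applications of the solid $L^p$-Caccioppoli, with the pointwise bound $|\partial_t u_j| \le |\nabla u_j|$ interposed. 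Similarly, your stated intermediate output $\|\s^{\cL}_t g\|_{L^p(\kappa Q)}$ at a single $t$ should really be a supremum over $t$ in a range $\approx \ell(Q)$, although this is harmless since Proposition \ref{prop.slproperties}\,\ref{item.slpropertiesbound}) is uniform in $t$. Your closing discussion of the geometry for $|t|\gg 2^k\ell(Q)$ correctly identifies that the estimate with $r\approx 2^k\ell(Q)$ is lossy but sufficient, matching the presence of the unmatched factor $t^m\ell(Q)^{-m}$ in the statement.
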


By a straightforward duality argument, from the above proposition we deduce

\begin{corollary}\label{L2LpODdecay} Define $\Theta_{t,m} := t^m \partial_t^m (\s_t \nabla)$. Let $g \in L^2(Q)$ and suppose that $p\in[2,p_+]$ is such that  $|p -2|$ is small enough so that Lemma \ref{Snei.lem} holds. Then for $q = \tfrac{p}{p-1}$ and $k \ge 1$, we have that
\begin{equation}\nonumber
\| \Theta_{t,m} (f\bbm 1_{R_k})\|_{L^2(Q)} \lesssim 2^{nk \alpha}2^{-k(m + 1)}t^m \ell(Q)^{-n(1/q - 1/2)}\ell(Q)^{-m} \| f\|_{L^q(R_k)},
\end{equation}
where $\alpha = \alpha(p)$ is as in Proposition \ref{nabSL2LpODdecay}.
Moreover, if $t \approx \ell(Q)$, then for all $k \ge 0$,
\begin{multline*}
\| \Theta_{t,m} (f\bbm 1_{R_k})\|_{L^2(Q)} \lesssim 
2^{nk \alpha}2^{-k(m + 1)} \ell(Q)^{-n(1/q - 1/2)} \lVert f \rVert_{L^q(R_k)}
\\ \approx 2^{nk \alpha}2^{-k(m + 1)} t^{-n(1/q - 1/2)} \lVert f \rVert_{L^q(R_k)}.
\end{multline*}
\end{corollary}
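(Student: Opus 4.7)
The estimate follows from Proposition \ref{nabSL2LpODdecay} applied to the adjoint operator $\cL^*$ via a duality argument, as suggested in the statement. First, I would realize the $L^2(Q)$ norm through test functions: since $\Theta_{t,m}(f\mathbbm{1}_{R_k})\in L^2_{\loc}(\rn)$,
\begin{equation*}
\|\Theta_{t,m}(f\mathbbm{1}_{R_k})\|_{L^2(Q)} = \sup\Big\{\big|\langle \Theta_{t,m}(f\mathbbm{1}_{R_k}),h\rangle\big|\,:\, h\in L^2(\rn),\ \supp h\subseteq Q,\ \|h\|_{L^2}=1\Big\}.
\end{equation*}
Fix such a test function $h$. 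I would then apply Proposition \ref{dualwtder.prop} (iii) with the roles of $\cL$ and $\cL^*$ exchanged (legal, since $\cL^*$ is of the same type as $\cL$ and satisfies the same smallness assumptions), together with the scalar factor $t^m$, to obtain the identity
\begin{equation*}
\langle \Theta_{t,m}(f\mathbbm{1}_{R_k}),h\rangle \,=\, (-1)^m \big\langle f\mathbbm{1}_{R_k},\, \big(t^m D_{n+1}^m\nabla\s^{\cL^*}[h]\big)_{-t}\big\rangle.
\end{equation*}
The key point is that the (sesquilinear) adjoint of $\Theta_{t,m}^{\cL}$ on scalars is, up to a sign, the vector-valued operator $t^m D_{n+1}^m\nabla\s_{-t}^{\cL^*}$ to which Proposition \ref{nabSL2LpODdecay} directly applies.

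Next, I would apply H\"older's inequality on $R_k$ with the conjugate pair $(q,p)$, yielding
\begin{equation*}
\big|\langle \Theta_{t,m}(f\mathbbm{1}_{R_k}),h\rangle\big| \leq \|f\|_{L^q(R_k)}\cdot \big\|\big(t^m D_{n+1}^m\nabla\s^{\cL^*}[h]\big)_{-t}\big\|_{L^p(R_k)}.
\end{equation*}
Since $h$ is supported in $Q$ and $|-t|=|t|$, Proposition \ref{nabSL2LpODdecay} (applied to $\cL^*$ at the level $-t$) gives
\begin{equation*}
\big\|\big(t^m D_{n+1}^m\nabla\s^{\cL^*}[h]\big)_{-t}\big\|_{L^p(R_k)} \lesssim 2^{nk\alpha}\, 2^{-k(m+1)}\, |t|^m\, \ell(Q)^{-n(1/2-1/p)}\, \ell(Q)^{-m}\,\|h\|_{L^2(Q)}.
\end{equation*}

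Finally, I would invoke the elementary identity $1/2-1/p = 1/q-1/2$ (from $1/p+1/q=1$) to match the exponent $\ell(Q)^{-n(1/q-1/2)}$ appearing in the statement, combine the two estimates, and take the supremum over admissible test functions $h$; this yields the first bound. The ``moreover'' part follows at once by substituting $t\approx \ell(Q)$, so that $t^m\ell(Q)^{-m}\approx 1$ and $\ell(Q)^{-n(1/q-1/2)}\approx t^{-n(1/q-1/2)}$. I do not anticipate any serious obstacle to this argument: the only subtlety is verifying that Proposition \ref{dualwtder.prop} (iii) can be invoked for $L^2$ (respectively, $L^2$-vector-valued) inputs, but this has already been established there by density via Proposition \ref{L2avgandsliceboundsperturb.prop}.
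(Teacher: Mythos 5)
Your proposal is correct and is precisely the ``straightforward duality argument'' the paper alludes to but leaves unspelled: dualize $\Theta_{t,m}$ against $h\in L^2(Q)$ via Proposition \ref{dualwtder.prop}(iii), apply H\"older on $R_k$, and invoke Proposition \ref{nabSL2LpODdecay} for $\cL^*$ at level $-t$. The only implicit restriction is $m\ge 2$ (inherited from Proposition \ref{dualwtder.prop}(iii)), which is harmless since the Corollary is only used with $m\ge n+10$.
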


\begin{proof}[Proof of Proposition \ref{nabSL2LpODdecay}] Notice that $g\in L^2(Q)\subset L^{2n/(n+1)}(Q)\subset\Hfm$, so that $\s_t^{\cL} g$ is well defined.

We treat first the case $k\geq 1$.  Fix a small parameter $\delta = \delta(m) >0$ and set $\tilde{R}_k= (2+\delta)^{k+1}Q\setminus(2-\delta)^kQ$ be a small (but fixed) dilation of $R_k$. We may use that $\partial_t^m u$ is a solution (see Proposition \ref{tsolns.prop}), a slight variant of Lemma \ref{Lpcacconslices.lem} adapted to annular regions, and Proposition \ref{Lpcaccop.prop} to see that  
\begin{equation*}
\Big(\int_{R_k} |t^m (\partial_t)^m \nabla \s_t^{\cL} g|^p\Big)^\frac{1}{p} 
 \lesssim \dfrac{t^m}{(2^k\ell(Q))^{1+1/p}}\Big( \dint_{I_{k,1}} |\partial_t^m \s_t ^{\cL} g|^p\Big)^{1/p}, 
\end{equation*}
where $I_k:= \{ (y,s)\in \ree: y\in \tilde{R}_{k,1}, \, s\in (t-2^k\ell(Q), t+2^k\ell(Q)\}$ and $\tilde{R}_{k,j}$ is defined as $\tilde{R}_k$ but with $\delta/(m+2-j)$ instead of $\delta$ (so that, in particular, $\tilde{R}_{k,m+1}=\tilde{R}_k$). Now, applying the $(n+1)$-dimensional $L^p$ Caccioppoli $m$ times (see Proposition \ref{Lpcaccop.prop}), we further obtain   that
\begin{equation*}
\Big(\int_{R_k} |t^m\partial_t^m \nabla \s_t^{\cL} g|^p \Big)^\frac{1}{p} 
 \lesssim \dfrac{t^m}{(2^k\ell(Q))^{m+1+1/p}}\Big( \dint_{I_{k,m+1}} | \s_t ^{\cL} g|^p\Big)^{1/p}.
\end{equation*}
Now, using H\"older's inequality in $t$ and the mapping properties of $\s_t^{\cL}$ we see that
\begin{multline*}
\Big(\int_{R_k} |t^m\partial_t^m \nabla \s_t^{\cL} g |^p   \Big)^\frac{1}{p} 
 \lesssim \frac{t^m}{[2^k\ell(Q)]^{m+1}} \sup_{t \in (-2^{k}\ell(Q), 2^{k}\ell(Q))} \lVert \s_t^{\cL} g \rVert_{L^{p}(\tilde{R_k})}
\\  \lesssim \frac{t^m[2^k\ell(Q)]^{n\alpha}}{[2^k\ell(Q)]^{m+1}} \sup_{t \in (-2^{k}\ell(Q), 2^{k}\ell(Q))} \lVert \s_t^{\cL} g \rVert_{L^{p_+}(\tilde{R_k})}
   \lesssim \frac{t^m[2^k\ell(Q)]^{n\alpha}}{[2^k\ell(Q)]^{m+1}} \lVert g \rVert_{L^{p_-}(Q)} 
\\  \lesssim \frac{t^m[2^k\ell(Q)]^{n\alpha}}{[2^k\ell(Q)]^{m+1}} |Q|^{\frac{1}{2n}} \lVert g \rVert_{L^{2}(Q)}. 
\end{multline*}
The case $k=0$ is treated similarly, except that we impose the restriction $t\approx \ell(Q)$ to guarantee that we are far away from the support of $g$.\end{proof}

For the most part, the case $q = p = 2$ in the above proposition will be enough for our purposes; however, the introduction of error terms in the $Tb$ theorem below will necessitate a certain quasi-orthogonality result for which we use the case $p > 2 > q$.

\begin{lemma}[Quasi-orthogonality]\label{QOforSnabBI.lem}
Let $m \ge n$ and let $\m Q_s$ be a CLP family (see Definition \ref{CLP.def}). Then there exist $\gamma, C > 0$ such that for all $s < t$, we have that
\begin{equation}\label{QOforSnabBI.eq}
\| \Theta_{t,m} B_1 I_1\m Q_s^2 g\|_2 \le C\Big(\frac{s}{t}\Big)^\gamma \|\m Q_s g\|_2
\end{equation}
for all $g \in L^2(\rn)$, where $I_1$ is the standard fractional integral operator of order $1$. Here, $C$ and $\gamma$ depend on $m$, $n$, $\lambda$, $\Lambda$,  and the constants in the definition of $\m Q_s$.
\end{lemma}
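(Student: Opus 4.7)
The plan is to prove the equivalent operator estimate $\|\Theta_{t,m}B_1 I_1\m Q_s\|_{L^2\to L^2}\lesssim (s/t)^\gamma$ for $0<s<t$; substituting $h:=\m Q_s g$ then yields the stated bound, since $\m Q_s^2 g = \m Q_s h$ and $\|h\|_2 \leq \|\m Q_s g\|_2$. This is a quasi-orthogonality estimate in the spirit of the Kato problem: the scale-$t$ $L^p$ off-diagonal decay of $\Theta_{t,m}$ from Corollary~\ref{L2LpODdecay} is to be played against the scale-$s$ cancellation of the CLP family $\m Q_s$ (namely $\hat\psi(0)=0$), with the merely $L^2$-bounded operator $B_1 I_1$ serving as a ``bridge.''

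First, I would record the uniform bound $\|\Theta_{t,m}B_1 I_1\m Q_s\|_{L^2\to L^2}\lesssim 1$: by the Sobolev embedding $I_1:L^2(\rn)\to L^{2n/(n-2)}(\rn)$ and Hölder's inequality, $B_1 I_1$ is bounded on $L^2$ with norm $\lesssim\|B_1\|_n$, while $\Theta_{t,m}$ and $\m Q_s$ are uniformly $L^2$-bounded by Proposition~\ref{L2avgandsliceboundsperturb.prop} and the CLP properties. The scale-dependent gain $(s/t)^\gamma$ must come from a more refined argument. I would then fix a dyadic grid $\mathcal G_t$ of cubes $Q\subset\rn$ of side length $t$ and, for each $Q$, decompose the argument into annular pieces $B_1 I_1\m Q_s h = \sum_{k\ge 0}(B_1 I_1\m Q_s h)\,\mathbf{1}_{R_k(Q)}$, with $R_k(Q):=2^{k+1}Q\setminus 2^k Q$ for $k\geq 1$. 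Applying Corollary~\ref{L2LpODdecay} with a dual exponent $q$ slightly below $2$ yields, for some $\beta=m+1-n\alpha>0$,
\[
\|\Theta_{t,m}[(B_1 I_1\m Q_s h)\mathbf{1}_{R_k(Q)}]\|_{L^2(Q)} \lesssim 2^{-k\beta}\,t^{-n(1/q-1/2)}\,\|B_1 I_1\m Q_s h\|_{L^q(R_k(Q))}.
\]
On each annulus, Hölder's inequality with $B_1\in L^n$ and Sobolev bound this by $\|B_1\|_n$ times a local $L^r$-norm of $I_1\m Q_s h$. The gain arises from the scale-$s$ structure of $I_1\m Q_s$: its Fourier-level multiplier $c\hat\psi(s\xi)/|\xi|$ localizes at $|\xi|\sim 1/s$ and peaks at value $\sim s$, so $I_1\m Q_s$ acts as a scale-$s$ smoothing. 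One convenient way to organize the Fourier analysis is to write $I_1\m Q_s^2 = s\widetilde{\m Q}_s$, where $\widetilde{\m Q}_s$ is a CLP-like convolution family with multiplier $c\hat\psi(s\xi)^2/|\xi|$ (satisfying $|\widehat{\widetilde\psi}(\eta)|\lesssim\min(|\eta|^{2\sigma-1},|\eta|^{-2\sigma-1})$ for $\sigma>1/2$, which is available after possibly replacing $\m Q_s$ by a higher power); the vanishing-integral property of $\widetilde{\m Q}_s$ can then be exploited directly against the off-diagonal decay of $\Theta_{t,m}$ to produce the polynomial decay $(s/t)^\gamma$ after Schur-type summation over $k$ and $Q$.

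The main technical obstacle is that $B_1$ is merely $L^n$-integrable and carries no pointwise regularity, so the cancellation of $\m Q_s$ cannot simply be transferred through $B_1$ as in a textbook Cotlar--Stein argument for a smooth coefficient. The remedy lies precisely in the $L^q$ version of the off-diagonal decay of $\Theta_{t,m}$ with $q<2$: the excess integrability pairs with $\|B_1\|_n$ via Hölder on each annular region, while the polynomial decay $2^{-k\beta}$ dominates the $2^{nk\alpha}$ dilation loss present in Corollary~\ref{L2LpODdecay}, leaving a net gain of $(s/t)^\gamma$ upon summation. The constants $C$ and $\gamma$ then depend on $m$, $n$, $\lambda$, $\Lambda$, and on the CLP parameters of $\m Q_s$, as asserted.
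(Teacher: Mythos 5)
Your proposal correctly identifies the annular decomposition at scale $t$, the use of Corollary~\ref{L2LpODdecay} with a dual exponent $q<2$ paired against $\|B_1\|_n$ via H\"older, and the heuristic that $I_1\m Q_s$ is a scale-$s$ smoothing. But there is a genuine gap: nothing in your sketch actually \emph{produces} the $(s/t)^\gamma$ factor. The $k$-summation you invoke merely converges (since $\beta>n/2+1$); it does not generate scale dependence. In the paper's proof, $(s/t)^\gamma$ comes from two \emph{scale-$s$ gains} played against the single $t^{-n(1/q-1/2)}$ normalization in the off-diagonal decay. The first is a local hypercontractivity estimate on annuli, $\|\m Q_s^{(1)}h\|_{L^{nq/(n-q)}(R_k(Q))}\lesssim s^{-n(1/2-(n-q)/(nq))}\|h\|_{L^2(B_k)}$, which crucially requires the kernel of $\m Q_s^{(1)}$ to be \emph{compactly supported} so that $\m Q_s^{(1)}h$ on $R_k(Q)$ sees only $h$ on a comparable ball. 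A general CLP kernel obeys only a polynomial tail bound $|\psi(x)|\lesssim(1+|x|)^{-n-\sigma}$, and your $\widetilde{\m Q}_s$ (which involves the Riesz potential $I_1$, with its long tail) is even worse; the local estimate fails. The second gain is the factorization $\m Q_s^{(2)}=s^2\Delta e^{s^2\Delta}=s\,\m Q_s^{(3)}\div_\parallel$, which transfers the explicit divergence onto $I_1$ to produce the bounded Riesz transform $\vec R=\nabla_\parallel I_1$. A generic $\m Q_s$ admits no such factorization. Your Fourier-side heuristic that the multiplier of $I_1\m Q_s$ ``peaks at value $\sim s$'' is an $L^2$ operator-norm statement that does not survive localization to annuli once the multiplier $B_1$ has scattered the frequencies.

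There is a second, structural problem. Your opening reduction to the operator bound $\|\Theta_{t,m}B_1 I_1\m Q_s\|_{L^2\to L^2}\lesssim(s/t)^\gamma$ sheds one copy of $\m Q_s$, but $I_1\m Q_s$ has multiplier $\hat\psi(s\xi)/|\xi|\sim s^\sigma|\xi|^{\sigma-1}$ near the origin, which does not vanish (and is not even bounded) unless $\sigma>1$; the CLP definition only guarantees $\sigma>0$. Working with $I_1\m Q_s^2$ instead improves this to $\sigma>1/2$, still insufficient in general. Your parenthetical ``replace $\m Q_s$ by a higher power'' does not address this: the lemma is for a \emph{given} CLP family and the exponents $\m Q_s^2 g$, $\m Q_s g$ on either side are fixed by the $Tb$ machinery that consumes this lemma. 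The paper's cure, entirely absent from your sketch, is to insert the specific reproducing pair $(\m Q_\tau^{(1)},\m Q_\tau^{(2)})$ with compactly supported $\m Q_\tau^{(1)}$ and heat--Laplacian $\m Q_\tau^{(2)}$, prove the estimate~\eqref{QOforSnabBIstar.eq} for that pair, and then transfer to the arbitrary $\m Q_s$ by writing $\Theta_{t,m}B_1I_1\m Q_s^2g=\int_0^\infty \Theta_{t,m}B_1I_1\m Q_\tau^{(1)}\m Q_\tau^{(2)}\m Q_s^2g\,\frac{d\tau}{\tau}$, splitting the $\tau$-integral into $\tau<s$, $s\le\tau\le t$, $\tau>t$, and combining~\eqref{QOforSnabBIstar.eq} with the quasi-orthogonality estimates~\eqref{Q4QsQO.eq} of $\m Q_\tau^{(2)}\m Q_s$, $\m Q_\tau^{(4)}\m Q_s$. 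That Cotlar-type transfer is the mechanism that makes the argument work for an arbitrary CLP family, and it is the step your proof proposal needs to supply.
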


\begin{proof}
Let us first note that if $\alpha(p)$ is given as in Proposition \ref{nabSL2LpODdecay}, then $\alpha(p)\leq1/(2n)$. Therefore, for all $k \ge 0$ and $Q$ with $\ell(Q) \approx t$, we have that
\begin{multline}\label{QOforSnabBIeq1.eq}
\| \Theta_{t,m} (f\bbm 1_{R_k})\|_{L^2(Q)}  \lesssim 2^{nk \alpha}2^{-k(m + 1)} t^{-n(1/q - 1/2)} \lVert f \rVert_{L^q(R_k)}
\\   \lesssim 2^{-k \beta}  t^{-n(1/q - 1/2)} \lVert f \rVert_{L^q(R_k)},
\end{multline}
for some $\beta \ge n/2 + 1$, where we use that $m \ge n$.  

We first establish a variant of \eqref{QOforSnabBI.eq} with a collection of CLP families. Let $\zeta \in C_c^\infty(B(0,\tfrac{1}{100}))$ be real, radial and have zero average. Define $\m Q_s^{(1)}f(x) := (\zeta_s \ast f)(x)$, where $\zeta_s(x) = s^{-n}\zeta(\tfrac{x}{s})$. Set $\m Q_s^{(2)} f := s^2 \Delta e^{s^2\Delta}f$. By re-normalizing $\zeta$ (multiplying by a constant) we may assume that
\begin{equation}\label{CRforQ1Q2.eq}
\int_0^\infty\m Q_s^{(1)}\m Q_s^{(2)} \frac{ds}{s}=I
\end{equation}
in the strong operator topology of $L^2$. Indeed,
\begin{multline*}
\n F\Big(\int_0^\infty\m Q_s^{(1)}\m Q_s^{(2)}f \, \frac{ds}{s} \Big)  = -\int_0^\infty \hat\zeta(s|\xi|) s^2 |\xi|^2 e^{-s^2|\xi|^2} \hat f(\xi) \, \frac{ds}{s}
\\   = -\hat f(\xi) \int_0^\infty \hat \zeta(s) s^2 e^{-s^2} \, \frac{ds}{s},
\end{multline*}
where $\hat \zeta$ is the Fourier transform of $\zeta$ and we abused notation by regarding $\zeta$ and hence $\hat \zeta$ as a function of the radial variable. Then, to achieve the desired reproducing formula, \eqref{CRforQ1Q2.eq}, we may renormalize $\zeta$ so that $\int_0^\infty \hat \zeta(s) s^2 e^{-s^2} \, \frac{ds}{s}=-1$. Let $q < 2$ be such that the conclusion of Corollary \ref{L2LpODdecay} holds. We will show that for all $s < t$,
\begin{equation}\label{QOforSnabBIstar.eq}
\|\Theta_{t,m} B_1 I_1\m Q_s^{(1)}\m Q_s^{(2)} g \|_2 \lesssim \Big(\frac{s}{t}\Big)^{n(1/q - 1/2)} \|\m Q_s^{(3)}\vec{R} g \|_2,
\end{equation}
where $\vec{R} = I_1 \nabla_\parallel$ is the vector valued Riesz transform on $\rn$ and $\m Q_s^{(3)}\vec{f} := se^{s^2 \Delta}\div_\parallel\vec{f}$.  Before proving \eqref{QOforSnabBIstar.eq}, we establish a ``local hypercontractivity'' estimate. For $Q \subset \rn$ a cube and $s < \ell(Q)$, we have that
\begin{equation}\label{ODhyper2.eq}
\|\m Q_s^{(1)}h \|_{L^{\frac{nq}{n-q}}(R_k(Q))} \lesssim s^{-n\big(\frac12-\frac{n-q}{nq}\big)}\|h \|_{L^2(B_k)}
\end{equation}
for all $k \ge 0$, where $R_0(Q) = 2Q$, $R_k(Q) = 2^{k+1}Q \setminus 2^kQ$ for $k \ge 1$,  and $B^k(Q) = B(x_Q, 2^{k + 2}\ell(Q)\sqrt{n})$. To verify \eqref{ODhyper2.eq}, we use that $s < \ell(Q)$, Young's convolution inequality, and the properties of $\zeta_s$. 

Now we are ready to prove \eqref{QOforSnabBIstar.eq}. Let $\dd_t$ be a grid of cubes on $\rn$ with side length $t$ and set $F = I_1 g$. Consider the estimate 
\begin{multline*}
\|\Theta_{t,m} B_1 I_1\m Q_s^{(1)}\m Q_s^{(2)} g \|_2  
   = \|\Theta_{t,m} B_1\m Q_s^{(1)}\m Q_s^{(2)} F \|_2
  = \Big( \sum_{Q \in \dd_t} \int_Q |\Theta_{t,m} B_1\m Q_s^{(1)}\m Q_s^{(2)} F|^2 \Big)^{1/2}
\\  \le  \sum_{k \ge 0} \Big( \sum_{Q \in \dd_t} \int_Q\big|\Theta_{t,m}\big([B_1\m Q_s^{(1)}\m Q_s^{(2)} F]\mathbbm{1}_{R_k(Q)}\big)(x)\big|^2 \, dx \Big)^{1/2}
\\   \lesssim \sum_{k \ge 0} 2^{-\beta k}t^{-n(1/q - 1/2)}\Big( \sum_{Q \in \dd_t} \Big(\int_{R_k(Q)} |B_1\m Q_s^{(1)}\m Q_s^{(2)} F |^q  \Big)^{2/q} \Big)^{1/2}
\\    \lesssim  \|B_1\|_{n}\sum_{k \ge 0} 2^{-\beta k}t^{-n(1/q - 1/2)}\Big( \sum_{Q \in \dd_t} \Big(\int_{R_k(Q)} |\m Q_s^{(1)}\m Q_s^{(2)} F |^{\frac{nq}{n-q}}  \Big)^{\frac{2(n-q)}{nq}} \Big)^{1/2}
\\  \lesssim \sum_{k \ge 0} 2^{-\beta k}t^{-n(1/q - 1/2)}s^{-n\big(\frac12-\frac{n-q}{nq}\big)}\Big( \sum_{Q \in \dd_t} \int_{B_k(Q)} |\m Q_s^{(2)} F |^{2}  \Big)^{1/2}
\\   \lesssim \sum_{k \ge 0} 2^{-\beta k}t^{-n(1/q - 1/2)}s^{-n\big(\frac12-\frac{n-q}{nq}\big)} s\Big( \sum_{Q \in \dd_t} \int_{B_k(Q)} |\m Q_s^{(3)} \nabla_\parallel F |^{2}  \Big)^{1/2}
\\  \lesssim \Big(\frac{s}{t}\Big)^{n(1/q - 1/2)} \sum_{k \ge 0} 2^{-\beta k + \frac{nk}{2}} \Big( \sum_{Q \in \dd_t}\int_Q \dashint_{B_k(Q)} |\m Q_s^{(3)} \nabla_\parallel F(x)|^{2} \, dx\, dy\Big)^{1/2}
\\ \lesssim \Big(\frac{s}{t}\Big)^{n(1/q - 1/2)} \sum_{k \ge 0} 2^{-\beta k + \frac{nk}{2}} \Big( \int_{\rn} \dashint_{|x -y| < 2^kt} |\m Q_s^{(3)} \nabla_\parallel F(x)|^{2} \, dx\, dy\Big)^{1/2}
\\  \lesssim \Big(\frac{s}{t}\Big)^{n(1/q - 1/2)}\|\m Q_s^{(3)}\nabla_\parallel F \|_2
 = \Big(\frac{s}{t}\Big)^{n(1/q - 1/2)}\|\m Q_s^{(3)}\vec{R} g\|_2,
\end{multline*}
where first we used that $I_1g=F$, then Minkowski's inequality in the second line,   \eqref{QOforSnabBIeq1.eq} in the third line,    H\"older's inequality in the fourth line,   \eqref{ODhyper2.eq} in the fifth line, and the mapping properties of the Hardy-Littlewood maximal function in the last line. The above estimate proves \eqref{QOforSnabBIstar.eq}.
 
Now we are ready to pass to an arbitrary CLP family $\m Q_s$. We may obtain, using the Cauchy-Schwarz inequality and \eqref{CRforQ1Q2.eq}, that
\begin{multline*}
\|\Theta_{t,m}B_1I_1\m Q_s^2 g\|_2   = \int_{\rn}\Big| \int_0^\infty \Theta_{t,m} B_1 I_1\m Q_\tau^{(1)}\m Q_\tau^{(2)} \m Q_s^2 g(x) \, \frac{d\tau}{\tau}\Big|\, dx 
\\   \le C_\gamma \int_{\rn} \int_0^\infty \max\Big(\frac{s}{\tau}, \frac{\tau}{s}\Big)^\gamma
|\Theta_{t,m} B_1 I_1\m Q_\tau^{(1)}\m Q_\tau^{(2)} \m Q_s^2 g(x)|^2 \, \frac{d\tau}{\tau} \, dx
=: I + II + III,
\end{multline*}
where $I,II,III$ are, respectively, the integrals over the intervals $\tau < s< t$, $s\leq \tau \leq t$, and $s<t<\tau$.  On the other hand, note that the kernel of $\m Q_s^{(3)} \vec{R}$ is, up to a constant multiple, the inverse Fourier transform of $s|\xi| e^{-s^2|\xi|^2}$.  Therefore, if we set $\m Q_s^{(4)} =\m Q_s^{(3)}\vec{R}$, then we have that
\begin{equation}\label{Q4QsQO.eq}
\max\big\{\|\m Q_\tau^{(4)}\m Q_s f\|_2,  \|\m Q_\tau^{(2)}\m Q_s f\|_2\big\}\lesssim \min\Big(\frac{\tau}{s}, \frac{s}{\tau}\Big)^{2\gamma}\Vert f\Vert_2,
\end{equation}
for some $\gamma > 0$ (and hence all smaller $\gamma$). For convenience, set $\sigma = n(1/q - 1/2)$ and we assume that $\gamma$ above is such that $\gamma < 2\sigma$. By \eqref{QOforSnabBIstar.eq} and \eqref{Q4QsQO.eq}, we have that
\[I \lesssim \int_0^s \Big(\frac{s}{\tau}\Big)^\gamma 
\Big(\frac{\tau}t \Big)^{2\sigma} \|\m Q_\tau^{(4)}\m Q^2_s h \|_2^2  \frac{d\tau}{\tau} \lesssim
\Big(\frac st \Big)^{2\sigma} \|Q_s h\|^2_2\,,
\]
and observe that $\tau <s$ in the present scenario. Similarly, we have that 
\[ II \lesssim \int_s^t  \Big(\frac{\tau}{s}\Big)^\gamma 
\Big(\frac{\tau}t \Big)^{2\sigma} \Big(\frac{s}{\tau}\Big)^{2\gamma} \|\m Q_s h \|_2^2  \frac{d\tau}{\tau}
\lesssim
\Big(\frac st \Big)^{\gamma} \|\m Q_s h\|^2_2\,, \]
since in particular, $\gamma < 2\sigma$.
Finally, by \eqref{eq.sliceest} and the mapping $B_1 I_1: L^2(\rn) \to L^2(\rn)$, we have that $\Theta_{t,m} B_1 I_1 \m Q_\tau^{(1)} :L^2(\rn) \to L^2(\rn)$ uniformly in $t$ and $\tau$, and thus it follows that
\begin{multline*}
III    \lesssim \int_t^\infty  \Big(\frac{\tau}{s}\Big)^\gamma 
\| Q_\tau^{(2)} Q_s h (x)\|_2^2 \frac{d\tau}{\tau}    \lesssim \int_t^\infty  \Big(\frac{\tau}{s}\Big)^\gamma 
\Big(\frac{s}{\tau}\Big)^{2\gamma} \| Q_s h\|_2^2 \frac{d\tau}{\tau} 
  \lesssim \Big(\frac st \Big)^{\gamma} \|Q_s h\|^2_2, 
\end{multline*}
where we used \eqref{Q4QsQO.eq}.\end{proof}

We conclude this section with the following proposition, which summarizes the off-diagonal decay given by Proposition \ref{nabSL2LpODdecay} and Corollary \ref{L2LpODdecay}.

\begin{proposition}\label{ODforsevop.prop}
For $m \in \N$, $m \ge \tfrac{n+1}{2} + 2$ , the operators $t^m\partial_t^m (\s_t\nabla)$, $t^m \partial_t^{m+1}\s_t$
and $\Theta_t'$ defined by 
$$\Theta_t'\vec{g}(x) := (t^m \partial_t^m \s_t\nabla \widetilde A \vec{g} + t^m \partial_t^m \s_t[ {B_2}_\parallel g])(x)$$
have good off diagonal-decay in the sense of Definition \ref{goodODdecay.def} with the implicit constants depending on $n$, $m$, $\lambda$, and $\Lambda$, provided that $\max\{\|B_1\|_n, \|B_2\|_n\} < \eps_0$, where $\eps_0$ depends on $n$, $\lambda$, and $\Lambda$.  
\end{proposition}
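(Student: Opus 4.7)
The plan is to derive the good off-diagonal decay of Definition \ref{goodODdecay.def} for each operator from Corollary \ref{L2LpODdecay} (for the first two) and a variant thereof for $\s_t$ without the outer $\nabla$ (for the third), the core algebraic step being to trade factors of $t/\ell(Q)\lesssim 1$ against powers of $2^{-k}$ — which is precisely what the hypothesis $m\ge (n+1)/2+2$ is calibrated to allow.

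For $\theta_t=\Theta_{t,m}:=t^m\partial_t^m(\s_t\nabla)$ I would apply Corollary \ref{L2LpODdecay} at $p=q=2$; with $\alpha(2)=\tfrac{1}{2n}$ (so $2^{nk\alpha(2)}=2^{k/2}$) and $n(1/q-1/2)=0$, one obtains
\[
\|\Theta_{t,m}(\vec f\,\mathbf 1_{R_k})\|_{L^2(Q)}\lesssim 2^{-k(m+1/2)}\bigl(t/\ell(Q)\bigr)^m\|\vec f\|_{L^2(R_k)}=2^{-k/2}\bigl(t/(2^k\ell(Q))\bigr)^m\|\vec f\|_{L^2(R_k)}.
\]
Factoring out the target shape $2^{-nk/2}\bigl(t/(2^k\ell(Q))\bigr)^{M+1}$ leaves the residual $2^{k(n-1)/2}\bigl(t/(2^k\ell(Q))\bigr)^{m-M-1}$, which by $t\lesssim\ell(Q)$ is bounded by $2^{k[(n-1)/2-(m-M-1)]}\lesssim 1$ whenever $m\ge M+(n+1)/2$; the hypothesis provides admissible $M\ge 2$. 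The second operator is then immediate, since the identity $\s_tD_{n+1}=-\partial_t\s_t$ makes $t^m\partial_t^{m+1}\s_t f$ equal (up to sign) to $\Theta_{t,m}$ applied to $(0,\dots,0,f)$, so its off-diagonal decay is inherited at once.

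For $\Theta'_t$ I would split linearly: the first summand $\Theta_{t,m}(\widetilde A\vec g)$ is handled by the first case through $\|\widetilde A\vec g\,\mathbf 1_{R_k}\|_{L^2}\le\Lambda\|\vec g\,\mathbf 1_{R_k}\|_{L^2}$. The genuinely new ingredient is $t^m\partial_t^m\s_t({B_2}_\|\cdot\vec g)$, for which I would prove the analogue of Proposition \ref{nabSL2LpODdecay} for $t^m\partial_t^m\s_t$ (no outer $\nabla$) at the exponent $p_0=\tfrac{2n}{n-2}$, performing one fewer Caccioppoli iteration than in the $\nabla$-version and then invoking the Sobolev embedding $\dt{W}^{1,2}\hookrightarrow L^{p_0}$ on $\partial_t^m\s_t\phi$, for which $\|\nabla\partial_t^m\s_t\phi\|_{L^2(\rn)}\lesssim t^{-m}\|\phi\|_2$ by \eqref{eq.sliceest} (applicable since $m\ge 2$). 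Duality converts this into an off-diagonal $L^{(p_0)'}(R_k)\to L^2(Q)$ bound; since $(p_0)'=\tfrac{2n}{n+2}$ satisfies $1/(p_0)'=1/n+1/2$, H\"older yields $\|{B_2}_\|\cdot\vec g\,\mathbf 1_{R_k}\|_{L^{(p_0)'}}\le\|B_2\|_n\|\vec g\,\mathbf 1_{R_k}\|_{L^2}\le\eps_0\|\vec g\|_{L^2(R_k)}$, producing the bound $\eps_0\,2^{-nk/2}\bigl(t/(2^k\ell(Q))\bigr)^m\|\vec g\|_{L^2(R_k)}$, exactly the required form. The main care point is to use $p_0$ rather than the more obvious $p_-$: routing through $L^{p_-}$ would introduce an extraneous $\ell(Q)^{1/2}$ from the Sobolev embedding $L^{p_-}\hookrightarrow\dt{H}^{-1/2}$ and break scale-invariance, whereas passing through $p_0$ via the higher-regularity slice estimate avoids this, reproducing the scale-invariant decay that the explicit kernel computation yields in the Laplacian model.
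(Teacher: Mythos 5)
Your proposal is correct and follows essentially the same route as the paper. For $t^m\partial_t^m(\s_t\nabla)$ the paper applies Corollary \ref{L2LpODdecay} at $p=q=2$ and rebalances exactly the powers you compute (the paper keeps $M=0$, trading $2^{-k}(t/(2^k\ell(Q)))^{2m}$ for $2^{-nk}(t/(2^k\ell(Q)))^2$ via $t\lesssim\ell(Q)$ and $m\ge(n+1)/2$, whereas you note the stronger $M\ge2$ also fits within the hypothesis $m\ge(n+1)/2+2$); $t^m\partial_t^{m+1}\s_t$ is handled by the same observation $\s_tD_{n+1}=-\partial_t\s_t$; and for the ${B_2}_\parallel$ piece of $\Theta_t'$ the paper also goes through $L^{\frac{2n}{n-2}}$ via a cut-off adapted to $R_k$ and the Sobolev inequality on slices, bounding $\|\eta\,t^m\partial_t^m\s_t g\|_{L^{2n/(n-2)}}$ by $\|(\nabla\eta)t^m\partial_t^m\s_t g\|_{L^2}+\|t^m\partial_t^m\nabla\s_t g\|_{L^2}$ and then dualizing. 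Your organization is the dual form (prove the $L^{(p_0)'}\!\to L^2$ off-diagonal bound for $t^m\partial_t^m\s_t$ first, then H\"older with $B_2\in L^n$), which is a cosmetic rearrangement of the same estimates; one small imprecision is that your citation of \eqref{eq.sliceest} is the global slice bound, while what you actually need and clearly intend is its off-diagonal variant, which comes from Proposition \ref{nabSL2LpODdecay} at $p=2$ combined with the cut-off argument you describe.
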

\begin{proof}
By Corollary \ref{L2LpODdecay} with $p =2$,   for any cube $Q \subset \rn$ and $k \ge 2$ we have that
$$\| \Theta_{t,m} (f\bbm 1_{R_k})\|_{L^2(Q)}^2 \lesssim 2^{-k}\Big( \frac{t}{2^k \ell(Q)}\Big)^{2m}\|f\|_{L^2(R_k)}^2,$$
where $R_k = R_k(Q) = 2^{k+1}Q\setminus 2^kQ$. Thus, for all $t \in (0, C\ell(Q))$, it follows that
$$\| \Theta_{t,m} (f\bbm 1_{R_k})\|_{L^2(Q)}^2 \lesssim 2^{-kn}\Big( \frac{t}{2^k \ell(Q)}\Big)^{2m-(n-1)}\|f\|_{L^2(R_k)}^2,$$
so that if $m \ge \tfrac{n+1}{2}$, we obtain the estimate
\begin{equation}\label{ODL2estThetatmeq.eq}
\| \Theta_{t,m} (f\bbm 1_{R_k})\|_{L^2(Q)}^2 \lesssim 2^{-kn}\Big( \frac{t}{2^k \ell(Q)}\Big)^{2}\|f\|_{L^2(R_k)}^2.
\end{equation}
This bound provides the desired good off-diagonal decay for $t^m\partial_t^m (\s_t\nabla)$, $t^m \partial_t^{m+1}\s_t$ and $t^m \partial_t^m \s_t\nabla \widetilde A \vec{g}$ in the sense of Definition \ref{goodODdecay.def}. To obtain the good off-diagonal decay for the remainder of $\Theta_t'$, $t^m \partial_t^m (\s_t {B_2}_\parallel)$, we return to the proof of Proposition \ref{nabSL2LpODdecay} and make a slight modification. Let $\eta$ be a smooth cut-off adapted to $R_k$; that is, $\eta \equiv 1$ on $R_k$, $\eta \in C^\infty_c(\widetilde R_k)$ and $|\nabla \eta| \lesssim \tfrac{1}{\ell(Q)}$, where $\widetilde R_k$ is as in Proposition \ref{nabSL2LpODdecay}. Then for $g \in L^2(Q)$ with $\supp g \subseteq Q$,   from H\"older's inequality and the Sobolev embedding on $\rn$ we have that
\begin{multline*}
\|t^m \partial_t^m  {B_2}_\parallel \s_t g\|_{L^2(R_k)}   \lesssim \|\eta t^m \partial_t^m \s_t g\|_{L^{\frac{2n}{n-2}}(\rn)}^2
\\    \lesssim \|(\nabla\eta) t^m \partial_t^m \s_t g\|_{L^2(\widetilde R_k)}^2 + 
\| t^m \partial_t^m \nabla \s_t g\|_{L^2(\widetilde R_k)}^2
    \lesssim \|t^{m -1} \partial_t^m \s_t g\|_{L^2(\widetilde R_k)}^2 + 
\| t^m \partial_t^m \nabla \s_t g\|_{L^2(\widetilde R_k)}^2
\\   \lesssim \|(\Theta_{t,m-1})^* g\|_{L^2(\widetilde R_k)}^2 + 
\| (\Theta_{t,m})^*  g\|_{L^2(\widetilde R_k)}^2.
\end{multline*}
Dualizing these estimates, the off-diagonal decay for $t^m \partial_t^m (\s_t {B_2}_\parallel)$ follows from the off-diagonal decay in \eqref{ODL2estThetatmeq.eq}, provided that $m \ge \tfrac{n+1}{2} + 1$.\end{proof}

Before continuing on to the next section we make two remarks.

\begin{remarks}

i) In the next section, we will use the off diagonal decay of the operators in Proposition \ref{ODforsevop.prop} or \emph{similar} ones.  The proof of good off-diagonal decay for these operators is entirely analogous to those above.

ii) As seen above, there may be some loss of $t$-derivatives (and hence decay) in our operators when we obtain certain estimates. Therefore, when proving the  first square function estimate (Theorem \ref{Tbargument.thrm}), we ensure that $m \ge n + 10 > \tfrac{n+1}{2} + 10$ so that Lemma \ref{QOforSnabBI.lem} and Proposition \ref{ODforsevop.prop} hold.
\end{remarks}

\section{Square function bounds via $Tb$ Theory}

The goal of this section is to prove Theorem \ref{perturbfromlargem.thrm}.

\subsection{Reduction to high order $t$-derivatives}\label{ss5.1}

We will adapt the methods of \cite{GH,Hof-May-Mour} to prove the   square function bound in Theorem \ref{perturbfromlargem.thrm} for $m$ large:
\begin{theorem}[Square function bound for high $t-$derivatives]\label{Tbargument.thrm}
For each $m \in \N$ with $m \ge n + 10$, we have the estimate
$$\dint_{\ree_+} \big|t^m (\partial_t)^{m+1}\s_t^{\cL} f(x)\big|^2\,  \frac{dx \, dt}{t} \leq C \| f \|_{L^2(\rn)}^2,$$
where $C$ depends on $m$, $n$, $\lambda$, and $\Lambda$, provided that $\max\{\| B_1\|_n, \|B_2\|_n\}$ is sufficiently small depending on $n$, $\lambda$, and $\Lambda$. Under the same hypotheses, the analogous bounds hold for $\cL$ replaced by $\cL^*$, and for $\bb R^{n+1}_+$ replaced by $\bb R^{n+1}_-$.
\end{theorem}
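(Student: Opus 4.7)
The plan is to apply the generalized local $Tb$ theorem for square functions \cite[Theorem 4.5]{GH} to the operator family $\Theta_t := t^m \partial_t^{m+1} \s_t^{\cL}$, after first rewriting $\Theta_t$ in a form that exposes a horizontal divergence structure.

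First I will reduce to a ``divergence form'' expression for $\Theta_t$. Since $\s_t^\cL f$ solves $\cL u = 0$ away from $t=0$, Proposition \ref{IBPonslicesprop.prop} (or rather its pointwise in $t$ analogue from $\cL u=0$) yields
\begin{equation*}
A_{n+1,n+1}\partial_t^2 \s_t^\cL f = -\sum_{j\leq n} A_{n+1,j}\partial_t\partial_j \s_t^\cL f - (B_1)_\perp \partial_t \s_t^\cL f - \divp\big((\tilde A \nabla \s_t^\cL f)_\parallel + (B_1)_\parallel \s_t^\cL f\big) + (B_2)\cdot \nabla \s_t^\cL f.
\end{equation*}
Differentiating this identity $m-1$ times in $t$ and multiplying by $t^m$, I may express $\Theta_t f$ as a finite sum of terms of the form $t^m \partial_t^k(\s_t\nabla)\vec G_k$, $t^m \partial_t^k \s_t[(B_2)_\parallel \cdot \vec H_k]$, and $t^m \partial_t^k[(B_1)_\perp \cdot J_k]$ with $k\leq m$, each controllable through Proposition \ref{ODforsevop.prop} (this is the operator $\Theta'_t$ listed there). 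The advantage is that, except for the residual $B_1$-term, each summand now factors through $\s_t\nabla$ or $\s_t \divp$, and hence possesses good off-diagonal decay.

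Second, I will verify the three hypotheses of the $Tb$ theorem for each such summand. Uniform $L^2 \to L^2$ boundedness follows from the slice estimates in Proposition \ref{L2avgandsliceboundsperturb.prop}; good off-diagonal decay is Proposition \ref{ODforsevop.prop}; and the ``testing condition'' is obtained by constructing a system of accretive testing functions $\{b_Q\}_Q$. Following the pattern of \cite{AAAHK, GH}, I will take $b_Q$ to be a suitable bump adapted to $Q$ (or, more precisely, the tangential trace of a solution of an auxiliary problem for $\cL_\parallel$ constructed using Proposition \ref{laxmilgram.prop} for the $n$-dimensional tangential form), and check that
$$\frac{1}{|Q|}\int_0^{\ell(Q)}\int_Q |\Theta_t b_Q|^2\,\frac{dx\,dt}{t} \lesssim 1, \qquad \Big| \dashint_Q b_Q\Big| \gtrsim 1.$$
These follow by reproducing the argument from \cite{GH} on the principal part, handling the $\|M\|_\infty$ perturbation by absorption, and handling the error terms containing $B_1$ by writing $B_1 = B_1 I_1 \cdot (I_1)^{-1}$ (recalling that $\Vert B_1 \Vert_n$ is small, so $B_1 I_1$ is a bounded operator on $L^2$) and invoking the quasi-orthogonality estimate of Lemma \ref{QOforSnabBI.lem} after resolving the identity through a CLP family $\m Q_s$.

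Third, once the hypotheses are verified, \cite[Theorem 4.5]{GH} yields
$$\dint_{\ree_+} |\Theta_t f|^2\,\frac{dx\,dt}{t} \lesssim \|f\|_2^2,$$
which is exactly the claim. The statements for $\cL^*$ and for $\ree_-$ follow by symmetry, since $\cL^*$ is of the same form as $\cL$ with comparable constants.

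The main technical obstacle is the treatment of the $B_1$-terms. Unlike the divergence-form principal part, the term $t^m\partial_t^k[(B_1)_\perp J_k]$ cannot be absorbed into $\s_t\nabla$, and the multiplication operator $B_1$ is not of convolution type, so the usual Littlewood-Paley resolution of the identity does not produce orthogonality for free. The precise role of Lemma \ref{QOforSnabBI.lem} is to recover this orthogonality at the cost of one Riesz potential; the large exponent $m \geq n+10$ is imposed to guarantee that after the reduction above, sufficiently many $t$-derivatives remain on each summand so that both Proposition \ref{ODforsevop.prop} and Lemma \ref{QOforSnabBI.lem} apply, and the decay in the off-diagonal estimates compensates the growth in the number of annuli.
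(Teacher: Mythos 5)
Your high-level outline is in the right spirit — invoke the generalized $T1$/$Tb$ theorem of \cite{GH}, establish off-diagonal decay via Proposition \ref{ODforsevop.prop}, and use Lemma \ref{QOforSnabBI.lem} to recover quasi-orthogonality against the non-convolution $B_1$ multiplication — and you correctly identify the reason $m \ge n + 10$ is imposed. However, the central step, the construction of the testing functions, is wrong, and the rest of the argument does not survive without it.

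The paper does \emph{not} build $b_Q$ from an $n$-dimensional auxiliary tangential problem for $\cL_\parallel$, nor from a bump on $Q$. It builds $F_Q := \cL^{-1}(\Psi_Q^+ - \Psi_Q^-)$, a Lax--Milgram solution in the \emph{full} $(n+1)$-dimensional ambient space with data a difference of shifted $(n+1)$-dimensional bumps, and then sets $b^0_Q := |Q|(\partial_\nu^{\cL,-}F_Q)(\cdot,0)$, $b'_Q := |Q|\nabla_\parallel F_Q(\cdot,0)$, together with an auxiliary $b_Q^{(a)} := |Q|B_1F_Q(\cdot,0)$. The whole point of this choice is that the Green's formula (Theorem \ref{thm.greenformula}) produces the algebraic identity
\begin{equation*}
\widehat\Theta_t \hat b_Q(x) = |Q|\, t^m\partial_t^{m+1}F_Q^-(x,t),
\end{equation*}
so the Carleson testing estimate for $\widehat\Theta_t\hat b_Q$ reduces, via Caccioppoli on Whitney boxes, to the global $Y^{1,2}$ bound \eqref{gradestFQ} for $F_Q^-$. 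Nothing of this sort is available for the tangential trace of an $n$-dimensional $\cL_\parallel$-solution; you would have no mechanism to verify $\frac{1}{|Q|}\int_0^{\ell(Q)}\int_Q|\Theta_t b_Q|^2\,\tfrac{dx\,dt}{t}\lesssim 1$. You have conflated this construction with the $n$-dimensional Kato-problem family $\{F_Q\}$, which the paper invokes in a \emph{different} place — in Lemma \ref{tTboundlem.lem}, to bound the auxiliary gradient square function $\||\tT_t\||_{op}$ (a Carleson error term), not to produce the main $Tb$ testing system.

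Two further structural features are missing. First, the testing system is genuinely vector-valued: the accretivity hypothesis applies only to the scalar piece $b^0_Q$ (Lemma \ref{bqest3.lem}, $\Re\,\dashint b^0_Q\,d\mu_Q\geq 1/2$), while the tangential piece $b'_Q$ is required to have \emph{small} average (Lemma \ref{bQest4.lem}, bounded by $\eta/2$), and the cone/sawtooth argument trades between them. Your condition $|\dashint_Q b_Q|\gtrsim 1$ does not encode this dichotomy, and indeed without the smallness of $b'_Q$ the cone truncation \eqref{tbhearteq.eq} does not close. Second, the error operators $\Theta_t',\Theta_t^{(a)}$ enter with Carleson norms comparable to $1+\|\Theta^0_t 1\|_\C$ (Proposition \ref{Carlesonerrorest.prop}), so the argument is a bootstrap: one must truncate so that $\|\Theta^0_t 1\|_\C$ is a priori finite, show $\|\Theta^0_t 1\|_\C\leq 1+\tfrac12\|\Theta^0_t 1\|_\C$, and hide. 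Your proposal treats the error terms as if they were controlled by $\|f\|_2$ directly; they are not, and the absorption step is essential.
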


Let us see that we may reduce the proof of Theorem \ref{perturbfromlargem.thrm} to that of Theorem \ref{Tbargument.thrm}. First, it is a fact that square function estimates for solutions $u$ of $\cL u=0$ ``travel up'' the $t-$derivatives:  
\begin{lemma}[Square function bound ``travels up'' $t$-derivatives]\label{lm.travelup} Fix $m,k\in\bb N$ with $m>k\geq1$. Suppose that $u\in W^{1,2}_{\loc}(\bb R^{n+1}_+)$ solves $\cL u=0$ in $\bb R^{n+1}_+$ in the weak sense. Then there exists a constant $C$ depending only on $m$, $n$, $\lambda$, $\Lambda$, and $\max\{\| B_1\|_n, \|B_2\|_n\}$, such that $\||t^m\partial_t^{m-1}\nabla u\||\leq C\||t^k\partial_t^{k} u\||$.
\end{lemma}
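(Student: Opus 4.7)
The strategy is an iterated Caccioppoli argument that alternately trades powers of $t$ for $t$-derivatives. The engine is the observation (Proposition \ref{tsolns.prop}), made available by the $t$-independence of the coefficients of $\cL$, that every iterated transversal derivative $\partial_t^j u$, $j\geq 0$, is again a local weak solution of $\cL v=0$ in $\bb R^{n+1}_+$, and in particular $\partial_t^j u \in W^{1,2}_{\loc}(\bb R^{n+1}_+)$. I will combine this with the classical Caccioppoli inequality on Whitney balls, summed so as to reproduce the Carleson measure $dx\,dt/t$.

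\emph{Core Caccioppoli step.} Fix $j\geq 1$. Cover $\bb R^{n+1}_+$ by a bounded-overlap Whitney family $\{B_\alpha\}$ of balls $B_\alpha=B((x_\alpha,t_\alpha),t_\alpha/8)$ with $t_\alpha>0$, so that $2B_\alpha\subset \bb R^{n+1}_+$ and $t\approx t_\alpha$ on $2B_\alpha$. Applying Proposition \ref{classCaccioppoli.prop} with $f=0$, $\vec F=0$ to the weak solution $\partial_t^j u$ on each $B_\alpha$, multiplying by $t_\alpha^{2j+1}$, and summing (using the bounded-overlap property), I obtain
\begin{equation*}
\dint_{\bb R^{n+1}_+} \bigl|t^{j+1}\partial_t^j \nabla u\bigr|^2\,\frac{dx\,dt}{t} \lesssim \dint_{\bb R^{n+1}_+} \bigl|t^j\partial_t^j u\bigr|^2\,\frac{dx\,dt}{t},
\end{equation*}
i.e.\ $\||t^{j+1}\partial_t^j \nabla u\|| \lesssim \||t^j \partial_t^j u\||$, with a constant depending only on $m,n,\lambda,\Lambda$ and $\max\{\|B_1\|_n,\|B_2\|_n\}$ (these being the parameters governing the Caccioppoli constant).

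\emph{Trivial trade and telescoping.} For each $j\geq 1$, the scalar $\partial_t^j u$ is one component of the vector $\nabla\partial_t^{j-1} u$, so pointwise $|\partial_t^j u|\leq |\nabla\partial_t^{j-1}u|$, giving $\||t^j\partial_t^j u\|| \leq \||t^j\partial_t^{j-1}\nabla u\||$. Chaining this with the Caccioppoli step of the previous paragraph yields, for each $j\geq 1$,
\begin{equation*}
\||t^{j+1}\partial_t^j \nabla u\||\; \lesssim\; \||t^j\partial_t^{j-1}\nabla u\||.
\end{equation*}
Telescoping this inequality from $j=m-1$ down to $j=k+1$ bounds $\||t^m\partial_t^{m-1}\nabla u\||$ by $\||t^{k+1}\partial_t^k\nabla u\||$; one final invocation of the core Caccioppoli step at $j=k$ (legitimate since $k\geq 1$, which keeps us away from $t=0$ in the summation and guarantees the integrability needed on the right-hand side) controls this by $\||t^k\partial_t^k u\||$, which is the desired inequality.

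\emph{Main obstacle.} There is no deep difficulty; the argument is essentially bookkeeping. The two technical points to verify carefully are: (i) that $\partial_t^j u\in W^{1,2}_{\loc}(\bb R^{n+1}_+)$ for every $j\leq m-1$, which follows by induction from Proposition \ref{tsolns.prop} applied successively to $u,\partial_t u,\partial_t^2 u,\ldots$; and (ii) that the Whitney summation, combined with $t\approx t_\alpha$ on $2B_\alpha$, produces exactly the measure $dx\,dt/t$ on both sides so that the telescoped constants remain bounded by a quantity depending only on $m,n,\lambda,\Lambda,\|B_1\|_n,\|B_2\|_n$. Since every Caccioppoli ball sits at a strictly positive height, no boundary considerations enter.
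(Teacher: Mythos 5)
Your proof is correct and follows exactly the route the paper indicates when it says the lemma follows by decomposing into Whitney cubes and applying the Caccioppoli inequality; the iterated telescoping, with the trivial pointwise bound $|\partial_t^j u|\le |\nabla\partial_t^{j-1}u|$ providing the bridge between Caccioppoli steps, is the natural way to fill in that sketch. (One minor quibble: the parenthetical at the end about $k\geq 1$ "keeping us away from $t=0$" is not really the role of that hypothesis — the Whitney balls already sit at strictly positive height regardless of $k$ — but this does not affect the argument.)
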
	

The proof of the previous lemma is very straightforward (decompose into Whitney cubes and then use the Caccioppoli inequality), and thus omitted.

Now, the following proposition (and Lemma \ref{lm.travelup}\footnote{Lemma \ref{lm.travelup} is used to show that $\ep_0$ can be chosen independently of $m$.}) immediately allow us to reduce proof of Theorem  \ref{perturbfromlargem.thrm} to that of Theorem \ref{Tbargument.thrm}, and is a partial converse to Lemma \ref{lm.travelup}. Recall that $L^2(\bb R^n)\subset\Hfm$.

\begin{proposition}[Square function bound ``travels down'' $t$-derivatives]\label{lm.passtograd} 
The following 
estimates hold, where the implicit constants depend on $m$, $k$, $\lambda$, and $\Lambda$.
\begin{enumerate}[i)]
   \item For each $f\in L^2(\bb R^n)$ and each $m\geq1$, 
$\||t^m\partial_t^m\nabla\m Sf\||\lesssim_m\||t^{m+1}\partial_t^{m+1} \nabla \m Sf\||+\Vert f\Vert_2$. 
		\item For each $f\in L^2(\bb R^n)$ and each $m>k\geq1$,
		\begin{equation}\label{eq.traveldown}
		\||t^k\partial_t^k\nabla\m Sf\||\lesssim_{m}\||t^m\partial_t^{m+1}\m Sf\||+\Vert f\Vert_2.
		\end{equation}
	\end{enumerate}
\end{proposition}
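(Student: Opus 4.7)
The strategy for both (i) and (ii) is to perform an integration by parts in $t$, which converts a square function at level $m$ into one at level $m+1$ plus boundary contributions controlled by the slicewise estimates of Proposition \ref{L2avgandsliceboundsperturb.prop}; these contributions account for the additive $\|f\|_2$ on the right-hand side.

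For (i), fix $m\geq 1$ and set $g(t):=\|\partial_t^m\nabla\m S_t f\|_{L^2(\rn)}$. The identity
\begin{equation*}
\partial_t\big(t^{2m}g(t)^2\big) = 2m t^{2m-1}g(t)^2 + 2t^{2m}\operatorname{Re}\langle\partial_t^m\nabla\m S_t f,\partial_t^{m+1}\nabla\m S_t f\rangle_{L^2(\rn)},
\end{equation*}
integrated on $[\delta,T]\subset(0,\infty)$, yields
\begin{equation*}
2m\int_\delta^T\|t^m\partial_t^m\nabla\m S_t f\|_{L^2(\rn)}^2\,\tfrac{dt}{t} \leq \big[t^{2m}g(t)^2\big]_\delta^T + 2\int_\delta^T t^{2m}\big|\langle\partial_t^m\nabla\m S_t f,\partial_t^{m+1}\nabla\m S_t f\rangle_{L^2(\rn)}\big|\,dt.
\end{equation*}
For $m\geq 2$, Proposition \ref{L2avgandsliceboundsperturb.prop} gives $t^{2m}g(t)^2\lesssim\|f\|_2^2$ uniformly in $t>0$; for $m=1$, the same bound holds along sequences $\delta_j\to 0^+$ and $T_j\to\infty$ via the averaged estimate, so the boundary contributions are controlled by $\|f\|_2^2$. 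The bilinear term is written as $\|t^m\partial_t^m\nabla\m S_t f\|_{L^2(\rn)}\cdot\|t^{m+1}\partial_t^{m+1}\nabla\m S_t f\|_{L^2(\rn)}\,t^{-1}$ and estimated via Cauchy--Schwarz in $L^2(dt/t)$ followed by $ab\leq\epsilon a^2+C_\epsilon b^2$, allowing us to absorb $\epsilon\||t^m\partial_t^m\nabla\m S f\||^2$ into the LHS. Sending $\delta\to 0$ and $T\to\infty$ (along good sequences when $m=1$) yields (i).

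For (ii), fix $m>k\geq 1$ and proceed in three steps. Step 1: by (i), $\||t^k\partial_t^k\nabla\m S f\||\lesssim\||t^{k+1}\partial_t^{k+1}\nabla\m S f\||+\|f\|_2$. Step 2: applying Lemma \ref{lm.travelup} with $M=k+1$, $K=k$, and the solution $u=\partial_t\m S f$ gives $\||t^{k+1}\partial_t^{k+1}\nabla\m S f\||\lesssim\||t^k\partial_t^{k+1}\m S f\||$, trading the spatial gradient for an additional $t$-derivative on $\m S f$. Step 3: iterate an $\m S$-analog of (i), namely
\begin{equation*}
\||t^{j-1}\partial_t^j\m S f\||\lesssim\||t^j\partial_t^{j+1}\m S f\||\qquad (j\geq 2),
\end{equation*}
starting at $j=k+1\geq 2$ and ascending to $j=m$. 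This analog follows from the pointwise representation $\partial_t^j\m S f(x,t)=-\int_t^\infty\partial_s^{j+1}\m S f(x,s)\,ds$, valid for a.e.~$x$: indeed, Sobolev embedding together with Proposition \ref{L2avgandsliceboundsperturb.prop}(ii) gives $\|\partial_T^j\m S f\|_{L^{2n/(n-2)}(\rn)}\lesssim T^{-j}\|f\|_2\to 0$ along a subsequence. Taking the $L^2(\rn)$-norm and applying Minkowski, then substituting $u=\log t$, $v=\log s$, reduces the estimate to Young's convolution inequality with kernel $K(y)=e^{-(j-1)y}\mathbbm{1}_{y\geq 0}$, whose $L^1$ norm is $(j-1)^{-1}$. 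Chaining the three steps yields (ii).

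The main obstacle lies in handling the boundary contributions at $T=\infty$ and $\delta=0$ in the integration by parts, and in justifying the integral representation in Step 3. For $m=1$ in (i), only an averaged slicewise $L^2$ bound on $\partial_t\nabla\m S_t f$ is available, so the boundary terms must be handled along well-chosen subsequences. In (ii), the absence of uniform slicewise $L^2$ bounds on $\partial_t^j\m S_t f$ itself (as opposed to the $L^{2n/(n-2)}$ bounds available via Sobolev) is circumvented by using the $L^{2n/(n-2)}$ decay only to verify the decay-at-infinity hypothesis underlying the integral representation, while Minkowski's inequality then transfers the time integral back into the $L^2(\rn)$ norm.
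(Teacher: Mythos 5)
Your argument is correct, and both parts reach the stated conclusion. For (i) you use essentially the same integration-by-parts in $t$ that the paper uses; the only stylistic difference is that the paper inserts a compactly supported cutoff $\psi(t)$ (so the $\psi'$ term, supported on two dyadic annuli, is controlled by the averaged slice estimate \eqref{eq.avgslice}), whereas you work directly on $[\delta,T]$ and control the boundary term at $T$ via \eqref{eq.sliceest} when $m\ge 2$ and via a pigeonholed sequence from \eqref{eq.avgslice} when $m=1$. Both are fine; the cutoff version is marginally cleaner because it treats $m=1$ and $m\ge 2$ uniformly, but your sequence-selection handles $m=1$ correctly (and note the lower boundary term at $\delta$ enters with a favorable sign, so only the upper endpoint needs control).

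For (ii) you take a genuinely different route from the paper. The paper iterates part (i) all the way from level $k$ up to level $m+1$, picking up a $\|f\|_2$ error at each step, and then applies Lemma \ref{lm.travelup} a single time with $u=\partial_t\m Sf$ (and exponents $m+1 > m$) to convert $\||t^{m+1}\partial_t^{m+1}\nabla\m Sf\||$ into $\||t^m\partial_t^{m+1}\m Sf\||$. You instead apply (i) once, then Caccioppoli once (correctly invoking Lemma \ref{lm.travelup} with $m'=k+1>k=k'$ applied to the solution $\partial_t\m Sf$), and then prove and iterate a Hardy-type inequality $\||t^{j-1}\partial_t^j\m Sf\||\le (j-1)^{-1}\||t^j\partial_t^{j+1}\m Sf\||$ for $j\ge 2$ with no additive $\|f\|_2$ loss. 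The price you pay is the need to justify the vector-valued integral representation $\partial_t^j\m Sf(\cdot,t)=-\int_t^\infty \partial_s^{j+1}\m Sf(\cdot,s)\,ds$. Your justification is essentially sound: Proposition \ref{L2avgandsliceboundsperturb.prop}(ii) (with $m=j\ge 2$) gives the pointwise slice bounds $\|\partial_s^{j+1}\m Sf(\cdot,s)\|_{L^2}\lesssim s^{-j}\|f\|_2$, so the Bochner integral converges absolutely in $L^2(\rn)$, while the Sobolev embedding gives $\|\partial_T^j\m Sf(\cdot,T)\|_{L^{2n/(n-2)}}\lesssim T^{-j}\|f\|_2\to0$ so the FTC remainder vanishes; since the two modes of convergence agree up to a.e.\ subsequences, the representation holds in $L^2$ and Minkowski applies. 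Two minor imprecisions worth tightening: the $L^{2n/(n-2)}$ decay is a genuine pointwise-in-$T$ bound, not merely along a subsequence; and the phrase ``valid for a.e.\ $x$'' should be read as an identity of Bochner integrals in $L^2(\rn)$, from which the Minkowski inequality is then extracted. Your Step 3 also happens to expose a fact the paper does not state explicitly, namely that the gradientless square function for $\m S$ ``travels up'' without any additive error for $j\ge 2$; this is an agreeable by-product of your approach.
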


\begin{proof}   One may obtain $ii)$ as a consequence of $i)$ via induction on $m$, using 
Caccioppoli's inequality on Whitney boxes after increasing the number of $t$ derivatives appropriately. 
So it suffices to prove $i)$.
Fix $m\in\bb N$, $N>0$ large, $\epsilon>0$ small and let $f\in L^2(\bb R^n)$. Let $\psi\in C_c^{\infty}(0,\infty)$ be a non-negative function which satisfies
\begin{align*}
\psi&\equiv1\quad\text{on }\big(\epsilon,\tfrac1{\epsilon}\big),\qquad &&\psi\equiv0\quad\text{on }\big(0,\tfrac{\epsilon}2\big)\cup\big(\tfrac2{\epsilon},\infty\big),\\|\psi'|&\leq\tfrac4{\epsilon}\quad\text{on }\big(\tfrac{\epsilon}2,\epsilon\big),\qquad &&|\psi'|\leq2\epsilon\quad\text{on }\big(\tfrac1{\epsilon},\tfrac2{\epsilon}\big).
\end{align*}
Since $\m Sf\in Y^{1,2}(\bb R^{n+1})$ and $\cL\m Sf=0$ in $\bb R^{n+1}_+$ in the weak sense, then $\partial_t^m\m Sf\in W^{1,2}_{\loc}(\bb R^{n+1})$ and $\cL\partial_t^m\m Sf=0$ in $\bb R^{n+1}_+$ in the weak sense. Observe that
\begin{equation}\nonumber
\int_{B(0,N)}\int_{\epsilon}^{1/\epsilon}t^{2m-1}|\partial_t^m\nabla\m Sf|^2\,dt\leq\int_{B(0,N)}\int_{\epsilon/2}^{2/\epsilon}t^{2m-1}|\partial_t^m\nabla\m Sf|^2\psi\,dt,
\end{equation}
and notice per our observations in Proposition \ref{L2avgandsliceboundsperturb.prop} that the right-hand side above is finite. Now,
\begin{multline}\nonumber
\int_{B(0,N)}\int_{\epsilon/2}^{2/\epsilon}t^{2m-1}|\partial_t^m\nabla\m Sf|^2\psi\,dt=\int_{B(0,N)}\int_{\epsilon/2}^{2/\epsilon}t^{2m-1}\partial_t^m\nabla\m Sf\overline{\partial_t^m\nabla\m Sf}\psi\,dt\\=-\frac1{2m}\int_{B(0,N)}\int_{\epsilon/2}^{2/\epsilon}t^{2m}\partial_t(\partial_t^m\nabla\m Sf\overline{\partial_t^m\nabla\m Sf}\psi)\,dt\\\leq\frac1m\int_{B(0,N)}\int_{\epsilon/2}^{2/\epsilon}t^{2m}|\partial_t^{m+1}\nabla\m Sf||\partial_t^m\nabla\m Sf|\psi\,dt\\+\frac1m\int_{B(0,N)}\Big[\dashint_{\epsilon/2}^{\epsilon}t^{2m}|\partial_t^m\nabla\m Sf|^2\,dt+\dashint_{1/\epsilon}^{2/\epsilon}t^{2m}|\partial_t^m\nabla\m Sf|^2\,dt\Big].
\end{multline}
The last two terms are controlled by (\ref{eq.avgslice}). As for the first term, note that $2m=\frac{2m-1}2+\frac{2m+1}2$, and we use Cauchy's inequality and absorb one of the resulting summands to the left-hand side. 
Sending $N\ra\infty$ and $\epsilon\searrow0$ yields the desired result.\end{proof}

Combining Lemma \ref{tTboundlem.lem} below and Theorem \ref{Tbargument.thrm}, we will also obtain the following result.

\begin{theorem}[Square function bound for $\m S\nabla$]\label{fullsqfnlargem.thrm}
	For each $m \in \N$, with $m \ge n + 10$,
	\begin{equation}\label{eq.sqfnest}
	\dint_{\ree_+} \Big|t^m (\partial_t)^m(\s_t \nabla) \vec{f}(x)\Big|^2\,  \frac{dx \, dt}{t} \lesssim \| \vec{f} \|_{L^2(\rn)},
	\end{equation}
	where $C$ depends on $m$, $n$, $\lambda$, $\Lambda$, provided that $\max\{\| B_1\|_n, \|B_2\|_n\}$ is 
	 sufficiently small depending on $m$,  $n$, $\lambda$, $\Lambda$.
	These results hold for $\cL^*$ and in the lower half space as the hypotheses are symmetric.
\end{theorem}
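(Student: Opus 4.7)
The strategy is to reduce the vector-valued square function bound (\ref{eq.sqfnest}) to the scalar bound of Theorem \ref{Tbargument.thrm}. Using the definition of $\s_t\nabla$, I decompose the operator acting on ${\bf f} = (\vec{f}_{\|}, f_{n+1})$ as
\begin{equation*}
t^m\partial_t^m(\s_t^{\cL}\nabla){\bf f} = -t^m\partial_t^m\s_t^{\cL}(\divp\vec{f}_{\|}) - t^m\partial_t^{m+1}\s_t^{\cL}f_{n+1}.
\end{equation*}
The perpendicular contribution is controlled directly by Theorem \ref{Tbargument.thrm} (at the same order $m$), so the essential task is to bound the parallel contribution.

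For the parallel contribution, the plan is to pass to an adjoint operator for $\cL^*$ via duality on slices. Testing against $\Phi \in L^2(\reu, \tfrac{dxdt}{t})$ of unit norm, the slice-adjoint relation of Proposition \ref{prop.slproperties}(v), together with the skew-adjointness of $\divp$ and the transfer-of-derivative identities in Proposition \ref{dualwtder.prop}, yields (up to signs)
\begin{equation*}
\dint_{\reu}t^m\partial_t^m\s_t^{\cL}(\divp\vec{f}_{\|})(x)\,\overline{\Phi(x,t)}\,\tfrac{dxdt}{t} = \pm\int_{\rn}\vec{f}_{\|}(x)\cdot\overline{R\Phi(x)}\,dx,
\end{equation*}
where $R\Phi(x) := \int_0^\infty t^m\nablap\partial_t^m\s_{-t}^{\cL^*}[\Phi(\cdot,t)](x)\,\tfrac{dt}{t}$ is the formal adjoint of the map $g\mapsto t^m\nablap\partial_t^m\s_{-t}^{\cL^*}g$, viewed as an operator $L^2(\rn)\to L^2(\reu,\tfrac{dxdt}{t})$. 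After a Cauchy--Schwarz estimate in $x$, the desired bound therefore reduces to the square function estimate $\vertiii{t^m\nablap\partial_t^m\s^{\cL^*}g}\lesssim \|g\|_2$. This last estimate follows from Theorem \ref{Tbargument.thrm} applied to $\cL^*$ at order $m+1$ (admissible since $m \ge n + 10$ implies $m+1 \ge n + 10$), combined with the ``travel down'' estimate (\ref{eq.traveldown}) of Proposition \ref{lm.passtograd}(ii) with $k=m$.

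The main obstacle is to rigorously justify the chain of duality manipulations in the parallel term, especially the commutation of $t^m\partial_t^m$ with the slice pairing and the passage between $\s_t^{\cL}$ and $\s_{-t}^{\cL^*}$ at the level of the square function norm. These manipulations are packaged in Lemma \ref{tTboundlem.lem}, which converts the slice-duality identities of Propositions \ref{prop.slproperties} and \ref{dualwtder.prop} into a form directly usable for square function arguments. Once Lemma \ref{tTboundlem.lem} is in place, the strategy outlined above, combined with Theorem \ref{Tbargument.thrm} for both $\cL$ and $\cL^*$ and Proposition \ref{lm.passtograd}, delivers the full square function estimate (\ref{eq.sqfnest}).
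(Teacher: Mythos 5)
Your decomposition into the parallel piece $-t^m\partial_t^m\s_t^{\cL}(\divp\vec{f}_\parallel)$ and the perpendicular piece $-t^m\partial_t^{m+1}\s_t^{\cL}f_{n+1}$, and the treatment of the latter by Theorem \ref{Tbargument.thrm}, match the paper. But the duality argument for the parallel piece is circular. Your operator $R\Phi = \int_0^\infty t^m\nablap\partial_t^m\s_{-t}^{\cL^*}[\Phi(\cdot,t)]\,dt/t$ is \emph{not} the adjoint of the square-function operator $T\colon g\mapsto t^m\nablap\partial_t^m\s_{-t}^{\cL^*}g$; rather, for a family $\{\theta_t\}$, the map $\Phi\mapsto\int_0^\infty\theta_t[\Phi(\cdot,t)]\,dt/t$ is the adjoint of the square-function operator built from $\theta_t^*$. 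Here $\theta_t:=t^m\nablap\partial_t^m\s_{-t}^{\cL^*}$ has slice-adjoint $\theta_t^*=-t^m\partial_t^m\s_t^{\cL}\divp$ (by Propositions \ref{prop.slproperties} and \ref{dualwtder.prop}), so that $\|R\|_{op}=\|R^*\|_{op}=\||\theta_t^*\||_{op}$, which is precisely the parallel square-function norm you set out to prove. The Cauchy--Schwarz step therefore gains nothing. And the distinction is not cosmetic: $\||\theta_t\||_{op}$ and $\||\theta_t^*\||_{op}$ are unrelated in general (take $\theta_t f=\langle f,\phi\rangle\psi_t$ with $\psi_t$ sweeping through a long orthonormal family as $t$ varies: the first norm grows while the second stays bounded), so the slice-adjoint relation between $\s_t^{\cL}$ and $\s_{-t}^{\cL^*}$ does not transfer a square-function bound from one to the other.

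The paper instead combines Theorem \ref{Tbargument.thrm} with Lemma \ref{tTboundlem.lem}, which bounds $\||t^m\partial_t^m\s_t^{\cL}\nablap\||_{op}$ directly by $\||\Theta_t^0\||_{op}+1$. That lemma is the heart of the matter and is not a duality argument: it relies on the Hodge decomposition for $L_\parallel$, the quasi-orthogonality estimate of Lemma \ref{QOforSnabBI.lem}, a Carleson measure reduction, and the Kato-problem family $\{F_Q\}$ from \cite{Katoproblem}. You invoke Lemma \ref{tTboundlem.lem} at the end, but describing it as ``packaging the slice-duality identities'' mischaracterizes it --- once Lemma \ref{tTboundlem.lem} is in hand the parallel piece is finished immediately and the duality maneuver is superfluous, while without it the maneuver does not close the argument.
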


\subsection{Setup for the $Tb$ argument and testing functions}

Having reduced matters to proving Theorem \ref{Tbargument.thrm}, we fix $m \in \N$ with $m \ge n + 10$. We define the space $H$ to be the subspace of $L^2(\bb R^n)^n$ consisting of the gradients of $Y^{1,2}(\bb R^n)$-functions. That is, $H=\{h': h' = \nabla F,\,F \in Y^{1,2}(\rn)\}$. For $h'\in H$ and $h^0 \in L^2(\rn)$, we set $h =(h',h^0)$ and define for each $t\in\bb R\backslash\{0\}$,
\begin{gather*}
\Theta_t^0 h^0:= t^{m}\partial_t^{m + 1} \s_th^0,\qquad\text{and}
\\ \Theta'_t h':= t^{m}\partial_t^{m}(\s_t \nabla)\tilde{A}h' + t^{m}(\partial_t)^{m}\s_t ( {B_2}_{\parallel}\cdot h'),
\end{gather*}
where we recall that $\tilde A$ is the $(n+1)\times n$ submatrix of $A$ consisting of the first $n$ columns of $A$.
We let $\Theta_t := (\Theta'_t, \Theta^0_t):H\times L^2(\bb R^n)\ra L^2(\bb R^n)$, which acts on  $h = (h',h^0)$ via the identity $\Theta_th=  \Theta'_th' + \Theta^0_th^0$. 

For each $t>0$, we also define an auxiliary operator $\Theta^{(a)}_t:L^2(\bb R^n,\bb C^{n+1})\ra L^2(\bb R^n)$ which acts on  $g = (g', g^0)$ via $\Theta_t^{(a)} g = t^{m}(\partial_t)^{m}(\s_t\nabla) (g', g^0)$. This auxiliary operator will play the role of an error term that allows us to integrate by parts. Accordingly, define $\widehat\Theta_t$ acting on functions $h = (h', h^0, h'') \in H \times L^2(\rn, \mathbb{C}) \times L^2(\rn, \mathbb{C}^{n+1})$ via
$$\widehat\Theta_t h(x)  = \Theta_t (h', h_0)(x) + \Theta^{(a)}_th''(x).$$

We need to define appropriate testing functions for our family $\{\Theta_t\}$. Let $\tau \in (0, 1/40)$ be a small parameter to be chosen later, and let $\widetilde\Psi$ be a smooth cut-off function in $\bb R^{n+1}$ with the following properties:
\begin{gather*}
\widetilde\Psi \in C_c^\infty\big(\big[-\tfrac{1}{1000}, \tfrac{1}{1000}\big]^n \times\big[-\tfrac{\tau}2,\tfrac{\tau}2\big]\big),\qquad \widetilde\Psi \equiv 1 \text{ on }\big[-\tfrac{1}{2000}, \tfrac{1}{2000}\big]^n \times\big[-\tfrac{\tau}4,\tfrac{\tau}4\big]
\\ 0 \le \widetilde\Psi \le 1,\qquad |\nabla \widetilde\Psi | \lesssim 1/\tau.
\end{gather*}
Let $\Psi := c_{n,\tau} \widetilde\Psi$ where $c_{n,\tau}$ is chosen so that $\lVert \Psi \rVert_1 = 1$. Hence $\Psi$ is a normalization of $\widetilde\Psi$. For any cube $Q \subset \rn$, we define the measurable functions 
$$\Psi_Q(X):= \frac{1}{\ell(Q)^{n+1}} \Psi\Big(\frac{1}{\ell(Q)}[X - (x_Q,0)]\Big), \quad \big(\text{ note that } \lVert \Psi_Q \rVert_1 = 1\big),$$
$$\Psi_Q^\pm(y,s): = \Psi_Q\Big(y~,~ s \mp \frac{3}{2}\tau\ell(Q)\Big),$$
and $\Psi_Q^{s'}(y,s):= \Psi_Q(y, s + s')$, for each $s'\in\bb R$. Let us make a few observations about $\widetilde\Psi$ and $\Psi$. The fact that
$$\mathbbm{1}_{\big[-\tfrac{1}{2000}, \tfrac{1}{2000}\big]^n \times\big[-\tfrac{\tau}4,\tfrac{\tau}4\big]} \le \widetilde\Psi \le \mathbbm{1}_{\big[-\tfrac{1}{1000}, \tfrac{1}{1000}\big]^n \times \big[-\tfrac{\tau}2,\tfrac{\tau}2\big]}$$
forces that $c_{n,\tau} \approx  \frac{1}{\tau}$ and that $\lVert \widetilde\Psi \rVert_{2_*} \approx \tau^{\frac{1}{2_*}}$. Consequently, $\lVert \Psi \rVert_{2_*}\approx \tau^{-1 + 1/2_*}$, and 
\begin{equation*} 
\lVert \Psi_Q \rVert_{2_*} \approx \tau^{-1 + 1/2_*}[\ell(Q)^{n+1}]^{-1 + 1/2_*} = [\tau \ell(Q)^{n+1}]^{-1/2 + 1/(n+1)}.
\end{equation*}
Of course, the same $L^{2_*}$ estimate holds for $\Psi_Q^\pm$ and $\Psi_Q^{s'}$.
Now, we define for any cube and $s' \in \RR$ the quantities
\begin{equation*}
F_Q^\pm:= \cL^{-1}(\Psi^\pm_Q),\qquad F_Q:= F_Q^+ - F_Q^-,\qquad F_Q^{s'}:= \cL^{-1}(\Psi_Q^{s'}).
\end{equation*}
By our previous observations and the fact that $L^{2_*}(\bb R^{n+1})$ embeds continuously into $(Y^{1,2}(\bb R^{n+1}))^*$, we easily see that for any cube $Q$ and any $s'\in\bb R$, the estimate
\begin{equation}\label{gradestFQ}
\max\big\{\lVert \nabla F_Q \rVert_2, \lVert \nabla F^\pm_Q \rVert_2, \lVert \nabla F_Q^{s'} \rVert_2\big\}\lesssim
 [\tau \ell(Q)^{n+1}]^{-1/2 + 1/(n+1)}
\end{equation}
holds. Notice that we have
\begin{equation*}
\Psi_Q^+(y,s) - \Psi_Q^-(y,s)= - \int_{-\tfrac{3}{2} \tau \ell(Q)}^{\tfrac{3}{2} \tau \ell(Q)} \partial_{s'}\Psi(y, s + s') \, ds' = - \int_{-\tfrac{3}{2} \tau \ell(Q)}^{\tfrac{3}{2} \tau \ell(Q)} \partial_{s}\Psi_Q^{s'}(y,s) \, ds'.
\end{equation*}
Therefore, the identity 
\begin{multline}\label{slideFq.eq}
F_Q = - \int_{-\tfrac{3}{2} \tau \ell(Q)}^{\tfrac{3}{2} \tau \ell(Q)} \cL^{-1}(D_{n+1}\Psi_Q^{s'}) \, ds' = - \int_{-\tfrac{3}{2} \tau \ell(Q)}^{\tfrac{3}{2} \tau \ell(Q)}\partial_t \cL^{-1}(\Psi_Q^{s'}) \, ds'
\\ =- \int_{-\tfrac{3}{2} \tau \ell(Q)}^{\tfrac{3}{2} \tau \ell(Q)} \partial_t F_Q^{s'} \, ds'
\end{multline}
is valid in $Y^{1,2}(\bb R^{n+1})$. For convenience, we write   $(\nabla_{y,s} u)(y,0) := (\nabla_{y,s} u(y,s))\big|_{s = 0}$. We are now ready to define our testing functions $b_Q =(b'_Q, b_Q^0)$. Let $b^0_Q$ be defined via $b^0_Q(y) := |Q|(\partial_{\nu}^{\cL,-}F_Q)(y,0)$, where  
\begin{multline*}
\partial_{\nu}^{\cL,-}u(y,0)= e_{n+1} \cdot[A(\nabla_{y,s} u)(y,0) - B_1 u(y,0)]
\\   = e_{n+1} \cdot[A(\nabla_{y,s} u)(y,0)] - (B_1)_\perp u(y,0).
\end{multline*}
We define $b'_Q$ via $b'_Q: = |Q| \nabla_\parallel F_Q(y,0)$, while we define the auxiliary testing function $b_Q^{(a)}$ via $b_Q^{(a)}:= |Q|B_1 F_Q(y,0)$. 

We will define a measure for each cube $Q$ that corresponds to a smoothened characteristic function. 
We do this exactly as in \cite{GH}. Let $\hm>0$  to be chosen. For each cube, we let $d\mu_Q = \phi_Q \, dx$, where $\phi_Q: \rn \to [0,1]$ is a smooth bump function supported in $(1 + \hm)Q$ with $\phi_Q \equiv 1$ on $(1/2)Q$. Clearly, we can choose $\phi_Q$ so that $\phi_Q \gtrsim \hm$ on $Q$ and $\lVert\nabla\phi_Q \rVert_{L^\infty} \lesssim 1/\ell(Q)$. We also let $\Phi_Q: \ree \to [0,1]$ be a smooth extension of $\phi_Q$; that is, $\Phi_Q(y,0) = \phi_Q(y)$, with $\Phi_Q$ supported in $I_{(1 + \hm)Q}$ and $\Phi_Q \equiv 1$ on $I_{(1/2)Q}$, where  for any cube $Q \subset \rn$, we let $I_Q = Q \times (-\ell(Q), \ell(Q))$ denote the ``double Carleson box'' associated to $Q$. We may also ensure that $\lVert \nabla \Phi_Q \rVert_{L^\infty(\ree)} \lesssim 1/\ell(Q)$. 

\subsection{Properties of the testing functions}

The testing functions defined above enjoy the following essential properties which justify their use in the $Tb$ argument.

\begin{proposition}[Properties of the testing functions]\label{allbQproperties.lem} Let $b_Q = (b'_Q, b^0_Q)$, $\hat b_Q$, and $\widehat\Theta_t$ be as above. For any $\eta > 0$, there exists $\tau \in (0,1)$ depending on $n$, $\lambda$, $\Lambda$, $\eta$, and $C_0 = C_0(m, \tau)$, and there exists a measure $\mu_Q$  as described above, such that for each cube $Q$, the estimates
\begin{equation}\label{destbQ1.eq}
\int_{\rn} |b_Q|^2  \le C_0 |Q|
\end{equation}
\begin{equation}\label{destbQ2.eq}
\int_0^{\ell(Q)} \int_{Q} \big|\widehat\Theta_t \hat b_Q(x)\big|^2 \, \frac{dx \, dt}{t} \le C_0|Q|
\end{equation}
\begin{equation}\label{destbQ3.eq}
\frac{1}{2} \le \f R e \Big(\frac1{\mu_Q(Q)}\int_{Q} b^0_Q \, d\mu\Big)
\end{equation}
\begin{equation}\label{destbQ4.eq}
\Big|\frac1{\mu_Q(Q)}\int_{Q} b'_Q \, d\mu_Q \Big| \le \frac{\eta}{2},
\end{equation}
hold, provided that $\max\{\| B_1\|_n, \|B_2\|_n\}=\eps_m < \tau$.
\end{proposition}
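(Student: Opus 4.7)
The plan is to verify \eqref{destbQ1.eq}--\eqref{destbQ4.eq} sequentially, exploiting that $\cL F_Q=0$ on the horizontal strip $\rn\times(-\tau\ell(Q),\tau\ell(Q))$: the sources $\Psi_Q^\pm$ are supported in $\rn\times(\pm\tau\ell(Q),\pm2\tau\ell(Q))$, away from $\{t=0\}$, so we may invoke interior PDE estimates (the slice $L^p$-Caccioppoli of Lemma \ref{Lpcacconslices.lem}, the $L^p$-Caccioppoli of Proposition \ref{Lpcaccop.prop}, and the slice integration-by-parts identity of Proposition \ref{IBPonslicesprop.prop}) with constants depending on $\tau$ but uniform in $Q$. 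The parameters will be chosen in the order: first $m$ large enough to invoke the off-diagonal decay of Proposition \ref{ODforsevop.prop} and the quasi-orthogonality of Lemma \ref{QOforSnabBI.lem}; then $\tau$ in terms of $\eta$; then the weight $\phi_Q$ (equivalently $\hm$) depending on $\tau$ and $\eta$; finally the smallness $\ep_m$. For the size bound \eqref{destbQ1.eq}, I would tile $\rn$ by cubes of side-length $\sim\tau\ell(Q)$ in the slice $\{t=0\}$, apply \eqref{Lpgradslices} with $p=2$ on each cube (valid since $F_Q$ is a null-solution across $\{t=0\}$), and sum to obtain
\[\int_\rn|\nabla F_Q(y,0)|^2\,dy\lesssim (\tau\ell(Q))^{-1}\|\nabla F_Q\|_2^2\lesssim \tau^{-C}\ell(Q)^{-n},\]
the last inequality from \eqref{gradestFQ}. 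Multiplying by $|Q|^2$ controls $\|b_Q'\|_2^2$ and the leading piece of $\|b_Q^0\|_2^2$; the $(B_1)_\perp F_Q$ piece of $b_Q^0$ and $b_Q^{(a)}=|Q|B_1F_Q(\cdot,0)$ are handled by H\"older together with the Sobolev embedding on $\rn$, absorbing the $F_Q$-factor by the same slice argument and using $\|B_1\|_n<\ep_m$.

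For the testing condition \eqref{destbQ2.eq}, the key step is to unpack the definitions of $\Theta_t',\Theta_t^0,\Theta_t^{(a)}$ and use $\partial_t\s_t=-\s_t\dno$ to group terms into
\[\widehat\Theta_t\hat b_Q=|Q|t^m\partial_t^{m+1}\s_t\big[A_{n+1,n+1}\partial_tF_Q|_0\big]-|Q|t^m\partial_t^m\s_t\big[\divp\vec V-(B_2)_\parallel\cdot\nablap F_Q|_0\big]+\m E_t,\]
where $\vec V=\big((A\nabla F_Q+B_1F_Q)_\parallel-\vec A^\parallel_{\cdot,n+1}\partial_tF_Q\big)|_0$ and $\m E_t$ collects lower-order $B_1$-corrections (arising because $b_Q^0$ subtracts $(B_1)_\perp F_Q$ while $b_Q^{(a)}$ contributes $+(B_1)_\perp F_Q$). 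Since $\cL F_Q=0$ near $\{t=0\}$, Proposition \ref{IBPonslicesprop.prop} converts $\divp(A\nabla F_Q+B_1F_Q)_\parallel|_0$ into a $t$-derivative of normal-flux data plus the $B_2\cdot\nabla F_Q|_0$ correction; combining with the first bracket and invoking Green's representation (Theorem \ref{thm.greenformula}) together with the jump relations of Proposition \ref{prop.jump}, one expects to recast
\[\widehat\Theta_t\hat b_Q=|Q|t^m\partial_t^{m+1}F_Q(\cdot,t)+R_t(x),\]
with $F_Q$ now evaluated at height $t$, and $R_t$ controllable via the smallness $\max\{\|B_1\|_n,\|B_2\|_n\}<\tau$, the off-diagonal decay of Proposition \ref{ODforsevop.prop}, and the quasi-orthogonality of Lemma \ref{QOforSnabBI.lem} (needed for the $B_1I_1$-type contributions hidden inside $\m E_t$). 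The main term is then bounded by splitting $t\in(0,\ell(Q))$: on $(0,\tau\ell(Q)/4)\cup(2\tau\ell(Q),\ell(Q))$, where $F_Q$ is a null-solution across the $t$-slice, the $L^p$-Caccioppoli of Proposition \ref{Lpcaccop.prop}, iterated $m+1$ times, exchanges $t$-derivatives for spatial $L^2$-bounds on $\nabla F_Q$ ultimately controlled by \eqref{gradestFQ}; on the middle strip $(\tau\ell(Q)/4,2\tau\ell(Q))$ of $t$-measure $\sim\tau\ell(Q)$, direct bounds suffice since $F_Q$ remains smooth at a distance $\sim\tau\ell(Q)$ from the supports of $\Psi_Q^\pm$.

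For \eqref{destbQ3.eq}, I would expand the conormal via Definition \ref{def.conormal} with an extension $\Phi_Q$ such that $\Phi_Q\equiv 1$ on a neighborhood of $\supp\Psi_Q^-$ (possible since $\Psi_Q^-$ is supported in $I_{Q/2}\cap\ree_-$ whereas $\Phi_Q\equiv 1$ on $I_{Q/2}$), obtaining
\[\int_\rn b_Q^0\,d\mu_Q=|Q|\langle\partial_\nu^{\cL,-}F_Q,\phi_Q\rangle=|Q|\Big[B_{\cL,\ree_-}[F_Q,\Phi_Q]+\iint\Psi_Q^-\Phi_Q\Big]=|Q|(1+\mathrm{err}),\]
since $\iint\Psi_Q^-=1$ by normalization; the bilinear error $B_{\cL,\ree_-}[F_Q,\Phi_Q]$ localizes to the $\hm$-annular support of $\nabla\Phi_Q$ and is estimated by $O(\hm^{1/2})+O(\ep_m)$ via a Poincar\'e-type argument on that annulus together with \eqref{gradestFQ} and the smallness of $\|B_1\|_n,\|B_2\|_n$. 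Dividing by $\mu_Q(Q)\approx\hm|Q|$ (by the design of $\phi_Q$, with the $1/\hm$ growth cancelling the $|Q|$) yields \eqref{destbQ3.eq} after fixing $\hm,\tau,\ep_m$ small in succession. For \eqref{destbQ4.eq}, slice-IBP at $\{t=0\}$ gives $\int b_Q'\,d\mu_Q=-|Q|\int F_Q(\cdot,0)\,\nablap\phi_Q$, again supported on the same $\hm$-annulus; using the representation $F_Q=-\int_{-3\tau\ell(Q)/2}^{3\tau\ell(Q)/2}\partial_tF_Q^{s'}\,ds'$ from \eqref{slideFq.eq} together with an approximate anti-symmetry of $F_Q^{s'}$ across $\{t=0\}$ (which is exact for the Laplacian and perturbative for general $\cL$), the integral is estimated by $O(\hm|Q|)$, so dividing by $\mu_Q(Q)$ and fixing $\hm$ small in terms of $\eta$ delivers the desired bound.

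The principal obstacle is the testing condition \eqref{destbQ2.eq}: aligning the definitions of $\Theta_t',\Theta_t^0,\Theta_t^{(a)}$ with the slice-IBP structure to extract the main term $t^m\partial_t^{m+1}F_Q(\cdot,t)$ with the correct signs and absorb the numerous lower-order remainders from $B_1,B_2$ via the off-diagonal decay and quasi-orthogonality machinery of the preceding section is the central technical task. Tracking powers of $\tau$ (which must be absorbable into the final constant $C_0=C_0(m,\tau)$) and ensuring that the smallness assumption $\ep_m<\tau$ is sharp enough to control every remainder contribution (in particular, the $B_1I_1$-type terms requiring Lemma \ref{QOforSnabBI.lem}, rather than a naive off-diagonal estimate) is the most delicate aspect of the argument.
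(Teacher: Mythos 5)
The main issue is with your treatment of \eqref{destbQ2.eq}. You write that one ``expects to recast'' $\widehat\Theta_t\hat b_Q = |Q|t^m\partial_t^{m+1}F_Q(\cdot,t) + R_t$ with a remainder $R_t$ to be absorbed via smallness of $\max\{\|B_1\|_n,\|B_2\|_n\}$, the off-diagonal decay of Proposition \ref{ODforsevop.prop}, and the quasi-orthogonality of Lemma \ref{QOforSnabBI.lem}. This misses the central design feature of the construction: the auxiliary operator $\Theta_t^{(a)}$ and auxiliary testing function $b_Q^{(a)}$ are introduced \emph{precisely} so that the Green-type identity is exact, with no remainder. The paper proves the identity
$$\widehat\Theta_t\hat b_Q = |Q|\,t^m\partial_t^{m+1}F_Q^-\quad\text{on }\ree_+$$
(note $F_Q^-$, not $F_Q$) by pairing against a test function $f$, using Proposition \ref{dualwtder.prop} and the integration-by-parts-on-slices identity of Proposition \ref{IBPonslicesprop.prop} to show that $\widehat\Theta_t\hat b_Q = |Q|t^m\partial_t^{m+1}(\s_t^{\cL}(\partial_\nu^{\cL,-}F_Q) + \cD_t^{\cL,+}(\tro F_Q))$, and then invoking the Green's formula, Theorem \ref{thm.greenformula}\ref{item.greenformula2}). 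The function $F_Q^-$ is a null solution of $\cL$ on all of $\{t>-\tau\ell(Q)\}$, so the subsequent Caccioppoli argument (iterating Proposition \ref{Lpcaccop.prop} $m+1$ times over a grid of boxes of side $\sim\tau\ell(Q)$) works uniformly for all $t\in(0,\ell(Q)]$; there is no need to split cases by $t$, and your middle strip $(\tau\ell(Q)/4, 2\tau\ell(Q))$ would be genuinely problematic for $F_Q$ (since $\Psi_Q^+$ is supported there and $F_Q$ is not a null solution), whereas no such issue arises for $F_Q^-$. Neither the off-diagonal decay Proposition \ref{ODforsevop.prop} nor Lemma \ref{QOforSnabBI.lem} is needed here — those enter later, in the $Tb$ argument proper, not in the testing-function estimates.

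Two smaller points. First, your claim $\mu_Q(Q)\approx\hm|Q|$ is wrong: since $\phi_Q\equiv 1$ on $(1/2)Q$ and $0\le\phi_Q\le1$, one has $(1/2)^n|Q|\le\mu_Q(Q)\le|Q|$, so $\mu_Q(Q)\approx|Q|$; the role of $\hm$ is in the annular support of $\nabla\phi_Q$ (for \eqref{smalloffQ.eq}) and the lower bound $\phi_Q\gtrsim\hm$ on $Q$, not in the size of $\mu_Q(Q)$ itself. Second, for \eqref{destbQ4.eq} you invoke an ``approximate anti-symmetry'' of $F_Q^{s'}$ across $\{t=0\}$, which is neither used nor needed: the paper simply integrates by parts on the slice to get $-|Q|\int F_Q(\cdot,0)\nablap\phi_Q$ and then uses the exact representation $F_Q = -\int_{-\frac32\tau\ell(Q)}^{\frac32\tau\ell(Q)}\partial_t F_Q^{s'}\,ds'$ from \eqref{slideFq.eq}, which delivers the crucial factor of $\tau\ell(Q)$ directly. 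Finally, the parameter ordering differs from the paper's simpler collapse $\hm=\tau^3$, $\eps_m<\tau$, then $\tau$ small, but that is a cosmetic matter; the substantive gap is in \eqref{destbQ2.eq}.
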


We note that while the smallness of $\eps_m =\max\{\| B_1\|_n, \|B_2\|_n\}$ apparently 
depends on $m$ at this point, we 
may prove Theorem \ref{Tbargument.thrm} for a fixed sufficiently large $m$,
and then use Lemma
\ref{lm.travelup}  and Proposition \ref{lm.passtograd}  to remove any dependence on $m$ in the bound for 
$\max\{\| B_1\|_n, \|B_2\|_n\}$.  For now, throughout the $Tb$ argument, we shall
continue to use $\eps_m$ to denote this quantity.

We will establish several preliminary lemmas in anticipation of the proof of the above proposition.

\begin{lemma}[Estimate of the $L^2$ norm of $b_Q$]\label{bqest1.lem} The estimate
$$\int_{\rn} |b_Q|^2\lesssim \tau^{-2 + 2/(n+1)}|Q|,$$
holds, where the implicit constant depends on $n$, $\lambda$, and $\Lambda$.
\end{lemma}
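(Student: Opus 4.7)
The key geometric fact is that the functions $\Psi_Q^\pm$ are supported in the slabs $\{(y,s)\in\ree : y\in x_Q+[-\ell(Q)/1000,\ell(Q)/1000]^n,\ \tau\ell(Q)\leq |s|\leq 2\tau\ell(Q)\}$, and consequently $\cL F_Q=0$ in the weak sense inside the strip $\Sigma:=\{|t|<\tau\ell(Q)\}$. This is what allows us to trade the bulk bound \eqref{gradestFQ} for a sharp slice bound at $t=0$.

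\textbf{Step 1 (slice bound on $\nabla F_Q$).} Cover $\bb R^n$ by a grid $\mathcal G$ of essentially disjoint cubes $Q'$ with $\ell(Q')=c\tau\ell(Q)$, where $c\in (0,1)$ is chosen so small that $4Q'\times(-\ell(Q'),\ell(Q'))\subset\Sigma$. Since $\cL F_Q=0$ on each such box, Lemma~\ref{Lpcacconslices.lem} with $p=2$ and $t=0$ gives
\[
\int_{Q'}|\nabla F_Q(x,0)|^2\,dx \lesssim \frac{|Q'|}{|(Q')^*|}\dint_{(Q')^*}|\nabla F_Q|^2 \lesssim \frac{1}{\ell(Q')}\dint_{(Q')^*}|\nabla F_Q|^2,
\]
where $(Q')^*=2Q'\times(-\ell(Q')/4,\ell(Q')/4)$. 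Summing over $\mathcal G$, using the bounded overlap of $\{(Q')^*\}_{Q'\in\mathcal G}$, and then invoking \eqref{gradestFQ}, yields
\[
\int_{\bb R^n}|\nabla F_Q(x,0)|^2\,dx \lesssim \frac{1}{\tau\ell(Q)}\|\nabla F_Q\|_{L^2(\ree)}^2 \lesssim \frac{[\tau\ell(Q)^{n+1}]^{-1+2/(n+1)}}{\tau\ell(Q)} = \tau^{-2+2/(n+1)}|Q|^{-1}.
\]

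\textbf{Step 2 (bounds on $b_Q'$ and the principal part of $b_Q^0$).} Multiplying by $|Q|^2$, the Step 1 bound immediately gives $\|b_Q'\|_{L^2(\bb R^n)}^2=|Q|^2\|\nabla_\parallel F_Q(\cdot,0)\|_{L^2}^2\lesssim \tau^{-2+2/(n+1)}|Q|$, and likewise
$
\bigl\|\,|Q|\,e_{n+1}\!\cdot\!A\nabla F_Q(\cdot,0)\bigr\|_{L^2(\bb R^n)}^2\lesssim \tau^{-2+2/(n+1)}|Q|
$
by the $L^\infty$ bound on $A$.

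\textbf{Step 3 (lower-order term in $b_Q^0$).} It remains to bound $|Q|(B_1)_\perp F_Q(\cdot,0)$ in $L^2(\bb R^n)$. By H\"older and the $\dt W^{1,2}(\bb R^n)\hookrightarrow L^{2n/(n-2)}(\bb R^n)$ Sobolev inequality applied to $\tr_0 F_Q$ (which is legitimate since $\tr_0 F_Q\in H^{1/2}_0(\bb R^n)\hookrightarrow L^{2n/(n-1)}(\bb R^n)$ decays at infinity, so the standard representative is the one in $Y^{1,2}(\bb R^n)$),
\[
\|(B_1)_\perp F_Q(\cdot,0)\|_{L^2(\bb R^n)} \leq \|B_1\|_{L^n(\bb R^n)}\,\|F_Q(\cdot,0)\|_{L^{\frac{2n}{n-2}}(\bb R^n)} \lesssim \eps_m\,\|\nabla_\parallel F_Q(\cdot,0)\|_{L^2(\bb R^n)},
\]
which is controlled by Step 1.

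\textbf{Step 4 (conclusion).} Combining Steps 2 and 3 yields $\|b_Q^0\|_{L^2(\bb R^n)}^2\lesssim \tau^{-2+2/(n+1)}|Q|$, and together with the bound on $\|b_Q'\|_{L^2}^2$ this proves the lemma. The main obstacle is really bookkeeping: getting the powers of $\tau$ to line up through the slice Caccioppoli argument, and ensuring that the strip in which $\cL F_Q=0$ is thick enough (of order $\tau\ell(Q)$) to support a tiling by cubes of side $c\tau\ell(Q)$ where the Caccioppoli inequality on slices applies.
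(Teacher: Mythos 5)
Your proof is correct and follows the same route as the paper's: both observe that $\cL F_Q=0$ in a strip of thickness $\sim\tau\ell(Q)$ around $t=0$, tile $\rn$ by cubes of comparable side length, apply the $L^2$ Caccioppoli inequality on slices (Lemma~\ref{Lpcacconslices.lem}) together with \eqref{gradestFQ} to bound $\|\nabla F_Q(\cdot,0)\|_{L^2(\rn)}^2$, and then handle the $(B_1)_\perp F_Q$ term by the Sobolev embedding on $\rn$ plus H\"older. The only cosmetic difference is how you justify that $\tr_0 F_Q$ is the correct $Y^{1,2}(\rn)$ representative (you argue via $H^{1/2}_0\hookrightarrow L^{2n/(n-1)}$ decay at infinity, while the paper cites Lemmas~\ref{ContSlices.lem} and \ref{Le1.5.lem}); both are valid.
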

\begin{proof}
Set $a: = \tfrac{\tau\ell(Q)}{1000}$ and observe that $F_Q$ solves $\cL F_Q=0$ in the strip $\{(x,t): |t| < 50a\}$. Let $\bb G_a$ be the grid of pairwise disjoint $n$-dimensional cubes with sides of length $a$ parallel to the coordinate axes, and for each $P \in \bb G_a$, define the $(n+1)-$dimensional box $P^*:= 2P \times [-2\ell(P), 2\ell(P)]$. Applying Lemma \ref{Lpcacconslices.lem} and the estimate \eqref{gradestFQ}, we obtain that
\begin{multline*}
\int_{\rn} |\nabla F_Q(\cdot,0) |^2 = \sum_{P \in \bb G_a} \int_{P} |\nabla F_Q(\cdot,0)|^2  
 \lesssim \frac{1}{a}  \sum_{P \in \bb G_a} \dint_{P^*} |\nabla F_Q|^2
\\  \lesssim \frac{1}{a} \lVert \nabla F_Q \rVert_2^2  \lesssim \frac{1}{a} [\tau \ell(Q)^{n+1}]^{-1 + 2/(n+1)}
\lesssim \tau^{-2 + 2/(n+1)}|Q|^{-1},
\end{multline*}
where we used that $a \approx \tau\ell(Q)$ and the bounded overlap of $\{P^*\}_{P \in\bb G_a}$. Upon multiplying the above inequality by $|Q|^2$,   we have the desired estimate up to controlling $\lVert\, |Q| (B_1)_\perp F_Q(\cdot,0)\rVert_{L^2(\rn)}^2$. We have already shown that $\lVert \nabla_\parallel F_Q(\cdot,0)\rVert_{L^2(\bb R^n)} < \infty$, and from Lemma \ref{ContSlices.lem} and Lemma \ref{Le1.5.lem}, we have that $F_Q(\cdot, 0) \in L^{2^*}(\rn)$, so that $F(\cdot,0) \in Y^{1,2}(\rn)$. From this, we can deduce the estimate $\lVert F_Q(\cdot,0) \rVert_{L^{\frac{2n}{n-2}}(\rn)} \lesssim \lVert \nabla_\parallel F_Q(\cdot,0) \rVert_{L^2(\rn)}$. Consequently, we may use 
the estimate for $\lVert \nabla F_Q(\cdot,0) \rVert_{L^2(\rn)}^2$ obtained above and 
H\"older's inequality to show that
\begin{multline*}
\int_{\rn}  |(B_1)_\perp F_Q(\cdot,0)|^2\le \lVert B_1 \rVert_{n}^2 \lVert F_Q(\cdot, 0) \rVert_{L^{\frac{2n}{n-2}}(\rn)}^2 \lesssim \eps_m^2  \lVert \nabla F_Q(\cdot, 0) \rVert_{L^2(\rn)}^2\\ \lesssim\eps_m^2 \tau^{-2 + 2/(n+1)}|Q|^{-1}.
\end{multline*}
Upon multiplying the previous estimates by $|Q|^2$, we easily obtain the claimed inequality from the ellipticity of $A$.\end{proof}

The next lemma says that we have a Carleson estimate by including the error term.
\begin{lemma}[Good behavior of $\hat b_Q$ vis-\`a-vis Carleson norm of $\hat\Theta_t$]\label{bqest2.lem}
Let $b'_Q$, $b^0_Q$, and $b^{(a)}_Q$ be as above. Then, if $\hat b_Q = (b'_Q,b^0_Q, b^{(a)}_Q)$, we have the estimate
$$\int_0^{\ell(Q)} \int_{Q} \Big|\widehat\Theta_t \hat b_Q(x)\Big|^2 \, \frac{dx \, dt}{t} \leq C|Q| \tau^{-\beta},$$
where $\beta = 2 + 2m- 2/(n+1) > 0$, and $C$ depends on $m$, $n$, $\lambda$, and $\Lambda$.
\end{lemma}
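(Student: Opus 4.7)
The plan is to split the Carleson box $Q\times(0,\ell(Q))$ at $t=\tau\ell(Q)$ into a ``far-field'' regime and a ``slab'' regime, and to handle each separately. The split is natural because the supports of $\Psi_Q^{\pm}$ lie in $|t|\in[\tau\ell(Q),2\tau\ell(Q)]$, so $F_Q$ is an $\cL$-solution precisely on the slab $\{|t|<\tau\ell(Q)\}$, while above $t=\tau\ell(Q)$ we forfeit this structural advantage but the measure $dt/t$ is benign.

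In the far-field regime $t\in(\tau\ell(Q),\ell(Q))$, the argument is routine: each of $\Theta_t^0$, $\Theta_t'$, $\Theta_t^{(a)}$ is uniformly $L^2$-bounded in $t$ by Proposition \ref{L2avgandsliceboundsperturb.prop}(ii), together with the duality identity in Proposition \ref{dualwtder.prop}(iii) used to dualize the $(\s_t\nabla)$-structure to a $\nabla\s_{-t}^{\cL^*}$-estimate. Moreover, $\|\hat b_Q\|_2^2\lesssim \tau^{-2+2/(n+1)}|Q|$: the bounds for $b_Q^0$ and $b_Q'$ are exactly Lemma \ref{bqest1.lem}, and the analogous estimate for $b_Q^{(a)}=|Q|B_1F_Q(\cdot,0)$ follows from $\|B_1F_Q(\cdot,0)\|_2\le \|B_1\|_n\|F_Q(\cdot,0)\|_{L^{2n/(n-2)}(\rn)}\lesssim \eps_m\|\nabla F_Q(\cdot,0)\|_2$ by Sobolev embedding, combined with the proof of Lemma \ref{bqest1.lem}. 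Together these give a contribution $\lesssim \tau^{-2+2/(n+1)}\log(1/\tau)|Q|$, which is absorbed in $|Q|\tau^{-\beta}$ since $\beta\ge 2$.

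In the slab regime $t\in(0,\tau\ell(Q))$, we exploit that $\cL F_Q=0$ on $\{|t|<\tau\ell(Q)\}$ by invoking the integration-by-parts-on-slices identity (Proposition \ref{IBPonslicesprop.prop}) at $t=0$. Setting $\vec{\mathcal G}:=A\nabla F_Q+B_1F_Q$ and using the decomposition $\tilde A\nabla_{\|}F_Q=A\nabla F_Q-\vec A_{\cdot,n+1}\partial_tF_Q$ together with the definition of $(\s_t\nabla)$, the sum $\Theta_t'b_Q'+\Theta_t^{(a)}b_Q^{(a)}$ can be rewritten in terms of $\s_t$ acting on fixed boundary data of $F_Q$ and $\partial_tF_Q$. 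A crucial cancellation then occurs: the expression $(\s_t\nabla)\vec{\mathcal G}|_0$ produces a term $-\partial_t\s_t\vec{\mathcal G}_\perp|_0$, which after differentiating $m$ times in $t$ yields $-t^m\partial_t^{m+1}\s_t\vec{\mathcal G}_\perp|_0$; this precisely cancels the leading term arising from $\Theta_t^0b_Q^0=t^m\partial_t^{m+1}\s_t\vec{\mathcal H}_\perp|_0$ upon writing $\vec{\mathcal H}_\perp=\vec{\mathcal G}_\perp-2(B_1)_\perp F_Q$. After cancellation, $\widehat\Theta_t\hat b_Q/|Q|$ reduces to a sum of two kinds of terms: (a) terms of the form $t^m\partial_t^m\s_t(g_j)$ with $g_j\in L^2(\rn)$ built from $\partial_tF_Q|_0$, $\nabla F_Q|_0$, and the coefficients; such terms gain an extra factor of $t$ via $t^m\partial_t^m\s_t g_j=t\cdot t^{m-1}\partial_t(\partial_t^{m-1}\s_tg_j)$ bounded using Proposition \ref{L2avgandsliceboundsperturb.prop}(ii) with $m$ replaced by $m-1$; and (b) two remaining terms of the form $t^m\partial_t^{m+1}\s_t(g_j)$ with $g_j\in\{A_{n+1,n+1}\partial_tF_Q|_0,\,(B_1)_\perp F_Q|_0\}$, which are bounded uniformly in $t$ by $\|g_j\|_2\lesssim\tau^{-1+1/(n+1)}|Q|^{-1/2}$, the latter obtained by iterating Lemma \ref{Lpcacconslices.lem} for $F_Q$ and $\partial_tF_Q$ in the slab and combining with \eqref{gradestFQ}. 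Integrating type (a) against $dt/t$ over $(0,\tau\ell(Q))$ yields an $O((\tau\ell(Q))^2\|g_j\|_2^2)$ contribution, while type (b) contributes $O(\tau^{-2+2/(n+1)}\log(1/\tau)|Q|)$; both are dominated by $|Q|\tau^{-\beta}$.

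The principal technical obstacle is the cancellation in the slab regime: it must be traced through the interaction of the IBP identity with the definitions of $(\s_t\nabla)$ and $\partial_\nu^{\cL,-}$, and it is essential, because without it the surviving $t^m\partial_t^{m+1}\s_t\vec{\mathcal G}_\perp|_0$ term would yield a logarithmically divergent $dt/t$-integral near $t=0$ that cannot be offset even by the crude $\tau^{-\beta}$ tolerance.
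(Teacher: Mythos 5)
Your decomposition into far-field and slab regimes is reasonable, and the far-field estimate (using uniform slice bounds from Proposition \ref{L2avgandsliceboundsperturb.prop} and the $\|\hat b_Q\|_2^2\lesssim\tau^{-2+2/(n+1)}|Q|$ bound) is fine over $t\in(\tau\ell(Q),\ell(Q))$. Your identification of the cancellation in $\widehat\Theta_t\hat b_Q$ after applying the slice IBP formula is also essentially correct: the $\vec b\cdot\nabla_\parallel F_Q|_0$ contribution from $\Theta'_t b'_Q+\Theta^{(a)}_t b^{(a)}_Q$ does cancel the matching piece of $\Theta^0_t b^0_Q$, and the surviving ``type (b)'' terms are indeed $t^m\partial_t^{m+1}\s_t$ acting on $A_{n+1,n+1}\partial_t F_Q|_0$ and $(B_1)_\perp F_Q|_0$.

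The gap is in how you estimate these surviving type (b) terms over $(0,\tau\ell(Q))$. By Proposition \ref{L2avgandsliceboundsperturb.prop}(ii), $\|t^m\partial_t^{m+1}\s_t g\|_{L^2(\rn)}\lesssim\|g\|_2$ \emph{uniformly in $t$} — there is no decay as $t\to 0^+$ (nor should there be: $\Theta^0_t g=t^m\partial_t^{m+1}\s_t g$ is precisely the operator whose square-function bound the $Tb$ argument is trying to establish, so assuming extra smallness near $t=0$ would be circular). Hence $\int_0^{\tau\ell(Q)}\|t^m\partial_t^{m+1}\s_t g_j\|_2^2\,\tfrac{dt}{t}$ is logarithmically divergent, not $O(\tau^{-2+2/(n+1)}\log(1/\tau)|Q|)$ as claimed. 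You acknowledge in your last paragraph that exactly such a term would spoil the bound, but your list of remaining terms still contains two of that form, and the argument doesn't kill them.

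What the paper does instead — and what is the real content of the testing-function construction — is to recognize the Green's representation identity
\[
\widehat\Theta_t\hat b_Q=|Q|\,t^m\partial_t^{m+1}F_Q^-\qquad\text{on }\ree_+,
\]
which follows from (the proof of) Theorem \ref{thm.greenformula}(ii)–(iii) together with Propositions \ref{prop.adj} and \ref{dualwtder.prop}. Here $F_Q^-=\cL^{-1}(\Psi_Q^-)$ and $\Psi_Q^-$ is supported in $\{-2\tau\ell(Q)<t<-\tau\ell(Q)\}$, so $F_Q^-$ is an $\cL$-solution in the half-space $\{t>-\tau\ell(Q)\}$, which crucially contains a full neighborhood of $\{t=0\}$. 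Iterating the Caccioppoli inequality (on slices via Lemma \ref{Lpcacconslices.lem} and then Proposition \ref{Lpcaccop.prop}) at the fixed spatial scale $a\approx\tau\ell(Q)$ then gives
\[
\|t^m\partial_t^{m+1}F_Q^-(\cdot,t)\|_{L^2(\rn)}^2\lesssim t^{2m}a^{-1-2m}\|\nabla F_Q^-\|_2^2\lesssim |Q|^{-2}\tau^{-\beta}\Big(\tfrac{t}{\ell(Q)}\Big)^{2m}.
\]
The factor $(t/\ell(Q))^{2m}$ is the decay near $t=0$ that your boundary-trace decomposition cannot produce: it comes from $F_Q^-$ being a single solution extending below $t=0$, so that all $m+1$ transversal derivatives are traded for powers of the \emph{fixed} scale $\tau\ell(Q)$ rather than of $t$. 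With this decay the $dt/t$ integral converges and the stated bound follows. Your IBP-at-$t=0$ reformulation, by contrast, produces fixed boundary data $g_j\in L^2(\rn)$ with no such built-in decay, and the argument cannot close.
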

\begin{proof} First, let us show the identity
\begin{equation}\label{greenformulaestFQ.cl}
\widehat\Theta_t \hat b_Q(x) = |Q| t^{m}(\partial_t)^{m+1}F_Q^-,\qquad\text{on }\bb R^{n+1}_+.
\end{equation}
By (an analogue of) Theorem \ref{thm.greenformula} \ref{item.greenformula2}), to show the above identity, it suffices to show that for each $t>0$, the representation
\begin{equation*}
\hat{\Theta}_t \hat{b}_Q = |Q|t^m\partial_t^{m+1}\big( \s^{\cL}_t(\partial_{\nu}^{\cL,-}F_Q)+ \cD^{\cL,+}_t( \tro F_Q)\big)
\end{equation*}
holds in $L^2(\rn)$. For notational convenience, we will write $F_Q^0:=\tro F_Q$. By definition, we have that for any $f\in C_c^\infty(\rn)$,
\begin{multline*}
\langle \hat{\Theta}_t \hat{b}_Q, f\rangle  = \langle |Q|t^m(D_{n+1}^{m+1}\s^{\cL})_t [\partial_{\nu}^{\cL,-} F_Q], f\rangle  \\
+~ \langle |Q|t^m (D_{n+1}^m (\s^{\cL}\nabla)[\tilde{A}\nablap F_Q^0 + B_1F_Q^0])_t , f\rangle ~+~\langle |Q|t^m (D_{n+1}^m\s^{\cL} [ {B_2}_{\|} \cdot \nablap F_Q^0])_t, f \rangle\\
= \langle |Q|t^m(D_{n+1}^{m+1}\s^{\cL})_t [\partial_{\nu}^{\cL,-}F_Q], f\rangle\\ +~ (-1)^m\langle \tilde{A}\nablap F_Q^0+ B_1F_Q^0, |Q|t^m (D_{n+1}^m\nabla \s^{\cL^*}[f])_{-t}\rangle\\+~(-1)^m \langle{B_2}_{\|}\cdot \nablap F_Q^0, |Q|t^m (D_{n+1}^m \s^{\cL^*}[f])_{-t}\rangle .
\end{multline*}
Therefore, it suffices to show that 
\begin{multline*}\label{IBP1.eq} 
\langle |Q|t^m(D_{n+1}^{m+1} \cD^{\cL,+} [F_Q^0])_t, f\rangle =  (-1)^m\langle \tilde{A}\nablap F_Q^0+ B_1F_Q^0, |Q|t^m (D_{n+1}^m\nabla \s^{\cL^*}[f])_{-t}\rangle\\
 \quad + (-1)^m \langle{B_2}_{\|}\cdot \nablap F_Q^0, |Q|t^m (D_{n+1}^m \s^{\cL^*}[f])_{-t}\rangle\\
 =: (-1)^m|Q|t^m I_t. 
\end{multline*}
We  rewrite $I_t$ as follows, using Proposition \ref{IBPonslicesprop.prop}, and the fact that $F_Q^0\in W^{1,2}(\rn)$,
\begin{multline*}
I_t  = \langle \tilde{A}\nablap F_Q^0 +B_1F_Q^0, (\nabla D^m_{n+1} \s^{\cL^*} [f])_{-t}\rangle
 +\langle {B_2}_{\|}\cdot \nablap F_Q^0, (D_{n+1}^m\s^{\cL^*} [f])_{-t}\rangle\\
 = \langle \nablap F_Q^0,\big((A^*\nabla D_{n+1}^m \s^{\cL^*}[f])_{\|}\big)_{-t} + \overline{{B_2}_{\|}} (D_{n+1}^m\s^{\cL^*}[f])_{-t}\rangle\\
\quad + \langle F_Q^0, \overline{B_1} (\nabla D_{n+1}^m\s^{\cL^*}[f])_{-t}\rangle\\
 = (-1)^{m+1}\Big\langle F_Q^0, D_{n+1}^{m+1}\big(\vec A^*_{n+1,\cdot}\nabla(\s^{\cL^*}[f])_{-s} \big)_{s=t}
 + D_{n+1}^{m+1} \big(\overline{{B_2}_\perp} (\s^{\cL^*}[f])_{-s}\big)_{s=t}\Big\rangle\\
 = (-1)^{m+1} \Big.\dfrac{d^{m+1}}{ds^{m+1}}\Big|_{s=t} \big\langle F_Q^0,\vec A^*_{n+1,\cdot}\nabla(\s^{\cL^*}[f])_{-s} + \overline{B_2}_\perp (\s^{\cL^*}[f])_{-s}\big\rangle \\
  = (-1)^{m+1} \Big. \dfrac{d^{m+1}}{dt^{m+1}} \Big|_{s=t} \langle F_Q^0, \partial_{\nu,-s}^{\cL^*,-} (\s^{\cL^*}[f])\rangle 
  =(-1)^{m+2} \Big. \dfrac{d^{m+1}}{dt^{m+1}} \Big|_{s=t} \langle \cD_{s}^{\cL, +}[F_Q^0], f\rangle\\
 = (-1)^{m+2} \langle (D_{n+1}^{m+1} \cD^{\cL,+}[F_Q^0])_t, f\rangle,
\end{multline*}
where we used (i) in Lemma \ref{lm.conormals} in the fifth equality,   we used (ii) of Proposition \ref{prop.adj} in the sixth equality, and we justify the handling of the $t$-derivatives via Proposition \ref{dualwtder.prop}. This concludes the proof of the identity \eqref{greenformulaestFQ.cl}.

Now, we let $a= \frac{\tau \ell(Q)}{1000}$ as before, and note that $(\partial_t)^{m+2}F_Q^-$ is a solution in the half space $\{(x,t) : t > 50a\}$. For $P \in \bb G_a$ and $t \ge 0$, we set
\[
P_t^* = 2P \times \big(t - \tfrac{a}{20}, t + \tfrac{a}{20}\big),\quad\text{and}\quad P_t^{**} = 4P \times \big(t - \tfrac{a}{5}, t + \tfrac{a}{5}\big).
\]
Then using \eqref{Lpgradslices} and then Proposition \ref{Lpcaccop.prop} repeatedly ($m +1$ times) , we obtain for $t \in (0, \ell(Q)]$
\begin{multline*}
 \int_{Q}\big|\widehat\Theta_t \hat b_Q\big|^2 \le \int_{\rn}\big||Q| t^{m}(\partial_t)^{m+1}F_Q^-(\cdot,t)\big|^2= t^{2m}|Q|^2 \sum_{P \in\bb G_a}\int_{P}\big|(\partial_t)^{m+1}F_Q^-(\cdot,t)\big|^2
 \\  \lesssim t^{2m}|Q|^2 a^{-1}\sum_{P \in\bb G_a}\dint_{P_t^*} |(\partial_t)^{m+1}F_Q^-|^2 \lesssim  t^{2m}|Q|^2 a^{-1 - 2m}\sum_{P \in \bb G_a}\dint_{P_t^{**}} |\partial_tF_Q^-|^2
 \\ \lesssim t^{2m}|Q|^2 a^{-1 - 2m} \lVert \nabla F_Q^- \rVert_2^2 \lesssim t^{2m}|Q|^2 a^{-1 - 2m} [\tau \ell(Q)^{n+1}]^{-1 + 2/(n+1)} \lesssim |Q| \tau^{-\beta} \Big(\frac{t}{\ell(Q)} \Big)^{2m},
\end{multline*}
where we used the bounded overlap of $\{P_t^{**}\}_{P \in \bb G_a}$. Hence, we see that
\begin{equation*}
\int_0^{\ell(Q)} \int_{Q} \Big|\widehat\Theta_t \hat b_Q(x)\Big|^2 \, \frac{dx \, dt}{t} \lesssim |Q| \tau^{-\beta} \int_0^{\ell(Q)}  \Big(\frac{t}{\ell(Q)} \Big)^{2m}\, \frac{dx \, dt}{t} \quad \lesssim |Q| \tau^{-\beta}.
\end{equation*}\end{proof}

Observe that Lemma \ref{bqest1.lem} and the properties of $\mu_Q$ allow us to establish that
\begin{equation}\label{smalloffQ.eq}
 \int_{\rn \setminus Q} |b_Q| \,  d\mu_Q\le |(1 + \hm)Q\setminus Q|^{1/2}\lVert b_Q\rVert_{L^2(\rn)} \lesssim\hm^{1/2} \tau^{-1 + 1/(n+1)}|Q|.
\end{equation}
Let us furnish a smallness estimate for $b'_Q$.

\begin{lemma}[Almost atomic behavior of $b_Q'$]\label{bQest4.lem} 

Let $b'_Q$ and $\mu_Q$ be as above. Then
\begin{equation}\label{bQ4smallest1.eq}
\Big| \int_{\rn} b'_Q \, d\mu_Q \Big| \lesssim |Q| \tau^{1/2 + 1/(n+1)},
\end{equation}
where the implicit constant depends on $n$, $\lambda$, and $\Lambda$. In particular,
\begin{equation}\label{bQest4.eq}
\Big|\frac1{\mu_Q(Q)}\int_{Q} b'_Q \, d\mu_Q \Big| \lesssim \tau^{1/2 + 1/(n+1)} + \hm^{1/2} \tau^{-1 + 1/(n+1)}.
\end{equation}
\end{lemma}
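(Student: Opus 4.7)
My plan is to prove (\ref{bQ4smallest1.eq}) first, and then deduce (\ref{bQest4.eq}) as a quick consequence. The starting point is to integrate by parts in $\rn$, using that $\phi_Q$ is compactly supported, to rewrite
\[
\int_{\rn} b_Q'\, d\mu_Q = |Q| \int_{\rn} \nabla_\parallel F_Q(y,0)\, \phi_Q(y)\, dy = -|Q|\int_{\rn} F_Q(y,0)\, \nabla_\parallel \phi_Q(y)\, dy.
\]
The crucial observation that unlocks the $\tau$-smallness is the ``dipole'' nature of the source: $\Psi_Q^+$ and $\Psi_Q^-$ are identical bumps of equal mass, translated in the $t$-direction by $3\tau\ell(Q)$, so their difference is, up to $O(\tau\ell(Q))$, the $t$-derivative of $\Psi_Q$. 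Concretely, using identity \eqref{slideFq.eq} together with Minkowski's inequality and the elementary bound $\|\partial_t \Psi_Q\|_{(Y^{1,2})^\ast}\leq \|\Psi_Q\|_{L^2(\bb R^{n+1})}$, Lax-Milgram applied to $\cL(\partial_t G_Q) = \partial_t\Psi_Q$ yields the improved gradient bound
\[
\|\nabla F_Q\|_{L^2(\bb R^{n+1})}\;\lesssim\; \tau\ell(Q)\,\|\Psi_Q\|_{L^2(\bb R^{n+1})} \;\lesssim\; \tau^{1/2}\, \ell(Q)^{(1-n)/2},
\]
which is a factor of $\tau^{1/2}$ sharper than \eqref{gradestFQ}. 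This is the first of the two $\tau$-gains.

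To extract the second, $\tau^{1/(n+1)}$, I will localize in $t$. Introducing a smooth cutoff $\zeta\in C_c^\infty([-\tau\ell(Q),\tau\ell(Q)])$ with $\zeta(0)=1$ and $\|\zeta'\|_\infty\lesssim(\tau\ell(Q))^{-1}$, I express $F_Q(y,0)$ via the fundamental theorem of calculus as an integral over the thin slab $\rn\times[-\tau\ell(Q),\tau\ell(Q)]$ of two terms involving $\zeta'(s)F_Q(y,s)$ and $\zeta(s)\partial_s F_Q(y,s)$. Plugging into the boundary integral and applying H\"older's inequality in the $(y,s)$ variables with the pairing $L^{2^*}$--$L^{2_*}$ on $\ree$, one side is controlled by $\|F_Q\|_{L^{2^*}(\bb R^{n+1})}$ (or $\|\partial_s F_Q\|_{L^2(\bb R^{n+1})}$), which inherits the $\tau^{1/2}$ improvement via the Sobolev embedding, while the other side carries $\|\nabla_\parallel\phi_Q\otimes \mathbbm{1}_{[-\tau\ell(Q),\tau\ell(Q)]}\|_{L^{2_*}(\bb R^{n+1})}$. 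The smallness of the $s$-interval contributes the factor $(\tau\ell(Q))^{1/2_*-1}=\tau^{1/(n+1)-1/2}\ell(Q)^{1/(n+1)-1/2}$, and after all the $\ell(Q)$ exponents cancel (they must, by scaling), what remains combines to $\tau^{1/2+1/(n+1)}$, yielding \eqref{bQ4smallest1.eq}.

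To deduce \eqref{bQest4.eq}, I will write $\int_Q b'_Q\,d\mu_Q = \int_\rn b'_Q\,d\mu_Q - \int_{\rn\setminus Q} b'_Q\,d\mu_Q$, divide by $\mu_Q(Q)\gtrsim \hm|Q|$ (which holds since $\phi_Q\gtrsim \hm$ on $Q$), and apply \eqref{bQ4smallest1.eq} to the first summand and \eqref{smalloffQ.eq} to the second. This produces exactly the two terms $\tau^{1/2+1/(n+1)}$ and $\hm^{1/2}\tau^{-1+1/(n+1)}$ on the right-hand side.

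The main technical hurdle will be bookkeeping: combining the dipole-cancellation bound on $\|\nabla F_Q\|_{L^2(\bb R^{n+1})}$ with a thin-slab Sobolev embedding so that both $\tau$-improvements appear simultaneously (rather than one replacing the other), and verifying that all the $\ell(Q)$-exponents cancel as they must by scaling. In particular, one must choose the cutoff $\zeta$ supported exactly on the scale $\tau\ell(Q)$ (not larger, not smaller): a smaller scale would kill the $\tau^{1/(n+1)}$ gain from the slab, while a larger scale would destroy the $\tau^{1/2}$ gain that depends on $\partial_s F_Q$ living in a region of $(n+1)$-volume $\tau\ell(Q)^{n+1}$.
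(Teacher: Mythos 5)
Your opening observation is a genuine improvement over what the paper proves: by writing $\Psi_Q^+-\Psi_Q^-=-\int_{-\frac32\tau\ell(Q)}^{\frac32\tau\ell(Q)}\partial_s\Psi_Q^{s'}\,ds'$ and pairing $\partial_s\Psi_Q^{s'}$ against $\partial_s v$ for $v\in Y^{1,2}$, one does get $\|\nabla F_Q\|_{L^2(\ree)}\lesssim\tau\ell(Q)\,\|\Psi_Q\|_{L^2(\ree)}\approx\tau^{1/2}\ell(Q)^{(1-n)/2}$, which is sharper than \eqref{gradestFQ} by a factor $\tau^{n/(n+1)}$ (not $\tau^{1/2}$ as you state, a minor overstatement). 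However, the second step --- the $t$-localization via a cutoff $\zeta$ --- does not close. In the $\zeta'$ piece of your FTC decomposition, H\"older with $L^{2^*}$--$L^{2_*}$ on the slab gives
\[
\|\zeta'\|_\infty\,\|F_Q\|_{L^{2^*}(\ree)}\cdot\|\nabla_\parallel\phi_Q\|_{L^{2_*}(\rn)}(\tau\ell(Q))^{1/2_*}
\lesssim(\tau\ell(Q))^{-1+1/2_*}\cdot\tau^{1/2}\ell(Q)^{(1-n)/2}\cdot\ell(Q)^{-1+n/2_*},
\]
and the $\tau$-exponent here works out to $-1+\tfrac{n+3}{2(n+1)}+\tfrac12=\tfrac1{n+1}$, short of the target $\tfrac12+\tfrac1{n+1}$ by $\tau^{1/2}$. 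The issue is structural: $\|\zeta'\|_\infty\cdot(\text{slab thickness})^{1/2_*}=(\tau\ell(Q))^{1/2_*-1}$ is a \emph{negative} power (you write this exponent yourself), so the cutoff derivative eats the slab smallness and more; the only net $\tau$-gain that survives is your improved $\|\nabla F_Q\|_2$ bound translated to $L^{2^*}$ through Sobolev, which is $\tau^{1/2}$, and the slab trades this down to $\tau^{1/(n+1)}$. Your assertion that ``what remains combines to $\tau^{1/2+1/(n+1)}$'' is therefore not borne out by the arithmetic. (The $\zeta\,\partial_s F_Q$ piece is fine and actually over-delivers, giving $\tau^1$.)

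The paper sidesteps this entirely: rather than introducing a temporal cutoff, it applies the slide identity \eqref{slideFq.eq} directly at the level of the boundary trace, writing $F_Q(\cdot,0)=-\int_{-\frac32\tau\ell(Q)}^{\frac32\tau\ell(Q)}\partial_tF_Q^{s'}(\cdot,0)\,ds'$, which contributes the factor $\tau\ell(Q)$ from the length of the $s'$-interval with \emph{no} compensating $1/(\tau\ell(Q))$ loss, and then bounds $\|\partial_tF_Q^{s'}(\cdot,0)\|_{L^2(\text{annulus})}$ by the solid integral $\|\nabla F_Q^{s'}\|_{L^2(\ree)}$ via Caccioppoli on slices \eqref{Lpgradslices} (valid because each $F_Q^{s'}$ is a solution on the annular cylinder). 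This is the mechanism that produces $\tau\cdot\tau^{-1/2+1/(n+1)}=\tau^{1/2+1/(n+1)}$. If you want to retain your improved gradient bound, a workable variant is to keep the slide identity at the trace level as the paper does and observe that your improved bound gives an even better $\tau$-power; but the cutoff-in-$t$ route as written does not produce \eqref{bQ4smallest1.eq}. Finally, a small slip in your deduction of \eqref{bQest4.eq}: you should divide by $\mu_Q(Q)\gtrsim|Q|$ (which follows from $\phi_Q\equiv1$ on $\tfrac12Q$), not by $\mu_Q(Q)\gtrsim\hm|Q|$; the latter, weaker bound would introduce an unwanted $\hm^{-1}$.
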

\begin{proof}
We first show how to derive \eqref{bQest4.eq} from the first inequality. We have that
$$\Big| \int_{Q} b'_Q \, d\mu_Q\Big| \le \Big| \int_{\rn} b'_Q \, d\mu_Q \Big|  +  \int_{\rn \setminus Q} |b_Q| \,  d\mu_Q,$$
so that \eqref{bQest4.eq} readily follows from \eqref{bQ4smallest1.eq}, \eqref{smalloffQ.eq}, and the fact that $\mu_Q(Q) \ge (1/2)^n|Q|$.
It remains to show \eqref{bQ4smallest1.eq}. To this end, we utilize the properties of $\phi_Q$, \eqref{slideFq.eq}, \eqref{Lpgradslices} and H\"older's inequality to see that
\begin{multline*} 
\Big| \int_{\rn} b'_Q \, d\mu_Q \Big| = |Q| \Big| \int_{\rn} \nabla_{\parallel} F_Q(\cdot,0) \phi_Q\Big|
 = |Q|  \Big| \int_{\rn} F_Q(\cdot,0) \nabla \phi_Q\Big|
\\ \lesssim \ell(Q)^{n-1} \int_{(1 + \hm)Q \setminus (1/2)Q} |F_Q(\cdot,0)| 
\\  \lesssim \ell(Q)^{n-1} \int_{(1 + \hm)Q \setminus (1/2)Q} \Big |\int_{-\tfrac{3}{2} \tau \ell(Q)}^{\tfrac{3}{2} \tau \ell(Q)} \partial_t F_Q^{s'}(y,0) \, ds' \Big| \, dy 
\\  \lesssim \ell(Q)^{n-1} \int_{-\tfrac{3}{2} \tau \ell(Q)}^{\tfrac{3}{2} \tau \ell(Q)} \int_{(1 + \hm)Q \setminus (1/2)Q} \Big | \partial_t F_Q^{s'}(y,0)  \Big| \, dy \, ds'
\\  \lesssim  \ell(Q)^{n-1}  \frac{\ell(Q)^{n/2}}{\ell(Q)^{1/2}}   \int_{-\tfrac{3}{2} \tau \ell(Q)}^{\tfrac{3}{2} \tau \ell(Q)} \Big( \dint_{I_{2Q} \setminus I_{(1/4)Q}} |\nabla F_Q^{s'}(Y)|^2 \, dY\Big)^{1/2}\,ds'
\\ \lesssim |Q| \tau^{1/2 + 1/(n+1)},
\end{multline*}
where we used \eqref{gradestFQ} in the last line and, in order to use \eqref{Lpgradslices}, we used that for $s \in (-\tfrac{3}{2} \tau \ell(Q), \tfrac{3}{2} \tau \ell(Q))$ each $F_Q^{s'}$ is a solution in $I_{2Q} \setminus I_{(1/4)Q}$.\end{proof} 

The last preliminary lemma we will need establishes a coercivity estimate for $b^0_Q$.

\begin{lemma}[Coercivity of $b_Q^0$]\label{bqest3.lem}
Let $b^0_Q$ and $d\mu_Q = \phi_Q \, dx$ as above. Suppose that $\ep_m>0$ is a small number depending on $m$. Then, if $\max\{\| B_1\|_n, \|B_2\|_n\} \le \eps_m$, the estimate 
$$\f Re\Big(\frac1{\mu_Q(Q)}\int_{Q} b^0_Q \, d\mu_Q\Big) \ge \Big(1 - C\big[\tau^{1/2 + 1/(n+1)} + \eps_m\tau^{-1/2 + 1/(n+1)} + \hm^{1/2} \tau^{-1 + 1/(n+1)}\big]\Big),$$
holds, where $C$ depends on $m$, $n$, $\lambda$, and $\Lambda$.
\end{lemma}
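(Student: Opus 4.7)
The plan is to recognize $b_Q^0/|Q|$ as a conormal derivative and exploit the ``dipole'' structure $\cL F_Q = \Psi_Q^+ - \Psi_Q^-$ to show that $\frac{1}{\mu_Q(Q)}\int_Q b_Q^0\,d\mu_Q$ is a small perturbation of $1$. First I would establish the key identity: since $\Psi_Q^+$ is supported in $\reu$, $F_Q$ satisfies $\cL F_Q = -\Psi_Q^-$ in $\ree_-$ weakly. Applying the definition of $\partial_{\nu}^{\cL,-}$ (Definition \ref{def.conormal}) with the smooth extension $\Phi_Q$ of $\phi_Q$, and using that $\tau<1/40$ forces $\supp\Psi_Q^-\subset I_{(1/2)Q}$ where $\Phi_Q\equiv 1$, together with $\int\Psi_Q^- = 1$, yields
\[
\tfrac{1}{|Q|}\int_{\rn}b_Q^0\,d\mu_Q \,=\, \langle\partial_{\nu}^{\cL,-}F_Q,\phi_Q\rangle \,=\, B_{\cL,\ree_-}[F_Q,\Phi_Q] + 1.
\]

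The main obstacle is to show $|B_{\cL,\ree_-}[F_Q,\Phi_Q]|\lesssim \tau^{1/2+1/(n+1)} + \eps_m\tau^{-1/2+1/(n+1)}$, which is dominated by the principal term $I_1 = \int_{\ree_-}A\nabla F_Q\cdot\overline{\nabla\Phi_Q}$. A direct application of \eqref{gradestFQ} and $\|\nabla\Phi_Q\|_2\lesssim\ell(Q)^{(n-1)/2}$ only gives $O(\tau^{-1/2+1/(n+1)})$, which is too large. To win back an extra factor of $\tau\ell(Q)$, I would invoke the slide representation \eqref{slideFq.eq}, $\nabla F_Q = -\int_{-3\tau\ell(Q)/2}^{3\tau\ell(Q)/2}\nabla\partial_t F_Q^{s'}\,ds'$. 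On $\supp\nabla\Phi_Q\subset I_{(1+\hm)Q}\setminus I_{(1/2)Q}$, each $F_Q^{s'}$ is $\cL$-harmonic since $\supp\Psi_Q^{s'}\subset\{|t|\le 2\tau\ell(Q)\}$ lies at distance $\gtrsim\ell(Q)$; then so is $\partial_t F_Q^{s'}$ by Proposition \ref{tsolns.prop}. Passing from the global $L^{2^*}$-bound to $L^2$ on Whitney balls at scale $\ell(Q)$ via H\"older and then applying two Caccioppoli iterations yields $\|\nabla\partial_tF_Q^{s'}\|_{L^2(\supp\nabla\Phi_Q)}\lesssim\tau^{-1/2+1/(n+1)}\ell(Q)^{-(n+1)/2}$ uniformly in $s'$. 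Minkowski in $s'$ then gives $\|\nabla F_Q\|_{L^2(\supp\nabla\Phi_Q)}\lesssim\tau^{1/2+1/(n+1)}\ell(Q)^{-(n-1)/2}$, so pairing with $\|\nabla\Phi_Q\|_2$ controls $|I_1|$ by $C\tau^{1/2+1/(n+1)}$.

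For the lower-order terms $I_2 = \int_{\ree_-}B_1F_Q\cdot\overline{\nabla\Phi_Q}$ and $I_3 = \int_{\ree_-}B_2\cdot\nabla F_Q\,\overline{\Phi_Q}$, the slide trick is not needed: the smallness $\|B_i\|_n\le\eps_m$ already absorbs one power of $\tau^{-1/2+1/(n+1)}$. Using H\"older in $y$ with exponents $n$, $2n/(n-2)$, $2$, the Sobolev embedding on slices $\|F_Q(\cdot,t)\|_{L^{2n/(n-2)}(\rn)}\lesssim\|\nablap F_Q(\cdot,t)\|_{L^2(\rn)}$, Cauchy--Schwarz in $t$ on the $O(\ell(Q))$-sized support of $\Phi_Q$, and \eqref{gradestFQ}, one obtains $|I_2|+|I_3|\lesssim\eps_m\tau^{-1/2+1/(n+1)}$. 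Combining these with the tail estimate $\big|\int_{\rn\setminus Q}b_Q^0\,d\mu_Q\big|\lesssim\hm^{1/2}\tau^{-1+1/(n+1)}|Q|$ (which follows from \eqref{smalloffQ.eq} and Lemma \ref{bqest1.lem}) and the comparison $|Q|/\mu_Q(Q)\in[1,2^n]$, the identity from the first paragraph produces the claimed lower bound for $\f Re\big(\tfrac{1}{\mu_Q(Q)}\int_Q b_Q^0\,d\mu_Q\big)$.
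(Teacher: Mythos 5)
Your proposal is correct and follows essentially the same strategy as the paper: extract the main term $1$ from the source pairing $\langle\Psi_Q^-,\Phi_Q\rangle$ via the conormal-derivative identity, bound the remainder $B_{\cL,\ree_-}[F_Q,\Phi_Q]$ term by term (using the slide representation \eqref{slideFq.eq} together with Caccioppoli for the $A\nabla F_Q\cdot\nabla\Phi_Q$ piece, and the smallness of $\|B_i\|_n$ for the lower-order pieces), and combine with \eqref{smalloffQ.eq}. The only cosmetic difference is that you differentiate under the $s'$-integral and apply Caccioppoli to $\partial_t F_Q^{s'}$, whereas the paper applies Caccioppoli to $F_Q$ first and slides afterward; both orderings yield the same bound $\tau^{1/2+1/(n+1)}$.
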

\begin{proof}
By the definitions of $\mu_Q$, $b^0_Q$, and the conormal derivative, we observe that
\begin{multline*} 
\int_{\rn} b^0_Q \, d\mu_Q = \int_{\rn} b^0_Q \phi_Q  =|Q| \int_{\rn} (\partial_{\nu}^{\cL,-}F_Q)(y,0) \phi_Q(y) \, dy
\\  = |Q|\Big(- \langle \Phi_Q, \cL F_Q \rangle_{\ree_-} + \dint_{\ree} A \nabla F_Q \cdot \nabla \Phi_Q + (B_1 F_Q)\cdot \nabla \Phi_Q +(B_2\cdot\nabla F_Q) \Phi_Q \Big)
\\ = |Q|(I + II).
\end{multline*}
Since $\supp\Psi_Q^+\cap\bb R^{n+1}_-=\varnothing$, $\Phi_Q \equiv 1$ on $\supp \Psi_Q^-$, and $\dint_{\ree_-} \Psi_Q^-= 1$, we have that
\begin{equation*} 
I = - \langle \Phi_Q, \cL F_Q \rangle_{\ree_-} = - \dint_{\ree_-} (-\Psi_Q^-) = 1.
\end{equation*}

To bound $II$,  we write $II = II_1 + II_2 + II_3$, where the $II_i$ correspond to each of the summands in the integral defining $II$. For the term, $II_1$, we use essentially the same estimates as in the previous lemma. In particular we use the properties of $\Phi_Q$, H\"older's inequality, the Caccioppoli inequality, and \eqref{slideFq.eq} to obtain that  
\begin{multline*} 
|II_1|  \le \dint_{\ree} \Big |A \nabla F_Q \cdot \nabla \Phi_Q\Big|   \lesssim \frac{1}{\ell(Q)} \dint_{I_{(1+ \hm)Q} \setminus I_{(1/2)Q}} |\nabla F|  
\\  \lesssim \ell(Q)^{(n-1)/2}\Big( \dint_{I_{(1+ \hm)Q} \setminus I_{(1/2)Q}} |\nabla F|^2  \Big)^{1/2}
  \lesssim \ell(Q)^\frac{(n-3)}{2}\Big( \dint_{I_{(1+ \hm)Q} \setminus I_{(1/2)Q}} |F|^2\Big)^{1/2}
\\  \lesssim \ell(Q)^\frac{(n-3)}{2}\Big( \dint_{I_{(1+ \hm)Q} \setminus I_{(1/2)Q}} \Big|\int_{-\tfrac{3}{2} \tau \ell(Q)}^{\tfrac{3}{2} \tau \ell(Q)} \partial_t F_Q^{s'}(Y) \, ds' \Big|^2\,dY \Big)^{1/2}
  \lesssim \tau^{1/2 + 1/(n+1)}.
\end{multline*}

To bound $II_2$, we use the estimate $\|B_1 F_Q\|_{2} \lesssim \eps_m \| \nabla F_Q \|_{2}$ and  \eqref{gradestFQ} to see that
\begin{multline*} 
|II_2|  \le \dint_{I_{2Q}} |(B_1 F) \cdot \nabla \Phi_Q|  \lesssim \frac{1}{\ell(Q)} \dint_{I_{2Q}} |B_1 F_Q|  
\\  \lesssim \frac{\ell(Q)^\frac{n+1}{2}}{\ell(Q)}  \Big(\dint_{I_{2Q}} |B_1 F_Q|^2\Big)^{1/2}  \lesssim \eps_m \ell(Q)^\frac{n-1}{2} \| \nabla F_Q \|_{2}
  \lesssim \eps_m \tau^{-1/2 + 1/(n+1)}.
\end{multline*}

To bound $II_3$ we use H\"older's inequality, $\| B_2 \|_{n} \le \eps_m$, and \eqref{gradestFQ} as follows:
\begin{multline*} 
|II_3| \le \int_{-2\ell(Q)}^{2\ell(Q)} \int_{2Q} |\nabla F_Q B_2| \le \eps_m \int_{-2\ell(Q)}^{2\ell(Q)} \Big(\int_{2Q} |\nabla F_Q|^\frac{n}{n-1}\Big)^\frac{n-1}{n}
\\ \lesssim \eps_m \ell(Q)^\frac{n-2}{2} \int_{-2\ell(Q)}^{2\ell(Q)} \Big(\int_{2Q} |\nabla F_Q|^2 \Big)^\frac{1}{2} \lesssim \eps_m \ell(Q)^\frac{n-1}{2}  \Big(\int_{-2\ell(Q)}^{2\ell(Q)}\int_{2Q} |\nabla F_Q|^2\Big)^\frac{1}{2}\\ \lesssim \eps_m \ell(Q)^\frac{n-1}{2} \| \nabla F_Q \|_{2} \lesssim  \eps_m \tau^{-1/2 + 1/(n+1)}.
\end{multline*}

Combining the previous estimates gives that
$$\f Re \Big(\int_{\rn} b^0_Q \, d\mu_Q \Big) \ge |Q|\Big(1 - C\big[\tau^{1/2 + 1/(n+1)} + \eps_m\tau^{-1/2 + 1/(n+1)}\big]\Big).$$
This estimate, in concert with \eqref{smalloffQ.eq} and the fact that $\mu_Q(Q) \le 1$, ends the proof.\end{proof}

With $\eps_m$ and $\hm$ at our disposal, we collapse the dependence of parameters to only $\tau$, 
leaving freedom to take $\eps_m$ even smaller. We ensure that $\eps_m < \tau$ and set $\hm = \tau^3$. Under these choices, we are ready to present the

\begin{proof}[Proof of Proposition \ref{allbQproperties.lem}] When the choices $\eps_m < \tau$ and $\hm = \tau^3$ are used in Lemma \ref{bqest3.lem}, we have that
	$$\f R e \Big(\frac1{\mu_Q(Q)}\int_{Q} b^0_Q \, d\mu\Big) \ge 1 - C\tau^{1/2 + 1/(n+1)},$$
	where $C$ depends on $n$, $\lambda$, $\Lambda$. Accordingly, we may pick $\tau$ small enough so that \eqref{destbQ3.eq} holds. The choice $\hm = \tau^3$ used in \eqref{bQest4.eq} gives that
	$$
	\Big|\frac1{\mu_Q(Q)}\int_{Q} b'_Q \, d\mu_Q \Big| \le C\tau^{1/2 + 1/(n+1)},$$
	where $C$ depends on $n$, $\lambda$, and $\Lambda$. Hence, we may guarantee that \eqref{destbQ4.eq} holds by choosing $\tau$ small depending on $C$ and $\eta$. Having chosen $\tau$ so that \eqref{destbQ3.eq} and \eqref{destbQ4.eq} hold, \eqref{destbQ1.eq} and \eqref{destbQ2.eq} follow from Lemma \ref{bqest1.lem} and Lemma \ref{bqest2.lem} respectively.\end{proof}

\subsection{Control of the auxiliary square functions}

As a last preliminary step to presenting the proof of the square function bound, we elucidate how to control the error terms involving $\Theta^{(a)}_t$ and $\Theta'_t$. 

\begin{proposition}[Control of error terms]\label{Carlesonerrorest.prop}
	Let $T_t$ be either $\Theta'_t$ or $\Theta^{(a)}_t$. Then, for each fixed $t>0$, $T_t 1$ is well defined as an element of $L^2_{\loc}(\rn)$. Moreover, we have the estimates
	\begin{equation}\label{Carerrest1boundleq.eq}
	\||T_t |\|_{op}  \leq C\||\Theta_t^0 |\|_{op} + 1,
	\end{equation}
	and
	\begin{equation}\label{Carerrest2boundleq.eq}
	\|T_t 1 \|_{\C}  \leq C \|\Theta_t^0 1\|_{\C} + 1.
	\end{equation}
	where $C$ depends on $m$, $n$, $\lambda$, and $\Lambda$, provided that $\max\{\| B_1\|_n, \|B_2\|_n\}$ is sufficiently small depending on $m$, $n$, $\lambda$, and $\Lambda$.
\end{proposition}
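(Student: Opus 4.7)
The plan is to extract the $\Theta_t^0$-dependence of $T_t$ by an algebraic decomposition, and then show that the remaining pieces enjoy universal bounds (independent of $\Theta_t^0$), accounting for the additive constant $1$. From the definition $(\s_t\nabla)\vec f = -\s_t\divp f_\parallel + \s_t D_{n+1}f_\perp = -\s_t\divp f_\parallel - \partial_t\s_t f_\perp$ and $\Theta_t^0 = t^m\partial_t^{m+1}\s_t$, I would write
\[
\Theta^{(a)}_t(g',g^0) = -t^m\partial_t^m\s_t\divp g' \;-\; \Theta_t^0 g^0,
\]
\[
\Theta'_t h' = -t^m\partial_t^m\s_t\divp(A_\parallel h')\;-\;\Theta_t^0(\vec A_{n+1,\parallel}\cdot h')\;+\;t^m\partial_t^m\s_t({B_2}_\parallel\cdot h'),
\]
where $A_\parallel$ is the upper $n\times n$ block of $A$ and $\vec A_{n+1,\parallel}$ is the horizontal part of its last row. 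The $\Theta_t^0$-summand is controlled directly by $\||\Theta_t^0|\|_{op}$ in the operator-norm estimate, and by $\|\Theta_t^0 1\|_{\C}$ in the Carleson estimate (applying Lemma \ref{FSAAAHKL3.2.lem} and the boundedness of $\vec A_{n+1,\parallel}$). The remaining pieces, call them $E_t$, are of the form $t^m\partial_t^m\s_t\divp(A_\parallel\cdot)$ and $t^m\partial_t^m\s_t({B_2}_\parallel\cdot)$, and one must show they satisfy $\||E_t|\|_{op}\lesssim 1$ and $\|E_t 1\|_\C\lesssim 1$.

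Next, to justify that $T_t 1$ is well-defined in $L^2_{\loc}(\rn)$, I would invoke Lemma \ref{AAAHKL3.11.lem}. Its hypotheses are: good off-diagonal decay, which holds for each constituent operator by Proposition \ref{ODforsevop.prop} (we use $m\geq n+10>\tfrac{n+1}{2}+2$); and uniform boundedness on $L^2(\rn)$, which follows by dualizing the slice estimate $\|t^m\partial_t^m\nabla\s_t\|_{L^2\to L^2}\lesssim 1$ from Proposition \ref{L2avgandsliceboundsperturb.prop} to obtain $\|t^m\partial_t^m\s_t\divp\|_{L^2\to L^2}\lesssim 1$, together with the Sobolev embedding $L^{2n/(n+2)}\hookrightarrow \Hfm$ and the smallness $\|B_2\|_n\leq\eps_m$ to handle the $B_2$-piece. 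Once these are in place, the Carleson estimate $\|E_t 1\|_{\C}\lesssim 1$ follows from Lemma \ref{FSAAAHKL3.2.lem}.

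For the operator-norm estimate $\||E_t|\|_{op}\lesssim 1$, I would perform a quasi-orthogonality argument in the spirit of Lemma \ref{QOforSnabBI.lem}. Using a CLP family $\{\m Q_s\}$ together with the reproducing formula $I=\int_0^\infty \m Q_s^2\,\frac{ds}{s}$, the target reduces to establishing $\|E_t \m Q_s^2 h\|_2\lesssim \min(s/t,t/s)^\gamma\|\m Q_s h\|_2$ for some $\gamma>0$, after which a Schur-type integration yields $\||E_t h|\|\lesssim\|h\|_2$. For $t>s$, this bound comes from the good off-diagonal decay of the composite $\Theta_{t,m}(\cdot,0)=-t^m\partial_t^m\s_t\divp$ from Proposition \ref{ODforsevop.prop}. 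For $t<s$, we exploit the cancellation of $\m Q_s$ (its kernel is mean-zero) together with the uniform $L^2\to L^2$ bound on $E_t$; when $E_t$ is the $B_2$-piece, the argument of Lemma \ref{QOforSnabBI.lem} applies essentially verbatim upon replacing $B_1 I_1$ by $B_2$ composed with the appropriate Riesz potential, using $\|B_2\|_n\leq \eps_m$.

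The principal obstacle is the following subtlety: the scalar operator $t^m\partial_t^m\s_t$ by itself is \emph{not} uniformly bounded on $L^2(\rn)$, only its gradient version is (per Proposition \ref{L2avgandsliceboundsperturb.prop}). Consequently we cannot take $R_t = t^m\partial_t^m\s_t$ in Lemma \ref{AAAHKL3.5.lem} and deduce the square-function bound directly from $\|R_t\div\|_{L^2\to L^2}\lesssim 1/t$. The route through quasi-orthogonality circumvents this by always working with the composite vector-in/scalar-out operator $\Theta_{t,m}(\cdot,0)$, which does enjoy both uniform $L^2$-boundedness and good off-diagonal decay, and by exploiting the smallness $\|B_2\|_n\leq\eps_m$ to control the drift piece; carrying this out cleanly, and ensuring all the constants are as the proposition claims, is the technical heart of the proof.
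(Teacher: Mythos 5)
Your decomposition of $\Theta'_t$ and $\Theta^{(a)}_t$ into a $\Theta_t^0$-dependent piece plus ``remainder'' pieces $E_t$ (involving $t^m\partial_t^m\s_t\divp A_\parallel$ and $t^m\partial_t^m\s_t B_{2\parallel}$) matches the paper's decomposition, and the well-definedness of $T_t1$ via Lemma~\ref{AAAHKL3.11.lem} is correct. However, the claim that the remainder satisfies $\||E_t\||_{op}\lesssim 1$ and $\|E_t1\|_\C\lesssim 1$ with \emph{universal} constants is where the argument breaks down. The operator $E_t = t^m\partial_t^m\s_t\divp(A_\parallel\,\cdot\,)$ does \emph{not} annihilate constants: $E_t\mathbf{1}$ involves $\divp$ applied to $A_\parallel\mathbf{1}$, which is generally nonzero since $A_\parallel$ is variable. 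Consequently the quasi-orthogonality route fails at $t<s$: the step ``exploit the cancellation of $\m Q_s$ together with uniform $L^2$-boundedness'' is precisely the $R_t1=0$ hypothesis of Lemma~\ref{AAAHKL3.5.lem}, and without it one is left with a Carleson-measure term $\|E_t1\|_\C$ that is a priori controlled only in terms of $1 + \||\tT_t\||_{op}^2$ by Lemma~\ref{FSAAAHKL3.2.lem}, not by an absolute constant.

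The paper instead reduces $\Theta'_t$ and $\Theta^{(a)}_t$ to the operator $\tT_t = t^m\partial_t^m\s_t\nabla_\parallel$ and proves Lemma~\ref{tTboundlem.lem}, whose conclusion is the \emph{weaker} (and correct) bound $\||\tT_t\||_{op}\lesssim 1 + \||\Theta_t^0\||_{op}$, with the $\Theta_t^0$-dependence unavoidable. Establishing that lemma is the real technical core and requires machinery you have omitted: the Hodge decomposition for $L_\parallel = \divp A_\parallel\nablap$, an $R_t$-splitting $\tT_t = R_t + (\tT_t A_\parallel)P_t$, handling the five error terms $J_1,\dots,J_5$ arising from the lower-order coefficients (where $\Theta_t^0$ explicitly appears in $J_{2,1}$ and $J_{2,3}$), and finally the Carleson measure estimate~\eqref{CMesttTKato.eq} via the Kato-problem test functions $F_Q$ and the $T1$ reduction of~\cite{AT,CM}. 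Your proposal's quasi-orthogonality framework is in fact what the paper uses to absorb the $B_1$-error term (Lemma~\ref{QOforSnabBI.lem}), but it cannot replace the Kato-level argument needed to control the $A_\parallel$-piece, since that piece lacks the $T1$-cancellation on which quasi-orthogonality relies.
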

\begin{remark}
	We will operate under the assumption that  $T_t 1$ and $\Theta^0_t 1$ have finite $\lVert \cdot \rVert_{\C}$ norm. Indeed, otherwise for $\gamma > 0$, we replace $T_t1$ by $(T_t 1)_\gamma = (T_t 1)\mathbbm{1}_{\gamma < t \le 1/\gamma}$ and analogously for $\Theta^0_t 1$, and we observe that these truncated versions will always have finite $\lVert \cdot \rVert_{\C}$ norm under our hypotheses. 
\end{remark}

Proposition \ref{Carlesonerrorest.prop} will be a direct consequence of the following lemma.

\begin{lemma}[Control of gradient field terms]\label{tTboundlem.lem}Let $\widetilde \Theta_t := t^m \partial_t^m \s^{\cL}_t \nabla_\parallel$ for $m \in \N$, $m \ge n + 10$. Then 
	\begin{equation}\label{tTboundleq.eq}
	\|| \tT_t |\|_{op}  \lesssim \||\Theta_t^0 |\|_{op} + 1,
	\end{equation}
	and
	\begin{equation}\label{tT1boundleq.eq}
	\|\tT_t 1 \|_{\C}  \lesssim \|\Theta_t^0 1\|_{\C} + 1,
	\end{equation}
	where the constants depends on $m$, $n$, $\lambda$, and $\Lambda$, provided that $\max\{\| B_1\|_n, \|B_2\|_n\}$ is sufficiently small depending on $m$, $n$, $\lambda$, $\Lambda$.
\end{lemma}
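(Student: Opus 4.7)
The plan is to handle the $L^2$-operator bound \eqref{tTboundleq.eq} first by duality and a slice-level integration by parts, and then derive the Carleson bound \eqref{tT1boundleq.eq} from the operator bound using off-diagonal decay. Viewing $\widetilde\Theta_t$ as acting on vector fields $\vec g \in L^2(\rn;\CC^n)$ via $\widetilde\Theta_t\vec g = -t^m\partial_t^m\s_t^{\cL}(\divg_\parallel\vec g)$, the first step is to verify that $\widetilde\Theta_t$ has good off-diagonal decay in the sense of Definition \ref{goodODdecay.def}. This essentially reruns the proof of Proposition \ref{ODforsevop.prop}: iterating the $L^p$-Caccioppoli inequality (Proposition \ref{Lpcaccop.prop}) together with the slice bound (Lemma \ref{Lpcacconslices.lem}) and using that $m\ge n+10$ gives plenty of decay.

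For the operator bound, the main step is a duality computation. By Proposition \ref{prop.slproperties} item \emph{viii} and Proposition \ref{dualwtder.prop} item \emph{iii}, the adjoint acts on scalars $g$ via $\widetilde\Theta_t^\ast g = (-1)^m t^m \partial_t^m \nabla_\parallel \s_{-t}^{\cL^\ast} g$. Setting $v(s,\cdot) := \partial_s^m\s_s^{\cL^\ast}g$, which by Proposition \ref{tsolns.prop} iterated solves $\cL^\ast v = 0$ away from $s=0$, I would apply the slice integration-by-parts identity for $\cL^\ast$-solutions (the third formula of Proposition \ref{IBPonslicesprop.prop}) to rewrite the pairing of $\nabla_\parallel v(-t,\cdot)$ against a vector test field as a pairing with $\partial_s\nabla v$ plus lower-order contributions. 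Solving for the principal part by inverting the top-left block of $A^\ast$ using the uniform ellipticity produces the main term, proportional to $t^m\partial_s^{m+1}\s_s^{\cL^\ast}g|_{s=-t}$, which is (up to sign) the $L^2$-adjoint of $\Theta_t^{0,\cL^\ast}$. Since every identity we use is symmetric under $\cL\leftrightarrow\cL^\ast$, the operator norm $\|\vert\Theta_t^{0,\cL^\ast}|\|\|_{op}$ is comparable to $\|\vert\Theta_t^{0}|\|\|_{op}$, giving the desired principal contribution.

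The integration by parts generates two families of error terms. The first family involves entries of $A^\ast$ multiplying $\partial_t\nabla v$; these are bounded by $\|\vert t^m\partial_t^{m+1}\nabla\s_{-t}^{\cL^\ast}g|\|$, which by Lemma \ref{lm.travelup} applied to the $\cL^\ast$-solution $\partial_t\s^{\cL^\ast}g$ together with slice Caccioppoli reduces to a controlled multiple of $\|\vert\Theta_t^{0,\cL^\ast}|\|\|_{op}\|g\|_2$. The second family contains the critical-scale coefficients $B_1, B_2\in L^n(\rn)$ multiplied against $v$ or $\partial_t v$; these we bound using the slice Sobolev embedding (Proposition \ref{Lpcacconslices.lem} combined with Proposition \ref{prop.fracemb}) together with the smallness $\max\{\|B_1\|_n,\|B_2\|_n\}<\eps$, so that they split into a small multiple of $\|\vert\Theta_t^{0,\cL^\ast}|\|\|_{op}\|g\|_2$ (absorbable into the principal term) and a bounded multiple of $\|g\|_2$ (accommodated by the ``$+1$'' on the right-hand side of \eqref{tTboundleq.eq}).

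For the Carleson bound \eqref{tT1boundleq.eq}, the off-diagonal decay of $\widetilde\Theta_t$ together with the operator bound \eqref{tTboundleq.eq} first ensures, via Lemma \ref{FSAAAHKL3.2.lem}, that $\widetilde\Theta_t 1$ is a well-defined element of $L^2_{\loc}(\rn)$ and satisfies the crude estimate $\|\widetilde\Theta_t 1\|_{\cC}\lesssim 1+\|\vert\widetilde\Theta_t|\|\|_{op}^2$. To sharpen this into the form \eqref{tT1boundleq.eq}, I would revisit the IBP identity of the previous step with the constant input substituted (interpreted via off-diagonal decay), writing $\widetilde\Theta_t 1$ as a multiple of $\Theta_t^0 1$ plus auxiliary families of the form $t^m\partial_t^m\s_t^{\cL}$ applied to bounded functions built from the entries of $A$, $B_1, B_2$; each such auxiliary family has bounded Carleson norm by Lemma \ref{FSAAAHKL3.2.lem} applied to the (off-diagonal-decay-possessing) base operator $t^m\partial_t^m\s_t^{\cL}$, whose $L^2$-boundedness is already afforded by Proposition \ref{L2avgandsliceboundsperturb.prop}. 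The main obstacle throughout is the fine accounting of the critical-scale $B_1, B_2$ error terms produced by the IBP: these are at the Sobolev exponent, so ordinary Caccioppoli-type estimates are insufficient, and one must blend slice Sobolev embeddings with the $L^n$-smallness hypothesis to close the inequality without spoiling the bound.
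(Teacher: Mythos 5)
The derivation of the Carleson bound \eqref{tT1boundleq.eq} from the operator bound \eqref{tTboundleq.eq} via off-diagonal decay and Lemma \ref{FSAAAHKL3.2.lem} is correct, and is exactly what the paper does. The gap is in your argument for the operator bound.

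The crux is the step where you "solve for the principal part by inverting the top-left block of $A^\ast$." The slice integration-by-parts identity (third formula of Proposition \ref{IBPonslicesprop.prop}) does \emph{not} produce a pointwise relation for $(A^\ast\nabla v)_\parallel$; it only tells you that
\[
\div_\parallel\big(A^\ast\nabla v(t)\big)_\parallel \;=\; \vec A^\ast_{n+1,\cdot}\,\partial_t\nabla v(t) \;+\; (\text{terms with } B_1, B_2)
\]
as a distributional identity on $\rn$. You thus control the \emph{divergence} of a vector field, one derivative removed from $\nabla_\parallel v$ itself. To pass from a bound on $\div_\parallel(A^\ast_\parallel\nabla_\parallel v)$ to a bound on $\|\nabla_\parallel v\|_{L^2(\rn)}$ you cannot invert the $n\times n$ matrix $A^\ast_\parallel$; you would have to invert the second-order divergence-form operator $L_\parallel^\ast=\div_\parallel A^\ast_\parallel\nabla_\parallel$ on $\rn$, and control $\nabla_\parallel (L^\ast_\parallel)^{-1/2}$ on $L^2$. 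That is precisely the Kato square-root problem, and its use cannot be circumvented here by linear algebra on the coefficient matrix. Consequently the "desired principal contribution" in your argument does not actually materialize, and the operator bound is not established.

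What the paper does instead is reduce $\|\vert\widetilde\Theta_t|\|\|_{op}$ via the Hodge decomposition for $L_\parallel$ to the square function of $t^m\partial_t^m\s_t^{\cL}(\div_\parallel A_\parallel\nabla_\parallel F)$, then splits this Coifman--Meyer style as $(T_tA_\parallel)P_t\nabla_\parallel F + R_t\nabla_\parallel F$ where $P_t$ is a nice approximate identity. The remainder $R_t$ is then further decomposed (involving yet another integration by parts on slices and the quasi-orthogonality Lemma \ref{QOforSnabBI.lem} for the $B_1$ term) and controlled by Lemma \ref{AAAHKL3.5.lem} and the smallness of $\|B_i\|_n$. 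The main term is handled by proving the Carleson measure estimate \eqref{CMesttTKato.eq}, which invokes the stopping-time family $F_Q$ from the solution of the Kato problem, with the dyadic averaging operator $E_t$, and the properties (i)--(iii) of that family. None of this machinery---the Hodge decomposition, the $P_t/(I-P_t)$ splitting, the Kato stopping-time family, the dyadic averaging operator---appears in your proposal, and it is exactly this machinery that fills the gap left by the fact that the slice IBP only controls a divergence. Your accounting of the $B_1, B_2$ critical-scale error terms via slice Sobolev embeddings and the $L^n$-smallness is in the right spirit for the remainder, but it cannot substitute for the missing Kato ingredient in the main term.
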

\begin{proof}
	We note that \eqref{tT1boundleq.eq} follows from Lemma \ref{FSAAAHKL3.2.lem}, \eqref{tTboundleq.eq} and Proposition \ref{ODforsevop.prop}.
	The proof will follow the general scheme of \cite[Lemma 3.1]{Hof-May-Mour}, with modifications due to the first order terms. Write $L_\parallel:= \div_x A_\parallel \nabla_\parallel$ where $A_\parallel = (A_{i,j})_{1 \le i,j \le n}$. 
	By the Hodge decomposition for the operator $L_\parallel$, to prove \eqref{tTboundleq.eq} it is enough to show that  
	\begin{equation}\label{tTboundlredeq.eq}
	\dint_{\ree_+}\big|t^m\partial_t^m\s^{\cL}_t(\nabla_\parallel \cdot A_{\|} \nabla_\parallel F)(x)\big|^2 \, \frac{dx \, dt}{t}
	\lesssim (1 + \|| \Theta_t^0|\|^2_{op}) ,
	\end{equation}
	for all $F \in Y^{1,2}(\bb R^n)$ with $\|\nabla_{\|} F \|_{L^2}\lesssim1$ (dependence on $\lambda$ and $\Lambda$). We write
	\begin{multline*}
	t^m \partial_t^m\s^{\cL}_t \nabla_\parallel A_{\|}\nabla_{\|} F\\=\big\{ t^m \partial_t^m\s^{\cL}_t \nabla_\parallel A _\parallel - t^m\big(\partial_t^m \s^{\cL}_t \nabla_\parallel A_\parallel\big) P_t\big\} \nabla_\parallel F + t^m \big(\partial_t^m \s^{\cL}_t \nabla_\parallel  A_\parallel\big) P_t \nabla_\parallel F
	\\  =: R_t (\nabla_\parallel F) + t^m \big(\partial_t^m \s^{\cL}_t \nabla_\parallel \cdot A_\parallel\big) P_t \nabla_\parallel F,
	\end{multline*}
	where  $t^m (\partial_t^m \s_t \nabla_\parallel A_\parallel)$ is the (vector-valued) operator $t^m \partial_t^m \s_t \nabla_\parallel$ applied to $A_\parallel$, the latter understood as a vector function with components in $L^2_{loc}(\rn;\bb C^n)$, and $P_t$ is a nice approximate identity constructed as follows. Let $\zeta_t(x) = t^{-n} \zeta\Big(\frac{|x|}{t}\Big)$, where $\zeta \in C_c^\infty(B(0,1/2))$ is radial with $\int_{\rn} \zeta = 0$ and $\m Q_t f(x) = (\zeta_t \ast f)(x)$ satisfies the Calder\'{o}n reproducing formula
	\begin{equation*}
	\int_0^\infty\m Q_t^2 \, \frac{dt}{t} = I , \quad \text{ in the strong operator topology on } L^2.
	\end{equation*}
	Then $\m Q_s$ is a CLP family (see Definition \ref{CLP.def}) and we set $P_t := \int_t^\infty\m Q_s^2 \, \frac{ds}{s}$. 	Then $P_t$ is a nice approximate identity; that is, $P_t = (\varphi_t \ast f)(x)$ where $\varphi_t = t^{-n}\varphi\big(\frac{|\cdot|}{t}\big)$ and $\varphi \in C_c^\infty(B(0,1))$ is a radial function with $\int_{\rn} \varphi = 1$. 
	
	The term $t^m  \partial_t^m \s_t \nabla_\parallel \cdot A_\parallel P_t \nabla_\parallel F$ is the `main term' and we will apply the techniques of the solution to the Kato problem \cite{Katoproblem} to handle its contribution. For now, we focus on the remainder term $R_t (\nabla_\parallel F)$, which takes a bit of exposition due to the number of terms arising from the lower order terms in the differential operator $\cL$. To this end, we write
	\begin{multline*}
	R_t  =   t^m \partial_t^m\s^{\cL}_t \nabla_\parallel A _\parallel - t^m\big(\partial_t^m \s^{\cL}_t \nabla_\parallel A_\parallel\big) P_t  
	\\   = \big\{ t^m \partial_t^m\s^{\cL}_t \nabla_\parallel A _\parallel P_t- t^m \big(\partial_t^m \s^{\cL}_t \nabla_\parallel  A_\parallel\big) P_t \big\} +  t^m \partial_t^m\s^{\cL}_t \nabla_\parallel A _\parallel (I - P_t)
	=: R_t^{[1]} + R_t^{[2]}.
	\end{multline*}
	Observe that $R_t^{[1]} 1 = 0$, $R_t^{[1]}$ has sufficient off-diagonal decay (Proposition \ref{ODforsevop.prop}) and uniform $L^2$ boundedness (Proposition \ref{L2avgandsliceboundsperturb.prop}), and $\| R^{[1]}_t \nabla_x\|_{2 \to 2} \le C/t$. Then the square function bound
	$$\dint_{\ree_+} |R_t^{[1]}\nabla_\parallel F|^2 \, \frac{dx \, dt}{t} \lesssim \| \nabla_\parallel F \|_2^2$$
	follows from  Lemma \ref{AAAHKL3.5.lem} as desired. To control $R_t$ it remains to control $R_t^{[2]}$. Set $Z_t := I - P_t$ and define $\vec{b} := (A_{n+1,1}, \dots, A_{n+1,n})$. By using integration by parts on slices (Proposition \ref{IBPonslicesprop.prop}) and Proposition \ref{CLPW12.prop}, we obtain that 
	\begin{multline*}
	t^m \partial_t^m \s_t \nabla_\parallel  A_\parallel Z_t \nabla_\parallel F  =
	t^m \partial_t^m \s_t \nabla_\parallel A_\parallel \nabla_\parallel Z_t F
	\\  = t^m \partial_t^{m+1} (\s_t \nabla) \cdot \vec A_{\cdot,n+1} Z_t F
	\quad - t^m \partial_t^{m+1} \s_t(\vec b \nabla Z_t F)
	\quad + t^m\partial_t^m (\s_t \nabla)B_1 Z_t F
	\\  \quad - t^m\partial_t^m \s_t({B_2}_\parallel \nabla_\parallel Z_t F)
	\quad + t^m\partial_t^{m+1} \s_t({B_2}_\perp  Z_t F)
	\qquad =: J_1 + J_2 + J_3 + J_4 + J_5.
	\end{multline*}
	Note that, using   Plancherel's theorem, we have that
	\begin{equation}\label{plansqfnest.eq}
	\dint_{\ree_+} |t^{-1} (I - P_t)F(x)|^2 \, \frac{dx\, dt}{t} \lesssim \|\nabla_\parallel F \|_2^2.
	\end{equation}
	Since $t^{m+1} \partial_t^{m+1} (\s_t \nabla) :L^2 \to L^2$ uniformly in $t$, we easily obtain the associated square function bound for   $J_1$. To bound $J_2$, we write
	\begin{multline*}
	J_2  = -t^m\partial_t^{m +1} \s_t\big(\vec{b} \cdot \nabla_\parallel(I - P_t)F\big)
	\\   =  -t^m\partial_t^{m +1} \s_t\vec{b} \cdot \nabla_\parallel F + \{ t^m\partial_t^{m +1} \s_t\vec{b} P_t   - (t^m\partial_t^{m +1} \s_t\vec{b}) P_t\} \nabla_\parallel F + (t^m\partial_t^{m +1} \s_t\vec{b}) P_t \nabla_\parallel F 
	\\   =: J_{2,1} + J_{2,2} + J_{2,3}.
	\end{multline*}
	For $J_{2,1}$, we see that $J_{2,1} = \Theta_t^0\vec{b} \nabla_\parallel F$, whence
	$$\dint_{\ree_+}|t^m \partial_t^{m+1} \s_t \vec{b} \nabla_\parallel F|^2 \, \frac{dx \, dt}{t} \lesssim \|| \Theta_t^0 \||^2_{op} \| \nabla_\parallel F \|_2^2.$$
	Similarly, by Lemma \ref{FSAAAHKL3.2.lem} and Carleson's Lemma, we have that
	$$\dint_{\ree_+}|(t^m\partial_t^{m +1} \s_t\vec{b}) P_t \nabla_\parallel F|^2 \, \frac{dx \, dt}{t} \lesssim \|| \Theta_t^0 \||^2_{op} \| \nabla_\parallel F \|_2^2,$$
	so that the contribution from $J_{2,3}$ has the desired control.   Notice that $J_{2,2}$ is of the form $R_t \nabla_\parallel F$ where $R_t 1 = 0$, $R_t: L^2 \to L^2$ and $\|R_t \nabla_x \|_{L^2 \to L^2} \le C/t$ and $R_t$ good off-diagonal decay. Thus,  the desired square function bound for term $J_{2,2,}$ follows immediately from Lemma \ref{AAAHKL3.5.lem}.

	For term $J_3$, let $g$ be such that $I_1 g = F$ and $\|g\|_2 \approx \|\nabla_\parallel F\|_2$. Then using $t^m\partial_t^m (\s_t \nabla)= \Theta_t^{(a)}$, we have by Proposition \ref{QOforSnabBI.lem} that
	$$\| \Theta_t^{(a)} B_1I_1\m Q_s^2 g\|_{L^2(\rn)} \lesssim  \Big(\frac{s}{t}\Big)^\gamma\|\m Q_s g \|_{L^2(\rn)}$$
	for some $\gamma > 0$ independent of $ g$. 
	Then by standard estimates we obtain
	\begin{multline*}
	\dint_{\ree_+} \Big|t^m \partial_t^m (\s_t \nabla) B_1(I - P_t)F \Big|^2\, \frac{dx \, dt}{t}
	=  \dint_{\ree_+} \Big|t^m \partial_t^m (\s_t \nabla) B_1I_1(I - P_t)g \Big|^2\, \frac{dx \, dt}{t}
	\\   \lesssim  \dint_{\ree_+}\Big|t^m \partial_t^m (\s_t \nabla) B_1I_1\Big(\int_0^t\m Q^2_s g \frac{ds}{s}\Big) \Big|^2\, \frac{dx \, dt}{t}
	\\    \lesssim_{\gamma} \dint_{\ree_+} \int_0^t\Big(\frac{t}{s}\Big)^{\gamma/2}\Big|t^m \partial_t^m (\s_t \nabla) B_1 I_1\m Q_s^2 g\Big|^2 \, \frac{ds}{s} \, \frac{dx \, dt}{t}
	\\    \lesssim \int_0^\infty \int_s^\infty \Big(\frac{s}{t}\Big)^{\gamma/2}\|\m Q_s g \|_2^2 \, \frac{dt}{t} \, \frac{ds}{s}
	\lesssim_\gamma \int_0^\infty \|\m Q_s g \|_2^2\, \frac{ds}{s}
	\lesssim \| g \|_2^2 \approx \| \nabla_\parallel F \|_2^2,
	\end{multline*}
	where in the fourth inequality we used Cauchy's inequality in the $\frac{ds}{s}$ integral noting that $\int_0^t (s/t)^\gamma \frac{ds}{s} \lesssim C_\gamma$, and we used the square function estimate for the CLP family $\m Q_s$ (see Definition \ref{CLP.def}). This takes care of the contribution from $J_3$.
	
	Next, we handle $J_4$. We write $J_4$ as the sum of its pieces, as follows:
	\begin{multline*}
	J_4  = -t^m \partial_t^m \s_t{B_2}_\parallel \nabla_\parallel (I - P_t) F
	\\   = -t^m \partial_t^m \s_t{B_2}_\parallel \nabla_\parallel F + t^m \partial_t^m \s_t{B_2}_\parallel \nabla_\parallel P_t F
	= J_{4,1} + J_{4,2}.
	\end{multline*}
	For $J_{4,1}$, we observe that
	\begin{equation*}
	J_{4,1} = - t^m \partial_t^m \s_t {B_2}_\parallel \nabla_\parallel F  = - t^m \partial_t^m \s_t \div_\parallel \nabla_\parallel I_2 {B_2}_\parallel \nabla_\parallel F 
	= -\tT(\nabla_\parallel I_2 {B_2}_\parallel F)
	\end{equation*}
	and notice that $\| \nabla_\parallel I_2{B_2}_\parallel F\|_2 \lesssim \| B_2 \|_{n} \| \nabla_\parallel F \|_2$.
	Therefore,
	$$\dint_{\ree_+} | t^m \partial_t^m \s_t{B_2}_\parallel \nabla_\parallel F|^2 \, \frac{dx \, dt}{t} \lesssim \||\tT_t\||_{op}^2 \| B_2 \|_{n}^2 \| \nabla_\parallel F \|_2^2,$$
	and hence  $J_{4,1}$ can be hidden in \eqref{tTboundlredeq.eq}  when  $\| B_2 \|_{n}$ is   small. For $J_{4,2}$, we write
	\begin{multline*}
	J_{4,2}  =\big\{t^m\partial_t^m \s_t{B_2}_\parallel  P_t  - (t^m\partial_t^m \s_t {B_2}_\parallel)P_t\big\}\nabla_\parallel F +  (t^m\partial_t^m \s_t{B_2}_\parallel)P_t \nabla_\parallel F
	\\   = \widetilde R_t\nabla_\parallel F +  (t^m\partial_t^m \s_t{B_2}_\parallel)P_t \nabla_\parallel F.
	\end{multline*}
	We may handle $\widetilde R_t\nabla_\parallel F$ using Lemma \ref{AAAHKL3.5.lem}, as $\widetilde R_t$ satisfies the required hypotheses (see Propositions \ref{L2avgandsliceboundsperturb.prop} and \ref{ODforsevop.prop}). We see, in a similar fashion to $J_{4,1}$,   that $t^m\partial_t^m \s_t{B_2}_\parallel = \tT_t\nabla_\parallel I_2 {B_2}_\parallel$, and $\| \nabla_\parallel I_2{B_2}_\parallel\|_{BMO} \lesssim \|B_2 \|_n^2$. Noting that $\tT_t 1 = 0$, it follows from Lemma \ref{FSAAAHKL3.2.lem} and Carleson's Lemma that
	$$\dint_{\ree_+}\big|(t^m\partial_t^m \s_t{B_2}_\parallel)P_tF\big|^2 \, \frac{dx \, dt}{t} \lesssim (1 + \||\tT_t \||_{op}^2)\|B_2\|_n^2 \|\nabla_\parallel F\|_2^2,$$
	which can be  hidden in \eqref{tTboundlredeq.eq} when $\|B_2\|_n$ is sufficiently small.
	
	Finally, to handle $J_5$, rewrite it as $J_5 = t^{m+1}\partial_t^{m+1} \s_t{B_2}_\perp(\tfrac{1}{t}[I - P_t]F)$. Since\\ $t^{m+1}\partial_t^{m+1} \s_t  {B_2}_\perp : L^2 \to L^2$ uniformly in $t$, we may handle this term exactly as $J_1$ by using \eqref{plansqfnest.eq}.
	
	Having handled the remainder $R_t$, we have reduced matters to showing that the square function bound
	$$\dint_{\ree_+} |t^m (\partial_t^m \s_t \nabla_\parallel \cdot A_\parallel )(x)P_t \nabla_\parallel F(x)|^2 \, \frac{dx \, dt}{t} \lesssim \| \nabla_\parallel F\|_2^2$$
	holds for all $F \in Y^{1,2}(\bb R^n)$ with $\Vert\nabla_{\|}F\Vert_2\leq1$. By Carleson's Lemma, it is enough to show that
	\begin{equation}\label{CMesttTKato.eq}
	\sup_{Q} \frac{1}{|Q|} \int_0^{\ell(Q)} \int_{\rn} |t^m (\partial_t^m \s_t \nabla_\parallel \cdot A_\parallel )(x)|^2 \, \frac{dx \, dt}{t} \le C.
	\end{equation}
	
	In order to obtain \eqref{CMesttTKato.eq}, we appeal to the technology of the solution of the Kato problem \cite{Katoproblem}, and follow the argument of \cite{Hof-May-Mour}. By \cite{Katoproblem}, for each dyadic cube $Q$ there exists a mapping $F_Q : \rn \to \mathbb{C}^n$ such that
	\begin{enumerate}[i)]
		\item $\displaystyle \int_{\rn} |\nabla_\parallel F_Q|^2  \le C|Q|$
		\item $\displaystyle \int_{\rn}|L_\parallel F_Q|^2   \le \frac{|Q|}{\ell(Q)^2}$
		\item $\displaystyle \sup_Q \int_0^{\ell(Q)} \dashint_Q |\vec\zeta (x, t)|^2 \, \frac{dx \, dt}{t}
		\lesssim C \sup_Q \int_0^{\ell(Q)} \dashint_Q |\vec\zeta(x,t) E_t \nabla_\parallel F_Q|^2 \, \frac{dx \, dt}{t}$
	\end{enumerate}
	for each $\vec \zeta:  \ree_+ \to \mathbb{C}^n$, where $E_t$ denotes the dyadic averaging operator; that is, if $Q(x,t)$ is the minimal dyadic cube containing $x\in\bb R^n$ with side length at least $t$, then $E_t g(x) = \dashint_{Q(x,t)} g $. Here, we note that $\nabla_\parallel F_Q$ is the Jacobian of $F_Q$ and $\vec \zeta E_t \nabla_\parallel F_Q $ is a vector. Given such a family $\{F_Q\}_Q$, we see that by applying property $iii)$ with
	$\vec \zeta (x,t) = T_tA_\parallel$, where $T_t : = t^m \partial_t^m (\s_t \nabla_\parallel)$ it is enough to show that
	\begin{equation*} 
	\int_0^{\ell(Q)} \int_Q |(T_tA_\parallel)(x) E_t \nabla_\parallel F_Q(x)|^2\, \frac{dx \, dt}{t}
	\lesssim(1 + \||\Theta_t^0\||^2_{op})|Q|.
	\end{equation*} 
	Following \cite{AT, CM}, we write that
	\begin{multline*}
	(T_tA_\parallel) E_t \nabla_\parallel F_Q  = \{(T_tA_\parallel) E_t - T_tA_\parallel\}\nabla_\parallel F_Q + T_tA_\parallel \nabla_\parallel F_Q
	\\  = T_t A_\parallel(E_t - P_t)\nabla_\parallel F_Q + \{(T_t A_\parallel)P_t - T_tA_\parallel\} \nabla_\parallel F_Q + T_tA_\parallel\nabla_\parallel F_Q
	\\  =: R_t^{(1)}\nabla_\parallel F_Q +  R_t^{(2)}\nabla_\parallel F_Q + T_tA_\parallel \nabla_\parallel F_Q.
	\end{multline*}
	Observe that $R_t^{(2)} = -R_t$ from above, and we have already shown that $\|| R_t \||_{op} \lesssim (1 + \||\Theta_t^0\||_{op})$\footnote{We have shown that $\|| R_t \||_{op} \lesssim (1 + \||\Theta_t^0\||_{op}) + \epsilon\|| \tT_t \||_{op}$, where $\epsilon$ is at our disposal by the smallness of $\max\{\|B_1\|_n, \|B_2\|_n\}$, and this is enough for our purposes.}, so   that the desired bound holds from property $i)$ of $F_Q$. For the last term,  we have that  $T_t A_\parallel \nabla_\parallel F_Q = t^m \partial_t^m \s_t L_\parallel F_Q$, and we know that $ t^{m-1} \partial_t^m \s_t: L^2 \to L^2$ uniformly in $t$. Thus, by property $ii)$  of $F_Q$, we have that
	\begin{multline*}
	\int_0^{\ell(Q)} \int_Q|(T_tA_\parallel F_Q)(x)|^2 \, \frac{dx \, dt}{t}  \le
	\int_0^{\ell(Q)}\int_{\rn} |t^{m-1} \partial_t^m \s_t L_\parallel F(x)|^2 t\,dx \, dt
	\\   \lesssim \frac{|Q|}{\ell(Q)^2}\int_0^{\ell(Q)}t \, dt \lesssim |Q|,
	\end{multline*}
	which shows the desired bound for this term.
	
	To bound the contribution from $R_t^{(1)}$, we note that $T_t : L^2 \to L^2$ uniformly in $t$ and
	$$\dint_{\ree_+} |(E_t - P_t)g(x)|^2 \, \frac{dx\, dt}{t} \lesssim \| g\|_2^2$$
	for $g \in L^2(\rn)$. Therefore,  
	\begin{multline*}
	\int_0^{\ell(Q)} \int_Q|R_t^{(1)} \nabla_\parallel F_Q|^2 \, \frac{dx\, dt}{t}  \le
	\int_0^{\ell(Q)} \int_{\rn}|T_tA_{\|}(E_t - P_t) \nabla_\parallel F_Q|^2\, \frac{dx \, dt}{t}
	\\   \lesssim \int_0^{\ell(Q)} \int_{\rn}|(E_t - P_t) \nabla_\parallel F_Q|^2\, \frac{dx \, dt}{t}
	\\   \lesssim \|\nabla_\parallel F \|_2^2 \lesssim C|Q|,
	\end{multline*}
	where we used the ellipticity of $A$ in the second inequality, and property $i)$ of $F_Q$ in the last inequality. This controls the contribution from $R_t^{(1)}$ and finishes the proof of the Lemma.\end{proof}

We move on to the 

\begin{proof}[Proof of Proposition \ref{Carlesonerrorest.prop}]
	To see that $\||\Theta_t^{(a)}\||_{op} \lesssim 1 + \||\Theta_t^0\||_{op}$, and that
	$\|\Theta_t^{(a)} 1\|_{\C} \lesssim 1 + \|\Theta_t^0\|_{\C}$, 
	we  simply notice  that $\Theta_t^{(a)} = (\Theta_t^0 , t^m \partial_t^m (\s_t \nabla_\parallel))$
	so that the desired bounds follow directly from the previous lemma.
	
	We are left with showing the bounds in Proposition \ref{Carlesonerrorest.prop} for $T_t = \Theta'_t$. We note immediately that \eqref{Carerrest2boundleq.eq} will follow from \eqref{Carerrest1boundleq.eq} and Lemma \ref{FSAAAHKL3.2.lem}. Therefore, it is enough to show \eqref{Carerrest1boundleq.eq}. In fact, by Lemma \ref{tTboundlem.lem}, it suffices to show that $
	\||\Theta'_t\||_{op} \lesssim \|| \tT_t \||_{op} + \||\Theta_t^0\||_{op}$. 	For $g \in L^2(\rn, \mathbb{C}^n)$, we have that
	\begin{multline*}
	\Theta'_t g  = t^m \partial_t^m \s_t ({B_2}_\| g) + t^m \partial_t^m (\s_t \nabla)\cdot \tilde{A} g
	\\ =  t^m \partial_t^m \s_t ({B_2}_\| g) + t^m \partial_t^m (\s_t \nabla_\parallel)\cdot A_\parallel g - t^m \partial_t^{m +1} \s_t \vec{b} g,
	\end{multline*}
	where $\vec{b} = (A_{n+1, j})_{1 \le j \le n}$. The ellipticity of $A$ gives  immediately that \\
	$\|| t^m \partial_t^m(\s_t \nabla_\parallel) A_\parallel \||_{op} \lesssim \|| \tT\||_{op}$, and $\|| t^m \partial_t^{m +1} \s_t \vec{b} \||_{op} \lesssim \|| \tT\||_{op}$. It remains to handle the first term. Observe that ${B_2}_\| g = \div_\parallel \nabla_\parallel I_2{B_2}_\| g = \div_\parallel \vec{R} I_1{B_2}_\| g ,$ where   $\vec{R}$ is the vector-valued Riesz tranform. It follows that ${B_2}_\| g= \div_\| \vec{G}$ with $\| \vec{G} \|_2 \lesssim \|B_2 \|_n \|g \|_2$, and hence
	\begin{equation*}
	\|| t^m\partial_t^m S_t B_2 \||_{op} \lesssim \||\tT_t\||_{op} \|B_2 \|_n,
	\end{equation*}
	which yields the desired bound.\end{proof}

\subsection{Proof of the square function bound} We finally turn to the proof of Theorem \ref{Tbargument.thrm} (and hence, by our reduction, the proof of Theorem \ref{perturbfromlargem.thrm}). Our method  follows  the lines of \cite{GH}, circumventing some difficulties by introducing $\Theta^{(a)}_t$ and $b_Q^{(a)}$.
\begin{proof}[Proof of Theorem \ref{Tbargument.thrm}] Let $C_1$ be a constant, depending on $m$, $n$, $\lambda$ and $\Lambda$, for which the inequalities (\ref{Carerrest1boundleq.eq}) and (\ref{Carerrest2boundleq.eq}) hold. We   choose $\eta$ in Proposition \ref{allbQproperties.lem} as $\eta := 1/(2C_1 + 4)$. By the generalized Christ-Journ\'e $T1$ theorem for square functions, (see \cite[Theorem 4.3]{GH}) to prove the theorem it is enough\footnote{The careful reader will notice that we have verified the hypotheses of  \cite[Theorem 4.3]{GH} above aside from the quasiorthogonality estimate \cite[equation (4.4)]{GH}. This estimate is slightly misstated in \cite{GH}, where $h$ should be replaced by $\mathcal{Q}_s h$ and we verify this below when dealing with the term labeled $J_1$. } to show that
\begin{equation}\label{Tbgoaleq.eq}
\| \Theta^0_t 1 \|_{\C} \le C.
\end{equation}
As in \cite{GH}, we want to reduce the above estimate to one of the form
$$\dint_{R_Q} \Big| (\Theta_t 1)A_t^{\mu_Q} b_Q \Big|^2 \, \frac{dx \, dt}{t} \le C|Q|,$$
where $A_t^{\mu_Q}$ is an averaging operator adapted to $\mu_Q$ (and hence $Q$) we will introduce later and $R_Q$ is the Carleson region $Q \times (0, \ell(Q))$. The argument up until this reduction, namely \eqref{Tbgoal3eq.eq}, is almost exactly as in \cite{GH}. Define $\zeta(x,t) := \Theta_t 1(x)$, $\zeta^0(x,t) := \Theta_t^0 1(x)$, and $\zeta'(x,t) := \Theta_t' 1(x)$, where these objects make sense as elements of $L^2_{\loc}(\bb R^{n+1}_+)$ by Lemma \ref{AAAHKL3.11.lem} and Proposition \ref{ODforsevop.prop}. Consider the cut-off surfaces
$$F_1: = \big\{(x,t) \in \ree_+ : |\zeta^0(x,t)| \le \sqrt{\eta} |\zeta'(x,t)|\big\},$$
$$F_2: = \big\{(x,t) \in \ree_+ : |\zeta^0(x,t)| > \sqrt{\eta} |\zeta'(x,t)|\big\}.$$
We easily have that $\| \zeta^0 \|_{\C} \le \| \zeta^0\mathbbm{1}_{F_1} \|_{\C}  + \| \zeta^0 \mathbbm{1}_{F_2} \|_{\C}$. By definition of $F_1$, Proposition \ref{Carlesonerrorest.prop}, and the fact that $\eta < 1/(2C_1)$, we realize that
$$\| \zeta^0 \mathbbm{1}_{F_1} \|_{\C} \le \eta \| \zeta^1 \|_{\C} \le C_1\eta(1 + \|\zeta^0 \|_{\C}) \le \frac{1}{2}(1 + \|\zeta^0 \|_{\C}).$$
Consequently, $\|\zeta^0\|_{\C} \le 1 + 2 \| \zeta^0 \mathbbm{1}_{F_2} \|_{\C}$, and recall that we may work with truncated versions of each of $\zeta, \zeta^0, \zeta'$ so that all quantities are finite. Accordingly, we have reduced the proof of \eqref{Tbgoaleq.eq} to showing that
\begin{equation}\label{Tbgoal2eq.eq}
\| \zeta^0 \mathbbm{1}_{F_2} \|_{\C} \le C.
\end{equation}

By  \eqref{destbQ3.eq} and \eqref{destbQ4.eq} we have that
\begin{multline*}
\frac{1}{2} |\zeta^0| \le \Big| \zeta^0 \cdot\frac1{\mu_Q(Q)}\int_Q b^0_Q \, d\mu_Q \Big| \le  \Big| \zeta \cdot\frac1{\mu_Q(Q)}\int_Q b_Q \, d\mu_Q \Big| +  \Big| \zeta' \cdot\frac1{\mu_Q(Q)}\int_Q b'_Q \, d\mu_Q \Big| 
\\  \le \Big| \zeta \cdot\frac1{\mu_Q(Q)}\int_Q b_Q \, d\mu_Q \Big| + \frac{\eta}{2}|\zeta'|,
\end{multline*}
for every dyadic cube $Q \subset \rn$. Therefore, for every such $Q\subset \rn$, the estimates
$$\frac{1}{2} |\zeta^0|  \le \Big| \zeta \cdot\frac1{\mu_Q(Q)}\int_Q b_Q \, d\mu_Q \Big| + \frac{\sqrt{\eta}}{2}|\zeta^0|,\qquad\text{and}$$
$$|\zeta| \le |\zeta^0| + |\zeta'| \le (1 + \eta^{-\frac{1}{2}})|\zeta^0| \le 2 \eta^{-1/2}|\zeta^0|$$
hold in $F_2$. Combining the previous three estimates, we have that for $(x,t) \in F_2$ and every dyadic cube $Q$
\begin{equation}\label{tbdubtildeq.eq}
\frac{\sqrt\eta}{2}(1-\sqrt\eta) \frac{1}{2}|\zeta(x,t)| \le (1-\sqrt{\eta}) \frac{1}{2} |\zeta^0(x,t)| \le  \Big| \zeta \cdot\frac1{\mu_Q(Q)}\int_Q b_Q \, d\mu_Q \Big|.
\end{equation}

At this juncture, we make the observation that, in order to obtain (\ref{Tbgoal2eq.eq}), it suffices to show that for some $\alpha > 0$ chosen small enough, we have that 
\begin{equation}\label{tbdagprimeeq.eq}
\| \zeta^0\mathbbm{1}_{F_2}\mathbbm{1}_{\Gamma^\alpha_\nu}(\zeta)\|_{\C} \le C,
\end{equation}
with $C$ independent of $\nu$, where $\Gamma^\alpha_\nu$ is an arbitrary cone of aperture $\alpha$; that is, 
$$\Gamma^\alpha_\nu := \{z \in \mathbb{C}^{2}: |(z/|z|) - \nu)| < \alpha\},$$
for $\nu \in \mathbb{C}^{2}$ a unit vector. It is clear that if we establish \eqref{tbdagprimeeq.eq}, then \eqref{Tbgoal2eq.eq} follows by summing over a collection of cones covering $\mathbb{C}^{2}$.
In light of this, we fix such a cone $\Gamma^\alpha_\nu$ with $\alpha$ to be chosen. By \eqref{tbdubtildeq.eq} and the fact that $\eta < 1/4$ we have that  for each $(x,t) \in F_2$ with $\zeta(x,t) \in \Gamma^\alpha_\nu$ and every dyadic cube $Q \subset \rn$,
\begin{multline*}
\frac{\sqrt\eta}{8}\le \Big|\frac{\zeta(x,t)}{|\zeta(x,t)|} \cdot\frac1{\mu_Q(Q)}\int_{Q} b_Q \, d\mu \Big|\\ \le  \Big|\Big(\frac{\zeta(x,t)}{|\zeta(x,t)|} - \nu\Big) \cdot \frac1{\mu_Q(Q)}\int_{Q} b_Q \, d\mu \Big| +  \Big|\nu \cdot\frac1{\mu_Q(Q)}\int_{Q} b_Q \, d\mu \Big|
\\ \le C_0 \alpha + \Big|\nu \cdot \frac1{\mu_Q(Q)}\int_{Q} b_Q \, d\mu \Big|,
\end{multline*}
where in the last step, we used Schwarz's inequality, the fact that 
$$1/C_0 \le d\mu/dx = \phi_Q \le 1 \text{ on } Q,$$
and \eqref{destbQ1.eq}.
Since $\alpha$ is at our disposal, we may choose $\alpha < \tfrac{\sqrt{\eta}}{16C_0}$, so that 
\begin{equation}\label{tbhearteq.eq}
\frac{\sqrt{\eta}}{16} = : \theta  \le \Big|\nu \cdot \frac1{\mu_Q(Q)}\int_{Q} b_Q \, d\mu \Big|.
\end{equation}
Next, we observe that in order to obtain \eqref{tbhearteq.eq} we needed $(x,t) \in F_2$ with $\zeta(x,t) \in \Gamma^\alpha_\nu$. This means that \eqref{tbhearteq.eq} holds whenever
\begin{equation*} 
\dint_{R_Q}|\zeta^0(x,t)|^2 \mathbbm{1}_{F_2}(x,t) \mathbbm{1}_{\Gamma^\alpha_\nu}(\zeta(x,t)) \, \frac{dx \, dt}{t} \neq 0.
\end{equation*}
Consequently, when proving \eqref{tbdagprimeeq.eq} we can always assume that \eqref{tbhearteq.eq} holds.

Now, fix any dyadic cube $Q$ such that \eqref{tbhearteq.eq} holds and, following \cite{GH}, use a stopping time procedure to extract a family $\F=\{Q_j\}$ of non-overlapping dyadic subcubes of $Q$ which are maximal with respect to the property that at least one of the following conditions holds:
\begin{gather*}
\frac1{\mu_Q(Q_j)}\int_{Q_j} |b_Q|\, d\mu_Q  > \frac{\theta}{4\alpha}  \qquad \text{(type $I$)}
\\ \Big|\nu \cdot \frac1{\mu_Q(Q_j)}\int_{Q_j} b_Q \, d\mu_Q \Big|  \le \frac{\theta}{2}  \qquad\text{(type $II$)}.
\end{gather*}
If some $Q_j$ happens to satisfy both the type $I$ and type $II$ conditions we (arbitrarily) assign it to be of type $II$. We will write $Q_j \in \F_I$ or $Q_j \in \F_{II}$ to mean that a cube is of type $I$ or of type $II$ respectively.
This stopping time argument produces an `ample sawtooth' with desirable bounds in the following sense.
\begin{claim}[Ample sawtooth]\label{TbSawtooth.cl}
There exists $\beta > 0$, uniform in $Q$, such that
\begin{equation}\label{TbAmpleeq.eq}
\sum_{Q_j \in \F} |Q_j| \le (1-\beta)|Q|,
\end{equation}
provided that $\alpha > 0$ is  small enough (depending on allowable constants). Moreover,
\begin{equation}\label{TbBoundeq.eq}
|\zeta(x,t)|^2\mathbbm{1}_{\Gamma^\alpha}(\zeta(x,t)) \le C_\theta|\zeta(x,t)A_t^{\mu_Q}b_Q(x)|^2, \quad \text{for } (x,t) \in E_Q^*,
\end{equation}
where $E_Q^* : = R_Q \setminus(\cup_{Q_j \in \F} R_{Q_j})$. Here $A_t^{\mu_Q}$ is the `dyadic averaging operator adapted to the measure $\mu_Q$', that is, $A_t^\mu f(x) = \frac1{\mu_Q(Q(x,t))}\int_{Q(x,t)} f \, d\mu_Q,$ where $Q(x,t)$ denotes the smallest dyadic cube, of side length at least $t$, that contains $x$. 
\end{claim}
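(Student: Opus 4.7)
The plan is to execute a standard Carleson-type stopping-time argument as in \cite{GH, Hof-May-Mour}, keeping careful track of how the density $\phi_Q$ (with $\phi_Q\gtrsim\hm=\tau^3$ on $Q$) enters our estimates. Split $\F=\F_I\cup\F_{II}$ as in the statement and handle each contribution to \eqref{TbAmpleeq.eq} using the respective stopping condition, then exploit the maximality of the $Q_j$'s together with the $L^2$ bound \eqref{destbQ1.eq} to show that a non-trivial portion of $Q$ is never stopped.

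For the type $I$ contribution, by the defining inequality together with the disjointness of the $Q_j$ and Cauchy--Schwarz, I would compute
\begin{equation*}
\sum_{Q_j\in\F_I}\mu_Q(Q_j)\leq\frac{4\alpha}{\theta}\sum_{Q_j\in\F_I}\int_{Q_j}|b_Q|\,d\mu_Q\leq \frac{4\alpha}{\theta}\int_Q|b_Q|\,d\mu_Q\lesssim\frac{\alpha}{\theta}C_0^{1/2}|Q|.
\end{equation*}
Since $dx\leq\hm^{-1}\,d\mu_Q$ on $Q$, this yields $\sum_{\F_I}|Q_j|\lesssim\tau^{-3}(\alpha/\theta)C_0^{1/2}|Q|$, which will be less than $\tfrac{1}{4}|Q|$ once $\alpha$ is chosen small enough (depending on $\tau$, $\theta$, $C_0$). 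To bound the type $II$ family, I would start from \eqref{tbhearteq.eq} and decompose
\begin{equation*}
\theta\,\mu_Q(Q)\leq\Bigl|\nu\cdot\int_{Q\setminus\bigcup Q_j}b_Q\,d\mu_Q\Bigr|+\sum_{\F_I}\Bigl|\nu\cdot\int_{Q_j}b_Q\,d\mu_Q\Bigr|+\sum_{\F_{II}}\Bigl|\nu\cdot\int_{Q_j}b_Q\,d\mu_Q\Bigr|.
\end{equation*}
The first term is controlled by Cauchy--Schwarz and \eqref{destbQ1.eq} by $C_0^{1/2}\mu_Q(Q)^{1/2}\mu_Q(Q\setminus\bigcup Q_j)^{1/2}$; the $\F_I$ sum, by the type $I$ estimate just obtained, is at most a small multiple of $\mu_Q(Q)$; and the $\F_{II}$ sum is at most $(\theta/2)\mu_Q(Q)$ by definition. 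Rearranging yields $\mu_Q(Q\setminus\bigcup Q_j)\geq c(\theta,C_0)\mu_Q(Q)$, and since $\phi_Q\leq 1$, also $|Q\setminus\bigcup Q_j|\geq\beta|Q|$ for a suitable $\beta=\beta(\tau,\theta,C_0)>0$, establishing \eqref{TbAmpleeq.eq}.

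The pointwise estimate \eqref{TbBoundeq.eq} follows from the maximality of the stopping cubes. For $(x,t)\in E_Q^*$, the minimal dyadic cube $Q(x,t)\subset Q$ of side length $\geq t$ containing $x$ is not contained in any $Q_j\in\F$, hence fails both stopping conditions. This gives simultaneously
\begin{equation*}
|A_t^{\mu_Q}b_Q(x)|\leq\frac{1}{\mu_Q(Q(x,t))}\int_{Q(x,t)}|b_Q|\,d\mu_Q\leq \frac{\theta}{4\alpha},\qquad\Bigl|\nu\cdot A_t^{\mu_Q}b_Q(x)\Bigr|>\frac{\theta}{2}.
\end{equation*}
Writing $\zeta(x,t)=|\zeta(x,t)|\tilde\nu$ with $|\tilde\nu-\nu|<\alpha$ on $\Gamma^\alpha_\nu$, the triangle inequality gives
\begin{equation*}
|\zeta(x,t)\cdot A_t^{\mu_Q}b_Q(x)|\geq |\zeta(x,t)|\bigl(|\nu\cdot A_t^{\mu_Q}b_Q(x)|-\alpha|A_t^{\mu_Q}b_Q(x)|\bigr)\geq \frac{\theta}{4}|\zeta(x,t)|,
\end{equation*}
which is \eqref{TbBoundeq.eq} with $C_\theta=16/\theta^2$.

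The main obstacle will be the simultaneous calibration of the small parameters: $\alpha$ must be taken small enough for the type $I$ mass to be absorbed in the ample sawtooth bound \emph{and} for the ``cosine term'' $\alpha|A_t^{\mu_Q}b_Q|$ to be dominated by $\theta/2$ in the pointwise argument, while $\beta$ must remain positive after the $\tau^{-3}$ loss from passing from $\mu_Q$ to Lebesgue measure. Because $\tau$ has already been frozen in Proposition \ref{allbQproperties.lem} (and $\theta=\sqrt{\eta}/16$ is therefore fixed), these smallness requirements on $\alpha$ are compatible, and the final $\beta$ and $C_\theta$ will depend only on $n,\lambda,\Lambda,m$, as required.
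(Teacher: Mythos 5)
Your proof is correct and, for the ampleness estimate \eqref{TbAmpleeq.eq}, you take a genuinely different (and somewhat more elementary) route than the paper. The paper observes that $B_I := \bigcup_{\F_I}Q_j$ lies inside the superlevel set $\{\n M(b_Q) > \theta/(4C_0\alpha)\}$ of the Hardy--Littlewood maximal function, and invokes the weak-type $(2,2)$ inequality with \eqref{destbQ1.eq} to conclude $|B_I| \lesssim C_0^3(\alpha/\theta)^2|Q|$; you instead sum the type $I$ inequality over the disjoint family $\F_I$ and apply Cauchy--Schwarz with \eqref{destbQ1.eq} to obtain $\sum_{\F_I}\mu_Q(Q_j)\lesssim(\alpha/\theta)C_0^{1/2}|Q|$, converting to Lebesgue measure at a cost of $\hm^{-1}$. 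Both routes yield a factor that vanishes with $\alpha$ once $\tau,\theta,C_0$ are fixed; yours avoids the maximal function at the price of a worse power of $\alpha$, which is immaterial since only smallness is needed. Your derivations of the type $II$ bound and of the pointwise inequality \eqref{TbBoundeq.eq} coincide with the paper's up to rephrasing (reverse vs.\ ordinary triangle inequality). One small slip: in the Cauchy--Schwarz estimate for the $E$-term you wrote the bound $C_0^{1/2}\mu_Q(Q)^{1/2}\mu_Q(E)^{1/2}$; since \eqref{destbQ1.eq} is an $L^2(dx)$ bound and $d\mu_Q\le dx$, the correct statement is $\int_E|b_Q|\,d\mu_Q\le \mu_Q(E)^{1/2}\big(\int_Q|b_Q|^2\,dx\big)^{1/2}\le C_0^{1/2}|Q|^{1/2}\mu_Q(E)^{1/2}$; this is harmless because $\mu_Q(Q)\approx|Q|$, but as written the stated bound is slightly tighter than what the computation justifies. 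Your closing remarks on calibrating $\alpha$ against the already-frozen parameters $\tau$, $\theta=\sqrt{\eta}/16$, and $C_0$ are exactly the right observation.
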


We postpone the proof of the claim for a bit. The ampleness condition   \eqref{TbAmpleeq.eq}   allows us to use the  ``John-Nirenberg lemma for Carleson measures'' to replace $R_Q$ in the definition of $\| \cdot \|_{\C}$ by $E^*_Q$. This is done via an induction argument; see for instance, \cite[Lemma 1.37]{H1}. Thus, we have by \eqref{TbBoundeq.eq} that
\begin{multline*}
\| \zeta^0\mathbbm{1}_{F_2}\mathbbm{1}_{\Gamma^\alpha_\nu}(\zeta) \|_{\C}  \lesssim_{\beta} \sup_{Q} \frac{1}{|Q|} \dint_{E_Q^*}|\zeta^0(x,t)|^2 \mathbbm{1}_{F_2}(x,t)\mathbbm{1}_{\Gamma^\alpha_\nu}(\zeta(x,t))\, \frac{dx\,dt}{t}
\\  \lesssim  \sup_{Q} \frac{1}{|Q|} \dint_{R_Q}|\zeta(x,t)A_t^{\mu_Q}b_Q(x)|^2 \, \frac{dx \, dt}{t},
\end{multline*}
where we used that $|\zeta^0| \le |\zeta|$ in the first line and replaced $E_Q^*$ by the larger set $R_Q$ after using \eqref{TbBoundeq.eq} in the second line. As we had reduced the proof of the theorem to showing the estimate \eqref{tbdagprimeeq.eq}, it is enough to show that
\begin{equation}\label{Tbgoal3eq.eq} 
\sup_{Q} \frac{1}{|Q|} \dint_{R_Q}|\zeta(x,t)A_t^{\mu_Q}b_Q(x)|^2 \, \frac{dx \, dt}{t} \le C.
\end{equation}
To this end, we fix a dyadic cube $Q$ and write
\begin{equation*}
\zeta A_t^{\mu_Q}b_Q = [(\Theta_t 1)A_t^\mu - \Theta_t]b_Q + \Theta_t b_Q  =:R_t b_Q + \Theta_tb_Q = I + II.
\end{equation*}
First we handle term $II$, which is (almost) good by design. We write
$$II = \Theta_t b_Q = \hat \Theta_t \hat b_Q - \Theta_t^{(a)}b_Q^{(a)} =: II_1 + II_2.$$
By \eqref{destbQ2.eq}, the contribution from the term $II_1$ in \eqref{Tbgoal3eq.eq}  is controlled by $C_0$. Moreover, by Proposition \ref{Carlesonerrorest.prop} we have  that
\begin{multline*}
\dint_{R_Q} |\Theta_t^{(a)}b_Q^{(a)}|^2 \, \frac{dx \, dt}{t}  \le C_1\lVert b_Q^{(a)}\rVert^2_{L^2(\rn)}(1 +\| \Theta^0_t 1 \|_{\C})
\\ \le C_1C_0|Q|\, \|B_1\|_{n}^2(1 +\| \Theta^0_t 1 \|_{\C}),
\end{multline*}
so that the contribution of $II_2$ can be hidden in \eqref{Tbgoaleq.eq}, provided that $\|B_1\|_{n}$ is sufficiently small (depending on $\eta$, $\alpha$). Here, we used that $b_Q^{(a)}(y) =|Q| B_1 F_Q(y,0)$, so that
$$
\lVert b_Q^{(a)}\rVert^2_{L^2(\rn)} = \int_{\rn} |Q|^2|B_1F_Q(\cdot,0)|^2 \le \|B_1\|_{n}^2 |Q|^2 \int_{\rn} |\nabla F_Q(\cdot,0)|^2 \le C_0 \|B_1\|_{n}^2\,|Q|.
$$

It remains to obtain a desirable bound for $I$. Let $\{\m Q_s\}_{s>0}$ be a CLP family (see Definition \ref{CLP.def}). By a standard orthogonality argument and \eqref{destbQ1.eq}, it is enough to show that for some $\beta_0 > 0$ and all $t \in (0,\ell(Q))$, the estimate  
\begin{equation}\label{Redeq.eq}
\int_{Q} |R_t\m Q_s^2 h|^2\lesssim \min\Big(\frac{s}{t}, \frac{t}{s}\Big)^{\beta_0} \int _{\rn}|\m Q_s h|^2
\end{equation}
holds for all $h \in H \times L^2(\rn)$. 

We remind the reader that $H:= \{h': h' = \nabla F, F \in Y^{1,2}(\rn)\}$ and that $b_Q \in H \times L^2(\rn)$. Before proving \eqref{Redeq.eq}, we make a small technical point. Having fixed $Q$, we let $\tilde \mu_Q$ be a measure on $\rn$ defined by $\tilde\mu_Q := \mu_Q|_{Q} + \frac{1}{\tilde{C_0}}dx|_{\rn \setminus Q}$, and set $E_t = A_t^{\tilde \mu_Q}$. Notice that for $(x,t) \in Q \times (0,\ell(Q))$, $A_t^{\tilde \mu_Q}$ acts exactly as $A_t^{\mu_Q}$. Thus, in order to prove \eqref{Redeq.eq}, we may replace $R_t$ by $\widetilde R_t$, where $\widetilde{R}_t : = [(\Theta_t 1)E_t-\Theta_t]$.   Notice that we may apply Lemma \ref{AAAHKL3.11.lem} to $\Theta_t$, since $\Theta_t$ has good off-diagonal decay (see Proposition \ref{ODforsevop.prop}) and satisfies uniform $L^2$ bounds on slices (see Proposition \ref{L2avgandsliceboundsperturb.prop}). Thus, $(\Theta_t 1)$ is well defined as an element of $L^2_{\loc}$ and, since $E_t$ is a  self-adjoint averaging operator, 
we have that
\begin{equation}\label{TbDiamondeq.eq}
\sup_{t > 0} \| (\Theta_t 1)E_t\|_{L^2 \to L^2} \le C. 
\end{equation}
We break \eqref{Redeq.eq} into cases.

{\bf Case 1: $t \le s$.} In this case, we see by \eqref{TbDiamondeq.eq} and properties of $\Theta_t$ that $\widetilde{R}_t 1 = 0$, $\|\widetilde R_t\|_{L^2 \to L^2} \le C$ and $\widetilde R_t$ has good off-diagonal decay. Hence, it follows from   Lemma \ref{AAAHKL3.5.lem} that
$$\|\widetilde R_t\m Q_s^2 h\|_{L^2(\rn)} \lesssim t \| \nabla\m Q_s^2 h\|_{L^2(\rn)} \lesssim \tfrac{t}{s}\|s\nabla\m Q_s\m Q_s h\|_{L^2(\rn)} \lesssim  \tfrac{t}{s}\|\m Q_s h\|_{L^2(\rn)},$$
which shows \eqref{Redeq.eq} with $\beta_0 = 2$ in this case.

{\bf Case 2: $t > s$.} In this case, we break $\widetilde R_t$ into its two separate operators. One can verify that $\|E_t\m Q_s\|_{L^2 \to L^2} \lesssim (\tfrac{s}{t})^{\gamma}$ for some $\gamma > 0$.  Since $E_t$ is a projection operator, we have that $E_t = E_t^2$ and hence by \eqref{TbDiamondeq.eq}, we see that
\begin{equation*}
\| (\Theta_t 1)E_t\m Q_s^2h\|_2 = \| (\Theta_t 1)E_t [E_t\m Q_s^2h]\|_{2} \lesssim \|E_t\m Q_s^2h\|_{2} \lesssim (\tfrac{s}{t})^{\gamma}\|\m Q_sh\|_{2},
\end{equation*}
which shows that the contribution of $(\Theta_t 1)E_t\m Q_s^2$ to \eqref{Redeq.eq} when $t > s$ is as desired with $\beta_0 = 2\gamma$.

We are left with handling $\Theta_t\m Q_s^2 h$. Since $h = (h', h^0) \in H \times L^2(\rn)$, we write $h = (\nabla_\parallel F, h^0)$, with $F \in Y^{1,2}(\rn)$ (note $\nabla_\parallel = \nabla$ here). Then we may write 
\begin{multline*} 
\Theta_t\m Q_s h  = \Theta_t^0\m Q_s^2h^0 + \Theta'_t\m Q_s^2 \nabla_\parallel F
\\  = \Theta_t^0\m Q_s^2h^0 + [\Theta'_t\m Q_s^2 \nabla_\parallel F + \Theta_t^{(a)}B_1\m Q_s^2F] -  \Theta_t^{(a)}B_1\m Q_s^2F
\\  = J_1 + J_2  + J_3.
\end{multline*}
To handle $J_1$, we write $\m Q_s = s \div_\parallel s\nabla_\parallel e^{s^2 \Delta}$, so that 
\begin{equation*} 
J_1 = \Theta_t^0\m Q_s^2h^0  = t^{m}(\partial_t)^{m+1}\s^{\cL}_t\m Q_s\m Q_s h^0=  \tfrac{s}{t}t^{m+1}(\partial_t)^{m+1}\s^{\cL}_t \div_\parallel s\nabla_\parallel e^{s^2 \Delta}\m Q_s h^0.
\end{equation*}
Note that  by \eqref{eq.sliceest} we have that $t^{m+1}(\partial_t)^{m+1}\s^{\cL}_t \div_\parallel$ and $s\nabla_\parallel e^{s^2 \Delta}$ are bounded operators on $L^2(\bb R^n)$. Therefore, we have that $\| \Theta_t^0\m Q_s^2h^0\|_{2} \lesssim \frac{s}{t}\|\m Q_sh^0\|_{2}$, and the contribution of $J_1$ to \eqref{Redeq.eq} when $t > s$ is as desired with $\beta_0 = 2$.

For the term $J_2$, first we use Proposition \ref{CLPW12.prop} to justify that there exists $g\in L^2(\bb R^n)$ such that $\m Q_s F = I_1 g$, where $I_1 = (-\Delta)^{-1/2}$ is the Riesz potential of order $1$, and satisfying $\|g\|_2 \approx \|\nabla_\parallel\m Q_s F \|_2 = \|\m Q_s \nabla_\parallel F \|_2 = \|\m Q_s h' \|_2$ (every $F \in Y^{1,2}(\rn)$ arises as the Riesz potential of a function $g$ in $L^2(\rn)$). Then, we may use integration by parts on slices (Proposition \ref{IBPonslicesprop.prop}) to compute that
\begin{multline*}
J_2 = t^{m}(\partial_t)^m \s^{\cL}_t\big({B_2}_\parallel \nabla_\parallel\m Q_s^2 F\big) + t^{m}(\partial_t)^m (\s^{\cL}_t \nabla)\tilde{A}\nabla_\parallel\m Q_s^2 F  +  t^{m}(\partial_t)^m (\s^{\cL}_t \nabla)B_1\m Q_s^2F
\\  = - t^{m}(\partial_t)^{m +1}(\s^{\cL}_t \nabla)\vec A_{\cdot,n+1}\m Q_sI_1 g + t^m(\partial_t)^{m+1}\s^{\cL}_t{B_2}_\perp \m Q_sI_1 g
  = J_{2,1} + J_{2,2}.
\end{multline*}
Since $\|s^{-1}\m Q_sI_1 \|_{L^2 \to L^2} \le C$ and $t^{m + 1}(\partial_t)^{m +1}(\s^{\cL}_t \nabla):L^2 \to L^2$, we obtain that the contribution of $J_{2,1}$ to \eqref{Redeq.eq} when $t > s$ is as desired with $\beta_0 = 2$. 
Similarly, $t^m(\partial_t)^{m+1}\s^{\cL}_t {B_2}_\perp : L^2 \to L^2$, so that the contribution of $J_{2,2}$ to \eqref{Redeq.eq} when $t > s$ is as desired with $\beta_0 = 2$. 

We are left with controlling the contribution of
\begin{equation*}
J_3 = \Theta_t^{(a)}B_1\m Q_s^2F = t^m\partial_t^m (\s^{\cL}_t\nabla)B_1\m Q_s F = \Theta_{t,m} B_1 I_1 g,
\end{equation*}
where $F = I_1 g$, $F \in Y^{1,2}$ and $g \in L^2$ with $\| g\|_2 \approx \|\nabla_\parallel F\|_2$ . By Proposition \ref{QOforSnabBI.lem}, for all $s < t$ we have that
$$\|\Theta_t^{(a)}B_1\m Q_s^2F\|_{L^2(\rn)} \lesssim \Big(\frac{s}{t} \Big)^\gamma\|\m Q_s g\|_{L^2(\rn)}.$$
Then we may control this term in \eqref{Redeq.eq} with $g$ in place of $h = \nabla_\parallel F$, which is sufficient as $\| g\|_2 \lesssim \|\nabla_\parallel F\|_2$.

The proof of the theorem is finished modulo the  

\begin{proof}[Proof of Claim \ref{TbSawtooth.cl}]
	We first verify \eqref{TbBoundeq.eq}. Observe, by the maximality of the family $Q_j$, that for any dyadic subcube $Q'$ of $Q$  which is not contained in any $Q_j$, we have the inequalities opposite to the type $I$ and type $II$ inequalities, with $Q'$ in place of $Q_j$. Thus,
	$$\frac{\theta}{2} \le \Big|\nu \cdot A_t^{\mu_Q} b_Q(x)\Big|\quad
	\text{ and }\quad
	|A_t^{\mu_Q} b_Q(x)| \le \frac{\theta}{4\alpha}$$
	for all $(x,t) \in E_Q^*$. It follows that if $z \in \Gamma^\alpha_{\nu}$ and $(x,t) \in E_Q^*$, we have the bound
	\begin{multline*}
	\frac{\theta}{2} \le \Big|\nu \cdot A_t^{\mu_Q} b_Q(x)\Big| 
	\le  \Big|(z/|z|) \cdot A_t^{\mu_Q} b_Q(x)\Big|  +  \Big|(z/|z| -\nu) \cdot A_t^{\mu_Q} b_Q(x)\Big| 
	\\  \le  \Big|(z/|z|) \cdot A_t^{\mu_Q} b_Q(x)\Big| + \frac{\theta}{4},
	\end{multline*}
	where we used the definition of $\Gamma^\alpha_{\nu}$ in the last line. The above estimate yields \eqref{TbBoundeq.eq} with $C_\theta = (\tfrac{4}{\theta})^2$ by setting $z = \zeta(x,t)$.
	
	Now we establish \eqref{TbAmpleeq.eq}. Set $E: = Q \setminus( \cup_{Q_j \in \F} Q_j)$ and $B_I := \cup_{Q_j \in \F_I} Q_j$. By definition of $\F_I$ and the fact that $1/C_0 \le d\mu/dx \le 1$ on $Q$, we have that $B_I\subset \big\{\n M (b_Q) >\frac{\theta}{4C_0\alpha}\big\}$ where $\n M$ is the uncentered Hardy-Littlewood maximal function on $\rn$ (taken over cubes). The weak-type $(2,2)$ inequality for the Hardy-Littlewood maximal function and \eqref{destbQ1.eq} yield the estimate
	\begin{equation*} 
	|B_I| \le CC_0^2\Big(\frac{\alpha}{\theta}\Big)^2 \int_{\rn}|b_Q|^2 \le CC_0^3\Big(\frac{\alpha}{\theta}\Big)^2|Q|.
	\end{equation*}
	From this estimate, \eqref{destbQ1.eq},  \eqref{tbhearteq.eq}, the definition of type $II$ cubes, and H\"older's inequality we obtain
	\begin{multline*}
	\theta \mu_Q(Q) \le \Big| \nu \cdot \int_Q b_Q \, d\mu_Q\Big|
	\\ \le \Big| \nu \cdot \int_E b_Q \, d\mu_Q\Big| + \int_{B_I} |b_Q| \, d\mu + \sum_{Q_j \in \F_{II}} \Big| \nu \cdot \int_{Q_j} b_Q \, d\mu_Q\Big|
	\\  \le |E|^{1/2} \| b_Q\|_{L^2(\rn)} + |B_I|^{1/2} \| b_Q\|_{L^2(\rn)} + \frac{\theta}{2}\sum_{Q_j \in \F_{II}} \mu_Q(Q_j)
	\\  \le C|E|^{1/2} |Q|^{1/2} + C_{\theta}\alpha |Q| + \frac{\theta}{2}\mu_Q(Q).
	\end{multline*}
	Choosing $\alpha > 0$ small enough and using the fact that $(1/2)^n |Q| \le \mu_Q(Q) \le |Q|$, the above estimate implies that $|Q| \le C_\theta |E|$, which yields the claim with $\beta = 1/C_\theta$.\end{proof}

Thus we conclude the proof of Theorem \ref{Tbargument.thrm}.\end{proof}

\section{Control of slices via square function estimates}

We are able to use the square function estimate obtained in the previous section to immediately improve our $L^2\ra L^2$ boundedness results of $t-$derivatives of the single layer potential. More precisely, in the following lemma, we extend 
estimate \eqref{eq.sliceest} (previously valid for $m\geq 2$), to the case $m=1$, given sufficient smallness of
$\max\{\|B_1\|_n, \|B_2\|_n\}$.
\begin{lemma}[Stronger $L^2\ra L^2$ estimate]\label{lm.k=1} The estimate
\begin{equation*}
		\Vert t\nabla\partial_t \s_t^{\cL}f\Vert_{L^2(\bb R^n)}\lesssim\Vert f\Vert_{L^2(\bb R^n)},
\end{equation*}
holds, provided that $\max\{\|B_1\|_n, \|B_2\|_n < \varepsilon_0$ and $\varepsilon_0>0$ is small enough so that \eqref{eq.sqfnest} holds for $m = n + 10$.
\end{lemma}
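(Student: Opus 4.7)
The plan is to upgrade the averaged slice bound (\ref{eq.avgslice}) at $m=1$ to a pointwise-in-$t$ bound by a Gronwall-type argument driven by the pointwise slice bound (\ref{eq.sliceest}) at $m=2$. By a standard density argument it suffices to treat $f\in C_c^\infty(\rn)$. Set
\begin{equation*}
\phi(t):=\|t\nabla \partial_t \s_t^{\cL} f\|_{L^2(\rn)}.
\end{equation*}
By Proposition \ref{tsolns.prop} applied twice, both $\partial_t \s^{\cL} f$ and $\partial_t^2\s^{\cL} f$ are weak solutions of $\cL u = 0$ in $\bb R^{n+1}_+$; combining this with Lemma \ref{Le1.5.lem} and Lemma \ref{ContSlices.lem} applied on strips $\Sigma_{a}^{b}$ with $0<a<b<\infty$ gives that for every $\tau>0$ and each $k\in\{1,2\}$, $\nabla \partial_\tau^{k}\s_\tau^{\cL} f$ belongs to $L^2(\rn)$ and that the map $\tau\mapsto \nabla \partial_\tau^{k} \s_\tau^{\cL} f$ is continuous into $L^2(\rn)$ on $(0,\infty)$. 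In particular $\phi$ is continuous on $(0,\infty)$.

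Starting from the identity $\partial_\tau(\tau\nabla \partial_\tau\s_\tau^{\cL} f)=\nabla \partial_\tau \s_\tau^{\cL} f+\tau\nabla \partial_\tau^2\s_\tau^{\cL} f$, integrating from $s$ to $t$ with $0<s\le t$ and applying Minkowski's integral inequality in $L^2(\rn)$ yields
\begin{equation*}
\phi(t) \le \phi(s) + \int_s^t \tfrac{1}{\tau}\phi(\tau)\,d\tau + \int_s^t \tfrac{1}{\tau}\|\tau^2\nabla \partial_\tau^2\s_\tau^{\cL} f\|_{L^2(\rn)}\,d\tau.
\end{equation*}
By (\ref{eq.sliceest}) with $m=2$, the last integral is at most $C\|f\|_{L^2(\rn)}\log(t/s)$. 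Setting $\psi(\tau):=\phi(\tau)+C\|f\|_{L^2(\rn)}$ converts this into the Gronwall-type inequality $\psi(t)\le \psi(s)+\int_s^t \tau^{-1}\psi(\tau)\,d\tau$, whence Gronwall's lemma yields $\psi(t)\le \psi(s)(t/s)$ for all $0<s\le t$.

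Finally I would select an anchor using (\ref{eq.avgslice}) at $m=1$: since $\dashint_{t/2}^{t}\phi(\tau)^2\,d\tau\lesssim \|f\|_{L^2(\rn)}^2$, some $s\in[t/2,t]$ satisfies $\phi(s)\lesssim \|f\|_{L^2(\rn)}$, and as $t/s\le 2$ the previous inequality collapses to $\phi(t)\lesssim \|f\|_{L^2(\rn)}$, as claimed. The corresponding bounds for $\cL^*$ and for $\bb R^{n+1}_-$ follow by the symmetry of hypotheses. The main obstacle is verifying the continuity of $\phi$ and justifying the slice-wise fundamental theorem of calculus in the $L^2(\rn)$ sense used above; this is precisely what the iterated application of the solution-regularity machinery (Propositions \ref{tsolns.prop} and \ref{prop.tder}, together with Lemmas \ref{ContSlices.lem} and \ref{Le1.5.lem}) delivers, and notably it does not require any pointwise boundedness of solutions.
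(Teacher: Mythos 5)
Your argument is correct, and it actually proves something slightly stronger than stated: you never invoke the square function bound \eqref{eq.sqfnest}. The two ingredients you use — the averaged slice bound \eqref{eq.avgslice} at $m=1$ and the pointwise slice bound \eqref{eq.sliceest} at $m=2$ — are both supplied by Proposition \ref{L2avgandsliceboundsperturb.prop} under the $n,\lambda,\Lambda$-dependent smallness of $\max\{\|B_1\|_n,\|B_2\|_n\}$, independently of the $Tb$ machinery of Section 5. The paper states this lemma without proof but frames it as a consequence of the square function estimate; the natural such argument is essentially yours, except that the anchor $\phi(s)\lesssim\|f\|_2$ for some $s\in[t/2,t]$ would come from $\int_0^\infty\|\tau\nabla\partial_\tau\s_\tau^{\cL}f\|_2^2\,d\tau/\tau\lesssim\|f\|_2^2$ (obtained from Theorem \ref{perturbfromlargem.thrm} together with Proposition \ref{lm.passtograd}) rather than from \eqref{eq.avgslice}. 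Both anchors deliver the same pigeonhole conclusion, so the two routes differ only in where the a.e.-in-$t$ control originates, and yours is the more elementary of the two. Two small remarks. First, the Gronwall step is a detour: once you have $s\in[t/2,t]$ with $\phi(s)\lesssim\|f\|_2$, the term $\int_s^t\tau^{-1}\phi(\tau)\,d\tau$ is bounded directly by Cauchy--Schwarz and \eqref{eq.avgslice} at $m=1$, with no iteration needed. Second, as you rightly flag, what the vector-valued fundamental theorem of calculus really requires is differentiability (not just continuity) of $\tau\mapsto\nabla\partial_\tau\s_\tau^{\cL}f$ as an $L^2(\rn)$-valued map; this is supplied by Proposition \ref{dualwtder.prop}(i) (which identifies the weak $t$-derivative) combined with the continuity you derive from Lemmas \ref{Le1.5.lem} and \ref{ContSlices.lem}, or alternatively via Theorem \ref{diffboch.thm}.
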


We may use Lemma \ref{lm.k=1} to obtain the ``travel down'' procedure for $\nabla\m S^\cL\nabla$.

\begin{lemma}[$L^2\ra L^2$ estimates for $S_t\nabla$]\label{lm.l2slicesgrad} The following statements are true.
\begin{enumerate}[i)]
\item\label{item.traveldownsgrad2} For each ${\bf f}\in L^2(\bb R^n,\bb C^{n+1})$ and each $t\neq0$ we have that
\begin{equation}\label{eq.l2slicesgrad}
\Vert t^k\partial_t^k(\m S^\cL_t\nabla){\bf f}\Vert_2\lesssim\Vert{\bf f}\Vert_2,\qquad k\geq1,
\end{equation}
\begin{equation}\label{eq.l2slicegradsgrad}
\Vert t^k\partial_t^{k-1}\nabla(\m S^\cL_t\nabla){\bf f}\Vert_2\lesssim\Vert{\bf f}\Vert_2,\qquad k\geq2,
\end{equation}
provided that $\max\{\|B_1\|_n, \|B_2\|_n < \varepsilon_0$ is small. Therefore, for each $m>k\geq2$,
\begin{equation}\label{eq.traveldownsgrad2}
\vertiii{t^k\partial_t^{k-1}\nabla(\m S^\cL_t\nabla){\bf f}}\lesssim_m\vertiii{t^m\partial_t^m(\m S^\cL_t\nabla){\bf f}}+\Vert{\bf f}\Vert_2,
\end{equation}
provided that $\max\{\|B_1\|_{L^n(\rn)}, \|B_2\|_{L^n(\rn)}\}$ is small.
\item\label{item.traveldownsgrad1} The estimate (\ref{eq.traveldownsgrad2}) holds for $k=1$ if the operator $\nabla$ acting on $(\m S_t^\cL\nabla)$ is replaced by $\partial_t$.
\end{enumerate} 
\end{lemma}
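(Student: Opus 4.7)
The plan is to establish the three estimates inside part \ref{item.traveldownsgrad2} in order, and then to treat part \ref{item.traveldownsgrad1} as a variant of the argument. First I would prove the slice bound \eqref{eq.l2slicesgrad} by duality; next I would deduce the gradient slice bound \eqref{eq.l2slicegradsgrad} from \eqref{eq.l2slicesgrad} via Caccioppoli on slices; and finally I would obtain the travel-down estimate \eqref{eq.traveldownsgrad2} by rerunning the integration-by-parts scheme of Proposition \ref{lm.passtograd}, with $u = (\m S_t^{\cL}\nabla){\bf f}$ replacing $\m S_t^{\cL} f$.

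For \eqref{eq.l2slicesgrad} in the range $k\ge 2$, the key observation is that Proposition \ref{dualwtder.prop}(iii) supplies the identity $\langle(D_{n+1}^k\nabla\m S^{\cL}[f])_t,\vec g\rangle = (-1)^k\langle f,(D_{n+1}^k(\m S^{\cL^*}\nabla)[\vec g])_{-t}\rangle$ for all $f,\vec g\in L^2$. Pairing this with the slice estimate \eqref{eq.sliceest} applied to $\cL$ yields the bound for $\cL^*$, and then a symmetry argument (the hypothesis on $\max\{\|B_1\|_n,\|B_2\|_n\}$ is invariant under taking adjoints) transfers it back to $\cL$. For the endpoint $k=1$, Proposition \ref{dualwtder.prop}(iii) is unavailable, so I would instead combine parts (i) and (ii) on $C_c^{\infty}$ pairs to obtain the adjoint identity, and then extend it to $L^2\times L^2$ by density, using the uniform $L^2$ slice bounds from Proposition \ref{L2avgandsliceboundsperturb.prop} and Remark \ref{singlelayerbddonL2.rmk} together with the $k=1$ input Lemma \ref{lm.k=1}.

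For \eqref{eq.l2slicegradsgrad}, set $u(x,t) := \partial_t^{k-1}(\m S_t^{\cL}\nabla){\bf f}$. Proposition \ref{tsolns.prop} gives $\cL u = 0$ in $\reu$ in the weak sense, so the slice Caccioppoli inequality (Lemma \ref{Lpcacconslices.lem} with $p=2$) applies. Fixing $t>0$, summing over a grid of $n$-dimensional cubes of side length $\approx t$, and inserting a weight of $t^{2k}$, one obtains
\[
\|t^k\partial_t^{k-1}\nabla(\m S_t^{\cL}\nabla){\bf f}\|_{L^2(\rn)}^2 \lesssim \int_{t/2}^{3t/2}\|s^{k-1}\partial_s^{k-1}(\m S_s^{\cL}\nabla){\bf f}\|_{L^2(\rn)}^2\,\frac{ds}{s},
\]
and the integrand is uniformly bounded by $\|{\bf f}\|_2^2$ by \eqref{eq.l2slicesgrad} with $k-1\ge 1$.

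For \eqref{eq.traveldownsgrad2} and part \ref{item.traveldownsgrad1}, I would adapt the cutoff-plus-integration-by-parts argument of Proposition \ref{lm.passtograd} verbatim, with $u := (\m S_t^{\cL}\nabla){\bf f}$ (respectively $u := (\m S_t^{\cL}\nabla){\bf f}$ but with $\partial_t$ taking the place of $\nabla$ on the left) replacing $\m S_t^{\cL} f$. The interior term arising from integration by parts in $t$ is split via Cauchy's inequality with $\eps$, absorbing one summand and leaving the higher $t$-derivative square function on the right-hand side; the boundary averages over $[\eps/2,\eps]$ and $[1/\eps,2/\eps]$ are controlled uniformly in $\eps$ by the slice estimates \eqref{eq.l2slicesgrad}, \eqref{eq.l2slicegradsgrad}, giving an additive $\|{\bf f}\|_2$ contribution. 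A single such step reduces the normal-derivative count by one at the cost of a space derivative, and Caccioppoli on Whitney cubes trades this space derivative back for another $t$-derivative; iterating this pair of moves bridges the gap from exponent $m$ down to exponent $k$.

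The main obstacle is the $k=1$ case of \eqref{eq.l2slicesgrad}: Proposition \ref{dualwtder.prop}(iii) is not available at $m=1$, so the adjoint identity must be assembled from parts (i) and (ii) on smooth data and then upgraded to $L^2$ data, which requires a careful density argument making sense of $(\m S_t^{\cL^*}\nabla)\vec g$ for $\vec g\in L^2$. Once this endpoint is secured, every subsequent step is mechanical.
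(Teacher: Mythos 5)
The paper states this lemma without proof, leaving it as a routine adaptation of the machinery developed in Sections~\ref{AbsLPthry.sec} and~\ref{ss5.1}, so there is no explicit argument to compare against; your proposal correctly supplies the intended details. In particular: (a) the duality step for \eqref{eq.l2slicesgrad} with $k\ge 2$ via Proposition~\ref{dualwtder.prop}(iii) and \eqref{eq.sliceest}, with the symmetry in the hypotheses used to pass between $\cL$ and $\cL^*$, is exactly right; (b) the treatment of the endpoint $k=1$ by combining parts (i)--(ii) of Proposition~\ref{dualwtder.prop} on smooth data, inserting the input from Lemma~\ref{lm.k=1}, and extending by density (as in Remark~\ref{singlelayerbddonL2.rmk}) is the correct way to handle the fact that part~(iii) is not available at $m=1$; (c) the Caccioppoli-on-slices derivation of \eqref{eq.l2slicegradsgrad} from \eqref{eq.l2slicesgrad} is sound once one notes, as you do, that $\partial_t^{k-1}(\s^{\cL}\nabla)\mathbf f$ is a solution in $\ree_\pm$ (Propositions~\ref{tsolns.prop} and~\ref{prop.slproperties}\ref{item.slpropertiesws})) and verifies the bookkeeping of the $|Q|$ and $|Q^{**}|$ normalizations in \eqref{Lpcacconslices}, which does yield the displayed $\tfrac{ds}{s}$-average; and (d) iterating the integration-by-parts scheme of Proposition~\ref{lm.passtograd}, with the slice estimates controlling the boundary averages and Lemma~\ref{lm.travelup} converting the final $\nabla$ into a $\partial_t$ at the cost of one power of $t$, closes \eqref{eq.traveldownsgrad2} and its $k=1$ variant in item~\ref{item.traveldownsgrad1}. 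The proof is correct.
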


We proceed with the   

\noindent\emph{Proof of Theorem \ref{thm.suponslices}.} Let ${\bf h}\in C_c^{\infty}(\bb R^n)^{n+1}$ and fix $\tau>0$. Notice that by Lemma \ref{ContSlices.lem}, the pairing $({\bf h},\Tr_t\nabla u)_{2,2}$ is meaningful. Let $R\gg\tau$, $\psi\in C_c^{\infty}(\bb R)$ satisfy $\psi\equiv1$ on $[\tau,R]$, $\psi\equiv0$ on $[2R,\infty)$, $|\psi|\leq1$ and $|\psi'|\leq\frac2R$. We have the following estimates:
\begin{multline}\label{eq.rtoinfty}
|I|:=\Big|\int_{\bb R^n}{\bf h}\cdot\int_R^{2R}\psi'\nabla u\Big|\leq\int_{\bb R^n}\int_R^{2R}|{\bf h}||\psi'||\nabla u|\\ \leq\frac2{\sqrt R}\Vert{\bf h}\Vert_{L^2(\bb R^n)}\Vert\nabla u\Vert_{L^2(\bb R^{n+1}_+)}\longrightarrow0\quad\text{as }R\ra\infty,
\end{multline}
\begin{multline}\label{eq.tcloseto0}
\Big|\int_{\bb R^n}{\bf h}\cdot t\Tr_{t+\tau}\nabla\partial_t u\Big|\leq\Vert{\bf h}\Vert_{L^2(\bb R^n)}\frac t{t+\tau}\Vert(t+\tau)\Tr_{t+\tau}\nabla\partial_tu\Vert_{L^2(\bb R^n)}\\ \leq\frac t{\tau}\Vert{\bf h}\Vert_{L^2(\bb R^n)}\Vert\nabla u\Vert_{L^2(\bb R^{n+1}_+)}\longrightarrow0\quad\text{as }t\searrow0,
\end{multline}
\begin{multline}\label{eq.rtoinfty2}
|II|:=\Big|\int_{\bb R^n}\int_{R-\tau}^{2R-\tau}{\bf h}\cdot t\psi'(t+\tau)\Tr_{t+\tau}\partial_t\nabla u\,dt\Big|\leq2\dashint_R^{2R}\int_{\bb R^n}t|{\bf h}||\partial_t\nabla u|\,dt\\ \leq2\Vert{\bf h}\Vert_{L^2(\bb R^n)}\sup_{t\in(R,2R)}\Vert t\Tr_t\partial_t\nabla u\Vert_{L^2(\bb R^n)} \lesssim\Vert{\bf h}\Vert_{L^2(\bb R^n)}\Vert\nabla u\Vert_{L^2(\bb R^{n+1}_{R/2})}\longrightarrow0\quad\text{as }R\ra\infty,
\end{multline}
where in (\ref{eq.tcloseto0}) we used (\ref{eq.supl2gradshift}), and in (\ref{eq.rtoinfty2}) we used (\ref{eq.l2gradslice}) and the absolute continuity of the integral. We now perform two integration by parts in the following calculation, recalling that $\psi(2R)=0$ so that the arising boundary terms  vanish.
\begin{multline*}
\int_{\bb R^n}{\bf h}\cdot\Tr_{\tau}\nabla u=\int_{\bb R^n}{\bf h}\cdot\psi(\tau)\Tr_{\tau}\nabla u-\int_{\bb R^n}{\bf h}\cdot\psi(2R)\Tr_{2R}\nabla u\nonumber
\\ =-\int_{\bb R^n}{\bf h}\cdot\int_{\tau}^{2R}\psi\partial_t\nabla u~-~\int_{\bb R^n}{\bf h} \cdot\int_R^{2R}\psi' \nabla u \nonumber
\\ =\int_{\bb R^n}\int_{0}^{2R-\tau}{\bf h}\cdot t\Tr_t\n T^{\tau}\psi\partial_t^2\nabla\n T^{\tau}u\,dt+\int_{\bb R^n}\int_{R-\tau}^{2R-\tau}{\bf h}\cdot t\Tr_t\n T^{\tau}\psi'\partial_t\nabla\n T^{\tau}u\,dt-I\nonumber
\\ =\int_{\bb R^n}\int_{0}^{2R-\tau}{\bf h}\cdot t\Tr_t\n T^{\tau}\psi\partial_t^2\nabla\n T^{\tau}u\,dt+II-I 
\end{multline*}
where in the third equality we used (\ref{eq.tcloseto0}) already. Note that the terms $I, II$ drop to $0$ as $R\ra\infty$ by the estimates (\ref{eq.rtoinfty}) and (\ref{eq.rtoinfty2}).  For technical reasons, let us integrate by parts one more time.  The boundary term that is introduced is again controlled as in (\ref{eq.tcloseto0}) and (\ref{eq.rtoinfty2}) because we may apply the results of Proposition \ref{prop.tder} to $\partial_t^2\n T^{\tau}u$. Hence we have that
\begin{multline}
\int_{\bb R^n}\int_{0}^{2R-\tau}{\bf h}\cdot t\Tr_t\n T^{\tau}\psi\partial_t^2\nabla\n T^{\tau}u\,dt\\ =-\frac12\int_{\bb R^n}\int_{0}^{2R-\tau}{\bf h}\cdot t^2\Tr_t\n T^{\tau}\psi\partial_t^3\nabla\n T^{\tau}u\,dt+III,
\end{multline}
where $|III|\ra0$ as $R\ra\infty$. Intuitively, we would like to introduce Green's formula at this point, but we want the ``input" in the layer potentials to still depend on $t$ for when we later dualize to control our integral by square function estimates.  Let us now do a change of variables $t\mapsto2t$, and carefully track the use of the chain rule:
\begin{multline*}
\frac12\int_{\bb R^n}\int_{0}^{2R-\tau}{\bf h}\cdot t^2\Tr_t\n T^{\tau}\psi\partial_t^3\nabla\n T^{\tau}u\,dt
\\= 4\int_{\bb R^n}\int_{0}^{R-\frac{\tau}2}{\bf h}\cdot t^2\n T^{\tau}\psi(2t)\partial_{2t}^3\nabla_{x,2t}\n T^{\tau}u(\cdot,2t)\,dt\\ =\frac12\int_{\bb R^n}\Big[\int_0^{R-\frac{\tau}2}\vec{h_{\|}}\cdot t^2\n T^{\tau}\psi(2t)\partial_t^3\nabla_{\|}\n T^{\tau}u(\cdot,2t)\,dt\\+\frac12h_{\perp}\int_0^{R-\frac{\tau}2}t^2\n T^{\tau}\psi(2t)\partial_t^4\n T^{\tau}u(x,2t)\,dt\Big].
\end{multline*} 
We now consider  $s\in\bb R$ and write $2t=t+s|_{s=t}$. If $F$ is a differentiable function in $t$, the chain rule tells us that 
$\partial_t F(t+s)=\partial_s F(t+s)$.  By this change of variables, and the above identity, we compute that
\begin{multline*}
\frac12\int_{\bb R^n}\int_{0}^{2R-\tau}{\bf h}\cdot t^2\Tr_t\n T^{\tau}\psi\partial_t^3\nabla\n T^{\tau}u\,dt\nonumber
\\ =4\int_0^{R-\frac{\tau}2}t^2\n T^{t+\tau}\psi(t)\Big[\int_{\bb R^n}{\bf h}\cdot\Tr_t\nabla_{x,t}D_{n+1}^3\n T^s\n T^{\tau}u(x,t)\Big]_{s=t}\,dt.\nonumber
\\ =4\int_0^{R-\frac{\tau}2}t^2\n T^{t+\tau}\psi(t)\Big[\int_{\bb R^n}{\bf h}\cdot\Tr_t\nabla D_{n+1}\n T^{\tau}\Big(D_{n+1}^2\n T^su\Big)\Big]_{s=t}\,dt. 
\end{multline*}

We now apply Green's formula, Theorem \ref{thm.greenformula} \ref{item.greenformula}). The function $v:=D_{n+1}^2\n T^su$ belongs to $W^{1,2}(\bb R^{n+1}_+)\subset\Y$ and solves $\m L v=0$ in $\bb R^{n+1}_+$ in the weak sense. Therefore the identity $v=-\m D^{\cL,+}(\Tr_0v)+\m S^{\cL}(\partial_{\nu}^{\cL,+}v)$ holds in $\Y$, for any $s>0$. But by the results of Proposition \ref{prop.tder}, for each $t>0$ we have the identity
\[
\Tr_t\nabla D_{n+1}\n T^{\tau}v=\Tr_t\nabla D_{n+1}\n T^{\tau}\big(-\m D^{\cL,+}(\Tr_0v)+\m S^{\cL}(\partial_{\nu}^{\cL,+}v)\big)
\]
in $L^2(\bb R^n)$, for any $s>0$ and $t>0$. As such, per our calculations we have the identity
\begin{multline*}
\frac12\int_{\bb R^n}\int_{0}^{2R-\tau}{\bf h}\cdot t^2\Tr_t\n T^{\tau}\psi\partial_t^3\nabla\n T^{\tau}u\,dt\\ =-4\int_0^{R-\frac{\tau}2}t^2\n T^{t+\tau}\psi(t)\Big[\int_{\bb R^n}{\bf h}\cdot\Tr_t\nabla D_{n+1}\n T^{\tau}\m D^{\cL,+}(\Tr_0v)\Big]_{s=t}\,dt\\ + 4\int_0^{R-\frac{\tau}2}t^2\n T^{t+\tau}\psi(t)\Big[\int_{\bb R^n}{\bf h}\cdot\Tr_t\nabla D_{n+1}\n T^{\tau}\m S^{\cL}(\partial_{\nu}^{\cL,+}v)\Big]_{s=t}\,dt=IV+V. 
\end{multline*}

Now we make use of the adjoint relations (\ref{eq.slgradadj}), (\ref{eq.dlgradadj}) and (\ref{eq.tdermovetder}) to dualize $IV$ and $V$. Indeed, we see that
\begin{multline*} 
\int_{\bb R^n}{\bf h}\cdot\Tr_t\nabla D_{n+1}\n T^{\tau}\m D^{\cL,+}(\Tr_0v)=\overline{\Big(D_{n+1}\partial_{\nu,-t-\tau}^{\cL^*,-}(\m S^{\cL^*}\nabla){\overline{\bf h}} ,\Tr_0v\Big)_{2,2}}\\ =\overline{\Big(D_{n+1}e_{n+1}\cdot\Tr_{-t-\tau}\Big[A^*\nabla+\overline{B_2}\Big](\m S^{\cL^*}\nabla){\overline{\bf h}},\Tr_sD_{n+1}^2u\Big)_{2,2}}\,,
\end{multline*}
\begin{multline*} 
\int_{\bb R^n}{\bf h}\cdot\Tr_t\nabla D_{n+1}\n T^{\tau}\m S^{\cL}(\partial_{\nu}^{\cL,+}v)=\overline{\Big(\Tr_{-t-\tau}D_{n+1}(\m S^{\cL^*}\nabla){\overline{\bf h}},\partial_{\nu}^{\cL,+}v\Big)_{2,2}} \\ =\overline{\Big(\Tr_{-t-\tau}D_{n+1}(\m S^{\cL^*}\nabla){\overline{\bf h}},-e_{n+1}\cdot\Tr_s\Big[A\nabla +B_1 \Big]D_{n+1}^2u\Big)_{2,2}}.
\end{multline*}
Therefore, using the Cauchy-Schwartz inequality, we estimate that
\begin{multline}\label{eq.suponslicescalcsl2}
|IV|\leq4\int_0^{R-\frac{\tau}2}t^2\int_{\bb R^n}\Big|\Tr_{-t-\tau}D_{n+1}\Big[A^*\nabla+\overline{B_2}\Big](\m S^{\cL^*}\nabla){\overline{\bf h}}\Big|\,\Big|\Tr_tD_{n+1}^2u\Big|\,dt\\ \lesssim\||t^2\partial_t\nabla\m (S^{\cL^*}\nabla)\overline{{\bf h}}\||_-\||t\partial_t^2u\|| \lesssim\Vert{\bf h}\Vert_2\||t\partial_t^2u\||,
\end{multline}
\begin{multline}\label{eq.suponslicescalcdl2}
|V|\leq4\int_0^{R-\frac{\tau}2}t^2\int_{\bb R^n}\Big|\Tr_{-t-\tau}D_{n+1}(\m S^{\cL^*}\nabla){\overline{\bf h}}\Big|\,\Big|\Tr_t\Big[A\nabla +B_1\Big]D_{n+1}^2u\Big|\,dt\\\lesssim\||t\partial_t\m (S^{\cL^*}\nabla)\overline{{\bf h}}\||_-\||t^2\partial_t^2\nabla u\|| \lesssim\Vert{\bf h}\Vert_2\||t^2\partial_t^2\nabla u\||,
\end{multline}
where we used the square function estimate (\ref{eq.sqfnest}) and the ``travel-down'' procedure (\ref{eq.traveldownsgrad2}). Now send $R\ra\infty$, which sends $|I|,|II|,|III|\ra0$. By the bounds (\ref{eq.suponslicescalcsl2}), (\ref{eq.suponslicescalcdl2}), and Lemma \ref{lm.travelup}, the desired bound for the gradient follows.

To obtain the bound for the $L^{\frac{2n}{n-2}}(\rn)$ norm, we use   Lemma \ref{ContSlices.lem} to ensure that at each horizontal slice,  the $L^{\frac{2n}{n-2}}(\rn)$ norm of a  $\yotp$ solution is finite. Then  we may apply the Sobolev embedding, whence the desired result follows.\hfill{$\square$}

The method of proof of Theorem \ref{thm.suponslices} is robust, in the sense that we may loosen the condition that $u\in Y^{1,2}(\bb R^{n+1}_+)$, provided that $u$ is such that the square function  in the right-hand side of (\ref{eq.suponslices}) is finite, and that the gradient of $u$ decays to $0$ in the sense of distributions for large $t$. More precisely, we have

\begin{theorem}[A more general $\Tr<S$ result]\label{thm.suponslices2a} Suppose that $u\in W^{1,2}_{\loc}(\bb R^{n+1}_+)$, $\cL u=0$ in $\bb R^{n+1}_+$ in the weak sense, and $\nabla u(\cdot,t)$ converges to $0$ in the sense of distributions as $t\ra\infty$ (we refer to this last condition as the decaying condition). Furthermore, assume that $\||t\nabla D_{n+1}u\||<\infty$. Then, for every $\tau>0$, the following statements are true.
	\begin{enumerate}[i)]
	\item If $\cL 1\neq 0$ in $\reu$, then
	\begin{gather}\label{eq.suponslices1a}
	\| \tr_\tau u\|_{L^{\frac{2n}{n-2}}(\rn)}+\Vert\Tr_{\tau}\nabla u\Vert_{L^2(\bb R^n)}\lesssim \int_\tau^\infty \int_\rn t|\dno^2 u|^2\, dxdt \lesssim \||tD_{n+1}^2 u\||.
	\end{gather}
	\item If $\cL 1=0$ in $\reu$, then there exists a constant $c\in \CC$ such that $v:=u-c$ (which is again a solution) satisfies estimate (\ref{eq.suponslices1a}).
	\end{enumerate}
\end{theorem}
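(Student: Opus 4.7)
The strategy is to follow the proof of Theorem~\ref{thm.suponslices} as closely as possible, making careful adjustments to accommodate the weaker hypotheses. The key initial observation is that under the new assumptions, the auxiliary function $v := D_{n+1}^2 \n T^s u$ used in that proof still lies in $Y^{1,2}(\bb R^{n+1}_+)$, so Green's formula (Theorem~\ref{thm.greenformula}) remains applicable. Indeed, by Proposition~\ref{tsolns.prop} each $D_{n+1}^k u$ is a weak solution of $\cL w = 0$ in $\reu$; the hypothesis $\vertiii{t D_{n+1}^2 u} < \infty$, combined with Lemma~\ref{Le1.5.lem}, Proposition~\ref{Lpcacconslices.lem}, and Lemma~\ref{lm.travelup}, yields $D_{n+1}^2 u, D_{n+1}^3 u, \nabla D_{n+1}^2 u \in L^2(\Sigma_\tau^\infty)$ for each $\tau > 0$. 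After the shift by $s$, $v \in W^{1,2}(\bb R^{n+1}_+) \subset Y^{1,2}(\bb R^{n+1}_+)$, and the integration-by-parts and adjoint-duality manipulations producing \eqref{eq.suponslicescalcsl2}--\eqref{eq.suponslicescalcdl2} go through verbatim.

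Testing against $\mathbf{h} \in C_c^\infty(\bb R^n)^{n+1}$, as in the original proof, one must reverify the vanishing of the boundary terms $I, II, III$ and the estimate \eqref{eq.tcloseto0} as $R \to \infty$. Terms $II$, $III$ and \eqref{eq.tcloseto0} involve only $D_{n+1}$-derivatives of $u$, for which the square-function hypothesis together with Caccioppoli on strips supplies the required $L^2$ control on slices, independent of a global $Y^{1,2}$ assumption. The critical term is $I$ in \eqref{eq.rtoinfty}, where the original proof used $\nabla u \in L^2(\bb R^{n+1}_+)$. For $h \in C_c^\infty$ one instead estimates
\begin{equation*}
|I| \;\lesssim\; \sup_{t \in (R, 2R)}\Bigl| \int_{\bb R^n} h \cdot \nabla u(\cdot, t)\, dx \Bigr|,
\end{equation*}
and the assumed distributional decay of $\nabla u(\cdot,t)$ as $t \to \infty$ forces this supremum to vanish. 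A duality argument over $h \in C_c^\infty(\bb R^n)^{n+1}$ then yields the bound $\| \Tr_\tau \nabla u\|_{L^2(\bb R^n)} \lesssim \vertiii{tD_{n+1}^2 u}$, valid in both cases (i) and (ii).

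To control $\|\Tr_\tau u\|_{L^{2n/(n-2)}(\bb R^n)}$, note that $\nabla u \in L^2(\Sigma_\tau^\infty)$ follows from the gradient estimate just obtained, so Lemma~\ref{malziem.lem} produces a constant $c \in \CC$ with $u - c \in Y^{1,2}(\Sigma_\tau^\infty)$; combining $\Tr_\tau(u-c) \in L^{2^*}(\bb R^n)$ with the slice gradient bound, Sobolev embedding yields $\Tr_\tau(u-c) \in L^{2n/(n-2)}(\bb R^n)$ with the desired quantitative estimate. In case (ii), where $\cL 1 = 0$, the function $v := u - c$ is itself a solution, and the estimate \eqref{eq.suponslices1a} for $v$ is exactly what is asserted. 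In case (i), where $\cL 1 \neq 0$, one must further argue that $c = 0$: the identity $\cL(u-c) = -c\,\cL 1$ combined with $u-c \in Y^{1,2}(\Sigma_\tau^\infty)$ and a Lax-Milgram-type uniqueness argument, exploiting the non-triviality of $\cL 1 = -\div B_1$ in the relevant dual space, rules out $c \neq 0$.

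The principal obstacle is precisely this final step in case (i), namely showing rigorously that the constant $c$ arising from Lemma~\ref{malziem.lem} must vanish when $\cL 1 \neq 0$. This is essentially a uniqueness claim for $Y^{1,2}$-solutions of an inhomogeneous equation whose source is a $t$-independent distribution, and its careful justification requires reconciling the Lax-Milgram framework of Proposition~\ref{laxmilgram.prop} (which is set up on $\bb R^{n+1}$) with the half-space setting $\Sigma_\tau^\infty$ and with the boundary behavior of $u - c$ at $t = \tau$.
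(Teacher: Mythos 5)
Your outline for reproducing the integration-by-parts, Green's formula, and duality steps is on the right track, and your replacement estimate for the boundary term $I$ via the distributional decay of $\nabla u(\cdot,t)$ is correct. However, there are two genuine gaps in the final portion of your argument, both of which the paper fills by Proposition \ref{y12slices.prop} (which you would need to reproduce, and which you do not).

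First, you assert that ``$\nabla u \in L^2(\Sigma_\tau^\infty)$ follows from the gradient estimate just obtained,'' and use this to invoke Lemma \ref{malziem.lem}. This is false. The estimate $\|\Tr_{\tau'}\nabla u\|_{L^2(\rn)} \lesssim \int_{\tau}^\infty\int_{\rn} t|\dno^2 u|^2$ holds uniformly in $\tau' \ge \tau$, so it gives $\nabla u \in L^\infty_t L^2_x(\Sigma_\tau^\infty)$, not $L^2(\Sigma_\tau^\infty)$. Lemma \ref{malziem.lem} requires the latter and cannot be applied. The paper avoids this issue entirely by working slice-by-slice (Step 1 of Proposition \ref{y12slices.prop}): one applies the Sobolev embedding on each slice $\rn \times \{t\}$ to produce a scalar $f(t)$ with $u(\cdot,t) - f(t) \in Y^{1,2}(\rn)$, and then shows $f' \equiv 0$ by examining difference quotients, so that $f$ is a single constant $c$.

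Second, your route for forcing $c = 0$ when $\cL 1 \neq 0$ --- the identity $\cL(u-c) = -c\,\cL 1$ together with ``a Lax-Milgram-type uniqueness argument'' --- is not an argument but a hope, and you acknowledge as much. The obstruction is real: $u - c$ lies in $Y^{1,2}(\Sigma_\tau^\infty)$ (if one had this) but not in $Y^{1,2}_0(\Sigma_\tau^\infty)$, so the invertibility provided by Lax--Milgram/\v{S}ne\u{\i}berg does not apply without controlling the boundary value at $t = \tau$, which you have not done. The paper's mechanism is entirely different and local: fix any box $I = Q \times (t_1,t_2)$ on which $\cL 1 \neq 0$, set $u^R = u(\cdot,\cdot + R)$, and use Poincar\'e--Sobolev plus the decaying-gradient hypothesis to get $u^R - (u^R)_I \to 0$ in $L^{2^*}(I)$. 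Testing the equation on $I$ against a $\varphi_0 \in C_c^\infty(I)$ with $\dint_I B_1 \cdot \overline{\nabla\varphi_0} \neq 0$ (such $\varphi_0$ exists precisely because $\cL 1 \neq 0$) then forces $(u^R)_I \to 0$, hence $\|u^R\|_{L^{2^*}(I)} \to 0$, hence $\tr_s u \to 0$ on the corresponding slices; comparing with $\|\tr_s u - c\|_{\yotn} \to 0$ yields $c = 0$. This localized averaging-and-testing argument is the piece your proposal is missing.
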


The proof of this theorem is omitted as it is very similar to the proof of Theorem \ref{thm.suponslices} as soon as we have the following technical result.

\begin{proposition}[Solutions with gradient decay]\label{y12slices.prop}
Suppose that $u\in W^{1,2}_{\loc}(\reu)$ is a solution of $\cL u=0$ in $\reu$ and that $\cL 1 \neq 0$ on some box $I= Q\times (t_1,t_2)\subset \reu$. Further, assume that $\sup_{t>0}\| \nabla u(t)\|_{\ltrn} <\infty$, and that $\lim_{t\to\infty} \| \nabla u(t)\|_{\ltrn} =0$ (see (\ref{eq.ut})). Then $u(t)\in \yotn$ for every $t>0$.
\end{proposition}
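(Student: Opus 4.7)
The plan is to first isolate a slicewise constant $c(t)$ such that $u(\cdot,t)-c(t)\in\yotn$ for every $t>0$, then show that this constant is actually independent of $t$, and finally use the hypothesis $\cL 1\neq 0$ on $I$ to force it to be zero. Fix $t_0>0$; since $\nabla u(\cdot,t_0)\in L^2(\rn)$ by hypothesis, the $\rn$-analogue of Lemma \ref{malziem.lem} produces $c(t_0)\in\CC$ with $u(\cdot,t_0)-c(t_0)\in\yotn$ and the Sobolev-type estimate $\|u(\cdot,t_0)-c(t_0)\|_{L^{2n/(n-2)}(\rn)}\lesssim \|\nabla u(\cdot,t_0)\|_{L^2(\rn)}$. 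So the conclusion reduces to showing $c(t_0)=0$.

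To show that $c(t)$ is constant, set $Q_R:=[-R/2,R/2]^n$. The Sobolev estimate above and H\"older's inequality give that $\bar u_R(t):=\dashint_{Q_R} u(\cdot,t)\,dx \to c(t)$ as $R\to\infty$, for each fixed $t>0$. Since $u\in W^{1,2}_{\loc}(\reu)$, for a.e. $x\in \rn$ the map $s\mapsto u(x,s)$ is absolutely continuous on bounded intervals with classical derivative $\partial_s u(x,s)$, and Fubini then yields that $\bar u_R$ is absolutely continuous with $\frac{d}{dt}\bar u_R(t)=\dashint_{Q_R}\partial_t u(\cdot,t)\,dx$ a.e. Cauchy--Schwarz and the pointwise bound $|\partial_t u|\le |\nabla u|$ then give $|\tfrac{d}{dt}\bar u_R(t)|\le R^{-n/2}\|\nabla u(\cdot,t)\|_{L^2(\rn)}$, so that integration in $t$ and the uniform slice bound yield $|\bar u_R(\tau_2)-\bar u_R(\tau_1)|\le R^{-n/2}(\tau_2-\tau_1)\sup_{s>0}\|\nabla u(\cdot,s)\|_{L^2(\rn)}$ for $0<\tau_1<\tau_2$. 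Letting $R\to\infty$ proves $c(\tau_1)=c(\tau_2)$, so $c(t)\equiv c_\infty$ is a constant.

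Finally, to show $c_\infty=0$, I consider the shifts $u_T(x,s):=u(x,s+T)$; by $t$-independence of the coefficients, each $u_T$ is again a weak solution of $\cL u_T=0$ in $\reu$. On the bounded cube $I=Q\times(t_1,t_2)$, the assumption $\lim_{s\to\infty}\|\nabla u(\cdot,s)\|_{L^2(\rn)}=0$ yields $\|\nabla u_T\|_{L^2(I)}\to 0$ as $T\to\infty$, while the slice Sobolev estimate for $u-c_\infty$ together with H\"older's inequality on $Q$ gives $\|u_T-c_\infty\|_{L^2(I)}\to 0$. Hence $u_T\to c_\infty$ strongly in $W^{1,2}(I)$. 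The form $B_\cL$ is bounded on $W^{1,2}(I)\times C_c^\infty(I)$ on the bounded set $I$ (using H\"older, the $L^\infty$ control on $\nabla\varphi$, and the Sobolev embedding $W^{1,2}(I)\hookrightarrow L^{2(n+1)/(n-1)}(I)\subset L^{n/(n-2)}(I)$ valid on bounded $I$ for $n\ge 3$, which handles the critical coefficients $B_1,B_2,V$). Passing $T\to\infty$ in $\langle \cL u_T,\varphi\rangle=0$ thus gives $c_\infty\langle\cL 1,\varphi\rangle=0$ for every $\varphi\in C_c^\infty(I)$; since by hypothesis there exists some such $\varphi$ with $\langle\cL 1,\varphi\rangle\neq 0$, one concludes $c_\infty=0$.

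The main technical obstacle I expect is verifying that each term in the weak formulation passes to the limit under $u_T\to c_\infty$ in $W^{1,2}(I)$; the critical $B_1$-term in particular requires $W^{1,2}\hookrightarrow L^p(I)$ convergence for $p>n/(n-1)$, which works only because $I$ is bounded and $\varphi$ is smooth. The step showing $c$ is constant is essentially soft, and the use of $\cL 1\neq 0$ is the conceptual heart of the argument: it is precisely the statement that constants fail to be solutions in $I$, thereby pinning down the ambiguity that otherwise exists for operators that annihilate constants.
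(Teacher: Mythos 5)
Your proof is correct and follows the same conceptual skeleton as the paper's: produce a slicewise constant $c(t)$ via the Sobolev machinery, show $c$ is independent of $t$, and then invoke the hypothesis $\cL 1\neq 0$ on $I$ to pass to the limit in the weak formulation and conclude $c=0$. Where you diverge slightly is in the final step. The paper first applies the Poincar\'e--Sobolev inequality on the box $I$ to the shifted solutions $u^R$, deduces $u_I^R\to 0$, and then needs a separate trace argument (its Step 3) to transfer this box-level vanishing back to the slices; you instead exploit the slicewise Sobolev estimate for $u(\cdot,t)-c_\infty$ directly to get strong $W^{1,2}(I)$-convergence $u_T\to c_\infty$, which lets you take the limit in $B_\cL[u_T,\varphi]=0$ in one step and read off $c_\infty\dint_I B_1\cdot\overline{\nabla\varphi}=0$. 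This bypasses the paper's Step~3 entirely, at the modest cost of having established the $t$-independence of $c$ first (which you do, by a clean averaging-over-cubes argument in place of the paper's difference-quotient assertion). One small exponent correction: for the critical $B_1$-pairing $\dint_I B_1 u_T\cdot\overline{\nabla\varphi}$ with $B_1\in L^n$ and $\nabla\varphi\in L^\infty$, the relevant dual exponent is $n/(n-1)$, not $n/(n-2)$; since $n/(n-1)\le 2$ for $n\geq 3$ and $I$ is bounded, $L^2(I)$-convergence already suffices and the argument is unaffected.
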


\begin{proof} 
{\bf Step 1.} There exists a constant $c\in\bb C$ such that for all $t>0$, $u(\cdot,t)-c\in Y^{1,2}(\bb R^n)$.

To see this, first note that by the Sobolev embedding, there exists a function $f:(0,+\infty)\ra\bb C$ such that for each $t>0$, $u(\cdot,t)-f(t)\in Y^{1,2}(\bb R^n)$. We must show that $f$ is identically a constant. Since (see the proof of Theorem 1.78 in \cite{MZ}) for each $t>0$ we have that $f(t)=\lim_{R\ra\infty}\dashint_{B(0,R)}u(\cdot,t)$, it can be shown by the Sobolev embedding and considering the difference quotient $\frac{u(\cdot,t+h)-u(\cdot,t)}h$ that $f$ is differentiable and that $f'(t)\equiv0$ for all $t>0$. It follows that $f$ is a constant, as desired.

{\bf Step 2.} For the box $I\subset \reu$ as in the hypotheses, it holds that
\begin{equation*}
\dint_I |u^R |^{2^*}  \to 0 \qquad \textup{ as } R\to \infty,
\end{equation*}
where $u^R(\cdot, \cdot)= u(\cdot, \cdot+R)$. 

This is the crucial step. We set $p=2^*$ and  $u_I^R= |I|^{-1}\dint_{I} u^R$ for ease of notation. By the Poincar\'e-Sobolev inequality, we see that
\begin{equation}\label{sec6eq1.eq}
\| u^R-u^R_I\|_{L^p(I)} \lesssim \|\nabla u^R\|_{L^2(I)} \to 0, \quad \textup{ as } R\to \infty,
\end{equation}
where we used the definition of $u^R$ and the decaying condition of the gradient. In particular, we have that $u^R-u^R_I\to 0$ in $Y^{1,2}(I)$, so that $\cL(u^R-u^R_I)\to 0$ in $I$, which implies that for every  $\varphi\in C_c^\infty(I)$, the limit
\begin{equation*}
-u_I^R \dint_I B_1\cdot \overline{\nabla\varphi}  = \dint_I\big[ (A\nabla(u^R-u^R_I) + B_1(u^R-u_I^R))\cdot \overline{\nabla \varphi} + B_2 \cdot \nabla (u^R-u^R_I) \overline{\varphi}\big]   \to 0
\end{equation*}
holds. Since $\cL 1 \neq 0$ in $I$,  for some $\varphi_0\in C_c^\infty(I)$ we have that $
\dint_I B_1  \cdot \overline{\nabla \varphi_0} \neq 0$, whence $u_I^R\to 0$ as $R\to \infty$. The claim now follows by using this result in \eqref{sec6eq1.eq}. Notice that this argument holds just as well for any box $J$ containing $I$, in particular it holds for $\frac32I$.

{\bf Step 3.} For $Q\subset \rn$, $t\in (t_1,t_2)$ as in the hypotheses, we have that
\begin{equation*}
\int_Q |\trt u^R |^{p}  \to 0, \qquad \textup{ as } R\to \infty.
\end{equation*}

This is a consequence of Step 2 and the definition of the trace: For any $\phi\in C_c^\infty(Q)$ and $\eta\in C_c^\infty(t_1,t_2)$ with $\eta (s)=1$ near $t$, we set $\Phi:=\phi\eta\in C_c^\infty(I)$ and estimate
\begin{multline*}
|(\trt u^R, \phi)|=\Big| \dint_{\ree_+}( \dno u^R \Phi + u^R\dno \Phi) \Big|\\
 \leq \| \dno u^R\|_{Y^{1,2}(I)}\| \Phi\|_{L^{p'}(I)} + \| u^R\|_{Y^{1,2}(I)} \| \dno \Phi\|_{L^{p'}(I)}\\
 \lesssim_{\eta,\eta'} (\| \dno u^R\|_{Y^{1,2}(I)} + \| u^R\|_{Y^{1,2}(I)} ) \| \phi\|_{L^{p'}(I)}. 
\end{multline*}
The claim now follows  by the Caccioppoli  inequality; to wit,
\begin{equation*}
\| \dno u^R\|_{L^p(I)} + \| \nabla \dno u^R\|_{L^2(I)} \lesssim_{|I|} \sup_{s> t_2+R} \| \nabla u(s)\|_{\ltrn}\to 0 \quad \textup{ as } R\to \infty,
\end{equation*}
using the fact that $p<\frac{2n}{n-2}$.

We now conclude the proof: By Step 1, we can place $\tr_s (u-c)\in \yotn$ for all $s>0$. By Sobolev's inequality and the hypotheses, $\| \tr_s u-c\|_{\yotn} \to 0$ as $s\to \infty$. On the other hand, by Step 3, we have that $\tr_s u \to 0$ in $L^{p}(Q)$, so that $c=0$ and the desired result follows.\end{proof}

A quick  application of Theorem \ref{thm.suponslices2a} to the improvement of (\ref{eq.l2slicegradsgrad})  will be useful for the Dirichlet problem:

\begin{corollary}[Improvement to slice estimate]\label{cor.l2slicegradsgradk=1} The estimate (\ref{eq.l2slicegradsgrad}) holds true for $k=1$. In particular, (\ref{eq.traveldownsgrad2}) holds true for $k=1$ as well.
\end{corollary}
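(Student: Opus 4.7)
The corollary asserts two things: the slice inequality
\[
(A)\qquad \Vert t\,\Tr_t\nabla(\m S^\cL_t\nabla){\bf f}\Vert_{L^2(\rn)}\lesssim\Vert{\bf f}\Vert_2\quad\text{for every }t>0,
\]
and, as a consequence, the square-function inequality
\[
(B)\qquad \vertiii{t\,\nabla(\m S^\cL_t\nabla){\bf f}}\lesssim\vertiii{t^m\partial_t^m(\m S^\cL_t\nabla){\bf f}}+\Vert{\bf f}\Vert_2\quad\text{for every }m\geq 2.
\]
The plan is to obtain (A) as a consequence of the slice theorem (Theorem \ref{thm.suponslices2a}) applied to a vertical shift of $v:=(\m S^\cL\nabla){\bf f}$, using the $L^2$-in-slice bounds already known from Lemma \ref{lm.l2slicesgrad} in the case $k=2$. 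Then (B) will follow by a cutoff integration by parts in $t$, in the spirit of Proposition \ref{lm.passtograd}, with (A) providing uniform control on the boundary contributions.

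For (A), fix $t>0$, set $\tau:=t/2$, and consider the shifted solution $v_\tau(x,s):=v(x,s+\tau)$. Since the coefficients are $t$-independent, $v_\tau$ still solves $\cL v_\tau=0$ in $\ree_+$. To verify the hypothesis $\vertiii{s\,\nabla D_{n+1}v_\tau}<\infty$ of Theorem \ref{thm.suponslices2a}, use \eqref{eq.l2slicegradsgrad} with $k=2$---namely $\Vert r^2\partial_r\nabla v(\cdot,r)\Vert_2\lesssim\Vert{\bf f}\Vert_2$---and change variables $r=s+\tau$ to see that the resulting integral is bounded by $C\Vert{\bf f}\Vert_2^2\int_\tau^\infty (r-\tau)\,r^{-4}\,dr<\infty$. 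The distributional decay $\nabla v_\tau(\cdot,s)\to 0$ as $s\to\infty$ will follow from the $L^2(\ree)$-integrability of $\nabla v$ (since $v\in Y^{1,2}(\ree)$ by Proposition \ref{prop.slproperties}) together with the Lipschitz continuity of $s\mapsto\nabla v(\cdot,s)$ into $L^2(\rn)$ away from $s=0$, a consequence of $\Vert\partial_s\nabla v(\cdot,s)\Vert_2\lesssim\Vert{\bf f}\Vert_2/s^2$. With these hypotheses verified, Theorem \ref{thm.suponslices2a} applied with slice parameter $\tau$ yields
\[
\Vert\Tr_\tau\nabla v_\tau\Vert_2\;\lesssim\;\int_\tau^\infty\int_\rn s\,|\partial_s^2 v(x,s+\tau)|^2\,dx\,ds.
\]
Changing variables $r=s+\tau$ and invoking \eqref{eq.l2slicesgrad} with $k=2$ (i.e.\ $\Vert r^2\partial_r^2 v(\cdot,r)\Vert_2\lesssim\Vert{\bf f}\Vert_2$) bounds the right-hand side by $C\Vert{\bf f}\Vert_2^2/t^2$. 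Since $\Tr_\tau\nabla v_\tau=\Tr_t\nabla v$, this proves (A).

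For (B), choose a smooth cutoff $\psi\in C_c^\infty(0,\infty)$ with $\psi\equiv1$ on $[\eps,1/\eps]$, $\supp\psi\subset(\eps/2,2/\eps)$, $|\psi'|\lesssim1/\eps$ on $(\eps/2,\eps)$, and $|\psi'|\lesssim\eps$ on $(1/\eps,2/\eps)$. Writing $t=\tfrac12\frac{d}{dt}(t^2)$ and integrating by parts in $t$,
\[
J_\eps:=\dint_{\ree_+}\psi(t)\,t\,|\nabla v|^2\,dx\,dt\,=\,-\tfrac12\dint_{\ree_+}\psi'(t)\,t^2|\nabla v|^2\,dx\,dt\,-\,\dint_{\ree_+}\psi(t)\,t^2\,\real\langle\partial_t\nabla v,\nabla v\rangle\,dx\,dt.
\]
The first term on the right is bounded uniformly in $\eps$ by a multiple of $\Vert{\bf f}\Vert_2^2$: by (A) one has $\int_\rn t^2|\nabla v|^2\,dx\lesssim\Vert{\bf f}\Vert_2^2$, and $\int_0^\infty|\psi'|\,dt=O(1)$. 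The second term is at most $\vertiii{t^2\partial_t\nabla v}\cdot J_\eps^{1/2}$ by Cauchy--Schwarz, and Young's inequality lets one absorb $J_\eps^{1/2}$ into the left-hand side. Letting $\eps\to 0$ via monotone convergence yields $\vertiii{t\nabla v}^2\lesssim\Vert{\bf f}\Vert_2^2+\vertiii{t^2\partial_t\nabla v}^2$, and then \eqref{eq.traveldownsgrad2} with $k=2$ bounds $\vertiii{t^2\partial_t\nabla v}$ by $\vertiii{t^m\partial_t^m v}+\Vert{\bf f}\Vert_2$ for any $m>2$, giving (B).

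The main obstacle will be the careful verification of the hypotheses of Theorem \ref{thm.suponslices2a} for the shifted solution $v_\tau$---in particular the distributional decay of $\nabla v_\tau$ at infinity---and the bookkeeping required to justify the cutoff integration by parts; both reduce to direct manipulations of the slice estimates already established in Lemma \ref{lm.l2slicesgrad}.
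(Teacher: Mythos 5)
Your approach for part (A) — applying Theorem \ref{thm.suponslices2a} to the shift $v_\tau := v(\cdot,\cdot+\tau)$ with $\tau=t/2$ — is the intended one, and the shift is genuinely necessary, not merely convenient: the raw hypothesis $\vertiii{s\nabla D_{n+1}v}<\infty$ of Theorem \ref{thm.suponslices2a} would fail for $v=(\m S^\cL\nabla){\bf f}$, since the $k=2$ slice bound $\Vert \nabla\partial_s v(\cdot,s)\Vert_2\lesssim s^{-2}$ gives a divergent integral $\int_0 s\cdot s^{-4}\,ds$ near $s=0$. Shifting moves the integration off $\{s=0\}$, after which your change of variables and the $k=2$ bounds from Lemma \ref{lm.l2slicesgrad} close the estimate with the correct factor of $\tau^{-2}\approx t^{-2}$.

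However, your justification of the decay hypothesis has a genuine inaccuracy: you claim $v\in Y^{1,2}(\ree)$ by Proposition \ref{prop.slproperties}, but this is false in general. By definition $(\m S^\cL\nabla){\bf f} = -\m S^\cL(\div_\parallel{\bf f}_\parallel) - D_{n+1}\m S^\cL({\bf f}_\perp)$; the first summand lies in $Y^{1,2}(\ree)$, but $D_{n+1}\m S^\cL({\bf f}_\perp)$ is merely a $t$-derivative of a $Y^{1,2}$ function and need not lie in $Y^{1,2}(\ree)$. The conclusion you want, $\nabla v(\cdot,r)\to 0$ in $L^2(\rn)$, is nonetheless true, but by a slightly different route: Proposition \ref{prop.tder}~\ref{item.tdershift}) gives $\nabla D_{n+1}\m S^\cL({\bf f}_\perp)\in L^2(\bb R^{n+1}_\sigma)$ for each $\sigma>0$, hence $\nabla v\in L^2(\bb R^{n+1}_\sigma)$, and combining this $L^2$-integrability on a half-strip with the Lipschitz estimate $\Vert\partial_r\nabla v(\cdot,r)\Vert_2\lesssim r^{-2}$ (which you correctly identify) gives the decay. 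One should also record that when $\cL 1 = 0$, Theorem \ref{thm.suponslices2a} only gives the estimate for $v_\tau-c$, but since $\nabla(v_\tau-c)=\nabla v_\tau$, the gradient bound is unaffected.

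For part (B), your cutoff integration-by-parts and absorption argument is sound and is essentially the mechanism underlying Proposition \ref{lm.passtograd}; the uniform slice bound (A) plays exactly the role that \eqref{eq.avgslice} played there, controlling the boundary contributions from $\psi'$. One small caveat: as written, the final step uses \eqref{eq.traveldownsgrad2} with $k=2$, which requires $m>2$, so your argument literally yields \eqref{eq.traveldownsgrad2} with $k=1$ only for $m\ge 3$. To cover $m=2$ as well one should integrate by parts once more (or iterate), exactly as in the inductive step of Proposition \ref{lm.passtograd}; this costs nothing new given what you have established.
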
 

We can also, very similarly, prove

\begin{theorem}[$L^2-$sup on slices]\label{thm.suponslicesd} Suppose that $u\in W^{1,2}_{\loc}(\bb R^{n+1}_+)$, $\cL u=0$ in $\bb R^{n+1}_+$, and that $u$ converges to $0$ in the sense of distributions. Furthermore, assume that $\||t\nabla u\||<\infty$. Then, for every $\tau>0$,
\begin{equation}\label{eq.suponslicesd}\nonumber
\Vert\Tr_{\tau} u\Vert_{L^2(\bb R^n)}\lesssim \int_\tau^\infty \int_\rn t|\nabla u|^2\, dxdt\lesssim\||t\nabla u\||
\end{equation}
where the implicit constant is independent of $\tau$ and $u$.
\end{theorem}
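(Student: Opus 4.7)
The plan is to mimic the proof of Theorem~\ref{thm.suponslices} essentially verbatim, with exactly one fewer $t$-derivative at every step, so that the terminal square function involves $\nabla u$ rather than $\partial_t^2 u$. Fix $\tau>0$ and test against an arbitrary $h\in C_c^{\infty}(\bb R^n)$; by duality it suffices to establish the bound $|(h,\Tr_\tau u)_{2,2}|\lesssim\|h\|_{L^2(\bb R^n)}\||t\nabla u\||$. I would introduce a smooth cutoff $\psi\in C_c^{\infty}(\bb R)$ with $\psi\equiv 1$ on $[\tau,R]$, $\psi(2R)=0$, and $|\psi'|\lesssim 1/R$ on $(R,2R)$, and then perform two successive integrations by parts in the transversal variable (after the shift $t\mapsto t+\tau$), losing only one derivative on $u$ rather than two. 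All boundary contributions either vanish by the choice of $\psi$ or tend to zero as $R\to\infty$, by the distributional convergence of $u$ to $0$ combined with \eqref{eq.l2gradslice} and the hypothesis $\||t\nabla u\||<\infty$.

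After these integrations by parts and the change of variable $t\mapsto 2t$, the chain-rule identity $\partial_{2t}u(x,2t)=\partial_s u(x,t+s)|_{s=t}$ allows me to reinterpret one $t$-derivative as an $s$-derivative, so that the main term takes the form
\[
C\int_0^{\infty} t\,(\n T^{t+\tau}\psi)(t)\,\Big[\big(h,\Tr_t\partial_t[\n T^\tau\partial_s\n T^s u]\big)_{2,2}\Big]_{s=t}\,dt\;+\;(\text{vanishing errors})
\]
for some absolute constant $C$. I would then apply Green's formula, Theorem~\ref{thm.greenformula}\,\ref{item.greenformula}), to $v:=\partial_s\n T^s u$, which for each fixed $s>0$ is a weak solution of $\cL v=0$ in $\reu$ belonging to $\Y$. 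The identity $v=-\cD^{\cL,+}(\Tr_0 v)+\s^{\cL}(\partial_\nu^{\cL,+}v)$ splits the pairing into a double-layer piece and a single-layer piece. Dualizing via the adjoint relations \eqref{eq.slgradadj} and \eqref{eq.dlgradadj} transfers the remaining $t$-derivatives onto $\s^{\cL^*}$ and $\cD^{\cL^*}$ acting on $\overline{h}$, while the factor involving $u$ is pointwise controlled by $|\nabla u(\cdot,t+\tau)|$. Applying Cauchy--Schwarz in the square function norm then bounds the result by a product of the form $\||t\,\partial_t(\s^{\cL^*}\nabla)\overline{h}\||_{-}\cdot\||t\nabla u\||$; the first factor is dominated by $\|h\|_{L^2(\bb R^n)}$ by Theorem~\ref{fullsqfnlargem.thrm} combined with the travel-down procedure of Proposition~\ref{lm.passtograd} and Lemma~\ref{lm.l2slicesgrad}, and sending $R\to\infty$ completes the argument.

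The hard part will be verifying that $v=\partial_s\n T^s u\in\Y$ for each $s>0$, which is needed to apply Green's formula; under our weaker hypotheses (we assume only $\||t\nabla u\||<\infty$ rather than $u\in\Ya$), this does not follow directly from Proposition~\ref{prop.tder}\,\ref{item.tdershift}). However, $\||t\nabla u\||<\infty$ guarantees $\nabla u\in L^2(\bb R^{n+1}_{s/2})$ for any $s>0$, and then the Caccioppoli inequality applied to the solution $\partial_t u$ (Proposition~\ref{tsolns.prop}) on a Whitney decomposition of $\bb R^{n+1}_s$ yields $\partial_t u\in W^{1,2}(\bb R^{n+1}_s)\subset Y^{1,2}(\bb R^{n+1}_s)$, as needed. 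The remaining technicalities---primarily the vanishing of the various boundary contributions at $t=\tau^+$ and $t\to\infty$---are handled exactly as in the proofs of Theorems~\ref{thm.suponslices} and~\ref{thm.suponslices2a}.
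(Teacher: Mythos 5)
Your instinct to mimic Theorem~\ref{thm.suponslices} is right, but the derivative bookkeeping in your version has a genuine gap. In Theorem~\ref{thm.suponslices} the test function $\mathbf{h}$ is $\mathbb{C}^{n+1}$-valued, so after dualization every $h$-side square function carries the operator $\s^{\cL^*}\nabla$, which already has one transversal derivative's worth of order built in; in your scalar setting the dual is $\s^{\cL^*}$ itself, and the relevant adjoints are Proposition~\ref{prop.slproperties}~\ref{item.slpropertiesmovetder}) and Proposition~\ref{prop.adj}~\ref{item.dlmovetder}), not \eqref{eq.slgradadj}--\eqref{eq.dlgradadj} (indeed, the expression $(\s^{\cL^*}\nabla)\overline h$ in your claimed final bound is not even well-typed for scalar $h$). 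With only two integrations by parts (weight $t$), $v=\partial_s\n T^s u$, and outer operator $\Tr_tD_{n+1}\n T^\tau$, the single-layer piece leaves you, after Cauchy--Schwarz, with a factor of the form $\||t^aD_{n+1}\s^{\cL^*}\overline h\||_-$ with $a\le 2$, paired against $\||t^b\nabla D_{n+1}u\||$, $a+b=2$. No choice of $a$ makes the $h$-factor controllable: a square function of the single layer with only one transversal derivative is \emph{not} bounded by $\|h\|_2$. For $\cL=-\Delta$, $\partial_t\s^\Delta_t h$ is a constant multiple of the Poisson extension $P_th$, and
$\iint t^{2a-1}|P_th|^2\,dx\,dt\approx\int|\hat h(\xi)|^2|\xi|^{-2a}\,d\xi$ diverges for every $a\ge0$. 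Every available scalar single-layer bound (Theorem~\ref{perturbfromlargem.thrm}, Proposition~\ref{lm.passtograd}) requires at least two transversal derivatives, i.e.\ $m\ge1$ in $\||t^m\partial_t^{m+1}\s\cdot\||$.

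The repair is to keep all three integrations by parts (so the main term carries the weight $t^2$, exactly as in Theorem~\ref{thm.suponslices}), but decouple the resulting $\partial_t^3$ as $\partial_t^2\partial_s$ rather than $\partial_t\partial_s$: take $v=D_{n+1}\n T^s u$ and outer operator $\Tr_tD_{n+1}^2\n T^\tau$. After dualizing, the single-layer piece yields $\||tD_{n+1}^2\s^{\cL^*}\overline h\||_-\cdot\||t^2\nabla D_{n+1}u\||$; the first factor equals $\||t\partial_t^2\s^{\cL^*}\overline h\||_-\lesssim\|h\|_2$ by Theorem~\ref{perturbfromlargem.thrm} with $m=1$, and the second is $\lesssim\||tD_{n+1}u\||\le\||t\nabla u\||$ by Lemma~\ref{lm.travelup} applied to $u$ with $m=2$, $k=1$. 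The double-layer piece gives $\||t^2\partial_t^2\nabla\s^{\cL^*}\overline h\||_-\cdot\||tD_{n+1}u\||$, handled by Proposition~\ref{lm.passtograd}~(ii) with $k=2$. Your paragraph verifying $v\in\Y$ via Caccioppoli on a Whitney decomposition of $\bb R^{n+1}_s$ remains valid.
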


In the second paper, we will establish uniqueness under some weak background hypotheses. For this reason, we give two definitions and make an observation.
\begin{definition}[Good $\mathcal N / \mathcal R$ solutions]
We say that $u \in W^{1,2}_{\loc}(\ree_+)$ is a \emph{good $\mathcal N/ \mathcal R$ solution} if 
$\cL u = 0$ in $\ree_+$ in the weak sense, $u \in \sltp$ (see Definition \ref{def.slicespace} for the   slice spaces $S^2_+$ and $D^2_+$), and
$\partial_tu_\tau \in Y^{1,2}(\ree_+)$ for all $\tau > 0$, where $u_\tau(\cdot, \cdot) := u(\cdot, \cdot + \tau)$.
\end{definition}

\begin{definition}[Good $\mathcal D$ solutions]
We say that $u \in W^{1,2}_{\loc}(\ree_+)$ is a \emph{good $\mathcal D$ solution} if 
$\cL u = 0$ in $\ree_+$ in the weak sense, $u \in \dltp$ and
$u_\tau \in Y^{1,2}(\ree_+)$ for all $\tau > 0$, where $u_\tau(\cdot, \cdot) := u(\cdot, \cdot + \tau)$.
\end{definition}

As an immediate consequence of Theorems \ref{thm.suponslices2a} and \ref{thm.suponslicesd} we exhibit
\begin{corollary}\label{alwaysmakesgoodsoln.cor}
Let $u\in W^{1,2}_{\loc}(\reu)$ satisfy $\cL u=0$ in $\reu$.
\begin{enumerate}[i)]
\item If $\|| t\nabla\partial_t u\||<\infty$ and  $\lim_{t\to\infty} \nabla u(t)=0$ in the sense of distributions (see (\ref{eq.ut})), then either $u$ is a good $\mathcal N$/$\mathcal R$ solution (in the case that $\cL 1\neq0$ in $\reu$), or $u-c$ is a good $\mathcal N$/$\mathcal{R}$ solution for some constant $c$ (in the case that $\cL =0$ in $\reu$).
\item If $\||t\nabla u\||<\infty$ and $\lim_{t\to \infty} u(t) =0$ in the sense of distributions, then $u$ is a good $\mathcal D$ solution.
\end{enumerate}
\end{corollary}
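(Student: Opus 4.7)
Both parts are direct consequences of the slice estimates of Theorems \ref{thm.suponslices2a} and \ref{thm.suponslicesd}, coupled with a shift-and-Lemma \ref{malziem.lem} argument that supplies the global $Y^{1,2}(\reu)$ integrability required in the definitions of good solutions.

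For (i), apply Theorem \ref{thm.suponslices2a}, replacing $u$ by $u-c$ when $\cL 1 \equiv 0$ in $\reu$; this produces
\[
\sup_{t > 0}\|u(t)\|_{\yotn} + \sup_{t > 0}\|\tr_t \nabla u\|_{L^2(\rn)} < \infty,
\]
with both quantities vanishing as $t \to \infty$, since the right-hand side of Theorem \ref{thm.suponslices2a} is a tail integral of $\||tD_{n+1}^2 u\||^2$. Proposition \ref{tsolns.prop} guarantees that $\partial_t u$ and $\partial_t^2 u$ are also solutions, so Caccioppoli on slices (Proposition \ref{Lpcacconslices.lem}) together with the equivalence recorded immediately after Definition \ref{def.slicespace} upgrade the two suprema above to the full $\sltp$-norm bound. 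The $C^2_0((0,\infty); \yotn)$ and $C_0((0,\infty); L^2(\rn))$ regularity demanded by Definition \ref{def.slicespace} follows from iterated application of Lemma \ref{ContSlices.lem} on strips $\Sigma_a^b \subset \reu$ (using Lemma \ref{Le1.5.lem} to furnish the needed $Y^{1,2}(\Sigma_a^b)$ control of $u$, $\partial_t u$, and $\partial_t^2 u$), together with a Sobolev embedding on each slice to upgrade $L^{2^*}(\rn)$-continuity to continuity in $L^{2n/(n-2)}(\rn)$, combined with the tail decay just noted.

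It remains to check that $\partial_t u_\tau \in Y^{1,2}(\reu)$ for each $\tau > 0$. The computation
\[
\int_\tau^\infty \int_\rn |\nabla \partial_t u|^2 \, dx \, dt \leq \tfrac{1}{\tau}\||t\nabla \partial_t u\||^2 < \infty
\]
shows that $\nabla(\partial_t u_\tau) \in L^2(\reu)$, so Lemma \ref{malziem.lem} produces a constant $c \in \CC$ with $\partial_t u_\tau - c \in Y^{1,2}(\reu)$. To conclude $c = 0$, observe that $\|\tr_s \partial_t u\|_{L^2(\rn)} \leq \|\tr_s \nabla u\|_{L^2(\rn)} \to 0$ as $s \to \infty$, while Lemma \ref{ContSlices.lem} and the $L^{2^*}(\reu)$-integrability of $\partial_t u_\tau - c$ force $\partial_t u_\tau(\cdot,s) - c \to 0$ in $L^{2^*}(\rn)$ as $s \to \infty$. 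Both limits must agree as distributions, so $c = 0$.

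For (ii) the structure is parallel: Theorem \ref{thm.suponslicesd} supplies $\sup_{t > 0}\|\tr_t u\|_{L^2(\rn)} < \infty$ with $\|\tr_t u\|_{L^2(\rn)} \to 0$ at infinity, and the shift-and-Lemma \ref{malziem.lem} argument applied to $u$ in place of $\partial_t u$ yields $u_\tau \in Y^{1,2}(\reu)$ --- here the distributional decay hypothesis $\lim_{t\to\infty} u(\cdot,t) = 0$ is what forces the integration constant to vanish. Continuity of $u$ into $L^2(\rn)$ on compact subsets of $(0, \infty)$ follows from
\[
\|u(t) - u(s)\|_{L^2(\rn)}^2 \leq |t - s|\int_s^t \|\partial_\tau u\|_{L^2(\rn)}^2 \, d\tau,
\]
with the integrand controlled by $\tau^{-1}\sup_{\sigma}\|\tr_\sigma u\|_{L^2(\rn)}^2$ via Caccioppoli on slices. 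The only delicate point throughout is the constant-fixing step in Lemma \ref{malziem.lem}, which is precisely why the distributional decay hypotheses are indispensable.
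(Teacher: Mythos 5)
Your proof is correct and takes precisely the route that the paper's ``immediate consequence'' phrasing intends: invoke the slice theorems, upgrade the two slice suprema to the full $\sltp$-norm (resp.\ $\dltp$-norm) via Caccioppoli on slices, and then supply the $Y^{1,2}(\reu)$-membership required in the definitions by combining the tail bound $\int_\tau^\infty\int_\rn|\nabla\partial_t u|^2\le \tau^{-1}\||t\nabla\partial_t u\||^2$ with Lemma~\ref{malziem.lem} and a constant-fixing argument driven by the decay hypotheses.

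One local imprecision is worth flagging. In fixing the constant for $\partial_t u_\tau$, you assert that $L^{2^*}(\reu)$-integrability of $F:=\partial_t u_\tau-c$ together with the local H\"older continuity of Lemma~\ref{ContSlices.lem} ``force $F(\cdot,s)\to 0$ in $L^{2^*}(\rn)$ as $s\to\infty$.'' That inference does not follow as stated: a function can lie in $L^{2^*}((0,\infty);L^{2^*}(\rn))$ and be locally H\"older-continuous into $L^{2^*}(\rn)$ without decaying along \emph{every} sequence $s\to\infty$ (shrinking bumps). The intended conclusion $c=0$ does survive, and there are two clean ways to see it. One can note that $\|F\|_{Y^{1,2}(\bb R^{n+1}_s)}\to 0$ as $s\to\infty$ because it is a tail of a convergent integral, so the (translation-invariant) trace bound of Lemma~\ref{lm.ytrace} gives $\|\tr_s F\|_{\Hf}\to 0$, hence $\tr_sF\to 0$ in $L^{2n/(n-1)}(\rn)$ by Proposition~\ref{prop.fracemb}; comparing this distributional limit with the $L^2(\rn)$ decay of $\tr_s\partial_t u_\tau$ yields $c=0$. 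Alternatively, $L^{2^*}$-integrability in $s$ gives decay of $\|\tr_s F\|_{L^{2^*}(\rn)}$ along a subsequence, which already suffices since the $L^2$-decay of $\tr_s\partial_t u_\tau$ holds for all $s$. A kindred remark applies to the ``Sobolev embedding on each slice'' step: since $L^{2^*}(\rn)\not\hookrightarrow L^{2n/(n-2)}(\rn)$, the $Y^{1,2}(\rn)$-continuity should be obtained from continuity of $\nabla_\parallel u(\cdot,t)$ into $L^2(\rn)$ together with the Sobolev inequality (using the $L^{2^*}$-continuity only to rule out a drifting additive constant), not by embedding $L^{2^*}$ into $L^{2n/(n-2)}$.
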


\bibliography{bhlmprefs}
\bibliographystyle{alpha-sort-max}
 
\end{document}